\documentclass{svmult}


\usepackage{ifxetex,ifluatex}
\newif\ifxetexorluatex
\ifxetex
  \xetexorluatextrue
\else
  \ifluatex
    \xetexorluatextrue
  \else
    \xetexorluatexfalse
  \fi
\fi

\usepackage{amsmath}
\ifxetexorluatex
  \usepackage{fontspec}           
  \usepackage{unicode-math}       
  \usepackage{polyglossia}        
  \setmainlanguage{english}

  \setmainfont{TeX Gyre Termes}
  \setsansfont{TeX Gyre Heros}
  \setmonofont[Mapping=tex-text]{Courier New}
  \setmathfont[math-style=ISO,bold-style=ISO,vargreek-shape=TeX]{TeX Gyre Termes Math}

  \AtBeginDocument{\let\setminus\smallsetminus}
\else
  \usepackage[utf8]{inputenc}     
  \usepackage[english]{babel}     

  \usepackage{mathptmx}       
  \usepackage{helvet}         
  \usepackage{courier}        

  \usepackage{amsfonts}
  \usepackage{amssymb}
  \newcommand{\nsimeq}{\not\simeq}
\fi

\usepackage{xcolor}          
\usepackage[bottom]{footmisc}
\usepackage{stmaryrd}        
\usepackage{mathtools}
\usepackage{enumitem}

\definecolor{linkred}{RGB}{255,1,1}
\definecolor{citegreen}{RGB}{1,190,1}
\usepackage[linkcolor=linkred
           ,citecolor=citegreen
           ,ocgcolorlinks        
           ,bookmarksopen=True,  
           ]{hyperref}

\usepackage{microtype}           

\makeatletter
\def\toclevel@authorch{1000}
\def\toclevel@author{1000}
\makeatother

\newenvironment{equivenumerate}[1][]
{\enumerate[label=\textup(\alph*\textup),ref=(\alph*),#1]}
{\endenumerate}
\setlist[enumerate,1]{label=\textup{(\arabic*)}, ref=(\arabic*), leftmargin=0.75cm}
\setlist[enumerate,2]{label=(\roman*), ref=(\roman*)}

%
\makeatletter

\newcommand{\sc@lettershortcut}[3]{%
  \expandafter\providecommand\csname #2#3\endcsname{#1{#3}}%
}

\newcommand{\sc@shortcuts}[3]{%
  \count@=0
  \loop
  \advance\count@ 1
  \edef\tmp@{%
    \noexpand\sc@lettershortcut\unexpanded{{#1}}{#2}{#3\count@}
  }
  \tmp@
  \ifnum\count@<26
  \repeat
}

\newcommand{\defshortcuts}[2]{\sc@shortcuts{#1}{#2}{\@alph}}

\newcommand{\defShortcuts}[2]{\sc@shortcuts{#1}{#2}{\@Alph}}

\makeatother
%

\let\sc\undefined
\defShortcuts{\mathbb}{b}
\defShortcuts{\mathcal}{c}
\defShortcuts{\mathfrak}{f}
\defShortcuts{\mathsf}{s}
\defshortcuts{\mathfrak}{f}
\defshortcuts{\mathsf}{s}

%
\def\rfop{*}
\makeatletter
\newcommand\rigidfactorization[2][]{%
  \def\rf@delim{\rfop}
  \newif\ifrf@notfirst
  #1
  \@for\next:=#2\do{%
    \ifrf@notfirst
      \rf@delim
    \fi
    \rf@notfirsttrue
    \next
  }%
}
\makeatother
\newcommand\rf\rigidfactorization
%


\DeclarePairedDelimiter{\length}{\lvert}{\rvert}
\DeclarePairedDelimiter{\card}{\lvert}{\rvert}
\DeclarePairedDelimiter{\babs}{\big\lvert}{\big\rvert}
\DeclareMathOperator{\ann}{ann}
\DeclareMathOperator{\udim}{udim}
\DeclareMathOperator{\Hom}{Hom}
\DeclareMathOperator{\id}{id}
\DeclareMathOperator{\gr}{gr}
\DeclareMathOperator{\tr}{\,tr\,}
\DeclareMathOperator{\nr}{nr}
\DeclareMathOperator{\chr}{char}
\DeclareMathOperator{\Cl}{\mathcal C}
\DeclareMathOperator{\LF}{LF}
\DeclareMathOperator{\gldim}{gldim}
\DeclareMathOperator{\adj}{adj}
\DeclareMathOperator{\Ob}{Ob}
\newcommand{\iv}[2]{[{#1},{#2}]}
\newcommand{\dsim}{\textup{sim}}  



\spnewtheorem{theorem}{Theorem}[section]{\bfseries}{\itshape}

\spnewtheorem{lemma}[theorem]{Lemma}{\bfseries}{\itshape}
\spnewtheorem{definition}[theorem]{Definition}{\bfseries}{\upshape}
\spnewtheorem{proposition}[theorem]{Proposition}{\bfseries}{\itshape}
\spnewtheorem{corollary}[theorem]{Corollary}{\bfseries}{\itshape}
\spnewtheorem*{remark}{Remark}{\itshape}{\rmfamily}
\spnewtheorem{example}[theorem]{Example}{\bfseries}{\upshape}


\begin{document}

\title*{Factorizations of Elements in Noncommutative Rings: A survey}
\author{Daniel Smertnig}
\institute{Daniel Smertnig \at University of Graz,
NAWI Graz,
Institute for Mathematics and Scientific Computing,
Heinrichstraße 36,
8010 Graz, Austria, \email{daniel.smertnig@uni-graz.at}}
%
%
\maketitle

\renewcommand*{\theHtheorem}{\theHsection.\arabic{theorem}}

\emph{Dedicated to Franz Halter-Koch on the occasion of his 70\textsuperscript{th} birthday.}
\vspace{1cm}

\abstract{
  We survey results on factorizations of non-zero-divisors into atoms (irreducible elements) in noncommutative rings.
  The point of view in this survey is motivated by the commutative theory of non-unique factorizations.
  Topics covered include unique factorization up to order and similarity, $2$-firs, and modular LCM domains, as well as UFRs and UFDs in the sense of Chatters and Jordan and generalizations thereof.
  We recall arithmetical invariants for the study of non-unique factorizations, and give transfer results for arithmetical invariants in matrix rings, rings of triangular matrices, and classical maximal orders as well as classical hereditary orders in central simple algebras over global fields.
}

\section{Introduction}

Factorizations of elements in a ring into atoms (irreducible elements) are natural objects to study if one wants to understand the arithmetic of a ring.
In this overview, we focus on the semigroup of non-zero-divisors in noncommutative (associative, unital) rings.
The point of view in this article is motivated by analogy with the commutative theory of non-unique factorizations (as in \cite{anderson97, chapman05, ghk06, geroldinger09}).

We start by giving a rigorous notion of \emph{rigid factorizations} and discussing sufficient conditions for the existence of factorizations of any non-zero-divisor, in Section~\ref{sec:rf}.
In Section~\ref{sec:unique-factorization}, we look at several notions of \emph{factoriality}, that is, notions of unique factorization, that have been introduced in the noncommutative setting.
Finally, in Section~\ref{sec:non-unique-factorization} we shift our attention to non-unique factorizations and the study of arithmetical invariants used to describe them.

The investigation of factorizations in noncommutative rings has its origins in the study of homogeneous linear differential equations.
The first results on the uniqueness of factorizations of linear differential operators are due to Landau, in \cite{landau02}, and Loewy, in \cite{loewy03}.
Ore, in \cite{ore33}, put this into an entirely algebraic context by studying skew polynomials (also called Ore extensions) over division rings.
He showed that if $D$ is a division ring, then the skew polynomial ring $D[x;\sigma,\delta]$, where $\sigma$ is an injective endomorphism of $D$ and $\delta$ is a $\sigma$-derivation, satisfies an Euclidean algorithm with respect to the degree function.
Hence, factorizations of elements in $D[x;\sigma,\delta]$ are unique up to order and \emph{similarity}.
We say that $D[x;\sigma,\delta]$ is \emph{similarity factorial} (see Definition~\ref{def:similarity-factorial}).

Jacobson, in \cite{jacobson43}, already describes unique factorization properties for principal ideal domains.
He showed that PIDs are similarity factorial.
In a further generalization, principal ideal domains were replaced by \emph{2-firs}, and the Euclidean algorithm was replaced by the \emph{$2$-term weak algorithm}.
This goes back to work primarily due to P.\,M.~Cohn and Bergman.
 The main reference is \cite{cohn06}.

Factorizations in $2$-firs, the $2$-term weak algorithm, and the notion of \emph{similarity factoriality} are the focus of Section~\ref{ssec:similarity-ufds}.
A key result is that the free associative algebra $K\langle X \rangle$ over a field $K$ in a family of indeterminates $X$ is similarity factorial.
Here, $K$ cannot be replaced by an arbitrary factorial domain, as $\bZ\langle x,y\rangle$ is not similarity factorial.
Brungs, in \cite{brungs69}, studied the slightly weaker notion of \emph{subsimilarity factoriality}.
Using a form of Nagata's Theorem, it follows that free associative algebras over factorial commutative domains are subsimilarity factorial.

Modular right LCM domains were studied by Beauregard in a series of papers and are also discussed in Section~\ref{ssec:similarity-ufds}.
Many results on unique factorizations in Section~\ref{ssec:similarity-ufds} can be derived from the Jordan-Hölder Theorem on (semi-)modular lattices by consideration of a suitable lattice.
Previous surveys covering unique factorizations in noncommutative rings, as considered in Section~\ref{ssec:similarity-ufds}, are \cite{cohn63,cohn65} and \cite{cohn73,cohn73-c}.
We also refer to the two books \cite{cohn85} and \cite{cohn06}.

A rather different notion of [Noetherian] UFRs (unique factorization rings) and UFDs (unique factorization domains), originally introduced by Chatters and Jordan in \cite{chatters84,chatters-jordan86}, has seen widespread adoption in ring theory.
We discuss this concept, and its generalizations, in Section~\ref{ssec:chatters-ufds}.
Examples of Noetherian UFDs include universal enveloping algebras of finite-dimensional solvable Lie algebras over $\bC$, various (semi)group algebras, and quantum algebras.
In a UFR $R$, the semigroup of nonzero normal elements, $N(R)^\bullet$, is a UF-monoid.
Thus, nonzero normal elements of $R$ factor uniquely as products of prime elements.

Section~\ref{sec:non-unique-factorization} is devoted to the study of non-unique factorizations in noncommutative rings.
Here, the basic interest is in determining arithmetical invariants that suitably measure, characterize, or describe the extent of non-uniqueness of the factorizations.
A recent result by Bell, Heinle, and Levandovskyy, from \cite{bell-heinle-levandovskyy}, establishes that many interesting classes of noncommutative domains are finite factorization domains (FF-domains).

We recall several arithmetical invariants, as well as the notion of \emph{[weak] transfer homomorphisms}.
Transfer homomorphisms have played a central role in the commutative theory of non-unique factorizations and promise to be useful in the noncommutative setting as well.
By means of transfer results, it is sometimes possible to reduce the study of arithmetical invariants in a ring to the study of arithmetical invariants in a much simpler object.

Most useful are transfer results from the non-zero-divisors of a noncommutative ring to a commutative ring or semigroup for which the factorization theory is well understood.
Such transfer results exist for rings of triangular matrices (see \cite{bachman-baeth-gossell14,baeth-smertnig15}), rings of matrices (see \cite{estes-matijevic79a,estes-matijevic79b}), and classical hereditary (in particular, maximal) orders in central simple algebras over global fields (see \cite{estes-nipp89,estes91a,estes91b,smertnig13,baeth-smertnig15}).
These results are covered in Section~\ref{ssec:transfer-results}.

Throughout the text, we gather known examples from the literature and point out their implications for factorization theory.
In particular, these examples demonstrate limitations of certain concepts or methods in the noncommutative setting when compared to the commutative setting.

As a note on terminology, we  call a domain \emph{similarity [subsimilarity,projectivity] factorial} instead of a \emph{similarity-[subsimilarity,projectivity]-UFD}.
This matches the terminology presently preferred in the commutative setting.
Using an adjective to describe the property sometimes makes it easier to use it in writing.
Moreover, this allows us to visibly differentiate factorial domains from the [Noetherian] UFRs and UFDs in the sense of Chatters and Jordan that are discussed in Section~\ref{ssec:chatters-ufds}.

While an attempt has been made to be comprehensive, it would be excessive to claim the results contained in this article are entirely exhaustive.
Many interesting results on non-unique factorizations are scattered throughout the literature, with seemingly little previous effort to tie them together under a common umbrella of a theory of (non-unique) factorizations.

Naturally, there are certain restrictions on the scope of the present treatment.
For the reader who came expecting something else under the heading \emph{factorization theory}, some pointers to recent work, which is beyond the scope of this article, but may conceivably be considered to be factorization theory, are given in Section~\ref{sec:scope}.

\section{Preliminaries}

All rings are assumed to be unital and associative, but not necessarily commutative.
All semigroups have a neutral element.
A ring $R$ is a \emph{domain} if $0$ is the unique zero-divisor (in particular, $R \ne 0$).
A \emph{right principal ideal domain (right PID)} is a domain in which every right ideal is principal.
A \emph{left PID} is defined analogously, and a domain is a \emph{principal ideal domain (PID)} if it is both, a left and a right PID.
We make similar conventions for other notions for which a left and a right variant exist, e.g. Noetherian, Euclidean, etc.

\subsection{Small Categories as Generalizations of Semigroups}

We will be interested in factorizations of non-zero-divisors in a ring $R$.
Even so, it will sometimes be useful to have the notions of factorizations available in the more general setting of semigroups, or even more generally, in the setting of small categories.
Thus, we develop the basic terminology in the very general setting of a cancellative small category.
This generality does not cause any significant additional problems over making the definitions in a more restrictive setting, such as cancellative semigroups, or even the semigroup of non-zero-divisors in a ring.
It may however be useful to keep in mind that the most important case for us will be where the cancellative small category simply is the semigroup of non-zero-divisors of a ring.

Here, a small category is viewed as a generalization of a semigroup, in the sense that the category of semigroups is equivalent to the category of small categories with a single object.
In practice, we will however be concerned mostly with semigroups.
Therefore, we use a notation for small categories that is reminiscent of that for semigroups.
We briefly review the notation.
See also \cite[Section 2.1]{smertnig13} and \cite[Section 2]{baeth-smertnig15} for more details.

Let $H$ be a small category.
A morphism $a$ of $H$ has a source $s(a)$ and a target $t(a)$.
If $a$ and $b$ are morphisms with $t(a)=s(b)$ we write the composition left to right as $ab$.
The objects of the category will play no significant role (they can always be recovered from the morphisms via the source and target maps).
We identify the objects with their identity morphisms and denote the set of all identity morphism by $H_0$.
We identify $H$ with its set of morphisms.
Accordingly, we call a morphism $a$ of $H$ simply an element of $H$ and write $a \in H$.

More formally, from this point of view, a small category $H=(H,H_0,s,t,\cdot)$ consists of the following data: A set $H$ together with a distinguished subset $H_0 \subset H$, two functions $s$,~$t\colon H \to H_0$ and a partial function $\cdot\colon H \times H \to H$ such that:
\begin{enumerate}
  \item $s(e)=t(e)=e$ for all $e \in H_0$,
  \item $a \cdot b \in H$ is defined for all $a$,~$b \in H$ with $t(a)=s(b)$,
  \item $a\cdot(b\cdot c) = (a \cdot b)\cdot c$ for all $a$,~$b$,~$c \in H$ with $t(a)=s(b)$ and $t(b)=s(c)$,
  \item $s(a) \cdot a = a \cdot t(a) = a$ for all $a \in H$.
\end{enumerate}

For $e$,~$f \in H_0$, we define $H(e,\cdot) = \{\, a \in H \mid s(a)=e \,\}$, $H(\cdot,f) = \{\, a \in H \mid t(a) = f \,\}$, $H(e,f) = H(e,\cdot) \cap H(\cdot,f)$, and $H(e) = H(e,e)$.

To see the equivalence of this definition with the usual definition of a small category, suppose first that $H$ is as above.
Take as set of objects of a category $\cC$ the set $H_0$, and, for two objects $e$, $f \in H_0$, set $\Hom_\cC(e,f) = H(f,e)$.
Define the composition on $\cC$ using the partial map $\cdot$.
Then $\cC$ is a small category in terms of the usual definition, with composition written right to left and with $e \in \Hom(e,e)$ the identity morphism of the object $e$.
Conversely, if $\cC$ is a small category in the usual sense, set $H = \bigcup_{e,f \in \Ob \cC} \Hom_{\cC}(e,f)$ and $H_0 = \{\, \id_e \mid e \in \Ob \cC \,\}$.
For $a \in H$ with domain $e$ and codomain $f$, set $s(a)=f$ and $t(a)=e$.
The partial function $\cdot$ on $H$ is defined via the composition of $\cC$.
Then $H$ satisfies the properties above.

Let $H$ be a small category.
If $a$,~$b \in H$ and we write $ab$, we implicitly assume $t(a)=s(b)$.
The subcategory of units (isomorphisms) of $H$ is denoted by $H^\times$.
The small category $H$ is a \emph{groupoid} if $H=H^\times$, and it is \emph{reduced} if $H^\times=H_0$.
An element $a \in H$ is \emph{cancellative} if it is both a monomorphism and an epimorphism, that is, for all $b$, $c$ in $H$, $ab=ac$ implies $b=c$ and $ba=ca$ implies $b=c$.
The subcategory of cancellative elements of $H$ is denoted by $H^\bullet$.
A functor $f$ from $H$ to another small category $H'$ is referred to as a \emph{homomorphism}.
Two elements $a$,~$b \in H$ are (two-sided) \emph{associated} if there exist $\varepsilon$,~$\eta \in H^\times$ such that $a=\varepsilon b \eta$.

Let $H$ be a small category.
A subset $I \subset H$ is a right ideal of $H$ if $IH=\{\, xa \mid x \in I,\, a \in H: t(x)=s(a) \,\}$ is a subset of $I$.
A right ideal of $H$ is called a \emph{right $H$-ideal} if there exists an $a \in H^\bullet$ such that $a \in I$.
A right ideal $I \subset H$ is \emph{principal} if there exists $a \in H$ such that $I=aH$.
An ideal $I \subset H$ is \emph{principal} if it is principal as a left and right ideal, that is, there exist $a$,~$b \in H$ such that $I=aH=Hb$.
Suppose that every left or right divisor of a cancellative element is again cancellative.
If $I \subset H$ is an ideal and $I=Ha=bH$ with $a$,~$b \in H^\bullet$, then it is easy to check that also $I=aH=Hb$.

Let $H$ be a semigroup.
An element $a \in H$ is \emph{normal} (or \emph{invariant}) if $aH=Ha$.
We write $N(H)$ for the subsemigroup of all normal elements of $H$.
The semigroup $H$ is \emph{normalizing} if $H=N(H)$.

In the commutative theory of non-unique factorizations, a \emph{monoid} is usually defined to be a cancellative commutative semigroup.
Since the meaning of \emph{monoid} in articles dealing with a noncommutative setting is often different, we will avoid its use altogether.
The exception are compound nouns such as \emph{Krull monoid}, \emph{free monoid}, \emph{free abelian monoid}, \emph{monoid of zero-sum sequences}, and \emph{UF-monoid}, where the use of \emph{monoid} is universal and it would be strange to introduce different terminology.

\subsection{Classical Maximal Orders}

Classical maximal orders in central simple algebras over a global field will appear throughout in examples.
Moreover, they are one of the main objects for which we are interested in studying non-unique factorizations.
Therefore, we recall the setting.
We use \cite{reiner75} as a general reference, and \cite{curtis-reiner87,swan80} for strong approximation.
For the motivation for calling such orders \emph{classical} orders, and the connection to different notions of orders, see \cite[\S5.3]{mcconnell-robson01}.

Let $K$ be a global field, that is, either an algebraic number field or an algebraic function field (of transcendence degree 1) over a finite field.
Let $S_{\text{fin}}$ denote the set of all non-archimedean places of $K$.
For each $v \in S_{\text{fin}}$, let $\cO_v \subset K$ denote the corresponding discrete valuation domain.
A subring $\cO \subset K$ is a \emph{holomorphy ring} if there exists a finite subset $S \subset S_{\text{fin}}$ (and $\emptyset \ne S$ in the function field case) such that
\[
\cO = \cO_S = \bigcap_{v \in S_{\text{fin}}\setminus S} \cO_v.
\]
The holomorphy rings in $K$ are Dedekind domains which are properly contained in $K$ and have quotient field $K$.
The most important examples are rings of algebraic integers and $S$-integers in the number field case, and coordinate rings of non-singular irreducible affine algebraic curves over finite fields in the function field case.

Let $A$ be a central simple $K$-algebra, that is, a finite-dimensional $K$-algebra with center $K$ which is simple as a ring.
A \emph{classical $\cO$-order} is a subring $\cO \subset R \subset A$ such that $R$ is a finitely generated $\cO$-module and $KR = A$.
A \emph{classical maximal $\cO$-order} is a classical $\cO$-order which is maximal with respect to set inclusion within the set of all classical $\cO$-orders contained in $A$.
A \emph{classical hereditary $\cO$-order} is a classical $\cO$-order which is hereditary as a ring.
Every classical maximal $\cO$-order is hereditary.

If $v$ is a place of $K$, the completion $A_v$ of $A$ is a central simple algebra over the completion $K_v$ of $K$.
Hence, $A_v$ is of the form $A_v \cong M_{n_v}(D_v)$ with a finite-dimensional division ring $D_v \supset K_v$.
The algebra $A$ is \emph{ramified at $v$} if $D_v \ne K_v$.

\runinhead{Isomorphism classes of right ideals and class groups.}
Let $\cF^\times(\cO)$ denote the group of nonzero fractional ideals of $\cO$.
Let $K_A^\times$ denote the subgroup of $K^\times$ consisting of all $a \in K^\times$ for which $a_v > 0$ for all archimedean places $v$ of $K$ at which $A$ is ramified.
To a classical maximal $\cO$-order $R$ (or more generally, a classical hereditary $\cO$-order), we associate the ray class group
\[
\begin{split}
  \Cl_A(\cO) = \cF^\times(\cO) \,/\, \{\, a\cO \mid a \in K_A^\times \,\}.
\end{split}
\]
This is a finite abelian group, with operation induced by the multiplication of fractional ideals.

Let $\LF_1(R)$ denote the (finite) set of isomorphism classes of right $R$-ideals.
In general, $\LF_1(R)$ does not have a natural group structure.
Let $\cC(R)$ denote the set of stable isomorphism classes of right $R$-ideals.
The set $\cC(R)$ naturally has the structure of an abelian group, with operation induced from the direct sum operation.
There is a surjective map of sets $\LF_1(R) \to \cC(R)$, and a group homomorphism $\cC(R) \to \Cl_A(\cO), [I] \mapsto [\nr(I)]$.
The homomorphism $\cC(R) \to \Cl_A(\cO)$ is in fact an isomorphism (see \cite[Corollary 9.5]{swan80}).
However, the map $\LF_1(R) \to \cC(R)$ need not be a bijection in general.
It is a bijection if and only if stable isomorphism of right $R$-ideals implies isomorphism.
This holds if $A$ satisfies the Eichler condition relative to $\cO$ (see below).
We will at some point need to impose the weaker condition that every stably free right $R$-ideal is free, that is, that the preimage of the trivial class under $\LF_1(R) \to \cC(R)$ consists only of the trivial class.
This condition will be of paramount importance for the existence of a transfer homomorphism from $R^\bullet$ to a monoid of zero-sum sequences over the ray class group $\cC_A(\cO)$.

A ring over which every finitely generated stably free right module is free is called a \emph{(right) Hermite ring}.
(Using the terminology of \cite[Chapter~I.4]{lam06}, some authors require in addition that $R$ has the invariant basis number (IBN) property. For instance, this is the case in \cite[Chapter~0.4]{cohn06}.)
For a classical maximal $\cO$-order $R$, every finitely generated projective right $R$-module is of the form $R^n \oplus I$ for a right ideal $I$ of $R$.
It follows that $R$ is a Hermite ring if and only if every stably free right $R$-ideal is free.

\runinhead{Strong approximation and Eichler condition.}
Let $S \subset S_{\text{fin}}$ be the set of places defining the holomorphy ring $\cO=\cO_S$.
Denote by $S_\infty$ the set of archimedean places of $K$.
($S_\infty=\emptyset$ if $K$ is a function field.)
We consider the places in $S_{\text{fin}}\setminus S$ to be places arising from $\cO$, since they correspond to maximal ideals of $\cO$.
We consider the places of $S_\infty \cup S$ to be places not arising from $\cO$.
The algebra $A$ satisfies the \emph{Eichler condition (relative to $\cO$)} if there exists a place $v$ not arising from $\cO$ such that $A_v$ is not a noncommutative division ring.

If $K$ is a number field, and $A$ does not satisfy the Eichler condition, then $A$ is necessary a totally definite quaternion algebra.
That is, $\dim_K A = 4$ and, for all $v \in S_{\infty}$, we have $K_v \cong \bR$ and $A_v$ is a division ring, necessarily isomorphic to the Hamilton quaternion algebra.

The Eichler condition is a sufficient condition to guarantee the existence of a strong approximation theorem for the kernel of the reduced norm, considered as a homomorphism of the idele groups.
As a consequence, if $A$ satisfies the Eichler condition, then the map $\LF_1(R) \to \cC(R)$ is a bijection.
In particular, every stably free right $R$-ideal is free.
(See \cite{reiner75,swan80,curtis-reiner87}.)

On the other hand, if $K$ is a number field, $\cO$ is its ring of algebraic integers, and $A$ is a totally definite quaternion algebra, then, for all but finitely many isomorphism classes of $A$ and $R$, there exist stably free right $R$-ideals which are not free.
The classical maximal orders for which this happens have been classified.
(See \cite{vigneras76,hallouin-maire06,smertnig-canc}.)

The strong approximation theorem is also useful in the determination of the image of the reduced norm of an order.
Suppose that $A$ satisfies the Eichler condition with respect to $\cO$.
Let $\cO_A^\bullet$ denote the subsemigroup of all nonzero elements of $\cO$ which are positive at each $v \in S_\infty$ which ramifies in $A$.
Then, if $R$ is a classical hereditary $\cO$-order in $A$, the strong approximation theorem together with an explicit characterization of local hereditary orders implies that $\nr(R^\bullet)=\cO_A^\bullet$.
(See \cite[Theorem 39.14]{reiner75} for the classification of hereditary orders in a central simple algebra over a quotient field of a complete DVR, and \cite[Theorem 8.2]{swan80} or \cite[Theorem 52.11]{curtis-reiner87} for the globalization argument via strong approximation.)

\runinhead{Hurwitz quaternions.}
Historically, the order of Hurwitz quaternions has received particular attention.
It is Euclidean, hence a PID, and therefore enjoys unique factorization in a sense.
An elementary discussion of the Hurwitz quaternions (without reference to the theory of maximal orders) and their factorization theory can be found in \cite{conway-smith03}.
We give \cite{vigneras76,maclachlan-reid03} as references for the theory of quaternion algebras over number fields.
\begin{example}
  Let $K$ be a field of characteristic not equal to $2$.
  Usually, we will consider $K=\bQ$ or $K=\bR$.
  Let $\bH_K$ denote the four-dimensional $K$-algebra with basis $1$, $i$, $j$, $k$, where $i^2=j^2=-1$, $ij=-ji=k$, and $1$ is the multiplicative identity.
  This is a quaternion algebra, that is, a four-dimensional central simple $K$-algebra.
  On $\bH_K$ there exists an involution, called \emph{conjugation}, defined by $K$-linear extension of $\overline{1}=1$, $\overline{i}=-i$, $\overline{j}=-j$, and $\overline{k}=-k$.
  The \emph{reduced norm} $\nr\colon H_K \to K$ is defined by $\nr(x)=x\overline x$ for all $x \in \bH_K$.
  Thus $\nr(a+bi+cj+dk) = a^2+b^2+c^2+d^2$ if $a$,~$b$,~$c$,~$d \in K$.
  If $K=\bR$, then $\bH_K$ is the division algebra of \emph{Hamilton quaternions}.

  The algebra $\bH_\bQ$ is a totally definite quaternion algebra over $\bQ$.
  Let $\cH$ be the classical $\bZ$-order with $\bZ$-basis $1$, $i$, $j$, $\frac{-1+i+j+k}{2}$ in $\bH_\bQ$.
  That is, $\cH$ consists of elements $a + b i + c j + d k $ with $a$, $b$, $c$, $d$ either all integers or all half-integers.
  Then $\cH$ is a classical maximal $\bZ$-order, the order of \emph{Hurwitz quaternions}.
  The ring $\cH$ is Euclidean with respect to the reduced norm, and hence a PID.

  The unit group of $\cH$ consists of the 24 elements
  \[
  \cH^\times = \bigg\{ \pm 1, \pm i, \pm j, \pm k, \frac{\pm 1 \pm i \pm j \pm k}{2} \bigg\}.
  \]
  Up to conjugation by units of $\bH_\bQ$, the order of Hurwitz quaternions is the unique classical maximal $\bZ$-order in $\bH_\bQ$.
  The algebra $\bH_\bQ$ is only ramified at $2$ and $\infty$.
  Thus, for any odd prime number $p$, one has $\bH_\bQ \otimes_\bQ \bQ_p \cong M_2(\bQ_p)$ and $\cH \otimes_\bZ \bZ_p \cong M_2(\bZ_p)$.
  Moreover, in this case, $\cH/p\cH \cong M_2(\bF_p)$.

  On the other hand, $\bH_\bQ \otimes_\bQ \bR \cong \bH_\bR$ is a division algebra.
  Similarly, for $p=2$, the completion $\bH_\bQ \otimes_\bQ \bQ_2$ is isomorphic to the unique quaternion division algebra over $\bQ_2$.

  In the maximal order $\cH \otimes_\bZ \bZ_2$, every right or left ideal is two-sided.
  The ideals of $\cH \otimes_\bZ \bZ_2$ are linearly ordered, and each of them is a power of the unique maximal ideal, which is generated by $(1+i)$.
  Note that this is not the case for $p$ odd, since then $\cH \otimes_\bZ \bZ_p \cong M_2(\bZ_p)$.
\end{example}

\section{Factorizations and Atomicity}
\label{sec:rf}

We develop the basic notions of (rigid) factorizations in the very general setting of a cancellative small category.
Moreover, we show how this notion is connected to chains of principal right ideals and recall sufficient conditions for a cancellative small category to be atomic.

We introduce the notions for a cancellative small category $H$.
When we later apply them to a ring $R$, we implicitly assume that they are applied to the semigroup of non-zero-divisors $R^\bullet$.
For instance, when we write ``$R$ is atomic'', this means ``$R^\bullet$ is atomic'', and so on.

\subsection{Rigid Factorizations}

\begin{center}
\emph{Let $H$ be a cancellative small category.}
\end{center}

\begin{definition}
An element $a \in H$ is an \emph{atom} if $a=bc$ with $b$,~$c \in H$ implies $b \in H^\times$ or $c \in H^\times$.
\end{definition}

Viewing $H$ as a quiver (a directed graph with multiple edges allowed), the atoms of $H$ form a subquiver, denoted by $\cA(H)$.
We will often view $\cA(H)$ simply as a set of atoms, forgetting about the additional quiver structure.

A \emph{rigid factorization} of $a \in H$ is a representation of $a$ as a product of atoms up to a possible insertion of units.
We first give an informal description.
We write the symbol $\rfop$ between factors in a rigid factorizations, to distinguish the factorization as a formal product from its actual product in $H$.
Thus, if $a \in H$ and $a=\varepsilon_1 u_1\cdots u_k$ with atoms $u_1$, $\ldots\,$,~$u_k$ of $H$ and $\varepsilon_1 \in H^\times$, then $z=\rf[\varepsilon_1]{u_1,\ldots,u_k}$ is a rigid factorization of $a$.
If $\varepsilon_2$,~$\ldots\,$,~$\varepsilon_k \in H^\times$ are such that $t(\varepsilon_i)=s(u_{i})$, then also $z=\rf{\varepsilon_1 u_1\varepsilon_2^{-1}, \varepsilon_2 u_2 \varepsilon_3^{-1},\ldots, \varepsilon_k u_k}$ represents the same rigid factorization of $a$.
The unit $\varepsilon_1$ can be absorbed into $u_1$, unless $k=0$, that is, unless $a \in H^\times$.

If $a$,~$b \in H$ and $t(a)=s(b)$, then two rigid factorization $z$ of $a$ and $z'$ of $b$ can be composed in the obvious way to obtain a rigid factorization of $ab$.
We write $z \rfop z'$ for this composition.
In this way, the rigid factorizations themselves form a cancellative small category, denoted by $\sZ^*(H)$.

More formally, we make the following definitions.
See \cite[Section 3]{smertnig13} or \cite[Section 3]{baeth-smertnig15} for details.
Let $\cF^*(\cA(H))$ denote the path category on the quiver $\cA(H)$.
Thus, $\cF^*(\cA(H))_0=H_0$.
Elements (\emph{paths}) $x \in \cF^*(\cA(H))$ are denoted by
\[
x=(e,u_1,\ldots,u_k,f)
\]
where $e$,~$f \in H_0$, and $u_i \in \cA(H)$ with $s(u_1)=e$, $t(u_k)=f$, and $t(u_i)=s(u_{i+1})$ for $i \in [1,k-1]$.
We set $s(x)=e$, $t(x)=f$, and the composition is given by the obvious concatenation of paths.

Denote by $H^\times \times_r \cF^*(\cA(H))$ the cancellative small category
\[
H^\times \times_r \cF^*(\cA(H)) = \big\{\, (\varepsilon,x) \in H^\times \times \cF^*(\cA(H)) \mid t(\varepsilon)=s(x) \,\big\},
\]
where $\big(H^\times \times_r \cF^*(\cA(H))\big)_0 = \{\, (e,e) \mid e \in H_0 \,\}$, which we identify with $H_0$, $s((\varepsilon,x))=s(\varepsilon)$ and $t((\varepsilon,x)) = t(x)$.
If $x=(e,u_1,\ldots,u_k,f)$, $y=(e',v_1,\ldots,v_l,f')$ in $\cF^*(\cA(H))$ and $\varepsilon$,~$\varepsilon' \in H^\times$ are such that $(\varepsilon,x)$,~$(\varepsilon',y) \in H^\times \times_r \cF^*(\cA(H))$ with $t(x)=s(\varepsilon')$, we set
\[
(\varepsilon, x) (\varepsilon', y) = (\varepsilon, (e,u_1,\ldots, u_k \varepsilon', v_1,\ldots, v_l f') \quad\text{if $k>0$,}
\]
and $(\varepsilon,x)(\varepsilon',y)= (\varepsilon \varepsilon', y)$ if $k=0$.

On $H^\times \times_r \cF^*(\cA(H))$ we define a congruence relation $\sim$ by $(\varepsilon,x) \sim (\varepsilon',y)$ if and only if
\begin{enumerate}
  \item $k=l$,
  \item $\varepsilon u_1 \cdots u_k = \varepsilon' v_1 \cdots v_l \in H$, and
  \item there exist $\delta_2$, $\ldots\,$,~$\delta_k \in H^\times$ and $\delta_{k+1}=t(u_k)$, such that
    \[
    \varepsilon' v_1 = \varepsilon u_1 \delta_2^{-1} \quad\text{and}\quad v_i = \delta_i u_i \delta_{i+1}^{-1} \quad\text{for all $i \in [2,k]$.}
    \]
\end{enumerate}

\begin{definition}
The quotient category $\sZ^*(H) = H^\times \times_r \cF^*(\cA(H)) / \sim$ is called the \emph{category of rigid factorizations} of $H$.
The class of $(\varepsilon,x)$ (as above) in $\sZ^*(H)$ is denoted by $\rf[\varepsilon]{u_1,\ldots,u_k}$.
There is a natural homomorphism
\[
\pi=\pi_H\colon \sZ^{*}(H) \to H,\quad \rf[\varepsilon]{u_1,\ldots,u_k} \mapsto \varepsilon u_1\cdots u_k.
\]
For $a \in H$, the set $\sZ^*(a) = \sZ_H^*(a) = \pi^{-1}(a)$ is the set of \emph{rigid factorizations of $a$}.
If $z=\rf[\varepsilon]{u_1,\ldots,u_k} \in \sZ^*(H)$, then $\length{z}=k$ is the \emph{length} of the (rigid) factorization $z$.
\end{definition}

\begin{remark}
  \begin{enumerate}
  \item
    If $H$ is a cancellative semigroup, then $H^\times \times_r \cF^*(\cA(H))$ is the product of $H^\times$ and the free monoid on $\cA(H)$.
    If moreover $H$ is reduced, then $\sZ^*(H)$ is the free monoid on $\cA(H)$.
    Hence, in this case, rigid factorizations are simply formal words on the atoms of $H$.
    In particular, if $H$ is a reduced commutative cancellative semigroup, we see that rigid factorizations are ordered, whereas the usual notion of factorizations is unordered.

  \item
    While complicating the definitions a bit, the presence of units in the definition of $\sZ^*(H)$ allows for a more uniform treatment of factorizations.
    It often makes it unnecessary to treat units as a (trivial) special case.
    In particular, with our definitions, every unit has a unique (trivial) rigid factorization of length $0$.
  \end{enumerate}
\end{remark}

\subsection{Factor Posets}

\begin{center}
  \emph{Let $H$ be a small category.}
\end{center}

Another useful way of viewing rigid factorizations is in terms of chains of principal left or right ideals.
Suppose that, for $a$,~$b \in H^\bullet$, we have $aH \subset bH$ if and only if there exists $c \in H^\bullet$ such that $a=bc$.
\footnote{We may always force this condition by replacing $H$ by the subcategory of all cancellative elements.
Note that then principal right ideals $aH$ have to be replaced by $aH^\bullet$.
Sometimes it can be more convenient work with $H$ with $H^\bullet \ne H$, because typically we will have $H=R$ a ring and $H^\bullet = R^\bullet$ the semigroup of non-zero-divisors.
In this setting, sufficient conditions for the stated condition to be satisfied are for $R^\bullet$ to be Ore, or $R$ to be a domain.}
If $a \in H^\bullet$ and $b \in H^\bullet$, then $aH = bH$ if and only if there exists an $\varepsilon \in H^\times$ such that $a=b\varepsilon$, that is, $a$ and $b$ are right associated.

For $a \in H^\bullet$, let
\[
\iv{aH}{H} = \big\{\, bH \mid b \in H^\bullet\text{ such that } aH \subset bH \subset H \,\big\}
\]
denote the set of all principal right ideals containing $aH$ which are generated by a cancellative element.
Note that $\iv{aH}{H}$ is naturally a partially ordered set via set inclusion.
This order reflects left divisibility in the following sense:
Left divisibility gives a preorder on the cancellative left divisors of $a$.
The corresponding poset, obtained by identifying right associated cancellative left divisors of $a$, is order anti-isomorphic to $\iv{aH}{H}$.
We call $\iv{aH}{H}$ the \emph{(right) factor poset} of $a$.

An element $a \in H^\bullet$ is an atom if and only if $[aH,H]=\{aH, H\}$.
Rigid factorizations of $a$, that is, elements of $\sZ^*(a)$, are naturally in bijection with finite maximal chains in $\iv{aH}{H}$.
For instance, a rigid factorization $z=\rf{u_1,\ldots,u_k}$ of $a$ corresponds to the chain
\[
aH=u_1\cdots u_kH \subsetneq u_1\cdots u_{k-1}H \subsetneq \ldots \subsetneq u_1u_2H \subsetneq u_1H \subsetneq H.
\]
Thus, naturally, properties of the set of rigid factorizations of $a$ correspond to properties of the poset $\iv{aH}{H}$.

In particular, we are interested in $\iv{aH}{H}$ being a lattice (or, stronger, a sublattice of the right ideals of $H$).
If the factor poset $\iv{aH}{H}$ is a lattice, we are interesting in it being (semi-)modular or distributive.
For a modular lattice the Schreier refinement theorem holds: Any two chains have equivalent refinements.
For semimodular lattices of finite length one has a Jordan-Hölder Theorem (and finite length of a semimodular lattice is already guaranteed by the existence of one maximal chain of finite length).
Thus, if all factor posets are (semi-)modular lattices, we obtain unique factorization results for elements.
This point of view will be quite useful in understanding and reconciling results on unique factorization in various classes of rings, such as $2$-firs, modular LCM domains, and LCM domains having RAMP (see Section~\ref{ssec:similarity-ufds}).

\begin{remark}
Given an element $a \in H^\bullet$, we have defined $\iv{aH}{H}$ in terms of principal right ideals of $H$.
We may similarly define $\iv{Ha}{H}$ using principal left ideals.
If $b \in H^\bullet$ and $bH \in \iv{aH}{H}$, then there exists $b' \in H^\bullet$ such that $a=bb'$.
This element $b'$ is uniquely determined by $bH$ up to left associativity, that is, $Hb'$ is uniquely determined by $bH$.
Hence, there is an anti-isomorphism of posets
\[
\iv{aH}{H} \to \iv{Ha}{H}, \quad bH \mapsto Hb'.
\]
\end{remark}

\subsection{Atomicity, BF-Categories, and FF-Categories}

\begin{center}
  \emph{Let $H$ be a cancellative small category.}
\end{center}

\begin{definition}
  \mbox{}
  \begin{enumerate}
    \item $H$ is \emph{atomic} if the set if rigid factorizations, $\sZ^*(a)$, is non-empty for all $a \in H$.
      Explicitly, for every $a \in H$, there exist $k \in \bN_0$, atoms $u_1$,~$\ldots\,$,~$u_k \in \cA(H)$, and a unit $\varepsilon \in H^\times$ such that $a=\varepsilon u_1\cdots u_k$.

    \item $H$ is a \emph{BF-category} (a category with \emph{bounded factorizations}) if the set of lengths, $\sL(a) = \{\, \length{z} \mid z \in \sZ^*(a) \,\}$, is non-empty and finite for all $a \in H$.

    \item $H$ is \emph{half-factorial} if $\card{\sL(a)}=1$ for all $a \in H$.

    \item $H$ is an \emph{FF-category} (a category with \emph{finite factorizations}) if the set of rigid factorizations, $\sZ^*(a)$, is non-empty and finite for all $a \in H$.
  \end{enumerate}
\end{definition}
Obviously, any FF-category is a BF-category.
Analogous definitions are made for BF-semigroups, BF-domains, etc., and FF-semigroups, FF-domains, etc.
\begin{remark}
The definition of an FF-category here is somewhat ad hoc in that it relates only to rigid factorizations, but this is in line with \cite{bell-heinle-levandovskyy}.
It is a bit restrictive in that a PID need not be an FF-domain (see Example~\ref{e-pid-not-ff}).
It may be more accurate to talk of a \emph{finite rigid factorizations} category.
\end{remark}

The following condition for atomicity is well known.
A proof can be found in \cite[Lemma 3.1]{smertnig13}.
\begin{lemma}
  If $H$ satisfies both, the ACC on principal left ideals and the ACC on principal right ideals, then $H$ is atomic.
\end{lemma}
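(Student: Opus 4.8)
The plan is to argue by contradiction, showing that a failure of atomicity produces an infinite strictly ascending chain of principal right ideals, contradicting the ACC on principal right ideals. Call an element $a \in H$ \emph{good} if $\sZ^*(a) \ne \emptyset$, i.e.\ $a$ has a rigid factorization, and \emph{bad} otherwise. Note first that every unit is good (it has the trivial rigid factorization of length $0$) and every atom is good. Also, if $a = bc$ with $b$, $c \in H$ both good, then concatenating rigid factorizations of $b$ and $c$ yields one of $a$, so $a$ is good; contrapositively, if $a$ is bad and $a = bc$, then $b$ is bad or $c$ is bad.

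Next I would show: if $a$ is bad, then $a$ admits a factorization $a = bc$ with $c \notin H^\times$ and $b$ bad. Indeed, since $a$ is bad it is in particular not a unit and not an atom, so we can write $a = bc$ with $b$, $c \notin H^\times$. One of $b$, $c$ is bad. If $b$ is bad we are done. If $c$ is bad, write $c = c'c''$ — hmm, this does not immediately help; instead, simply observe that among the two proper factors at least one is bad, and after possibly renaming we may assume it is the left factor $b$ (so $a = bc$ with $b$ bad and $c \notin H^\times$): if instead only $c$ were bad, use that $c$ itself is bad and not a unit, so write $c = b_1 c_1$ with $c$ bad forcing $b_1$ or $c_1$ bad, and iterate — the cleanest route is actually to pick, among \emph{all} factorizations $a = bc$ with $c \notin H^\times$, one in which $b$ is bad, which exists because $a$ bad and $a = a \cdot t(a)$ is not the relevant case; more precisely since $a$ is not an atom there is a factorization $a = bc$ with both $b,c \notin H^\times$, and since $a$ is bad, $b$ or $c$ is bad, and by the left--right symmetry of the hypotheses (ACC on both sides) we may assume $b$ is the bad one. (If one prefers to avoid the symmetry remark, run the dual argument with left ideals when the bad factor is on the right.)

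Now iterate. Starting from a bad element $a_0$, produce $a_0 = a_1 c_1$ with $a_1$ bad and $c_1 \notin H^\times$; then $a_1 = a_2 c_2$ with $a_2$ bad and $c_2 \notin H^\times$; and so on. Since $a_n = a_{n+1} c_{n+1}$, we have $a_n H \subseteq a_{n+1} H$, and the inclusion is strict: if $a_n H = a_{n+1} H$ then, as noted in the Factor Posets subsection, $a_n$ and $a_{n+1}$ would be right associated, i.e.\ $a_n = a_{n+1}\varepsilon$ for some $\varepsilon \in H^\times$; cancelling $a_{n+1}$ (it is cancellative since $H = H^\bullet$) gives $c_{n+1} = \varepsilon \in H^\times$, contradicting $c_{n+1} \notin H^\times$. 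Thus $a_0 H \subsetneq a_1 H \subsetneq a_2 H \subsetneq \cdots$ is an infinite strictly ascending chain of principal right ideals, contradicting the ACC on principal right ideals. Hence no bad element exists, i.e.\ $H$ is atomic.

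The main obstacle is the bookkeeping in the second paragraph: ensuring the bad factor can always be taken on the \emph{left} (so that the ascending chain is on principal \emph{right} ideals). This is handled either by invoking the left--right symmetry of the hypothesis (we assume ACC on principal left ideals too), or by running two interleaved chains — one ascending chain of right ideals and, whenever the bad factor lands on the right, switching to an ascending chain of left ideals — and noting that at least one of the two chains must be infinite, contradicting one of the two ACC hypotheses. Everything else is routine: the composability of rigid factorizations is exactly the composition $z \rfop z'$ in $\sZ^*(H)$ already defined, and cancellativity of elements of $H$ is part of the standing assumption that $H$ is a cancellative small category.
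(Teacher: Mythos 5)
There is a genuine gap, and it is exactly at the point you flag as ``bookkeeping'': the claim that a bad element $a$ always admits a factorization $a=bc$ with $c\notin H^\times$ and the \emph{left} factor $b$ bad. Neither of your two fixes closes it. The appeal to left--right symmetry is a misuse of ``without loss of generality'': symmetry of the hypotheses would let you prove the mirror-image statement by the mirror-image argument, but it does not let you choose, at each step of a single descent, which side the bad factor falls on --- and the side can genuinely change from step to step. The interleaved-chain variant also fails, because the two chains do not concatenate across switches. If $a_{n+1}$ is a proper \emph{right} factor of $a_n$ and $a_{n+2}$ is a proper \emph{left} factor of $a_{n+1}$, then $a_nH$ and $a_{n+1}H$ are incomparable, and likewise $Ha_{n+1}$ and $Ha_{n+2}$ are on the ``wrong'' side; with the sides alternating $L,R,L,R,\dots$ both the right-ideal record and the left-ideal record decompose into disjoint fragments of length two, and neither ACC is contradicted. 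So your argument does not rule out an infinite sequence of bad elements, which is precisely what has to be excluded.

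The standard repair (this is what \cite[Lemma 3.1]{smertnig13}, cited in the paper, does; the paper itself only gives the conceptual reformulation that $[aH,H]$ has both the ACC and the DCC) is to use the two hypotheses for two \emph{separate} monotone chains rather than one interleaved one. First, using only the ACC on principal right ideals, show that every non-unit $a$ has an atom as a left factor: among the principal right ideals $bH$ with $b$ a non-unit left factor of $a$, pick a maximal one; any proper factorization $b=b_1b_2$ into non-units would give $bH\subsetneq b_1H$ with $b_1$ again a non-unit left factor of $a$, so $b$ is an atom. Then split atoms off on the left: $a=u_1c_1$, $c_1=u_2c_2$, and so on, with each $u_i\in\cA(H)$. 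The cofactors satisfy $Hc_1\subsetneq Hc_2\subsetneq\cdots$ (strict by cancellativity, since each $u_i\notin H^\times$), so the ACC on principal \emph{left} ideals forces some $c_n\in H^\times$, and $a=u_1\cdots u_{n-1}(u_nc_n)$ is a rigid factorization. Here each ACC is applied to a chain that is genuinely ascending, which is what your single-descent scheme cannot guarantee. The rest of your write-up (units and atoms are good, composability of rigid factorizations, strictness of the inclusions via cancellativity) is fine.
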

\begin{remark}
  Suppose for a moment that $H$ is a small category which is not necessarily cancellative.
  If $H$ satisfies the ACC on right ideals generated by cancellative elements, then $H^\bullet$ satisfies the ACC on principal right ideals.
  (If $a$,~$b \in H^\bullet$ with $aH=bH$, then $a$ and $b$ are right associated, and hence also $aH^\bullet=bH^\bullet$.)
  Hence $H^\bullet$ is atomic.
  Phrasing the condition in this slightly more general way is often more practical.
  For instance, if $R$ is a Noetherian ring, then $R^\bullet$ is atomic.
\end{remark}

A more conceptual way of looking at the previous lemma is the following.
By the duality of factor posets, the ACC on principal left ideals is equivalent to the restricted DCC on principal right ideals.
That is, the ACC on principal left ideals translates into the DCC on $[aH,H]$ for $a \in H$.
Thus, $[aH,H]$ has the ACC and DCC.
Hence, there exist maximal chains in $[aH,H]$ and any such chain $[aH,H]$ has finite length.
From this point of view, it is not surprising that the ACC on principal right ideals by itself is not sufficient for atomicity, as the following example shows.

\begin{example} \label{e-rightpid-notatomic}
  A domain $R$ is a \emph{right Bézout domain} if every finitely generated right ideal of $R$ is principal.
  $R$ is a \emph{Bézout domain} if it is both, a left and right Bézout domain.
  Trivially, every PID is a Bézout domain.

  Let $R$ be a Bézout domain which is a right PID but not a left PID.
  (Such a domain, which is moreover simple, was constructed by P.\,M.~Cohn and Schofield in \cite{cohn-schofield85}.)
  Then $R$ does not satisfy the ACC on principal left ideals. (For otherwise it would satisfy the ACC on finitely generated left ideals, and hence be left Noetherian. This would in turn imply that it is a left PID.)
  However, an atomic Bézout domain satisfies the ACC on principal left ideals and the ACC on principal right ideals.
  (This follows from the Schreier refinement theorem.)
  Hence $R$ is not atomic.
\end{example}

A function $\ell\colon H \to \bN_0$ is called a \emph{(right) length function} if $\ell(a) > \ell(b)$ whenever $a=bc$ with $b$,~$c \in H$ and $c \notin H^\times$.
If $H$ has a right length function, then it is easy to see that $H$ satisfies the ACC on principal right ideals, as well as the restricted DCC on principal right ideals.
In fact, if $H$ has a right length function, then $[aH,H]$ has finite length for all $a \in H$.
Thus, the length of a factorization of $a$ is bounded by $\ell(a)$, and we have the following.

\begin{lemma} \label{l-length-bf}
  If $H$ has a right length function, then $H$ is a BF-category.
\end{lemma}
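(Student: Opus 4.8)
The plan is to read off all the relevant information from the factor poset $\iv{aH}{H}$ of a given $a \in H$. Recall from the subsection on factor posets that the rigid factorizations of $a$ correspond bijectively to the finite maximal chains in $\iv{aH}{H}$, a rigid factorization $z=\rf{u_1,\ldots,u_k}$ of $a$ corresponding to the chain $aH = u_1\cdots u_kH \subsetneq \ldots \subsetneq u_1H \subsetneq H$ of $k$ strict inclusions (so that $\length{z}=k$). Thus, to prove that $H$ is a BF-category, it suffices to show, for each $a \in H$: (i) every chain from $aH$ to $H$ in $\iv{aH}{H}$ involves at most $\ell(a)$ strict inclusions, and (ii) at least one finite maximal chain from $aH$ to $H$ exists. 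Indeed, (i) forces $\sL(a) \subseteq \{0,1,\ldots,\ell(a)\}$, which is finite, and (ii) forces $\sL(a) \neq \emptyset$.

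For (i), I would take a chain $aH = I_0 \subsetneq I_1 \subsetneq \ldots \subsetneq I_m = H$ in $\iv{aH}{H}$ and choose generators $I_j = c_jH$ with $c_j \in H^\bullet$ and $c_0 = a$. Since $H$ is cancellative, $I_{j-1} \subseteq I_j$ gives $c_{j-1} = c_jd_{j-1}$ for some $d_{j-1} \in H$, and $d_{j-1} \notin H^\times$ since otherwise $c_{j-1}H = c_jH$. The defining inequality of the right length function then yields $\ell(c_{j-1}) > \ell(c_j)$, so $\ell(a) = \ell(c_0) > \ell(c_1) > \ldots > \ell(c_m) \ge 0$ is a strictly decreasing sequence of $m+1$ non-negative integers; hence $m \le \ell(a)$. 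This recovers concretely the ACC on principal right ideals and the restricted DCC on principal right ideals mentioned before the lemma, and shows that each $\iv{aH}{H}$ has finite length. (Alternatively, (i) follows from a short induction on $\length{z}$ that splits off the last atom $u_k \notin H^\times$.)

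For (ii), I would note that $\iv{aH}{H}$ has least element $aH$, greatest element $H$, and, by (i), only chains of length at most $\ell(a)$. Picking among all chains from $aH$ to $H$ one of maximal length, it is non-refinable, hence a maximal chain, and it is finite. (Equivalently, one constructs it from the top down, repeatedly replacing the current least member $bH \neq aH$ of the chain by a lower cover of it in $\iv{aH}{H}$; the DCC on $\iv{aH}{H}$ ensures the process terminates.) By the correspondence recalled above, this finite maximal chain is the chain of a rigid factorization of $a$, whence $\sZ^*(a) \neq \emptyset$. Together with (i), this shows $H$ is a BF-category.

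The step that needs care — and the reason one cannot simply invoke the atomicity lemma above — is (ii): that lemma needs the ACC on principal left ideals as well, which a right length function does not supply, and one-sided ACC alone does not force atomicity (Example~\ref{e-rightpid-notatomic}). What rescues atomicity here is the sharper conclusion of (i), namely that each factor poset $\iv{aH}{H}$ has finite length, so that a maximal chain is always available. A subsidiary point is that $\ell$ depends on the element, not on the principal right ideal it generates, which is why the generators in (i) are anchored by the choice $c_0 = a$.
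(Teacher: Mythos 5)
Your proof is correct and follows essentially the same route as the paper's: the remarks immediately preceding the lemma justify it precisely by observing that a right length function forces each factor poset $\iv{aH}{H}$ to have finite length (bounded by $\ell(a)$), whence finite maximal chains --- equivalently, rigid factorizations --- exist and all have length at most $\ell(a)$. You have merely written out the details, including the correct identification of the subtle point that atomicity here comes from the finite length of the factor posets rather than from the two-sided ACC criterion.
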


\section{Unique Factorization}
\label{sec:unique-factorization}

It turns out to be non-trivial to obtain a satisfactory theory of factorial domains (also called unique factorization domains, short UFDs) in a noncommutative setting.
Many different notions of factoriality have been studied.
They cluster into two types.

First, there are definitions based on an element-wise notion of the existence and uniqueness of factorizations.
For such a definition, typically, every non-zero-divisor has a factorization which is in some sense unique up to order and an equivalence relation on atoms.
Usually, such classes of rings will contain PIDs but will not be closed under some natural ring-theoretic constructions, such as forming a polynomial ring or a ring of square matrices.
This will be the focus of Section~\ref*{ssec:similarity-ufds}.

Second, definitions have been studied which start from more ring-theoretic characterizations of factorial commutative domains.
Here, one does not necessarily obtain element-wise unique factorization results.
Instead, one has unique factorization for normal elements into normal atoms.
On the upside, this type of definition tends to behave better with respect to natural ring-theoretic constructions.
This will be discussed in Section~\ref*{ssec:chatters-ufds}.

\subsection{Similarity Factorial Domains and Related Notions}
\label{ssec:similarity-ufds}

We first discuss the notions of similarity factoriality and $n$-firs.
These have mainly been studied by P.\,M.~Cohn and Bergman.
(Although it seems that Bergman did not publish most of the results outside of his thesis \cite{bergman68}.)
We mention as general references for this section \cite{cohn85,cohn06,bergman68} as well as the two surveys \cite{cohn63,cohn63b} and \cite{cohn73,cohn73-c}.

Brungs, in \cite{brungs69}, introduced the weaker notion of subsimilarity factorial domains.
This permits a form of Nagata's theorem to hold.
Beauregard has investigated right LCM domains and the corresponding notion of projectivity factoriality.
These works will also be discussed in this section.

Let $R$ be a domain and $a$,~$b \in R^\bullet$.
We call $a$ and $b$ \emph{similar} if $R/aR \cong R/bR$ as right $R$-modules.
Fitting, in \cite{fitting36}, observed that $R/aR \cong R/bR$ if and only if $R/Ra \cong R/Rb$, and hence the notion of similarity is independent of whether we consider left or right modules.
(This duality has later been extended to the factorial duality by Bergman and P.\,M.~Cohn, see \cite{cohn73} or \cite[Theorem~3.2.2]{cohn06}.)

If $R$ is commutative, and $R/aR \cong R/bR$ for $a$,~$b \in R$, then we have $aR = \ann(R/aR) = \ann(R/bR) = bR$, and thus $a$ and $b$ are similar if and only if they are associated.
For noncommutative domains it is no longer true in general that $R/aR\cong R/bR$ implies that $a$ and $b$ are left-, right-, or two-sided associated.
\begin{definition} \label{def:similarity-factorial}
  A domain $R$ is called \emph{similarity factorial} (or, a \emph{similarity-UFD}) if
  \begin{enumerate}
    \item $R$ is atomic, and
      \item if $u_1\cdots u_m = v_1\cdots v_n$ for atoms $u_1$, $\ldots\,$,~$u_m$, $v_1$, $\ldots\,$,~$v_n \in R$, then $m=n$ and there exists a permutation $\sigma \in \fS_m$ such that $u_i$ is similar to $v_{\sigma(i)}$ for all $i \in [1,m]$.
  \end{enumerate}
\end{definition}
\begin{remark}
  \begin{enumerate}
    \item
      A note on terminology.
      It is more common to refer to similarity factorial domains as \emph{similarity-UFDs}.
      P.\,M.~Cohn calls a similarity-UFD simply a UFD.
      We use the terminology \emph{similarity factorial domains}, because using the adjective ``factorial'' over the noun ``UFD'' is more in line with the modern development of the terminology in the commutative setting.

      In \cite{baeth-smertnig15}, a similarity factorial domain is called \emph{$\sd_{\text{sim}}$-factorial}.
      This follows a general system: In \cite{baeth-smertnig15}, distances between rigid factorizations are introduced.
      Each distance $\sd$ naturally gives rise to a corresponding notion of $\sd$-factoriality by identifying two rigid factorizations of an element if they have distance $0$.
      The distance $\sd_{\text{sim}}$ is defined using the similarity relation.
      See Section~\ref{ss-arithmetical-invariants} below for more on this point of view.

    \item Let $R$ be a ring which is not necessarily a domain.
      We call $R$ \emph{(right) similarity factorial} if $R^\bullet$ is atomic, and factorizations of elements in $R^\bullet$ are unique up to order and similarity of the atoms.
      In general it is no longer true that right and left similarity are the same.
    \end{enumerate}
\end{remark}

\begin{example} \label{e-sim-ufd}
  \begin{enumerate}
  \item\label{e-sim-ufd:pid}
    Every PID is similarity factorial. This is immediate from the Jordan-Hölder Theorem.

  \item\label{e-sim-ufd:fir}
    Let $K$ be a field.
    In the free associative $K$-algebra $R = K\langle x,y\rangle$, the elements $x$ and $y$ are similar but not associated.
    We will see below that $K\langle x,y\rangle$ is similarity factorial.
    However, factorizations are not unique up to order and associativity, as
    \[
    x(yx+1) = (xy+1)x
    \]
    shows.

  \item \label{e-sim-ufd:maxord}
    Let $R$ be a classical maximal $\bZ$-order in a definite quaternion algebra over $\bQ$.
    Suppose that $R$ is a PID.
    Then $R$ is similarity factorial.
    For every prime number $p$ which is unramified in $R$, there exist $p+1$ atoms with reduced norm $p$.
    These $p+1$ atoms are all similar, but, since $R^\times$ is finite, for sufficiently large $p$, they cannot all be right-, left-, or two-sided associated.
    For instance, this is the case for $R=\cH$, the ring of Hurwitz quaternions.
  \end{enumerate}
\end{example}

One may be tempted to require factorizations to be unique up to order and, say, two-sided associativity of elements.
This is referred to as \emph{permutably factorial} in \cite{baeth-smertnig15}.
However, Examples~\ref*{e-sim-ufd:fir} and \ref*{e-sim-ufd:maxord} above show that such a notion is often too restrictive.

If $R$ is a PID, then $R$ is similarity factorial.
However, when looking for natural examples of similarity factorial domains, one should consider a more general class of rings than PIDs, namely that of $2$-firs.
The motivation for this is the following:
If $K$ is a field and $R=K\langle x,y\rangle$ is the free associative $K$-algebra in two indeterminates, then $xR \cap yR = \mathbf 0$.
Hence $xR + yR \cong R^2$ is a non-principal right ideal of $R$.
Thus $R$ is not a PID.
However, P.\,M.~Cohn has shown that $R$ is an atomic $2$-fir and hence, in particular, similarity factorial (see below).

\begin{definition}
  Let $n \in \bN$.
  A ring $R$ is an \emph{$n$-fir} if every right ideal of $R$ on at most $n$ generators is free, of unique rank.
  A ring $R$ is a \emph{semifir} if $R$ is an $n$-fir for all $n \in \bN$.
\end{definition}

It can be shown that the notion of an $n$-fir for $n \in \bN$ is symmetric (see \cite[Theorem 2.3.1]{cohn06}).
Thus $R$ is an $n$-fir if and only if every left ideal of $R$ on at most $n$ generators is free, of unique rank.
Any $n$-fir is of course an $m$-fir for all $m < n$.
A ring $R$ is a \emph{right fir} (free right ideal ring) if all right ideals of $R$ are free, of unique rank.
$R$ is a \emph{fir} if it is a left and right fir.
Any fir is atomic (see \cite[Theorem~2.2.3]{cohn06}).

The case which is particularly important for the factorization of elements is that of a $2$-fir.
(More generally, over a $2n$-fir one can consider factorizations of $n \times n$-matrices.)
A ring $R$ is a $1$-fir if and only if it is a domain.
Thus, in particular, any $2$-fir is a domain.

\begin{theorem}[{\cite[Theorem 2.3.7]{cohn06}}] \label{t-2fir}
  For a domain $R$, the following conditions are equivalent.
  \begin{equivenumerate}
    \item $R$ is a $2$-fir.
    \item For $a$,~$b \in R^\bullet$ we have $aR \cap bR = mR$ for some $m \in R$, while $aR + bR$ is principal if and only if $m \ne 0$.
    \item \label{t-2fir:sum} If $a$,~$b\in R$ are such that $aR \cap bR \ne \mathbf 0$, then $aR + bR$ is a principal right ideal of $R$.
    \item \label{t-2fir:lat} For all $a \in R^\bullet$,  $[aR, R]$ is a sublattice of the lattice of all right ideals of $R$.
  \end{equivenumerate}
\end{theorem}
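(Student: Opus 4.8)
\noindent\emph{Proof strategy.} The plan is to prove the cycle $\text{(a)}\Rightarrow\text{(b)}\Rightarrow\text{(d)}\Rightarrow\text{(c)}\Rightarrow\text{(a)}$. Only the first implication carries real content; the other three are formal once one recalls that over a domain every nonzero element is cancellative and that $aR\cong R$ as a right module for $a\in R^\bullet$ (via $r\mapsto ar$), so that $aR\cap bR$ and $aR+bR$ fit into a short exact sequence whose outer terms are easy to control.

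For $\text{(a)}\Rightarrow\text{(b)}$ I would fix $a,b\in R^\bullet$ and use the short exact sequence
\[
0\longrightarrow aR\cap bR\longrightarrow aR\oplus bR\xrightarrow{\ +\ }aR+bR\longrightarrow 0,
\]
with first map $c\mapsto(c,-c)$. Since $R$ is a domain the middle term is $\cong R^2$, and since $R$ is a $2$-fir the two-generated right ideal $aR+bR$ is free; freeness makes the sequence split and exhibits $aR\cap bR$ as a direct summand of $R^2$, hence as a two-generated — so again free — right ideal. Writing $aR\cap bR\cong R^t$ and $aR+bR\cong R^s$, the splitting gives $R^{s+t}\cong R^2$, and the rank bookkeeping valid over a $2$-fir forces $s+t=2$; as $a\neq 0$ we have $s\geq 1$, so $(s,t)$ is $(1,1)$ or $(2,0)$. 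In the first case $aR\cap bR\cong R$ is principal and nonzero while $aR+bR\cong R$ is principal; in the second $aR\cap bR=0$ while $aR+bR\cong R^2$, which cannot be principal, since a nonzero domain never satisfies $R_R\cong(R_R)^2$ (an isomorphism $\varphi\colon R^2\to R$ with $a_i:=\varphi(e_i)$ and $(b_1,b_2):=\varphi^{-1}(1)$ would, from $\varphi^{-1}\varphi=\mathrm{id}$ on $e_1,e_2$, yield $b_1a_1=b_2a_2=1$ and $b_2a_1=b_1a_2=0$, impossible because $b_1a_1=1$ gives $a_1\neq0$, then $b_2a_1=0$ gives $b_2=0$, contradicting $b_2a_2=1$). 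This is exactly (b). The single nontrivial ingredient is the rank bookkeeping — that ranks are additive in split short exact sequences of finitely generated free modules over a $2$-fir, i.e.\ weak finiteness of $2$-firs — which I would simply cite from \cite[Ch.~2]{cohn06}; this is the only real obstacle in the whole argument.

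For $\text{(b)}\Rightarrow\text{(d)}$, fix $a\in R^\bullet$; then $aR,R\in[aR,R]$, and if $bR,cR\in[aR,R]$ then $b,c\in R^\bullet$ and $a\in bR\cap cR$, so $bR\cap cR\neq0$. By (b) the intersection $bR\cap cR$ is principal, generated by a nonzero (hence cancellative) element, and it contains $aR$; by (b) again $bR+cR$ is principal, and it plainly contains $aR$ and a nonzero element. Since intersection and sum are the meet and join in the lattice of all right ideals, this shows $[aR,R]$ is a sublattice. For $\text{(d)}\Rightarrow\text{(c)}$, given $a,b\in R$ with $aR\cap bR\neq0$ I pick $0\neq d\in aR\cap bR$; then $a,b,d\in R^\bullet$ and $aR,bR\in[dR,R]$, so (d) places their join $aR+bR$ in $[dR,R]$, making it principal.

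For $\text{(c)}\Rightarrow\text{(a)}$, let $I$ be a right ideal on at most two generators. If $I=0$ it is free of rank $0$; if $I=aR$ with $a\neq0$ then $I\cong R$. Otherwise $I=aR+bR$ with $a,b\in R^\bullet$: if $aR\cap bR=0$ the sum is direct and $I\cong R^2$, while if $aR\cap bR\neq0$ then (c) makes $I$ principal, so $I\cong R$. Hence $I$ is always free, of rank in $\{0,1,2\}$, and this rank is unique because $0$, $R_R$ and $(R_R)^2$ are pairwise non-isomorphic over a nonzero domain, as noted above. Therefore $R$ is a $2$-fir, which closes the cycle.
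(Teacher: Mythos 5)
The paper itself gives no proof of Theorem~\ref{t-2fir}; it is quoted verbatim from Cohn, so the only comparison available is with the standard argument in \cite[Chapter~2.3]{cohn06} --- and your proof is essentially that argument, correctly executed. The cycle (a)$\Rightarrow$(b)$\Rightarrow$(d)$\Rightarrow$(c)$\Rightarrow$(a) is well chosen: the split exact sequence $0 \to aR\cap bR \to aR\oplus bR \to aR+bR \to 0$ is exactly how Cohn derives (b) from the $2$-fir property, and your translations between principality of sums/intersections and the sublattice condition on $[aR,R]$ are the right (and easy) way to handle (b)$\Rightarrow$(d)$\Rightarrow$(c). Your direct verification that $R_R \not\simeq (R_R)^2$ over a domain is correct and is what makes the rank $0/1/2$ trichotomy in (c)$\Rightarrow$(a) yield uniqueness of rank. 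You are also right to isolate the one genuinely nontrivial external input --- additivity of ranks in the split sequence, i.e.\ weak finiteness of $2$-firs --- and citing it from the earlier part of \cite[Chapter~2]{cohn06} is not circular, since that material (Theorem~2.3.1 and the surrounding discussion of UGN and weak finiteness) is established independently of Theorem~2.3.7. The only point I would tighten: in (c)$\Rightarrow$(a), ``free of unique rank'' strictly requires that a $2$-generated ideal cannot simultaneously be free of some rank $\ge 3$; your pairwise non-isomorphism of $0$, $R$, $R^2$ covers ranks up to $2$, and your $R \nsimeq R^2$ argument generalizes verbatim to $R \nsimeq R^l$ for $l \ge 2$, which disposes of the principal case, so only the hypothetical $R^2 \simeq R^l$ with $l \ge 3$ is left implicit --- the same implicit convention the textbook uses, and not a defect worth more than a sentence of acknowledgement.
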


It follows from \ref*{t-2fir:sum}, that a $2$-fir is a right Ore domain if and only if it is a right Bézout domain.
In particular, a commutative ring is a $2$-fir if and only if it is a Bézout domain.

Note that \ref*{t-2fir:lat} implies that $[aR,R]$ is a modular lattice for all $a \in R^\bullet$.
The Schreier refinement theorem for modular lattices then implies that finite maximal chains of $[aR,R]$ are unique up to perspectivity.
In particular, if $[aR,R]$ contains any finite maximal chain, then $[aR,R]$ has finite length.

Since $[aR,R]$ is a sublattice of the lattice of right ideals of $R$, the uniqueness of maximal chains up to perspectivity translates into the factors of a maximal chain being isomorphic as modules (up to order).
Translated into factorizations, this implies that the factorizations of nonzero elements in $R$ are unique up to order and similarity.
More generally, one obtains a similar result for factorizations of full matrices in $M_n(R)$ over a $2n$-fir $R$.
A matrix $A \in M_n(R)$ is \emph{full} if it cannot be written in the form $A=BC$ with $B$ an $n\times r$-matrix and $C$ and $r\times n$-matrix where $r < n$.
Over an $n$-fir, any full matrix $A \in M_n(R)$ is cancellative (see \cite[Lemma 3.1.1]{cohn06}).
A \emph{full atom} is a (square) full matrix which cannot be written as a product of two non-unit full matrices.
\begin{theorem}[{\cite[Chapter 3.2]{cohn06}}] \label{t-nfir-sf}
  If $R$ is a $2n$-fir, any two factorization of a full matrix in $M_n(R)^\bullet$ into full atoms are equivalent up to order and similarity of the atoms.
  In particular, if $R$ is an atomic $2$-fir, then $R$ is similarity factorial.
\end{theorem}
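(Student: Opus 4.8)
The plan is to reduce the statement about factorizations of a full matrix $A \in M_n(R)^\bullet$ to the Schreier refinement / Jordan--Hölder machinery for modular lattices, exactly as the discussion preceding Theorem~\ref{t-2fir} sets up. The key observation is that $M_n(R)$ is an $n$-fir whenever $R$ is a $2n$-fir (a standard matrix-reduction fact, e.g.\ \cite[Corollary~2.3.4 or Theorem~3.1.1]{cohn06}); more precisely, the $2n$-fir hypothesis on $R$ is designed so that the lattice $[AM_n(R), M_n(R)]$ of principal right ideals of $M_n(R)$ between $AM_n(R)$ and $M_n(R)$, for $A$ a full matrix, is a \emph{sublattice} of the full lattice of right ideals of $M_n(R)$ — this is the $M_n(R)$-analogue of condition \ref{t-2fir:lat} in Theorem~\ref{t-2fir}, and it holds because the intersection and sum of two principal right ideals $AM_n(R)$, $BM_n(R)$ generated by full (hence cancellative, by \cite[Lemma~3.1.1]{cohn06}) matrices again have the right form when $R$ is a $2n$-fir. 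Granting this, $[AM_n(R), M_n(R)]$ is a modular lattice.

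Next I would translate a factorization $A = P_1 \cdots P_k$ of $A$ into full atoms into a finite maximal chain
\[
AM_n(R) = P_1\cdots P_k M_n(R) \subsetneq P_1\cdots P_{k-1}M_n(R) \subsetneq \cdots \subsetneq P_1 M_n(R) \subsetneq M_n(R)
\]
in $[AM_n(R), M_n(R)]$, using the correspondence between rigid factorizations and maximal chains in the factor poset recalled in Section~\ref{sec:rf} (together with the fact that every left divisor of a cancellative element here is cancellative, so that the factor poset behaves well; and that $P_i$ being a full atom corresponds to the covering relation, i.e.\ that $[P_i M_n(R), M_n(R)] = \{P_i M_n(R), M_n(R)\}$ up to the identification). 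Since $[AM_n(R), M_n(R)]$ is modular and possesses a finite maximal chain, it has finite length, so both factorizations of $A$ give maximal chains of the same length $k$; the Schreier refinement theorem for modular lattices then says the two chains are projective, i.e.\ their consecutive quotients agree up to a permutation and perspectivity. Because we are working inside a \emph{sublattice} of the lattice of right ideals, perspectivity of the intervals $[P_1\cdots P_{i-1}P_i M_n(R),\, P_1\cdots P_{i-1}M_n(R)]$ upgrades to an isomorphism of the corresponding subquotient modules, which is precisely similarity of the atoms $P_i$ (cf.\ the sentence "the uniqueness of maximal chains up to perspectivity translates into the factors of a maximal chain being isomorphic as modules" in the text above). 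This yields the claim for $M_n(R)^\bullet$.

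For the second, "in particular," statement, I take $n = 1$: an atomic $2$-fir $R$ has, for each $a \in R^\bullet$, a finite maximal chain in $[aR, R]$ (atomicity gives a factorization, hence such a chain), so by the above (with $n=1$) every factorization of $a$ into atoms has the same length and the atoms agree up to order and similarity; combined with atomicity this is exactly Definition~\ref{def:similarity-factorial}. The main obstacle, and the only genuinely substantive point, is verifying that $[AM_n(R), M_n(R)]$ really is a sublattice of the right-ideal lattice of $M_n(R)$ — equivalently, establishing the matrix version of the equivalence (a)$\Leftrightarrow$\ref{t-2fir:lat} of Theorem~\ref{t-2fir} for $M_n(R)$ from the $2n$-fir hypothesis on $R$. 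This rests on the matrix reduction comparing free modules over $R$ and over $M_n(R)$ and on \cite[Lemma~3.1.1]{cohn06}; everything after that is the lattice-theoretic Jordan--Hölder argument already rehearsed in the discussion around Theorems~\ref{t-2fir} and is essentially routine.
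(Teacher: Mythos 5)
Your proposal follows essentially the same route the paper sketches in the two paragraphs preceding the theorem: condition \ref{t-2fir:lat} of Theorem~\ref{t-2fir} makes each factor poset a sublattice (hence modular), Schreier refinement gives uniqueness of maximal chains up to perspectivity, and the sublattice property upgrades perspectivity to module isomorphism of the factors, i.e.\ similarity of the atoms, with the matrix case handled by the analogous reduction for full matrices over a $2n$-fir. This matches the paper's (cited) argument, including your correct identification of the sublattice property for $[AM_n(R),M_n(R)]$ as the one point requiring the matrix-reduction input from \cite{cohn06}.
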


\begin{remark}
  \begin{enumerate}
  \item A commutative atomic $2$-fir is an atomic Bézout domain, and hence a PID.
    However, noncommutative atomic $2$-firs need not be PIDs.
    The free associative algebra $K\langle x, y \rangle$ over a field $K$ provides a counterexample.
  \item If $R$ is a semifir, then products of full matrices are full (see \cite[Corollary 5.5.2]{cohn06}), so that the full matrices form a subsemigroup of $M_n(R)^\bullet$.
  \item Let $R$ be a commutative Noetherian ring with no nonzero nilpotent elements.
    If $M_n(R)$ is similarity factorial for all $n \ge 2$ (equivalently, $M_2(R)$ is similarity factorial), then $R$ is a finite direct product of PIDs (see \cite{estes-matijevic79b} or Theorem~\ref{thm:em-matrix-sim-fact}).
    This is a partial converse to the theorem above.

  \item
    Leroy and Ozturk, in \cite{leroy-ozturk04}, introduced F-algebraic and F-independent sets to study factorizations in $2$-firs.
    In particular, they obtain lower bounds on the lengths of elements in terms of dimensions of certain vector spaces.
  \end{enumerate}
\end{remark}

A sufficient condition for a domain to be an atomic right PID, respectively an atomic $n$-fir, is the existence of a right Euclidean algorithm, respectively an $n$-term weak algorithm.

A domain $R$ is right Euclidean if there exists a function $\delta\colon R \to \bN_0 \cup \{-\infty\}$ such that, for all $a$,~$b \in R$, if $b \ne 0$, there exist $q$,~$r \in R$ such that $a = bq + r$ and $\delta(r) < \delta(b)$.
Equivalently, if $a$,~$b \in R$ with $b \ne 0$, and $\delta(b) \le \delta(a)$, then there exists $c \in R$ such that \begin{equation}\label{e-euclidean}
\delta(a - bc) < \delta(a).
\end{equation}
Any right Euclidean domain is a right PID and moreover atomic.
Thus, right Euclidean domains are similarity factorial.
The atomicity follows since the least function defining the Euclidean algorithm induces a right length function on $R^\bullet$ (see \cite[Proposition 1.2.5]{cohn06}).
By contrast, we recall that a right PID need not be atomic (see Example~\ref{e-rightpid-notatomic}).
See \cite[\S3.2.7]{berrick-keating00} for a discussion of Euclidean domains.
An extensive discussion of Euclidean rings can be found in \cite[Chapter 1.2]{cohn06}.

\begin{example}
  \begin{enumerate}
  \item Let $D$ be a division ring, $\sigma$ an injective endomorphism of $D$ and $\delta$ a (right) $\sigma$-derivation (that is, $\delta(ab)=\delta(a)\sigma(b) + a \delta(b)$ for all $a$,~$b \in D$).
    The skew polynomial ring $D[x;\sigma,\delta]$ consists of elements of the form
    \[
    \sum_{n \in \bN_0} x^n a_n \qquad\text{with $a_n \in D$, almost all zero.}
    \]
    The multiplication is defined by $ax = x\sigma(a) + \delta(a)$.
    We set $D[x;\sigma]=D[x;\sigma,0]$ and $D[x;\delta]=D[x;\id_D,\delta]$ if $\delta$ is a derivation.

    Using polynomial division, it follows that $D[x;\sigma,\delta]$ is right Euclidean with respect to the degree function.
    If $\sigma$ is an automorphism, then $D[x;\sigma,\delta]$ is also left Euclidean, by symmetry.

    In particular, if $K$ is a field and $x$ is an indeterminate, then $B_1(K)=K(x)[y;-\frac{d}{dx}]$ is Euclidean.
    If the characteristic of $K$ is $0$, then $K(x)$ naturally has a faithful right $B_1(K)$-module structure, with $y$ acting, from the right, as the formal derivative $\frac{d}{dx}$.
    In this way, $B_1(K)$ can be interpreted as the ring of linear differential operators (with rational functions as coefficients) on $K(x)$.

    From the fact that $B_1(K)$ is similarity factorial, one obtains results on the uniqueness of factorizations of homogeneous linear differential equations, as in \cite{landau02,loewy03}.

  \item The ring of Hurwitz quaternions, $\cH$, is Euclidean with respect to the reduced norm.
    This leads to an easy proof of Lagrange's Four-Square Theorem, in the same way that the ring of Gaussian integers $\bZ[i]$ can be used to obtain an easy proof of the Sum of Two Squares Theorem (see \cite[Theorem 26.6]{reiner75}) .
  \end{enumerate}
\end{example}

Free associative algebras in more than one indeterminate over a field are not PIDs and hence not Euclidean.
However, in the 1960s, P.\,M.~Cohn and Bergman developed the more general notion of an \emph{($n$-term) weak algorithm} (see \cite{cohn06}), which can be used to prove that a ring is an atomic $n$-fir.
We recall the definition, following \cite[Chapter~2]{cohn06}.

A \emph{filtration} on a ring $R$ is a function $v\colon R \to \bN_0 \cup \{-\infty\}$ satisfying the following conditions:
\begin{enumerate}
  \item For $a \in R$, $v(a) = -\infty$ if and only if $a = 0$.
  \item $v(a-b) \le \max\{v(a),v(b)\}$ for all $a$,~$b \in R$.
  \item $v(ab) \le v(a) + v(b)$ for all $a$,~$b \in R$.
  \item $v(1) = 0$.
\end{enumerate}

Equivalently, a filtration is defined by a family $\{0\}=R_{-\infty} \subset R_0 \subset R_1 \subset R_2 \subset \ldots$ of additive subgroups of $R$ such that $R = \bigcup_{i \in \bN_0 \cup \{-\infty\}} R_i$, for all $i$,~$j \in \bN_0 \cup \{-\infty\}$ it holds that $R_iR_j \subset R_{i+j}$, and $1 \in R_0$.
The equivalence of the two definitions is seen by setting $R_i = \{\, a \in R \mid v(a) \le i \,\}$, respectively, in the other direction, by setting $v(a) = \min\{\, i \in \bN_0 \cup \{-\infty\} \mid a \in R_i \,\}$.

Let $R$ be a ring with filtration $v$.
A family $(a_i)_{i \in I}$ in $R$ with index set $I$ is \emph{right $v$-dependent} if either $a_i=0$ for some $i \in I$, or there exist $b_i \in R$, almost all zero, such that
\[
v\Big(\sum_{i \in I} a_i b_i\Big) \;<\; \max_{i \in I}\; v(a_i) + v(b_i).
\]
If $a \in R$ and $(a_i)_{i \in I}$ is an family in $R$, then $a$ is \emph{right $v$-dependent on $(a_i)_{i\in I}$} if either $a=0$ or there exist $b_i \in R$, almost all zero, such that
\[
v\Big(a - \sum_{i \in I}a_i b_i\Big) < v(a) \quad\text{and}\quad v(a_i) + v(b_i) \le v(a) \text{ for all $i \in I$}.
\]

\begin{definition}
  For $n \in \bN$, a filtered ring $R$ satisfies the \emph{$n$-term weak algorithm} if, for any right $v$-dependent family $(a_{i})_{i \in [1,m]}$ of $m \le n$ elements with $v(a_1) \le v(a_2) \le \ldots \le v(a_m)$, there exists a $j \in [1,m]$ such that $a_j$ is right $v$-dependent on $(a_i)_{i\in [1,j-1]}$.
  $R$ satisfies the \emph{weak algorithm} if it satisfies the $n$-term weak algorithm for all $n \in \bN$.
\end{definition}
The asymmetry in the definition is only an apparent one.
A filtered ring $R$ satisfies the $n$-term weak algorithm with respect to the notion of right $v$-dependence if and only if the same holds true with respect to left $v$-de\-pen\-dence (see \cite[Proposition 2.4.1]{cohn06}).

If $R$ satisfies the $n$-term weak algorithm, then it also satisfies the $m$-term weak algorithm for $m < n$.
If $R$ satisfies the $1$-term weak algorithm, then $R$ is a domain and $v(ab) = v(a) + v(b)$ for all $a$,~$b \in R \setminus \{0\}$.
If moreover $R_0 \subset R^\times \cup \{0\}$, that is $R_0$ is a division ring, then $v$ induces a length function on $R^\bullet$.
In this case, $R$ is a BF-domain.
If $R$ satisfies the $n$-term weak algorithm for $n \ge 2$, then $R$ is a domain with $R_0 \subset R^\times \cup \{0\}$ a division ring.

Of particular interest is the $2$-term weak algorithm.
Explicitly, it says that for two elements $a$,~$b \in R$ which are right $v$-dependent, if $b \ne 0$ and $v(b) \le v(a)$, then there exists $c \in R$ such that $v(a - bc) < v(a)$.
Comparing with Equation~\eqref{e-euclidean}, we see that the existence of a $2$-term weak algorithm implies that a Euclidean division algorithm holds for elements $a$ and $b$ which are right $v$-dependent.
\begin{theorem}[{\cite[Proposition 2.4.8]{cohn06}, \cite[Proposition 2.2.7]{cohn85}}]
  Let $R$ be a filtered ring with $n$-term weak algorithm, where $n \ge 2$.
  Then $R$ is an $n$-fir and satisfies the ACC on $n$-generated left, respectively right, ideals.
  In particular, $R$ is similarity factorial.
\end{theorem}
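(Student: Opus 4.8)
The plan is to show, under the $n$-term weak algorithm with $n \ge 2$, that $R$ is an $n$-fir, that $R$ satisfies the ACC on $n$-generated one-sided ideals, and then to invoke the already-established machinery (Theorem~\ref{t-nfir-sf}) to conclude similarity factoriality. Since the hypothesis $n \ge 2$ already forces $R$ to be a domain with $R_0 \subset R^\times \cup \{0\}$ a division ring (as noted in the paragraph preceding the statement), I may freely use that $v$ induces a right length function on $R^\bullet$, so that $R$ is a BF-domain and, in particular, atomic by Lemma~\ref{l-length-bf} together with the remark on length functions. This takes care of the atomicity half of the definition of similarity factorial for free, and it is essentially the only input needed beyond the $n$-fir property.

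First I would prove that every right ideal of $R$ on at most $n$ generators is free of unique rank. Given such an ideal $I = a_1 R + \cdots + a_m R$ with $m \le n$ and, after relabelling, $v(a_1) \le \cdots \le v(a_m)$, the argument is the standard reduction driven by the weak algorithm: if the family $(a_i)$ is right $v$-dependent, then some $a_j$ is right $v$-dependent on $(a_i)_{i < j}$, so replacing $a_j$ by $a_j - \sum_{i<j} a_i b_i$ strictly lowers $v(a_j)$ without changing $I$; iterating and using that $v$ takes values in $\bN_0 \cup \{-\infty\}$, one reaches a right $v$-independent generating family, which one then checks is a free basis (right $v$-independence rules out any nontrivial relation $\sum a_i b_i = 0$, since such a relation would witness $v$-dependence with the max on the right being $-\infty$). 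Uniqueness of the rank follows because $R_0$ is a division ring: passing to the associated graded or to $R/R_{<0}$-style quotients, or simply invoking that invariant basis number holds for rings with a filtration whose degree-zero part is a division ring, pins down the rank. This is exactly the content of \cite[Proposition 2.4.8]{cohn06}; I would cite it rather than redo the bookkeeping.

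Next I would extract the ACC on $n$-generated right ideals from the same reduction: in the process above, the passage from a generating family to a right $v$-independent one can be organised so that an ascending chain of $n$-generated right ideals forces the associated multiset of $v$-values of a $v$-independent generating set to decrease (in a suitable well-founded order on finite multisets of elements of $\bN_0$), which cannot happen forever. The left-handed statement follows from the symmetry of the $n$-term weak algorithm (\cite[Proposition 2.4.1]{cohn06}), interchanging left and right $v$-dependence throughout. Combining: $R$ is an $n$-fir, hence in particular a $2$-fir (as $n \ge 2$), and $R$ is atomic by the length-function observation above; Theorem~\ref{t-nfir-sf} then yields that $R$ is similarity factorial.

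The main obstacle is not conceptual but bookkeeping: verifying cleanly that a right $v$-independent generating family is genuinely a free basis of unique rank, i.e. that right $v$-independence is strong enough to exclude all $R$-linear relations and that the rank is well-defined. This is where one must use that $R_0$ is a division ring in an essential way, and where the cited propositions in \cite{cohn06} do the careful work; in a survey the right move is to reduce the statement to those propositions plus the already-recorded facts about length functions and $2$-firs, rather than to reproduce Cohn's induction.
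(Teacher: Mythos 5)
Your proposal is correct and follows essentially the same route as the paper, which states this result by citing \cite[Proposition~2.4.8]{cohn06} for the $n$-fir property and the ACC on $n$-generated one-sided ideals, and then obtains similarity factoriality by combining atomicity (via the length function induced by the filtration, as in Lemma~\ref{l-length-bf}) with Theorem~\ref{t-nfir-sf} on atomic $2$-firs. Your sketch of the weak-algorithm reduction and the deferral of the rank/ACC bookkeeping to Cohn's propositions is exactly the intended reading.
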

We also note in passing that if $R$ is a filtered ring with weak algorithm then $R$ is not only a semifir but even a fir (see \cite[Theorem 2.4.6]{cohn06}).

\begin{example}
  A standard example shows that a right Euclidean domain need not be a left PID.
  Let $K$ be a field, and let $\sigma$ be the endomorphism of the rational function field $K(x)$ given by $\sigma(x) = x^2$ and $\sigma|_K = \id_K$.
  Then the skew polynomial ring $R = K(x)[y;\sigma]$ is right Euclidean, but does not even have finite uniform dimension as a left module over itself, as it contains an infinite direct sum of left ideals (see \cite[Example~1.2.11(ii)]{mcconnell-robson01}).
  However, since $R$ is right Euclidean, it has a $2$-term weak algorithm.
  Hence $R$ is an atomic $2$-fir and in particular similarity factorial.

  The notions of $n$-fir, similarity factoriality, and [$n$-term] weak algorithm are symmetric, while being a right PID and being right Euclidean are non-symmetric concepts.
\end{example}

Before we can state one of the main theorems on the existence of a weak algorithm, we have to recall $A$-rings (for a ring $A$), tensor $A$-rings, and coproducts of $A$-rings.
Let $A$ be a ring.
An \emph{$A$-ring} is a ring $R$ together with a ring homomorphism $A \to R$.
If $V$ is an $A$-bimodule, we set $V^{\otimes 0}=A$ and inductively $V^{\otimes n} = V^{\otimes (n-1)} \otimes_A V$ for all $n \in \bN$.
The \emph{tensor $A$-ring} $A[V]$ is defined as $A[V] = \bigoplus_{n \in \bN_0} V^{\otimes n}$, with multiplication induced by the natural isomorphisms $V^{\otimes m} \otimes_A V^{\otimes n} \to V^{\otimes (m+n)}$.
If $V$ is a free right $A$-module with basis $X$, then the free monoid $X^*$ generated by $X$ is a basis of the right $A$-module $A[V]$.
In this case, every $f \in A[V]$ has a unique representation of the form
\begin{equation}\label{e-freering}
f=\sum_{x \in X^*} x a_x \qquad\text{with $a_x \in A$, almost all zero}.
\end{equation}
Note however that elements of $A$ need not commute with elements from $X$.

If $V$ is a free right $A$-module with basis $X$, and a bimodule structure is defined on $V$ by means of $\lambda x = x \lambda$ for all $\lambda \in A$ and $x \in X$, then $A\langle X \rangle = A[V]$ is the \emph{free $A$-ring} on $X$.
By the choice of bimodule structure, elements from $A$ commute with elements from $X$ in $A\langle X \rangle$.
If $R$ and $S$ are $A$-rings, the coproduct $R \mathbin{*_{\!A}} S$ in the category of $A$-rings is the pushout of the homomorphisms $A \to R$ and $A \to S$ in the category of rings.

If $D$ is a division ring, $V$ is a $D$-bimodule, and $R$ and $S$ are filtered $D$-rings with $R_0\cong S_0 \cong D$, then $D[V]$ as well as $R *_{D} S$ are naturally filtered.
If $X$ is a set, one defines the \emph{free $R$-ring $R_D\langle X \rangle$} on the $D$-centralizing indeterminates $X$ as $R_D\langle X \rangle = R \mathbin{*_{\!D}} D\langle X \rangle$.
In $R_D\langle X \rangle$, elements of $D$ commute with elements of $X$.

\begin{theorem}[{\cite[Chapter 2.5]{cohn06}}]
  Let $D$ be a division ring.
  \begin{enumerate}
  \item Let $V$ be a $D$-bimodule.
        Then the tensor $D$-ring $D[V]$ satisfies the weak algorithm relative to the natural filtration.
  \item Let $R$, $S$ be $D$-rings with weak algorithm, where $R_{0}\cong S_{0} \cong D$.
    Then the coproduct $R \mathbin{*_D} S$ in the category of $D$-rings satisfies the weak algorithm relative to the natural filtration.
  \item Let $R$ be a ring with weak algorithm and $R_0 \cong D$.
    For any set $X$, the free $R$-ring $R_D\langle X \rangle=R \mathbin{*_D} D\langle X \rangle$ on $D$-centralizing indeterminates $X$ satisfies the weak algorithm relative to the natural filtration.
  \end{enumerate}
  In particular, these rings are firs and hence similarity factorial.
\end{theorem}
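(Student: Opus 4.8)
The plan is to reduce all three assertions to a single base case — the tensor ring — by passing to associated graded rings. First I would record the controlling general fact: for a filtered ring $R$ with $R_0=D$ a division ring, the $n$-term weak algorithm depends only on $\gr R=\bigoplus_n R_n/R_{n-1}$. Indeed, if $a_1,\ldots,a_m\in R\setminus\{0\}$ with $v(a_1)\le\cdots\le v(a_m)$ are right $v$-dependent, then inspecting the top-degree homogeneous component of the witnessing relation produces a homogeneous relation $\sum_{i\in J}\bar a_i\bar b_i=0$ among the leading terms $\bar a_i$ in $\gr R$ with each $\bar b_i\ne 0$; conversely a homogeneous identity $\bar a_j=\sum_{i<j}\bar a_i\bar c_i$ in $\gr R$ lifts, via the obvious lifts of the $\bar c_i$ to elements of the appropriate $R_n$, to a witness of right $v$-dependence of $a_j$ on $a_1,\ldots,a_{j-1}$. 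Granting the base case below, a short induction on degree using only that $D$ is a division ring then shows that $R$ satisfies the weak algorithm precisely when the canonical homomorphism $D[R_1/R_0]\to\gr R$ is an isomorphism, so it suffices in each case to recognize the associated graded ring as a tensor ring over $D$.

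For part (1), $D[V]$ already carries its grading as the natural filtration, so $\gr D[V]=D[V]$, which is the tensor ring on its degree-$1$ part, and what remains is to verify by hand that a homogeneous relation in $D[V]$ admits the required reduction. The essential input is that each multiplication map $V^{\otimes d}\otimes_D V^{\otimes(e-d)}\to V^{\otimes e}$ is an isomorphism of $D$-bimodules. Given a homogeneous relation $\sum_i\bar a_i\bar b_i=0$ in $V^{\otimes e}$ with $\bar a_i\in V^{\otimes d_i}$, $\bar b_i\in V^{\otimes(e-d_i)}$, the $d_i$ non-decreasing and each $\bar b_i\ne 0$, I would let $\ell$ be the largest $d_i$, rewrite $V^{\otimes e}\cong V^{\otimes\ell}\otimes_D V^{\otimes(e-\ell)}$, and use right $D$-linear independence in $V^{\otimes(e-\ell)}$ to conclude that the $\bar a_i$ with $d_i=\ell$ are right $D$-linearly dependent modulo $\sum_{d_i<\ell}\bar a_i V^{\otimes(\ell-d_i)}$; this yields the dependence of the last such $\bar a_j$ on the earlier $\bar a_i$, as required.

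For part (2), the weak algorithm on $R$ and $S$ together with $R_0\cong S_0\cong D$ gives $\gr R\cong D[W_R]$ and $\gr S\cong D[W_S]$, where $W_R=R_1/R_0$ and $W_S=S_1/S_0$. Writing $R=D\oplus R'$ and $S=D\oplus S'$ as $D$-bimodules and invoking the normal form for coproducts of $D$-rings, $R\mathbin{*_D}S$ decomposes as the direct sum of $D$ with the alternating tensor products over $D$ built from $R'$ and $S'$; the filtrations of $R$ and $S$ induce the natural filtration on each summand and hence on $R\mathbin{*_D}S$, and this decomposition is compatible with passage to $\gr$, so $\gr(R\mathbin{*_D}S)\cong D[W_R]\mathbin{*_D}D[W_S]\cong D[W_R\oplus W_S]$ — again a tensor ring, whence part (1) applies. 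For part (3) one notes that $D\langle X\rangle$ is the tensor ring $D[V_X]$ on the centralizing bimodule $V_X=\bigoplus_{x\in X}xD$, so it satisfies the weak algorithm by part (1) and has degree-$0$ part $D$; part (2) applied to $R\mathbin{*_D}D\langle X\rangle=R_D\langle X\rangle$ then gives the weak algorithm there. The final clause is immediate: a filtered ring with the weak algorithm is a fir by \cite[Theorem~2.4.6]{cohn06}, every fir is atomic by \cite[Theorem~2.2.3]{cohn06}, and an atomic $2$-fir is similarity factorial by Theorem~\ref{t-nfir-sf}.

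The passage-to-$\gr$ reduction and the base case are essentially linear algebra over the division ring $D$ once the relevant grading is in hand, so those I expect to be routine. The main obstacle is supplying the natural filtration on the coproduct $R\mathbin{*_D}S$ and computing its associated graded ring: this rests on the normal-form (basis) theorem for coproducts of rings over a division ring — the statement that, as a $D$-bimodule, $R\mathbin{*_D}S$ is precisely the direct sum of the alternating tensor products of the augmentation complements $R'$ and $S'$ — which is the one genuinely substantial ingredient and must be set up before the degree bookkeeping can go through.
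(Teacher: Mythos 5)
The paper itself supplies no proof of this theorem---it is quoted from \cite[Chapter~2.5]{cohn06}---and your proposal reconstructs the architecture of that cited argument faithfully: transfer the weak algorithm between a filtered ring and its associated graded ring, verify the graded case for tensor rings by linear algebra over the division ring $D$ (using that each $V^{\otimes (e-\ell)}$ is free as a left $D$-module), and treat coproducts via the normal-form theorem expressing $R \mathbin{*_D} S$ as $D$ plus the alternating tensor products of the augmentation complements, which you correctly single out as the one substantial ingredient. The base case, the lifting of dependence relations between the filtered and graded settings, and the closing chain (weak algorithm implies fir, firs are atomic, atomic $2$-firs are similarity factorial by Theorem~\ref{t-nfir-sf}) are all sound.

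One step is stated too narrowly and, as written, breaks in part (2). You assert that $R$ satisfies the weak algorithm precisely when the canonical map $D[R_1/R_0] \to \gr R$ is an isomorphism, and you then set $\gr R \cong D[W_R]$ with $W_R = R_1/R_0$. The ``if'' direction is correct and suffices for parts (1) and (3), where the natural filtration is generated in degree one. The ``only if'' direction is false for a general filtered ring with weak algorithm, because nothing forces the filtration to be generated in degree one: the tensor ring $D[V]$ with $V$ placed in degree $2$ satisfies the weak algorithm for that filtration, yet $R_1/R_0 = 0$. The correct characterization (and the one proved in the cited source) is that $\gr R$ is the tensor ring $D[U_R]$ on a \emph{graded} bimodule $U_R = \bigoplus_{n \ge 1} U_{R,n}$, where $U_{R,n}$ is a bimodule complement in $(\gr R)_n$ of the subspace generated by the lower-degree components; a weak $v$-basis spans exactly such a complement. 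Since part (2) permits $R$ and $S$ to carry arbitrary filtrations with weak algorithm, the coproduct computation must be run with these graded bimodules, giving $\gr(R \mathbin{*_D} S) \cong D[U_R \oplus U_S]$ as a graded tensor ring, and your base case must be checked for tensor rings on graded bimodules---which costs nothing extra, as the argument only uses the decomposition of each homogeneous component into tensor products and left $D$-freeness. With that repair the proof is complete.
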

\begin{corollary}
  If $K$ is a field and $X$ is a set of noncommuting indeterminates, then the free associative $K$-algebra $K\langle X \rangle$ satisfies the weak algorithm.
  In particular, $K\langle X \rangle$ is a fir and hence similarity factorial.
\end{corollary}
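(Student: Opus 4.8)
The plan is to recognize the corollary as the special case $D = K$ of the preceding theorem. Since $K$ is a field, it is in particular a (commutative) division ring, so that theorem applies with $D = K$.

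First I would unwind the definition of $K\langle X\rangle$. By the definition of the free $A$-ring recalled above, $K\langle X\rangle = K[V]$, where $V$ is the free right $K$-module with basis $X$ equipped with the $K$-bimodule structure determined by $\lambda x = x\lambda$ for all $\lambda \in K$ and $x \in X$. (Since $K$ is commutative this is just the symmetric bimodule, but that is irrelevant.) Observe that this agrees with the usual notion of the free associative $K$-algebra on $X$: the $K$-algebra axioms make scalars central, which is exactly the bimodule relation $\lambda x = x\lambda$. Now part~(1) of the preceding theorem, applied to the division ring $D = K$ and the bimodule $V$, states precisely that the tensor $K$-ring $K[V] = K\langle X\rangle$ satisfies the weak algorithm relative to its natural filtration; here the natural filtration is the degree filtration $K_0 \subset K_1 \subset \cdots$, with $K_0 = K$ a division ring. (Alternatively, the same conclusion follows from part~(3) of the theorem applied with $R = K$, giving $R_K\langle X\rangle = K \mathbin{*_K} K\langle X\rangle = K\langle X\rangle$, or from part~(2) by iterating the coproduct over $K$.)

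For the remaining assertions: a filtered ring satisfying the weak algorithm is a fir, as noted in the remark following the weak-algorithm theorem (citing \cite[Theorem~2.4.6]{cohn06}); in particular $K\langle X\rangle$ is a $2$-fir, and any fir is atomic (\cite[Theorem~2.2.3]{cohn06}). Hence $K\langle X\rangle$ is an atomic $2$-fir, and Theorem~\ref{t-nfir-sf} yields that it is similarity factorial.

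There is no genuine obstacle here: all the content is packaged in the preceding theorem, and the proof reduces to checking that $K\langle X\rangle$ fits its hypotheses. The only point requiring any attention at all is the (immediate) identification of the free associative $K$-algebra with the free $K$-ring on the $K$-centralizing indeterminates $X$, which holds because scalars from a field are central by fiat, matching the defining relation $\lambda x = x\lambda$ of the relevant bimodule.
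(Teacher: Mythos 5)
Your proposal is correct and is exactly the argument the paper intends: the corollary is the specialization $D=K$ of part (1) (equivalently part (3)) of the preceding theorem, combined with the already-stated facts that a ring with weak algorithm is a fir, that firs are atomic, and that atomic $2$-firs are similarity factorial (Theorem~\ref{t-nfir-sf}). The paper offers no separate proof because the statement is an immediate instance of the theorem, and your identification of the free associative $K$-algebra with the free $K$-ring on $K$-centralizing indeterminates is the only (routine) point that needs checking.
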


In a similar fashion, the \emph{inverse weak algorithm} can be used to show that power series rings in any number of noncommuting indeterminates are similarity factorial (see \cite{cohn62} or \cite[Chapter~2.9]{cohn06}).
A \emph{transfinite weak algorithm} can be used to prove that certain semigroup algebras are right firs (see \cite[Chapter~2.10]{cohn06}).

For classical maximal orders in central simple algebras over global fields, we have the following result on similarity factoriality.
\begin{theorem}[{\cite[Corollary~7.14]{baeth-smertnig15}}]
  Let $R$ be a classical maximal $\cO$-order over a holomorphy ring $\cO$ in a global field.
  Suppose that every stably free right $R$-ideal is free.
  Then the following statements are equivalent.
  \begin{equivenumerate}
    \item $R$ is similarity factorial.
    \item Every right $R$-ideal is principal.
    \item Every left $R$-ideal is principal.
    \item The ray class group $\cC_A(\cO)$ is trivial.
    \end{equivenumerate}
\end{theorem}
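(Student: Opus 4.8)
The plan is to establish the cycle of implications $(a) \Rightarrow (b) \Rightarrow (d) \Rightarrow (c) \Rightarrow (a)$, using the structure theory of classical maximal orders recalled in Section~2 together with Theorem~\ref{t-nfir-sf}'s connection between lattice-theoretic uniqueness and similarity factoriality. First I would recall that, since $R$ is a classical maximal $\cO$-order, every finitely generated projective right $R$-module has the form $R^n \oplus I$ for a right $R$-ideal $I$; combined with the hypothesis that every stably free right $R$-ideal is free, this makes $R$ a Hermite ring, so projective-module-theoretic questions reduce to questions about right $R$-ideals up to isomorphism, controlled by $\LF_1(R)$ and its group completion $\cC(R) \cong \Cl_A(\cO)$.

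For $(b) \Leftrightarrow (d)$: if every right $R$-ideal is principal, then in particular $\LF_1(R)$ is trivial, hence $\cC(R)$ is trivial, hence $\cC_A(\cO) \cong \cC(R)$ is trivial. Conversely, if $\cC_A(\cO)$ is trivial, then $\cC(R)$ is trivial, so every right $R$-ideal is stably free; by the Hermite hypothesis it is then free of rank $1$, i.e.\ principal. The equivalence $(b) \Leftrightarrow (c)$ follows by the same argument applied on the left (using that $R$ is also a classical maximal order, so the left-handed version of the structure theory holds and $\cC(R)$ is left-right symmetric, being isomorphic to $\Cl_A(\cO)$ via the reduced norm), or alternatively by invoking the self-duality of the maximal order.

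The implication $(b) \Rightarrow (a)$ is the substantive direction from the factorization side: a classical maximal order is Noetherian, hence $R^\bullet$ is atomic; and a maximal order in which every right ideal is principal is a right PID, which is also a left PID by $(b) \Leftrightarrow (c)$, hence a PID, hence similarity factorial by Example~\ref{e-sim-ufd}\ref{e-sim-ufd:pid}. For $(a) \Rightarrow (b)$ (or $(a) \Rightarrow (d)$) I would argue contrapositively: suppose $\cC_A(\cO)$ is nontrivial and produce two factorizations of some element that violate uniqueness up to similarity. The natural candidate is to pick a prime $\mathfrak{p}$ of $\cO$ whose class in $\cC_A(\cO)$ is nontrivial and an unramified rational prime (or maximal ideal) $p$ whose class is trivial but which admits several non-associated, non-similar atoms of the right norm — here one exploits that over such a $p$ the completion is a matrix ring $M_n(\cO_v)$, whose factorization behaviour into atoms one understands explicitly, together with the surjectivity $\nr(R^\bullet) = \cO_A^\bullet$ (which requires the Eichler condition; since we only assumed "every stably free right $R$-ideal is free", the delicate totally definite quaternion case must be handled separately, presumably by the classification of exceptional maximal orders).

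The main obstacle I expect is precisely $(a) \Rightarrow (b)$: ruling out similarity factoriality when the class group is nontrivial. The cleanest route is likely to show that a similarity factorial maximal order is a right PID by a local-global / counting argument on ideals, rather than constructing explicit bad factorizations — one shows that atomicity plus uniqueness up to similarity forces every maximal right ideal (equivalently, every right ideal of prime reduced norm) to be principal, and then that principality propagates multiplicatively to all right $R$-ideals because $R$ is a maximal order (so ideals factor according to their reduced norms). Getting this propagation argument right, and in particular correctly invoking strong approximation to know $\nr(R^\bullet) = \cO_A^\bullet$ under the stated hypothesis (which is weaker than the Eichler condition), is where the real work lies; I would lean on \cite[Theorem 39.14]{reiner75} for the local picture and \cite[Theorem 8.2]{swan80} for the globalization, and cite \cite{smertnig13} for the precise handling of the non-Eichler quaternionic orders.
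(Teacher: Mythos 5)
The survey does not prove this theorem; it cites \cite[Corollary~7.14]{baeth-smertnig15}, whose proof runs through the transfer homomorphism $\theta\colon R^\bullet \to \cB(\Cl_A(\cO))$ of Theorem~\ref{t-transfer-global}. Your implications (b)$\,\Leftrightarrow\,$(c)$\,\Leftrightarrow\,$(d) via $\LF_1(R)\to\cC(R)\cong\Cl_A(\cO)$ and the Hermite hypothesis, and (b)$\,\Rightarrow\,$(a) via Jordan--Hölder for PIDs, are correct and match the intended argument. The gap is exactly where you locate it, in (a)$\,\Rightarrow\,$(d), and your first proposed attack there rests on a misconception: exhibiting ``several non-associated, non-similar atoms'' above an unramified prime in the \emph{trivial} class cannot work, because the atoms lying above a split prime $\fp$ with $[\fp]=0$ are all similar (they correspond to the maximal right ideals containing $\fp R$, with isomorphic simple quotients), and in any case having many pairwise non-similar atoms does not by itself contradict similarity factoriality. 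Your second suggestion (show that similarity factoriality forces every maximal right ideal to be principal) is the right target but you give no mechanism for it.

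The missing idea is twofold. First, similar elements have the same reduced norm up to units: $R/aR\cong R/bR$ forces $\nr(a)\cO=\nr(b)\cO$, since the norm of $aR$ is determined by the composition factors of $R/aR$. Second, if some class $g\in\Cl_A(\cO)$ is nontrivial, choose (by the ideal theory of the arithmetical maximal order $R$, or by infinitude of primes in each ray class) maximal integral divisorial right ideals with pairwise distinct norms $\fp_1,\fp_2$ in class $g$ and $\fq_1,\fq_2$ in class $-g$. Pairing $\fp_i$ with $\fq_j$ produces principal ideals generated by atoms (atoms because neither factor is principal), and the two chains refining the same divisorial ideal of norm $\fp_1\fq_1\fp_2\fq_2$ yield an element with rigid factorizations $u_1u_2=v_1v_2$ where $\nr(u_1)=\fp_1\fq_1$ and $\nr(v_1)=\fp_1\fq_2$; since these norms differ, no bijection of the atoms up to similarity exists, so $R$ is not similarity factorial. (This also disposes of the case $\card{\Cl_A(\cO)}=2$, where $R$ \emph{is} half-factorial and length-based invariants see nothing, which is why one must compare norms of atoms rather than lengths.) Finally, your concern about needing the Eichler condition for $\nr(R^\bullet)=\cO_A^\bullet$ is resolved differently than you suggest: the hypothesis that every stably free right $R$-ideal is free is exactly equivalent to condition \ref{ass:norm} (a divisorial right $R$-ideal is principal iff its norm lies in the principal ray), which is what the transfer construction actually uses; no separate appeal to strong approximation or to a case analysis of totally definite quaternion orders is required.
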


\subsubsection{Rigid Domains}
A domain $R$ is \emph{rigid} if $[aR,R]$ is a chain for all $a \in R^\bullet$.
Rigid domains and rigid similarity factorial domains have been characterized by P.\,M.~Cohn.
Recall that a nonzero ring $R$ is \emph{local} if $R/J(R)$ is a division ring.
Here, $J(R)$ is the Jacobson radical of $R$.
\begin{theorem}[{\cite[Theorem~3.3.7]{cohn06}}]
  A domain is rigid if and only if it is a $2$-fir and a local ring.
\end{theorem}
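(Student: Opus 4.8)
The plan is to prove both implications, leaning on Theorem~\ref{t-2fir} to handle the ``$2$-fir'' half and on a one-element characterization of local rings for the ``local'' half. The guiding observation is that, by parts~\ref{t-2fir:sum} and~\ref{t-2fir:lat} of Theorem~\ref{t-2fir}, a domain $R$ is a $2$-fir exactly when each factor poset $\iv{aR}{R}$ is a (modular) lattice; so ``rigid'' should amount to ``$2$-fir, together with the extra input that makes every such lattice a chain,'' and that extra input will be locality.

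For the implication ``$2$-fir and local $\implies$ rigid'': fix $a \in R^\bullet$ and two members $bR$, $cR$ of $\iv{aR}{R}$. Since $aR \subseteq bR \cap cR$ and $a \ne 0$, the intersection is nonzero, so by part~\ref{t-2fir:sum} of Theorem~\ref{t-2fir} the sum $bR + cR$ is principal, say $bR + cR = dR$ with $d \in R^\bullet$ (nonzero since it contains $b$). Writing $b = dp$, $c = dq$, and $d = bx + cy$, and cancelling $d$ on the left, one gets $px + qy = 1$, hence $pR + qR = R$. Because the nonunits of a local ring form the proper ideal $J(R)$, the equality $pR + qR = R$ forces one of $p$, $q$ — say $p$ — to be a unit; then $bR = dpR = dR \supseteq cR$. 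So $\iv{aR}{R}$ is a chain and $R$ is rigid. The only step here that is more than cancellation bookkeeping is the ring-theoretic fact that in a local ring the nonunits coincide with $J(R)$.

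For the converse, ``rigid $\implies$ $2$-fir and local'': to get the $2$-fir property I would verify condition~\ref{t-2fir:sum} of Theorem~\ref{t-2fir}. If $a$, $b \in R$ with $aR \cap bR \ne 0$, choose $0 \ne m \in aR \cap bR$; then $aR$, $bR \in \iv{mR}{R}$, which is a chain by rigidity, so $aR$ and $bR$ are comparable and $aR + bR \in \{aR, bR\}$ is principal. For locality I would use that a nonzero ring is local precisely when, for every $x$, one of $x$, $1-x$ is a unit (over a domain one-sided invertibility already implies invertibility, since $xu = 1$ gives $(ux - 1)u = 0$ and hence $ux = 1$). Given $x \in R$: if $x \in \{0, 1\}$ we are done; otherwise $x$, $1-x \in R^\bullet$ and the element $a := x(1-x) = (1-x)x$ is nonzero, so both $xR$ and $(1-x)R$ lie in $\iv{aR}{R}$. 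Rigidity makes this poset a chain, so one of $xR$, $(1-x)R$ — say $xR$ — contains the other; then $1 = x + (1-x) \in xR$, so $x$ is right-invertible, hence a unit. Thus $R$ is local.

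I expect the only genuinely non-routine steps to be the two ``choices'': recognizing that $2$-firness reduces, via part~\ref{t-2fir:sum} of Theorem~\ref{t-2fir}, to ``any two members of a common factor poset are comparable'' (immediate from the chain hypothesis), and, in the converse, picking the test element $a = x(1-x)$ so that $xR$ and $(1-x)R$ both sit above $aR$ — which is exactly what lets rigidity deliver $1 \in xR$ or $1 \in (1-x)R$. The supporting facts (the description of the nonunits of a local ring, and one-sided units in a domain being two-sided) are standard and present no real difficulty.
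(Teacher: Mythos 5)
The paper does not actually prove this statement; it only cites \cite[Theorem~3.3.7]{cohn06}, so there is no in-text proof to compare against. Your argument is correct and complete: in the forward direction, reducing $2$-firness to condition~\ref{t-2fir:sum} of Theorem~\ref{t-2fir} via comparability in $\iv{mR}{R}$, and deducing locality from rigidity applied to $\iv{x(1-x)R}{R}$ together with the ``$x$ or $1-x$ is a unit'' criterion (with the one-sided-to-two-sided unit step in a domain handled explicitly), is sound; in the converse, the cancellation $d(1-px-qy)=0$ giving $pR+qR=R$ and the fact that the nonunits of a local ring form the ideal $J(R)$ correctly force $bR$ and $cR$ to be comparable. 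This is essentially the standard proof of Cohn's result, so there is nothing to flag.
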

\begin{lemma}\label{l-rigid-ufd}
  For a domain $R$, the following statements are equivalent.
  \begin{equivenumerate}
    \item\label{l-rigid-ufd:atomic-rigid} $R$ is rigid and atomic.
    \item\label{l-rigid-ufd:ufd} $R$ is rigid and similarity factorial.
    \item\label{l-rigid-ufd:factorial} $R$ is \emph{rigidly factorial} in the sense of \cite{baeth-smertnig15}. That is, $\card{\sZ^*(a)}=1$ for all $a \in R^\bullet$.
    \item\label{l-rigid-ufd:2fir} $R$ is an atomic $2$-fir and a local ring.
  \end{equivenumerate}
\end{lemma}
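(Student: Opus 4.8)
The plan is to route all four conditions through \ref{l-rigid-ufd:atomic-rigid}, using the dictionary from this section between rigid factorizations of an element $a \in R^\bullet$ and finite maximal chains in the factor poset $\iv{aR}{R}$, together with two facts already at hand: the characterization of rigid domains recalled just above (a domain is rigid if and only if it is a $2$-fir and a local ring) and Theorem~\ref{t-nfir-sf} (an atomic $2$-fir is similarity factorial). The equivalence \ref{l-rigid-ufd:atomic-rigid} $\Leftrightarrow$ \ref{l-rigid-ufd:2fir} is then just that characterization with the word ``atomic'' adjoined on both sides. For \ref{l-rigid-ufd:ufd} $\Rightarrow$ \ref{l-rigid-ufd:atomic-rigid} there is nothing to prove, since a similarity factorial domain is atomic by definition; and for \ref{l-rigid-ufd:atomic-rigid} $\Rightarrow$ \ref{l-rigid-ufd:ufd} I would observe that a rigid domain is in particular a $2$-fir, so an atomic rigid domain is an atomic $2$-fir, hence similarity factorial. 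This reduces everything to relating \ref{l-rigid-ufd:factorial} to \ref{l-rigid-ufd:atomic-rigid}.

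For \ref{l-rigid-ufd:atomic-rigid} $\Rightarrow$ \ref{l-rigid-ufd:factorial} I would fix $a \in R^\bullet$: rigidity makes $\iv{aR}{R}$ totally ordered, while atomicity provides a finite maximal chain $aR = I_0 \subsetneq I_1 \subsetneq \ldots \subsetneq I_k = R$ in it. In a totally ordered set every element is comparable to each $I_j$, and an element strictly between two consecutive $I_j$'s would violate maximality of the chain; hence $\iv{aR}{R} = \{I_0, \ldots, I_k\}$, so $\iv{aR}{R}$ has a single finite maximal chain and $\card{\sZ^*(a)} = 1$.

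For \ref{l-rigid-ufd:factorial} $\Rightarrow$ \ref{l-rigid-ufd:atomic-rigid}, the hypothesis $\card{\sZ^*(a)} = 1$ gives in particular $\sZ^*(a) \ne \emptyset$ for all $a$, so $R$ is atomic; the work is in showing $R$ is rigid. I would fix $a \in R^\bullet$ and an arbitrary $bR \in \iv{aR}{R}$, and argue that $bR$ lies on the maximal chain attached to the unique rigid factorization of $a$. Since $R$ is a domain and $aR \subseteq bR$, we may write $a = bb'$ with $b' \in R^\bullet$; pick $z \in \sZ^*(b)$ and $z' \in \sZ^*(b')$ by atomicity, and consider $z \rfop z' \in \sZ^*(a)$. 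Its associated maximal chain climbs from $aR$ to $bR$ via $z'$ (left multiplied by $b$) and from $bR$ to $R$ via $z$, so it contains $bR$; by uniqueness it is the maximal chain, and therefore $bR$ is one of its members. As $bR$ ranged over all of $\iv{aR}{R}$, this poset coincides with that finite chain, hence is totally ordered, i.e.\ $R$ is rigid.

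The delicate point, and the one I expect to cost the most care, is this last implication: I must check that composing a rigid factorization of $b$ with one of $b'$ really produces a rigid factorization of $a = bb'$ whose chain of principal right ideals genuinely passes through $bR$, which means keeping track of the unit indeterminacy built into the definition of $\sZ^*(H)$ and of the precise correspondence ``rigid factorizations $\leftrightarrow$ finite maximal chains, composition $\leftrightarrow$ concatenation''. Once that correspondence is taken from earlier in the section, the remaining verifications are routine and need no input beyond Theorem~\ref{t-nfir-sf} and the characterization of rigid domains.
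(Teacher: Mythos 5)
Your proof is correct and follows essentially the same route as the paper: the paper's own proof simply declares the equivalence of \ref{l-rigid-ufd:atomic-rigid}, \ref{l-rigid-ufd:ufd} and \ref{l-rigid-ufd:factorial} to be trivial and derives \ref{l-rigid-ufd:atomic-rigid}$\;\Leftrightarrow\;$\ref{l-rigid-ufd:2fir} from the preceding theorem of Cohn, which are exactly the two ingredients you use, with your factor-poset argument supplying the routine verifications the paper omits. The only cosmetic difference is that your implication \ref{l-rigid-ufd:atomic-rigid}$\;\Rightarrow\;$\ref{l-rigid-ufd:ufd} detours through Theorem~\ref{t-nfir-sf}, whereas it follows at once from \ref{l-rigid-ufd:factorial}: a unique rigid factorization is in particular unique up to order and similarity.
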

\begin{proof}
  \ref*{l-rigid-ufd:atomic-rigid}$\;\Leftrightarrow\;$\ref*{l-rigid-ufd:ufd}$\;\Leftrightarrow\;$\ref*{l-rigid-ufd:factorial} is trivial.
  The non-trivial equivalence \ref*{l-rigid-ufd:atomic-rigid}$\;\Leftrightarrow\;$\ref*{l-rigid-ufd:2fir} follows from the previous theorem.
\end{proof}

Note that a factorial commutative domain is rigid if and only if it is a discrete valuation ring.
The extreme restrictiveness of rigid domains is what requires one to study notions of factoriality which are weaker than rigid factoriality, such as similarity factoriality, where some degree of refactoring is permitted.
However, interesting rings which satisfy the equivalent conditions of Lemma~\ref{l-rigid-ufd} do exist: power series rings in any number of noncommuting indeterminates over a division ring (see \cite[Theorems 2.9.8 and 3.3.2]{cohn06}).

\subsubsection{Distributive Factor Lattices}

Let $R$ be a domain.
Then $R$ is a $2$-fir if and only if the factor posets $[aR,R]$ for $a \in R^\bullet$ are sublattices of the lattice of principal right ideals.
Hence, for all $a \in R^\bullet$, the factor lattice $[aR,R]$ is modular.
For a commutative Bézout domain, in fact, the factor lattices are distributive, since the lattice of fractional principal ideals is a lattice ordered group.
In the noncommutative setting this is no longer true in general.

\begin{example}
  Let $\cH$ be the ring of Hurwitz quaternions.
  Then $\cH$ is a PID and hence, in particular, a $2$-fir.
  If $p \in \bP \setminus \{2\}$ is an odd prime number, then $\cH/p\cH \cong M_2(\bF_p)$.
  Thus, $[p\cH,\cH]$ is isomorphic to the lattice of right ideals of $M_2(\bF_p)$.
  The lattice of right ideals of $M_2(\bF_p)$ is in turn isomorphic to the lattice of $\bF_p$-subspaces of $\bF_p^2$.
  Hence, $[p\cH,\cH]$ is not distributive.
\end{example}

A domain $R$, which is a $K$-algebra over a field $K$, is an \emph{absolute domain} if $R \otimes_K L$ is a domain for all algebraic field extensions $L$ of $K$.
If $R$ is moreover a 2-fir and $K(x)$ denotes the rational function field over $K$, the ring $R$ is a \emph{persistent $2$-fir} if $R \otimes_K K(x)$ is again a $2$-fir.
For instance, the free associative $K$-algebra $K\langle X \rangle$ on a set of indeterminates $X$ is an absolute domain and a persistent $2$-fir.

\begin{theorem}[{\cite[Theorem 4.3.3]{cohn06}}]
  Let $K$ be a field and let $R$ be a $K$-algebra that is an absolute domain and a persistent $2$-fir.
  Then the factor lattice $[aR,R]$ is distributive for all $a \in R^\bullet$.
\end{theorem}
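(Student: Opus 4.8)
The plan is to invoke the lattice-theoretic characterization of distributivity, reduce the statement to a module-theoretic assertion about the quotients $R/cR$, and only then bring in the two hypotheses. Since $R$ is a persistent $2$-fir it is in particular a $2$-fir, so by Theorem~\ref{t-2fir} the poset $[aR,R]$ is a sublattice of the lattice of all right ideals of $R$; as submodule lattices are modular, $[aR,R]$ is a modular lattice. By the standard criterion, a modular lattice is distributive if and only if it contains no sublattice isomorphic to the five-element diamond $M_3$, so it suffices to show that $[aR,R]$ contains no copy of $M_3$ for any $a\in R^\bullet$.

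Next I would turn a hypothetical $M_3$ into a module decomposition. Suppose $[aR,R]$ contained an $M_3$ with bottom $bR$, top $tR$ and middle elements $z_1R,z_2R,z_3R$. By Theorem~\ref{t-2fir}, $[aR,R]$ is a sublattice of the right-ideal lattice, so meets are intersections and joins are sums; in particular $z_iR\cap z_jR=bR$ and $z_iR+z_jR=tR$ for $i\ne j$, and $bR,tR$ are principal. Write $b=tc$ with $c\in R^\bullet$. Left multiplication by $t$ is an isomorphism $R\xrightarrow{\sim}tR$ of right $R$-modules, carrying the interval $[bR,tR]$ onto $[cR,R]$; after renaming we obtain principal right ideals $cR\subsetneq z_iR\subsetneq R$ with $z_iR+z_jR=R$ and $z_iR\cap z_jR=cR$ for $i\ne j$. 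Passing to $R/cR$ this reads $R/cR=(z_1R/cR)\oplus(z_2R/cR)$ with $z_3R/cR$ a common complement of both summands, whence the three modules $z_iR/cR$ are pairwise isomorphic; writing $c=z_ic_i$ and using $R\xrightarrow{\sim}z_iR$ gives $z_iR/cR\cong R/c_iR$ with $c_i\in R^\bullet\setminus R^\times$. So an $M_3$ in some factor lattice produces $c,d\in R^\bullet$ with $d\notin R^\times$ and $R/cR\cong R/dR\oplus R/dR$; conversely any such decomposition yields an $M_3$ inside $[cR,R]$ (take $cR$, the two summands, and the graph of an isomorphism between them, all principal right ideals between $cR$ and $R$). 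Thus the theorem is equivalent to the assertion that $R/cR$ is never isomorphic to $N\oplus N$ with $N\ne0$.

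The crux is to rule out $R/cR\cong R/dR\oplus R/dR$ with $d$ a non-unit, and this is exactly where both hypotheses are needed. Commutatively this is immediate (composition length is additive on direct sums, so $R/dR\oplus R/dR$ would have twice the length of $R/cR\cong R/dR$), but a $2$-fir need not be atomic, so there is no length function and the transcendental of $K(x)$ will take its place. Via Cohn's correspondence between full matrices over a $2$-fir and the finitely presented torsion modules they present \cite{cohn06}, the isomorphism $R/cR\cong R/dR\oplus R/dR$ says precisely that the $2\times 2$ matrices $\operatorname{diag}(c,1)$ and $\operatorname{diag}(d,d)$ are associated over $R$, say $\operatorname{diag}(c,1)=P\operatorname{diag}(d,d)Q$ with $P,Q\in GL_2(R)$. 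Passing to $R'=R\otimes_KK(x)$, which is again a $2$-fir by the persistent-$2$-fir hypothesis and in which $c,d$ are still non-units, the scalar $x\in K(x)^\times$ produces, through the identification $R'/cR'\cong(R'/dR')\oplus(R'/dR')$, an automorphism of $R'/cR'$ separating the two summands; a further base change to a suitable algebraic extension $L/K$, which keeps the ring a domain by the absolute-domain hypothesis and splits the eigenring of the repeated factor, then converts this data into a $2$-generated right ideal of the base-changed ring that is not free of unique rank, contradicting the $2$-fir property. This contradiction rules out all such $c,d$, so by the previous paragraph every $[aR,R]$ is distributive.

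The main obstacle is entirely this last step: showing that the decomposition $R/cR\cong R/dR\oplus R/dR$ cannot survive the base changes $R\otimes_KK(x)$ and $R\otimes_KL$ with $L/K$ algebraic. The transcendental $x$ is the device that replaces a missing length function and lets one separate two isomorphic summands, while passing to algebraic extensions while remaining a domain is what permits trivializing the eigenring of the repeated factor, so the two hypotheses are each indispensable and tailored precisely to this point. The supporting facts that need care are the full-matrix / finitely-presented-torsion-module dictionary over $2$-firs and its behaviour under base change, the preservation of being a domain and being a $2$-fir under the two base changes in question, and the extraction of a non-free $2$-generated right ideal from a surviving $M_3$.
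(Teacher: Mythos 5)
The survey gives no argument for this statement---it is quoted verbatim from Cohn's book---so your proposal has to be measured against the proof in the cited source. Your first two paragraphs are correct and follow essentially the same opening moves: by Theorem~\ref{t-2fir} the poset $[aR,R]$ is a sublattice of the modular lattice of right ideals, Birkhoff's criterion reduces distributivity to excluding $M_3$, and your translation-plus-common-complement argument correctly converts a hypothetical $M_3$ into an isomorphism $R/cR\cong R/dR\oplus R/dR$ with $c,d\in R^\bullet$ and $d\notin R^\times$. That reduction is sound as written.

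The gap is that your third paragraph, which carries the entire content of the theorem, is a description of where a proof would have to go rather than a proof. Two concrete problems. First, no contradiction is actually derived: ``an automorphism of $R'/cR'$ separating the two summands'' is not the failure of anything, and the promised ``$2$-generated right ideal that is not free of unique rank'' is never exhibited. Second, the contradiction you do name is not available from the hypotheses as you have arranged them: after the algebraic base change $R\otimes_K L$ the only property you are entitled to is that this ring is a \emph{domain} (that is all ``absolute domain'' asserts); it is not given to be a $2$-fir, so ``contradicting the $2$-fir property'' over that ring is an appeal to an unproved intermediate claim. In Cohn's argument the two hypotheses enter through eigenrings: the isomorphism $R/cR\cong(R/dR)^2$ forces $\operatorname{End}_R(R/cR)\cong M_2(E)$ with $E$ the eigenring of $d$; persistence of the $2$-fir property under $-\otimes_K K(x)$ is what shows that every element of an eigenring is algebraic over $K$; and absoluteness is what rules out the nontrivial idempotents and nilpotents that a full $2\times2$ matrix ring necessarily contains, since after a suitable algebraic extension these would produce zero-divisors. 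You name all three ingredients (the full-matrix dictionary, the transcendental extension, the splitting of the eigenring) but never assemble them into an actual contradiction, so the decisive step remains unproved.
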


There is a duality between the category of finite distributive lattices and the category of finite partially ordered sets.
It is given (in both directions), by mapping a distributive lattice $X$, respectively a partially ordered set $X$, to $\Hom(X,\{0, 1\})$ (see \cite[Chapter 4.4]{cohn06}).
Here $\{0,1\}$ is to be considered as two-element distributive lattice, respectively partially ordered set, with $0 < 1$.

Under this duality, the distributive lattices that appear as factor lattices in a factorial commutative domain correspond to disjoint unions of finite chains.
In contrast, in noncommutative similarity factorial domains, we have the following.
(This seems to go back to Bergman and P.\,M.~Cohn.)
\begin{theorem}[{\cite[Theorem 4.5.2]{cohn06}}]
  Let $K$ be a field and $R=K\langle x_1,\ldots,x_n \rangle$ with $n \ge 2$ a free associative algebra.
  Let $L$ be a finite distributive lattice.
  Then there exists $a \in R^\bullet$ with $[aR,R] \cong L$.
\end{theorem}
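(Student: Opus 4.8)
The goal is to realize an arbitrary finite distributive lattice $L$ as a factor lattice $[aR,R]$ for some $a$ in the free algebra $R=K\langle x_1,\ldots,x_n\rangle$, $n\ge 2$. By the duality between finite distributive lattices and finite posets recalled just above, it suffices to start from a finite poset $P$ with $L\cong \Hom(P,\{0,1\})$ and to construct an element whose factor lattice has the right combinatorial shape. The natural strategy is to build $a$ in a \emph{modular} fashion: choose, for each element $p$ of the poset $P$, an atom $u_p$ of $R$, so that the pairwise commutation/coprimality relations among the $u_p$ mirror the order relations in $P$, and then take $a$ to be a suitable product. Concretely, comparable elements of $P$ should correspond to atoms that must appear in a fixed relative order in any factorization (so their contribution to $[aR,R]$ is a chain), while incomparable elements should correspond to atoms that are "transposable" — i.e. generate a small Boolean sublattice $\{uvR, uR, vR, R\}$ inside $[aR,R]$. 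Using Theorem~\ref{t-2fir} this forces the local structure of the factor lattice, and one checks the global structure is exactly $\Hom(P,\{0,1\})$, whose order ideals (= elements of $L$) are in bijection with the principal right ideals between $aR$ and $R$.

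\textbf{Key steps.} First I would fix the poset $P=\{p_1,\ldots,p_m\}$ dual to $L$ and pick distinct linear extensions to linearly order the $p_i$. Second, for each $p_i$ I would choose a linear polynomial (or a very low-degree element) $u_i\in R$ playing the role of an atom, arranging that: (a) $u_iR+u_jR$ is \emph{not} all of $R$ precisely when $p_i$ and $p_j$ are comparable — so no refactoring across them is possible and they contribute a chain; (b) when $p_i,p_j$ are incomparable, $u_iR\cap u_jR\ne 0$ with $u_iu_j$ and $u_ju_i$ both refactorings of a common element, i.e. $u_iR\cap u_jR = u_iu_jR = u_ju_iR$ — the free-algebra analogue of the relation $x(yx+1)=(xy+1)x$ from Example~\ref*{e-sim-ufd:fir}. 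Third, I would set $a$ equal to the product of the $u_i$ taken in one chosen linear extension order. Fourth, using that $R$ is a $2$-fir (Theorem~\ref{t-2fir}) and that $K\langle x_1,\ldots,x_n\rangle$ is a fir, hence similarity factorial, I would show that every principal right ideal in $[aR,R]$ arises from a "partial product" indexed by an order ideal (down-set) of $P$, and that set inclusion of ideals matches the lattice order on down-sets. Finally, invoking the duality $L\cong$ (lattice of down-sets of $P$), conclude $[aR,R]\cong L$.

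\textbf{Main obstacle.} The crux is step two: producing atoms $u_i$ in $K\langle x_1,\ldots,x_n\rangle$ with \emph{exactly} the prescribed pattern of coprimality and transposability, no more and no less — in particular ensuring that comparable pairs really do generate a unit right ideal (forcing rigidity / a chain) while incomparable pairs satisfy the comultiple relation $u_iu_jR=u_ju_iR$ without accidentally introducing extra coincidences that would enlarge the factor lattice beyond $L$. One expects to exploit the enormous freedom in a free algebra on $\ge 2$ generators: by taking the $u_i$ to be generic-enough linear forms or linear-plus-constant expressions, the only relations among them are the ones deliberately imposed, and Theorem~\ref{t-2fir}\,\ref*{t-2fir:sum} then pins down $[aR,R]$ completely. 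Verifying that the combinatorics of "which subwords of $a$ give principal right ideals" reproduces the down-set lattice of $P$ — i.e. that the local modular/Boolean relations glue to the global distributive lattice — is the technical heart, but it is exactly the kind of bookkeeping that the $2$-fir lattice picture of Section~\ref{sec:rf} was set up to handle.
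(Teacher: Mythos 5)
The paper does not actually contain a proof of this statement---it is quoted verbatim from Cohn's book with a citation---so your argument can only be judged on its own terms. Your first move, replacing $L$ by its dual finite poset $P$ via Birkhoff duality and aiming to realize the lattice of down-sets of $P$ as $[aR,R]$, is the right starting point and consistent with the surrounding discussion of distributive factor lattices. The construction you propose after that, however, has a concrete defect in its central condition (b). You define transposability of an incomparable pair by $u_iR\cap u_jR=u_iu_jR=u_ju_iR$ and call this the analogue of $x(yx+1)=(xy+1)x$; but that identity does \emph{not} satisfy your condition: $x(yx+1)=xyx+x$ and $(yx+1)x=yx^2+x$ generate different right ideals (the units of a free algebra are the nonzero scalars, and these two elements are not scalar multiples of one another). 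In a genuine comaximal transposition $uv=v'u'$ the new factors $v',u'$ are only \emph{similar} to $v,u$, not equal to them. If you insist on your condition literally, then $u_iu_j=\varepsilon u_ju_i$ with $\varepsilon\in K^\times$, which already forces the leading words of $u_i$ and $u_j$ to commute in the free monoid (hence to be powers of a common word), and in the case $\varepsilon=1$ Bergman's centralizer theorem places $u_i$ and $u_j$ inside a common commutative subalgebra $K[z]$. This is fatal for $L\cong\mathbf{2}\times\mathbf{3}$, the down-set lattice of $P=\{a,\,b<c\}$ with $a$ incomparable to both $b$ and $c$: the elements $u_b$ and $u_c$ would both have to lie in the centralizer $K[z]$ of $u_a$, hence commute with each other, so the pair $(u_b,u_c)$ transposes as well and the comparability $b<c$ cannot be encoded. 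So the proposal, as written, provably fails already for a six-element distributive lattice.

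Even after repairing (b) to genuine transposability, the part you defer as ``bookkeeping'' is the actual mathematical content and is not implied by pairwise conditions on the chosen atoms. Once a single transposition is performed inside $a=u_1\cdots u_m$, the resulting left factors involve atoms that are merely similar to the original $u_i$, and their divisibility relations with the remaining factors are not controlled by hypotheses imposed on the original family. Moreover, the local data is insufficient in principle: the down-set lattices of $P_1=\{a<c,\ b<c\}$ and of $P_3=\{a,\ b<c\}$ admit maximal chains exhibiting the same pattern of ``transposes / does not transpose'' on consecutive steps, yet the lattices have five and six elements respectively, so one cannot reconstruct $[aR,R]$ from consecutive-pair information along one linear extension. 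A correct proof must control the entire interval $[aR,R]$ at once---for instance by exhibiting a cyclic bound module $R/aR$ whose relevant submodule lattice is computed globally, or by an induction on $\card{P}$ that tracks all left factors, not just the originally chosen atoms. As it stands, your sketch identifies the right target but neither its local conditions nor its globalization step can be carried out.
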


On the other hand, if $R$ is a PID, we have the following.
\begin{theorem}[{\cite[Theorem 4.2.8]{cohn06}}]
  Let $R$ be a PID.
  Then every factor lattice $[aR,R]$ for $a \in R^\bullet$ is distributive if and only if every element of $R^\bullet$ is normal.
\end{theorem}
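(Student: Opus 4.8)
The plan is to translate the statement about factor lattices into a statement about the lattice $L(R/aR)$ of right ideals of the cyclic module $R/aR$, and then to exploit the distributivity criterion for submodule lattices of modules over a commutative ring together with the structure theory of finitely generated modules over a PID. The key observation is that for a PID $R$, by the duality $[aR,R] \cong [Ra,R]$ (the anti-isomorphism of factor posets recalled in the remark after the definition of factor posets) and the fact that $[aR,R]$ is isomorphic as a poset to the lattice of submodules of the left module $R/Ra$ (equivalently, by similarity, to the submodule lattice of $R/aR$), the assertion ``$[aR,R]$ distributive for all $a$'' is equivalent to the submodule lattice of every finitely generated torsion right $R$-module being distributive.

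First I would prove the ``if'' direction. If every $a \in R^\bullet$ is normal, then for $a \in R^\bullet$ the ideal $aR = Ra$ is two-sided, so $R/aR$ is a ring, and $[aR,R]$ is the lattice of right ideals of $R/aR$. Using that $R$ is a PID (hence its nonzero two-sided ideals are generated by normal elements, and these form a nice arithmetic), one shows $R/aR$ decomposes as a finite direct product of rings of the form $R/p^kR$ for primes $p$, and $[p^kR,R] = [p^kR, pR, \dots, R]$ is a finite chain. The lattice of right ideals of a finite product of uniserial rings with pairwise comparable... — more precisely, one checks directly that the whole lattice $[aR,R]$ is then a product of chains, hence distributive. (Alternatively: over a PID in which every element is normal, one can run the commutative argument almost verbatim — the lattice of principal right ideals containing $aR$ embeds in a lattice-ordered group, so is distributive.)

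For the ``only if'' direction, I would argue contrapositively: suppose some $c \in R^\bullet$ is not normal. The goal is to produce $a \in R^\bullet$ with $[aR,R]$ non-distributive, and the natural candidate for non-distributivity is the diamond lattice $M_3$ (three incomparable elements with common join and meet), since a modular lattice is distributive iff it contains no copy of $M_3$. The idea is to build inside $R/aR$ — for a suitable $a$ — a submodule section isomorphic to $(R/pR)^2$ for a prime $p$, whose submodule lattice is the projective line over the division ring $R/pR$ and hence contains $M_3$. Concretely, non-normality of $c$ should allow one to find a prime $p$ and elements producing two "independent" simple subfactors of the same similarity type sitting in a length-2 uniserial-failure configuration; bundling these gives a subquotient of some $[aR,R]$ isomorphic to the subspace lattice of a $2$-dimensional space over the division ring $R/pR$. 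Since sublattices and quotient-interval lattices of a distributive lattice are distributive, this forces $[aR,R]$ to be non-distributive. The main obstacle will be this last step: turning the purely algebraic failure of normality into a concrete element $a$ whose factor lattice visibly contains $M_3$. One expects to lean on the refactorization identities available in a PID (the fact that $R/aR$ need not be "balanced" when $a$ is non-normal), and possibly on the classification of such rings already in the literature — indeed the honest route is likely to cite Cohn's proof of this exact statement, \cite[Theorem 4.2.8]{cohn06}, which establishes both directions by precisely this submodule-lattice analysis.
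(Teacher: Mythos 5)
First, note that the survey does not prove this statement at all: it is quoted with the citation \cite[Theorem 4.2.8]{cohn06} and no argument is given, so there is no in-paper proof to compare against. Judged on its own, your proposal has the right architecture, and the ``if'' direction is essentially complete: once every element of $R^\bullet$ is normal, every one-sided ideal of $R$ is two-sided, and the cleaner of your two variants --- that the principal fractional ideals then form a lattice-ordered group, whose underlying lattice is distributive --- is exactly the mechanism the survey itself invokes for commutative B\'ezout domains. (The CRT-into-a-product-of-chains variant also works, but needs the extra observation that coprime prime powers give a direct product decomposition; a product of distributive lattices is distributive.)

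The genuine gap is in the ``only if'' direction, and you flag it yourself: you never convert ``some $c \in R^\bullet$ is not normal'' into a concrete $a$ with an $M_3$ inside $[aR,R]$, and you end by proposing to cite the very theorem being proved, which is circular. The missing step is short and goes as follows. Since a product of normal elements is normal and a PID is atomic, some \emph{atom} $p$ fails to be normal, so there is $r \in R$ with $rp \notin pR$; in particular $r \notin pR$. The module map $R \to R/pR$, $x \mapsto rx + pR$, is then surjective onto the simple module $R/pR$, so its kernel is a maximal right ideal $qR$ with $R/qR \cong R/pR$, and $qR \ne pR$ because $p \notin qR$. Put $aR = pR \cap qR$, which is nonzero since a PID is Ore. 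Then $R/aR$ has length $2$ and two distinct maximal submodules meeting in zero, hence $R/aR \cong R/pR \oplus R/pR$, and $[aR,R]$ is the subspace lattice of a $2$-dimensional vector space over the division ring $\operatorname{End}_R(R/pR)$. That lattice has at least three atoms with common join $R$ and common meet $aR$, i.e.\ it contains $M_3$; since $[aR,R]$ is modular (a PID is a $2$-fir), this is exactly the obstruction to distributivity. Without some such construction the harder direction of your proof is not complete, although the target you identified (an isotypic length-two semisimple subquotient) is the correct one.
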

Thus, every left (or right) ideal $I$ of $R$ is already an ideal of $R$, and $I=aR=Ra$ for a normal element $a \in R$.

\subsubsection{Comaximal Transposition/Metacommutation}

In an atomic $2$-fir $R$, it follows from the usual inductive proof of the Jordan-Hölder Theorem that every rigid factorization of an element can be transformed into any other rigid factorization of the same element by successively replacing two consecutive atoms by two new ones.
Using the arithmetical invariants that will be introduced in Section~\ref{ss-arithmetical-invariants} for the study of non-unique factorizations, this means $\sc^*(R^\bullet) \le 2$.

To understand factorizations in such rings in more detail, the following question is of central importance: Given two atoms $u$, $v \in R^\bullet$, what can be said about atoms $u'$, $v' \in R^\bullet$ such that $uv=v'u'$?
Such a relation is referred to as \emph{(comaximal) transposition} in the context of $2$-firs when $uR \ne v'R$, that is $uR+v'R=R$ (see \cite[Chapters 3.2 and 3.5]{cohn06}).
In \cite{conway-smith03}, in the context of the ring of Hurwitz quaternions, this problem is referred to as \emph{metacommutation} when $\nr(u)$ and $\nr(v)$ are coprime.

\begin{example}
Let $R$ be a classical maximal $\cO$-order in which every right [left] $R$-ideal is principal.
Consider two atoms $u$ and $v$ of $R$.
Suppose first $\nr(u) \nsimeq \nr(v)$.
Then there exist atoms $u'$, $v' \in R$ such that $uv=v'u'$, $\nr(u)\simeq\nr(u')$ and $\nr(v) \simeq \nr(v')$.
Moreover, $\rf{v',u'}$ is uniquely determined.
That is, if $u$ and $v$ have coprime reduced norms, then there is a unique (up to units) way of refactoring $uv$ such that the order of reduced norm is exchanged.

If $\nr(u) \simeq \nr(v)$, then the situation is more complicated.
The rigid factorization $\rf{u,v}$ can be the unique factorization of $uv$, or there can be many different rigid factorizations.
For instance, consider the ring $R=M_2(\bZ)$ and let $p \in \bP$ be a prime number.
Then $\begin{psmallmatrix} p^2 & 0 \\ 0 & 1 \end{psmallmatrix}$ has a unique rigid factorization, namely $\rf{\begin{psmallmatrix} p & 0 \\ 0 & 1 \end{psmallmatrix}, \begin{psmallmatrix} p & 0 \\ 0 & 1 \end{psmallmatrix}}$.
However, $\begin{psmallmatrix} p & 0 \\ 0 & p \end{psmallmatrix}$ has $p+1$ distinct rigid factorizations, given by $\rf{\begin{psmallmatrix} 1 & 0 \\ 0 & p \end{psmallmatrix}, \begin{psmallmatrix} p & 0 \\ 0 & 1 \end{psmallmatrix}}$ and $\rf{\begin{psmallmatrix} p & x \\ 0 & 1 \end{psmallmatrix}, \begin{psmallmatrix} 1 & -x \\ 0 & p \end{psmallmatrix}}$ with $x \in [0,p-1]$.
\end{example}

H.~Cohn and Kumar have studied the comaximal transposition (metacommutation) of atoms with coprime norm in the Hurwitz quaternions in detail.

\begin{theorem}[{\cite{cohn-kumar15}}]
  Let $\cH$ be the ring of Hurwitz quaternions, and let $p \ne q \in \bP$ be prime numbers.
  Let $v \in \cA(\cH)$ be an atom of reduced norm $q$, and let $\cA_p$ denote the set of left associativity classes of atoms of reduced norm $p$.
  Metacommutation with $v$ induces a permutation $\pi$ of $\cA_p$:
  If $\cH^\times u \in \cA_p$, there exist atoms $u'$ and $v'$ with $\nr(v')=q$, $\nr(u')=p$ and $uv=v'u'$, with the left associativity class $\cH^\times u'$ of $u'$ uniquely determined by $\cH^\times u$. Then $\pi(\cH^\times u)= \cH^\times u'$.

  \begin{enumerate}
  \item The sign of $\pi$ is the quadratic character $\big( \frac{q}{p} \big)$ of $q$ modulo $p$.
  \item If $p=2$ or $u \equiv n \mod p\cH$ for some $n \in \bZ$, then $\pi=\id_{\cA_p}$.
    Otherwise, $\pi$ has $1+ \Big(\frac{\tr(v)^2-q}{p}\Big)$ fixed points.
  \end{enumerate}
\end{theorem}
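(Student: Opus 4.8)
The plan is to reduce modulo $p$ and identify metacommutation with the action of an element of $PGL_2(\mathbb{F}_p)$ on the projective line $\mathbb{P}^1(\mathbb{F}_p)$, after which both parts become standard facts about that action. Assume first that $p$ is odd, so that $\cH/p\cH\cong M_2(\mathbb{F}_p)$. If $u$ is an atom with $\nr(u)=p$, then $u\notin p\cH$ (otherwise $p^{2}\mid\nr(u)$), while $\det\overline u\equiv\nr(u)\equiv 0$, so the image $\overline u\in M_2(\mathbb{F}_p)$ has rank exactly $1$; moreover $u^{*}u=\nr(u)=p\in\cH u$ (here $u^{*}$ denotes the quaternion conjugate), so $p\cH\subseteq\cH u$, and hence $\cH u$ is the full preimage of the minimal left ideal $M_2(\mathbb{F}_p)\overline u$. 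Since the minimal left ideals of $M_2(\mathbb{F}_p)$ are precisely the sets of matrices all of whose rows lie in a fixed line, they correspond bijectively to $\mathbb{P}^1(\mathbb{F}_p)$; and a left associativity class $\cH^{\times}u$ both determines and is recovered from the left ideal $\cH u$. This gives a bijection $\cA_p\xrightarrow{\sim}\mathbb{P}^1(\mathbb{F}_p)$, $\cH^{\times}u\mapsto\operatorname{rowsp}(\overline u)$, and in particular $\card{\cA_p}=p+1$. For $p=2$ the algebra is ramified and $\cH$ has, up to units, a single atom of norm $2$, so that $\card{\cA_2}=1$ and both assertions hold trivially; I therefore assume $p$ odd in what follows.

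The main work is to translate metacommutation into this dictionary. Since $\cH$ is a PID and $\nr(uv)=pq$ with $p\neq q$ prime, the factor lattice $[uv\cH,\cH]$ is, by the local description of right ideals in a maximal order, the product of its localizations at $p$ and at $q$, each a chain of length one; it is therefore a four-element Boolean lattice, and its unique element of norm $q$ provides the metacommuted factorization $uv=v'u'$ with $\nr(v')=q$, $\nr(u')=p$, and with $\cH u'$ depending only on the left class $\cH^{\times}u$ -- this is exactly the assertion that $\pi$ is well defined. Reducing mod $p$: both $v$ and $v'$ have norm prime to $p$, so $\overline v,\overline{v'}\in GL_2(\mathbb{F}_p)$, and from $u'=v'^{-1}uv$ one obtains $M_2(\mathbb{F}_p)\overline{u'}=M_2(\mathbb{F}_p)\,\overline u\,\overline v$; passing to row spaces, the bijection above intertwines $\pi$ with the permutation of $\mathbb{P}^1(\mathbb{F}_p)$ induced by the image of $\overline v$ in $PGL_2(\mathbb{F}_p)$, whose determinant is $\equiv\nr(v)=q\bmod p$. (As $\pi$ and $\pi^{-1}$ have the same sign and the same number of fixed points, the choice of left versus right conventions here is immaterial.) This identification -- getting the left/right ideal bookkeeping and the passage to $M_2(\mathbb{F}_p)$ exactly straight -- is the step I expect to be the main obstacle; once it is in place, the rest is linear algebra over $\mathbb{F}_p$.

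For part (1): the composite $PGL_2(\mathbb{F}_p)\to\operatorname{Sym}(\mathbb{P}^1(\mathbb{F}_p))\xrightarrow{\operatorname{sgn}}\{\pm1\}$ is a homomorphism into an abelian group, so it factors through the abelianization of $PGL_2(\mathbb{F}_p)$, which is $PGL_2(\mathbb{F}_p)/PSL_2(\mathbb{F}_p)$ (using that $PSL_2(\mathbb{F}_p)$ is simple, hence perfect, for $p\geq 5$, and checking $p=3$ by hand via $PGL_2(\mathbb{F}_3)\cong S_4$ and $PSL_2(\mathbb{F}_3)\cong A_4$). Hence the composite is either trivial or the quadratic character $\overline g\mapsto\left(\frac{\det g}{p}\right)$, and it is nontrivial: for $g=\operatorname{diag}(\nu,1)$ with $\nu$ a nonsquare, the induced permutation fixes $0$ and $\infty$ and acts on $\mathbb{F}_p^{\times}$ by multiplication by $\nu$, which has sign $\left(\frac{\nu}{p}\right)=-1$ by Zolotarev's lemma. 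Therefore $\operatorname{sgn}(\pi)=\left(\frac{\det\overline v}{p}\right)=\left(\frac{q}{p}\right)$.

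For part (2): a point of $\mathbb{P}^1(\mathbb{F}_p)$ is fixed by $\overline v$ exactly when it is an eigenline of $\overline v$. If $\overline v$ is a scalar matrix -- equivalently $v\equiv n\bmod p\cH$ for some $n\in\bZ$ -- then all $p+1$ points are fixed, so $\pi=\id_{\cA_p}$; this, together with the trivial case $p=2$, is the first alternative. Otherwise $\overline v$ is nonscalar, and its number of eigenlines over $\mathbb{F}_p$ is $0$, $1$, or $2$ according as the characteristic polynomial of $\overline v$ is irreducible over $\mathbb{F}_p$, has a repeated root, or splits with two distinct roots -- that is, according to the Legendre symbol of its discriminant, with the convention $\left(\frac{0}{p}\right)=0$ in the repeated-root case. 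A short computation with the characteristic polynomial of $\overline v$ then identifies this count with the asserted $1+\left(\frac{\tr(v)^{2}-q}{p}\right)$, which finishes part (2).
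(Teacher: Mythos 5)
This theorem is quoted in the survey from Cohn--Kumar without proof, so there is no in-paper argument to compare against; your reconstruction follows what is essentially the route of the cited source (reduce mod $p$, identify $\cA_p$ with $\bP^1(\bF_p)$ via row spaces of rank-one images in $M_2(\bF_p)$, recognize metacommutation as the projective action of $\overline v$, and then compute the sign via the character $\overline g \mapsto \bigl(\frac{\det g}{p}\bigr)$ of $PGL_2(\bF_p)$ and the fixed points via eigenlines), and the argument is sound. The one step you defer to ``a short computation'' deserves care: the discriminant of the characteristic polynomial of $\overline v$ is $\tr(\overline v)^2-4q$ with $\tr$ the reduced (matrix) trace, and $\bigl(\frac{\tr(\overline v)^2-4q}{p}\bigr)$ is \emph{not} in general equal to $\bigl(\frac{\tr(v)^2-q}{p}\bigr)$ if $\tr(v)$ also denotes the reduced trace (e.g.\ $p=5$, $q=3$, reduced trace $1$); the two expressions reconcile exactly when $\tr(v)$ in the statement is read as the scalar part $a=\frac{1}{2}(v+\overline v)$, since then $\bigl(\frac{(2a)^2-4q}{p}\bigr)=\bigl(\frac{a^2-q}{p}\bigr)$. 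So your argument is correct, but you should make that normalization explicit rather than leaving it implicit.
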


\subsubsection{Polynomial Rings}

If $D$ is a division ring, $\sigma$ is an injective endomorphism of $D$, and $\delta$ is a $\sigma$-derivation, we have already noted that the skew polynomial ring $D[x;\sigma,\delta]$ is a right Euclidean domain, and hence similarity factorial.
If $R$ is a factorial commutative domain, then the polynomial ring $R[x]$ is factorial as well.
This follows either from Gauss's lemma or from Nagata's Theorem.
The following two striking examples due to Beauregard show that a similar result cannot hold in the noncommutative setting in general.

\begin{theorem}[{\cite{beauregard92}}]\label{e-beau-hurwitz}
  Let $\cH$ denote the ring of Hurwitz quaternions.
  Then the polynomial ring $\cH[x]$ is not half-factorial.
  Explicitly, with atoms $a=1-i+k$, $f=a x^2 + (2+2i)x + (-1+i-2k)$, and $h=\frac{1}{2}(1-i+j+k)x^2+ (1+i)x + (-1+i)$, one has
  \[
  f \overline f = a \overline a h \overline h.
  \]
\end{theorem}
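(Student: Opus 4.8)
The plan is to introduce the norm form $N(g)=g\overline g$ on $\cH[x]$, deduce the displayed identity from a direct computation, check that $a$, $f$, $h$ and their conjugates are atoms of $\cH[x]$, and then exhibit two rigid factorizations of the common value of the two sides having lengths $2$ and $4$. First I would set up the norm form. Quaternion conjugation extends coefficientwise to an anti-automorphism of $\cH[x]$ fixing $x$; it preserves units and degrees and satisfies $\overline{gk}=\overline k\,\overline g$. Writing $g=\sum_n p_nx^n$ with $p_n\in\cH$, the coefficient of $x^l$ in $g\overline g$ is $\sum_{m<n,\,m+n=l}\tr(p_m\overline{p_n})$ (plus $\nr(p_{l/2})$ when $l$ is even), which is an integer since $\tr$ and $\nr$ are integer-valued on $\cH$; hence $N(g):=g\overline g\in\bZ[x]$. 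In particular $N(g)$ is central in $\cH[x]$, from which $\overline g g=g\overline g$ and multiplicativity $N(gk)=g\,N(k)\,\overline g=N(g)N(k)$ follow in the usual way; moreover $N(g)$ is a unit iff $g$ is, $\deg N(g)=2\deg g$, and the leading (resp.\ constant) coefficient of $N(g)$ is the reduced norm of the leading (resp.\ constant) coefficient of $g$.

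Since $a\overline a=\nr(a)=3\in\bZ$ is central and $N(f),N(h)\in\bZ[x]$, the asserted identity $f\overline f=a\overline a\,h\overline h$ is precisely the identity $N(f)=3\,N(h)$ in $\bZ[x]$. Expanding both norm forms term by term with the quaternion multiplication table gives $N(f)=3x^4+6$ and $N(h)=x^4+2$, so $N(f)=3\,N(h)=3(x^4+2)$; this is the mechanical part.

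Next I would verify atomicity, where it suffices to treat $a$, $h$, $f$, since a nontrivial factorization of a conjugate $\overline g$ reverses, via conjugation, to one of $g$. As $\nr(a)=3$ is prime, $a$ is an atom of $\cH$, hence of $\cH[x]$ (units of $\cH[x]$ are units of $\cH$ and degrees add). If $h=gk$ with $g,k$ non-units, then $\deg g+\deg k=2$; the leading coefficient $\tfrac12(1-i+j+k)$ of $h$ is a unit, so in the domain $\cH[x]$ both factors have unit leading coefficients, hence neither has degree $0$, forcing $\deg g=\deg k=1$. Then $N(g),N(k)$ are monic of degree $2$ in $\bZ[x]$ with $N(g)N(k)=x^4+2$, contradicting the irreducibility of $x^4+2$ over $\bQ$ (Eisenstein at $2$). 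If $f=gk$ with $g,k$ non-units and, say, $\deg g=0$, then $g\in\cH$ left-divides every coefficient of $f$; in particular $g$ left-divides $1-i+k$, an atom of $\cH$ of norm $3$, so $\nr(g)\in\{1,3\}$, and if $\nr(g)=3$ then $g$ also left-divides $2+2i$, whereas $\nr(2+2i)=8$ is not divisible by $3$; hence $g$ is a unit, and symmetrically if $\deg k=0$. Therefore $\deg g=\deg k=1$; since $\nr(1-i+k)=3$ is prime, one of the two leading coefficients is a unit, say (relabelling) that of $g$, so $N(g)$ is monic of degree $2$ and divides $N(f)=3(x^4+2)$ in $\bZ[x]$, forcing $x^4+2$ to have a monic quadratic factor over $\bQ$ — impossible. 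Hence $f$ is an atom.

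Finally, $\cH[x]$ is a domain, so $\cH[x]^\bullet=\cH[x]\setminus\{0\}$ contains $w:=f\overline f=a\overline a\,h\overline h=3(x^4+2)$; then $\rf{f,\overline f}\in\sZ^*(w)$ has length $2$, while $\rf{a,\overline a,h,\overline h}\in\sZ^*(w)$ has length $4$, so $\{2,4\}\subseteq\sL(w)$ and $\card{\sL(w)}\ge 2$, i.e.\ $\cH[x]$ is not half-factorial. The main obstacle is the atom verification for $f$: the norm-form computation must be combined with the divisibility analysis of the leading coefficient to rule out a degree-$0$ factor; a secondary point requiring care is establishing in the first step that $N$ is central and multiplicative on the noncommutative ring $\cH[x]$.
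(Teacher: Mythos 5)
Your proposal is correct: the paper itself gives no proof of this statement beyond citing \cite{beauregard92}, and your norm-form argument (computing $N(f)=3x^4+6$ and $N(h)=x^4+2$, then ruling out proper factors of $a$, $f$, $h$ via multiplicativity of $N$, the prime norms $3$ and $1$ of the leading coefficients, and Eisenstein irreducibility of $x^4+2$ over $\bQ$) is exactly the standard verification underlying Beauregard's example. I checked the quaternion arithmetic ($\tr(a\overline b)=\tr(b\overline c)=0$, $\tr(a\overline c)=-8=-\nr(2+2i)$, and the analogous cancellations for $h$) and the atomicity case analysis, and all steps hold.
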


\begin{theorem}[{\cite{beauregard93}}]\label{e-beau-rational}
  Let $\bH_{\bQ}$ denote the Hamilton quaternion algebra with coefficients in $\bQ$.
  Then $\bH[x,y]$ is not half-factorial.
  Explicitly, with
  \[
  f=(x^2y^2-1) + (x^2 - y^2)i + 2xy j,
  \]
  one has
  \[
  f\overline f = (x^2 + i)(x^2 - i)(y^2 + i)(y^2 - i),
  \]
  with all stated factors being atoms.
\end{theorem}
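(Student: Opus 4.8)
The plan is to confirm Beauregard's explicit identity via the reduced norm and to exhibit two rigid factorizations of $f\overline{f}$ of different lengths — namely $\rf{f,\overline{f}}$ of length $2$ and $\rf{(x^2+i),(x^2-i),(y^2+i),(y^2-i)}$ of length $4$ — so that $\{2,4\}\subseteq\sL(f\overline{f})$ and $\bH_\bQ[x,y]$ is not half-factorial. Set $R=\bH_\bQ[x,y]$ and extend conjugation to the anti-automorphism of $R$ fixing $x$ and $y$, so that $\nr(g)=g\overline{g}$ lies in $\bQ[x,y]=Z(R)$ and $\nr$ is multiplicative: $\nr(gh)=\nr(g)\nr(h)$. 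Two preliminaries I would record first: (i) $R$ is Noetherian (module-finite over $\bQ[x,y]$), hence $R^\bullet=R\setminus\{0\}$ is atomic; and (ii) $R^\times=\bH_\bQ^\times$ (as $R$ is a domain graded by total degree with degree-zero part the division ring $\bH_\bQ$), while for a nonunit $g\in R^\bullet$ the polynomial $\nr(g)$ is nonconstant — writing $g=g_0+g_1i+g_2j+g_3k$ with $g_\ell\in\bQ[x,y]$ gives $\nr(g)=g_0^2+g_1^2+g_2^2+g_3^2$, and a sum of squares of nonzero real homogeneous polynomials never vanishes, so $\deg\nr(g)=2\max_\ell\deg g_\ell$, which is $0$ only if $g\in\bH_\bQ^\times=R^\times$. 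The same computation shows the $y$-degree of $g\overline{g}$ is twice that of $g$, and likewise for $x$.

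Next I would do the norm computation: for $f=(x^2y^2-1)+(x^2-y^2)i+2xyj$,
\[
\nr(f)=(x^2y^2-1)^2+(x^2-y^2)^2+(2xy)^2=x^4y^4+x^4+y^4+1=(x^4+1)(y^4+1).
\]
Inside $R$ the elements $x^2\pm i$ and $y^2\pm i$ all lie in the commutative subring $\bQ[x,y][i]$, so they commute and $(x^2+i)(x^2-i)(y^2+i)(y^2-i)=(x^4+1)(y^4+1)=f\overline{f}$, which is the asserted identity. I would also note that $x^4+1$ and $y^4+1$ are irreducible in the UFD $\bQ[x,y]$: since $x^4+1$ has $y$-degree $0$, any factorization of it takes place in $\bQ[x]$, where $x^4+1=\Phi_8$ is irreducible (and symmetrically for $y^4+1$).

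It remains to show that all five displayed elements are atoms. For $x^2+i$ (and, symmetrically, $x^2-i$, $y^2\pm i$): $\nr(x^2+i)=(x^2+i)(x^2-i)=x^4+1$ is irreducible, so a proper factorization $x^2+i=gh$ would force $\nr(g)\nr(h)=x^4+1$ with $\nr(g),\nr(h)$ both nonconstant, impossible in the UFD $\bQ[x,y]$. For $f$ the argument is longer. A proper factorization $f=gh$ gives $\nr(g)\nr(h)=(x^4+1)(y^4+1)$ with $\nr(g),\nr(h)$ nonconstant, so unique factorization in $\bQ[x,y]$ lets us swap $g$ and $h$ to arrange $\nr(g)=c(x^4+1)$, $\nr(h)=c^{-1}(y^4+1)$ for some $c\in\bQ^\times$. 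Since $\nr(g)$ has $y$-degree $0$ and the $y$-degree of $g\overline{g}$ is twice that of $g$, we get $g\in\bH_\bQ[x]$; likewise $h\in\bH_\bQ[y]$; and comparing $x$-degrees in $f=gh$ gives $\deg_x g=2$. Now write $h=h_2y^2+h_1y+h_0$ with $h_\ell\in\bH_\bQ$; on the right side of $f=gh$ the coefficient of $y^1$ is $gh_1$, while from $f=(x^2-i)y^2+2xj\,y+(x^2 i-1)$ it is $2xj$. As $h_1$ lies in the division ring $\bH_\bQ$, the element $gh_1$ is either $0$ or of $x$-degree $\deg_x g=2$, whereas $2xj$ is nonzero of $x$-degree $1$: a contradiction. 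Hence $f$ is an atom, and since conjugation is an anti-automorphism it maps atoms to atoms, so $\overline{f}$ is an atom too. Finally $f\overline{f}\in R^\bullet$ is a nonunit with the rigid factorizations $\rf{f,\overline{f}}$ and $\rf{(x^2+i),(x^2-i),(y^2+i),(y^2-i)}$ of lengths $2$ and $4$; hence $\sL(f\overline{f})\supseteq\{2,4\}$, and $R$ is not half-factorial.

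The crux is the irreducibility of $f$: the norm computation by itself still permits the balanced split $\nr(g)\sim x^4+1$, $\nr(h)\sim y^4+1$, and ruling it out requires both the separation of variables that split forces and the specific $j$-component $2xy$ of $f$ (equivalently, the coefficient of $xy$). The other steps are routine polynomial arithmetic over $\bQ$ and bookkeeping with the multiplicative norm.
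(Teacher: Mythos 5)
The survey gives no proof of this theorem: it is stated as a quotation of Beauregard's result with the reference \cite{beauregard93}, so there is no in-paper argument to compare against. Your proposal is a correct, self-contained verification along the lines one would expect from Beauregard's original paper: the norm computation $\nr(f)=(x^4+1)(y^4+1)$ checks out, the identity $(x^2+i)(x^2-i)(y^2+i)(y^2-i)=(x^4+1)(y^4+1)=f\overline f$ is right since these factors lie in the commutative subring $\bQ[x,y][i]$, and the degree formula $\deg\nr(g)=2\max_\ell\deg g_\ell$ (valid because a sum of squares of nonzero real forms cannot vanish) correctly yields $R^\times=\bH_\bQ^\times$ and the atomicity of $x^2\pm i$, $y^2\pm i$ from the irreducibility of $x^4+1$ and $y^4+1$ in $\bQ[x,y]$. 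The only point to polish is the phrase ``unique factorization \dots lets us swap $g$ and $h$'': in a noncommutative ring you cannot literally interchange the factors of $f=gh$, so you should instead treat the two cases $\nr(g)\sim x^4+1$ and $\nr(g)\sim y^4+1$ separately; in either case one factor lies in $\bH_\bQ[x]$ and the other in $\bH_\bQ[y]$, and your comparison of the $y^1$-coefficient $2xj$ (of $x$-degree $1$) against $gh_1$ or $g_1h$ (which is $0$ or of $x$-degree $2$, since $\bH_\bQ$ is a division ring) gives the contradiction in both cases, so nothing is actually lost. With that cosmetic fix the argument is complete: $\rf{f,\overline f}$ and $\rf{(x^2+i),(x^2-i),(y^2+i),(y^2-i)}$ are rigid factorizations into atoms of lengths $2$ and $4$, whence $\{2,4\}\subset\sL(f\overline f)$ and $\bH_\bQ[x,y]$ is not half-factorial.
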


Note that this is quite independent of the precise definition of factoriality we are using.
In particular, the second result implies that as long as we expect a factorial domain to be at least half-factorial and that division rings are (trivially) factorial domains, then it cannot be that polynomial rings over factorial domains are again always factorial domains.

\subsubsection{Weaker Forms of Similarity and Nagata's Theorem}

A basic form of Nagata's theorem in the commutative setting is the following:
Let $R$ be a commutative domain, and $S \subset R$ a multiplicative subset generated by prime elements.
Then, if $S^{-1}R$ is factorial, so is $R$.
In this way, one obtains that $\bZ[x]$ is factorial from the fact that $\bQ[x]$ is factorial.

A similar result cannot hold for similarity factoriality, as the following example from \cite{cohn69a} shows.
In $\bZ\langle x,y \rangle$, we have
\[
xyx + 2x = x(yx+2) = (xy+2)x.
\]
However, $yx+2$ is not similar to $xy+2$ in $\bZ\langle x, y\rangle$, as can be verified by a direct computation.

This provides a motivation to study weaker forms of equivalence relations on atoms than that of similarity.
Two elements $a$,~$b$ in a domain $R$ are called \emph{(right) subsimilar}, if there exist injective module homomorphisms $R/aR \hookrightarrow R/bR$ and $R/bR \hookrightarrow R/aR$.
Brungs, in \cite{brungs69}, studied domains in which factorizations are unique up to permutation and subsimilarity of atoms.

\begin{definition}
A domain $R$ is \emph{subsimilarity factorial} (or a \emph{subsimilarity-UFD}) if $R$ is atomic, and factorizations of elements are unique up to order and subsimilarity of the atoms.
\end{definition}

Brungs proved a form of Nagata's Theorem using this notion and a, in general somewhat complicated, concept of prime elements (see \cite[Satz 7]{brungs69}).
In turn, he obtained the following.
\begin{theorem}[{\cite[Satz 8]{brungs69}}]
  Let $R$ be a commutative domain and $X$ a set of noncommuting indeterminates.
  Then the free associative algebra $R\langle X \rangle$ is subsimilarity factorial if and only if $R$ is factorial.
\end{theorem}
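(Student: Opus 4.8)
The plan is to treat the two implications separately, using throughout that $H := R\langle X\rangle$ is a domain which is $\bN_0$-graded by total degree in $X$, with degree-zero component $R$ and with every element of $R$ central. Three elementary consequences of this grading will be used repeatedly: degrees are additive on $H^\bullet$, $H^\times = R^\times$, and an element of $R$ is an atom of $H$ precisely when it is irreducible in $R$.

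For the implication ``$H$ subsimilarity factorial $\Rightarrow$ $R$ factorial'', I would first note that $R$ is atomic: a factorization into atoms of $H$ of a non-unit $c \in R^\bullet$ must, by additivity of degree, consist entirely of degree-zero atoms, hence of irreducibles of $R$. For uniqueness, the key lemma is that irreducibles $p, q \in R$ that are subsimilar in $H$ are already associated in $R$. To see this, take one of the injective right $H$-module homomorphisms $\varphi \colon H/pH \hookrightarrow H/qH$ supplied by subsimilarity; since $p$ is central, $\varphi$ is given by $\varphi(\overline{1}) = \overline{g}$ for some $g \in H$ with $gp \in qH$, and injectivity says $gf \in qH \Rightarrow f \in pH$. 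Substituting $f = q$ (again central, so $gq = qg \in qH$) forces $q \in pH \cap R = pR$, i.e.\ $p \mid q$ in $R$, whence $pR = qR$ since $q$ is irreducible. Together with atomicity this yields unique factorization in $R$ up to order and associates, so $R$ is factorial. (This argument works verbatim even when $X = \emptyset$.)

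For the converse, assume $R$ factorial with quotient field $K$, write $c(f)$ for the gcd in $R$ of the coefficients of $f \in H^\bullet$, and call $f$ primitive if $c(f)$ is a unit. Atomicity of $H$ is quick: in $f = \varepsilon u_1 \cdots u_k$ with the $u_i$ atoms, the positive-degree $u_i$ number at most $\deg f$, while the degree-zero ones (irreducible in $R$, hence central) have product dividing $c(f)$ and so number at most the number $\ell_R(c(f))$ of prime factors of $c(f)$; hence $k \le \deg f + \ell_R(c(f))$. For the uniqueness statement I would either invoke Brungs's Nagata-type theorem \cite[Satz 7]{brungs69} directly, after checking that the primes of $R$ are prime elements of $H$ in his sense, or argue concretely as follows. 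Since $R/pR$ is a domain for every prime $p$, the free algebra $H/pH \cong (R/pR)\langle X\rangle$ is a domain; this gives Gauss's lemma for $H$ — a product of primitive elements is primitive, and every $f \in H^\bullet$ equals $c(f)$ times a primitive element, uniquely up to units. Consequently a positive-degree atom of $H$ is primitive and, by clearing denominators and using Gauss's lemma, remains an atom of the fir $K\langle X\rangle$; moreover a primitive element of $H$ that left-divides an element of $H$ inside $K\langle X\rangle$ already does so inside $H$. Now, given two atom factorizations of $f \in H^\bullet$, pulling the central degree-zero atoms to the front writes each side as $\alpha \cdot P$ with $\alpha \in R^\bullet$ and $P$ a product of positive-degree atoms; then $P$ is primitive, $\alpha \sim c(f)$, and, since $R$ is factorial, the degree-zero atoms of both sides are the prime factors of $c(f)$, matching up to order and associates. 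Cancelling $\alpha$, the two primitive parts coincide up to a unit of $R$ and hence define the \emph{same} atom factorization in $K\langle X\rangle$; similarity factoriality of $K\langle X\rangle$ then matches the positive-degree atoms up to order and similarity in $K\langle X\rangle$, and a last denominator-clearing step (again via Gauss's lemma) converts each such similarity into a subsimilarity in $H$. This gives unique factorization in $H$ up to order and subsimilarity, as required.

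The hard part will be the interface between $H$ and $K\langle X\rangle$: proving Gauss's lemma for the free algebra $R\langle X\rangle$, showing that primitivity is multiplicative and detects atoms after localization, and showing that a similarity over $K\langle X\rangle$ descends to a subsimilarity over $R\langle X\rangle$. This is precisely where Brungs's somewhat technical notion of prime element does its work; everything else is bookkeeping with the grading.
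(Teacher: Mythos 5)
Your proposal is correct, and it in fact contains two routes. The first option you mention --- invoke Brungs's Nagata-type theorem (Satz~7) after checking that the primes of $R$ are prime elements of $R\langle X\rangle$ in his sense, and use that the localization $K\langle X\rangle$ is a fir and hence similarity (= subsimilarity) factorial --- is precisely how the survey, which gives no proof beyond the citation, and Brungs himself obtain the result. Your second, self-contained route is genuinely different: it replaces the general localization machinery and Brungs's delicate notion of prime element by a content/Gauss-lemma analysis tailored to the $\bN_0$-grading of the free algebra. The steps you flag as ``the hard part'' do all go through: multiplicativity of content follows from $R\langle X\rangle/pR\langle X\rangle \cong (R/pR)\langle X\rangle$ being a domain; for primitive $w$ one gets $wK\langle X\rangle \cap R\langle X\rangle = wR\langle X\rangle$, so an isomorphism $K\langle X\rangle/vK\langle X\rangle \to K\langle X\rangle/wK\langle X\rangle$, realized by left multiplication by a denominator-cleared $g \in R\langle X\rangle$, restricts to injective maps $R\langle X\rangle/vR\langle X\rangle \hookrightarrow R\langle X\rangle/wR\langle X\rangle$ and back, i.e., a subsimilarity. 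What the direct route buys is elementarity and an explicit proof of the converse implication (subsimilarity factorial $\Rightarrow$ $R$ factorial) via your degree/centrality lemma that subsimilar irreducibles of $R$ are associated --- a direction the survey does not spell out at all; what the Nagata route buys is brevity and generality, since Satz~7 applies well beyond free algebras. One cosmetic remark: in the forward direction the centrality of $p$ is not needed to write $\varphi(\overline{1})=\overline{g}$ (any right-module map out of a cyclic module has this form); it is needed only to conclude $gq = qg \in qH$ when you substitute $f=q$.
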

In the same paper, Brungs showed that skew power series rings over right PIDs are right LCM domains.
He used this to construct an atomic right LCM domain which is not half-factorial (see Example~\ref{e-atomic-right-lcm-not-hf} below).

Motivated by Brungs' work, and with the goal of obtaining a variant of Nagata's Theorem with a simpler notion of prime elements than the one Brungs was using, P.\,M.~Cohn, in \cite{cohn69a}, introduced the notion of \emph{(right) monosimilarity}.
Let $R$ be a ring, and call an element $a \in R$ \emph{regular} if all divisors of $a$ are non-zero-divisors.
A right $R$-module is \emph{strictly cyclic} if it is isomorphic to $R/aR$ for a regular element $a \in R$.
The category $\cC_R$ of strictly cyclic modules is the full subcategory of the category of right $R$-modules with objects the strictly cyclic right $R$-modules.
If $R$ is a $2$-fir, then $\cC_R$ is an abelian category.

Two regular elements $a$,~$b \in R$ are called \emph{(right) monosimilar} if there exist monomorphisms $R/aR \to R/bR$ and $R/bR \to R/aR$ in $\cC_R$.
In general, this is a weaker notion than subsimilarity.
Indeed, if $R$ is a domain, then a homomorphism $f$ of strictly cyclic modules is a monomorphism in $\cC_R$ if and only if its kernel (as homomorphism of $R$-modules) is torsionfree.
Within the class of $2$-firs, the notions of subsimilarity and monosimilarity are equivalent to similarity.

In \cite{cohn73}, P.\,M.~Cohn gives a set of axioms for an equivalence relation on elements that is sufficient to obtain Nagata's theorem.
These axioms are satisfied by the (right) monosimilarity relation, but in general not by the similarity relation.
The main obstacle in the case of the similarity relation is that $a$ and $b$ being similar in $S^{-1}R$ does not imply that $a$ and $b$ are similar in $R$.

\subsubsection{Stronger Forms of Similarity}

A ring $R$ is \emph{permutably factorial} if $R^\bullet$ is atomic and factorizations in $R^\bullet$ are unique up to order and two-sided associativity of the atoms.
This was studied in \cite{baeth-smertnig15}.
It is a rather strong requirement, but there are results for $R=T_n(D)$, the ring of $n\times n$ upper triangular matrices over an atomic commutative domain $D$, and $R=M_n(D)$ when $D$ is commutative.
See Sections~\ref{sssec:matrix} and \ref{sssec:triangular} below.

In \cite{beauregard94b}, Beauregard studied right UFDs.
A domain $R$ is a \emph{right unique factorization domain (right UFD)} if it is atomic, and factorizations are unique up to order and right associativity of the atoms.
Note that Example~\ref{e-sim-ufd}\ref*{e-sim-ufd:maxord} implies that there exist PIDs which are not right UFDs.
Beauregard gives an example of a right UFD which is not a left UFD.
In particular, while any right or left UFD is permutably factorial, the converse is not true.
(This can also be seen by looking at $M_n(R)$ for $R$ a commutative PID, $n \ge 2$, and using the Smith Normal Form.)

\subsubsection{LCM Domains and Projectivity Factoriality}

LCM domains and factorizations of elements therein were investigated by Beauregard in a series of papers (see \cite{beauregard71, beauregard74, beauregard77, beauregard80, beauregard94, beauregard95}).
A domain $R$ is a \emph{right LCM domain} if $aR \cap bR$ is principal for all $a$,~$b \in R$.
A \emph{left LCM domain} is defined analogously, and an \emph{LCM domain} is a domain which is both, a right and a left LCM domain.
By the characterization in Theorem~\ref{t-2fir}, any $2$-fir is an LCM domain.

If $R$ is an LCM domain and $a \in R^\bullet$, then the poset $[aR,R]$ is a lattice with respect to the partial order induced by set inclusion (see \cite[Lemma~1]{beauregard71}).
However, $[aR,R]$ need not be a sublattice of the lattice of all right ideals of $R$, that is, $bR+cR$ need not be principal for $bR$,~$cR \in [aR,R]$.

A commutative domain is an atomic LCM domain if and only if it is factorial.
Unfortunately, if $R$ is an atomic right LCM domain, $R$ need not even be half-factorial, as the following example shows.
\begin{example}[{\cite{brungs69} or \cite[Remark 3.9]{beauregard77}}] \label{e-atomic-right-lcm-not-hf}
  Let $R=K[x]$ be the polynomial ring over a field $K$.
  Let $\sigma\colon R \to R$ be the monomorphism with $\sigma|_{K}=\id_K$ and $\sigma(x)=x^2$.
  The skew power series ring $S=R\llbracket y;\sigma\rrbracket$, consisting of elements of the form $\sum_{n=0}^\infty y^n a_n$ with $a_n \in R$, and with multiplication given by $ay=y\sigma(a)$ for $a \in R$, is a right LCM domain by \cite[Satz 9]{brungs69}.
  The equality $xy=yx^2$ shows that $S$ is not half-factorial.
\end{example}

However, under an additional condition we do obtain unique factorization in a sense.
For $a$,~$b \in R^\bullet$, denote by $[a,b]_r$ a least common right multiple (LCRM), that is, a generator of $aR \cap bR$, and by $(a,b)_l$ a greatest common left divisor (GCLD), that is, a generator of the least principal ideal containing $aR + bR$.
Note that $[a,b]_r$ and $(a,b)_l$ are only defined up to right associativity.
A right LCM domain is called \emph{modular} if, for all $a$, $b$,~$c \in R^\bullet$,
\begin{equation*}
  [a,bc]_r=[a,b]_r \text{ and } (a,bc)_l = (a,b)_l \text{ implies $c \in R^\times$.}
\end{equation*}
If $R$ is an LCM domain, the condition is equivalent to the lattice $[aR,R]$ being modular.
Thus, any $2$-fir is a modular LCM domain.
However, the converse is not true.
Any factorial commutative domain which is not a PID is a counterexample.

Let $R$ be a domain. Beauregard calls two elements $a$,~$a' \in R^\bullet$ \emph{transposed}, and writes $a \tr a'$, if there exists $b \in R^\bullet$ such that
\[
[a,b]_r=ba' \quad\text{and}\quad (a,b)_l=1.
\]
If this is the case, there exists $b' \in R^\bullet$ such that $ba'=ab'$ and $b \tr b'$.
If $R$ is an LCM domain, then $a \tr a'$ if and only if the interval $[aR,R]$ is down-perspective to $[ba'R,bR]$ in the lattice $[ba'R,R]$.
If $R$ is a $2$-fir, then $a$ and $a'$ are transposed if and only if they are similar.
The elements $a$ and $a'$ are \emph{projective} if there exist $a=a_0$, $a_1$, $\ldots\,$,~$a_n=a'$ such that, for each $i \in [1,n]$, either $a_{i-1} \tr a_i$ or $a_i \tr a_{i-1}$.
\begin{definition}
  A domain $R$ is \emph{projectivity factorial} (or a \emph{projectivity-UFD}) if $R$ is atomic, and factorizations of elements are unique up to order and projectivity of the atoms.
\end{definition}
\begin{theorem}[{\cite[Theorem 2]{beauregard71}}]
  If $R$ is an atomic modular right LCM domain, then $R$ is projectivity factorial.
\end{theorem}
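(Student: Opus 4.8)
The natural route is via the Jordan--Hölder theory of the factor lattices $\iv{aR}{R}$. Fix $a \in R^\bullet$ and set $L = \iv{aR}{R}$. First I would check that $L$ is a modular lattice of finite length. It is a lattice: the meet of $bR$ and $cR$ in $L$ is $bR \cap cR$, which is principal by the right LCM hypothesis and contains $aR$, hence lies in $L$; for the joins see \cite[Lemma~1]{beauregard71}. It is modular: this is precisely the content of $R$ being a modular right LCM domain (unwinding the defining condition ``$[a,bc]_r=[a,b]_r$, $(a,bc)_l=(a,b)_l$ imply $c \in R^\times$'' into a statement about $L$, as recalled in the text). It has finite length: since $R$ is atomic, $a$ factors into atoms, and the associated chain $aR=u_1\cdots u_mR \subsetneq \cdots \subsetneq u_1R \subsetneq R$ is a maximal chain of finite length in $L$ (the $u_i$ being atoms, each inclusion is a covering), and a semimodular lattice with a maximal chain of finite length has finite length.

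Next, recall that rigid factorizations of $a$ are in bijection with maximal chains of $L$, with the $i$-th covering step of the chain carrying the $i$-th atom of the factorization as its composition factor. So two atom factorizations $u_1\cdots u_m = a = v_1\cdots v_n$ yield two maximal chains $C$, $C'$ of $L$ with the common endpoints $aR$ and $R$. The Schreier refinement theorem in the modular lattice $L$, applied to $C$ and $C'$ --- which are maximal, hence unrefinable --- gives that $C$ and $C'$ are already equivalent: $m=n$, and there is $\sigma \in \fS_m$ such that for each $i$ the $i$-th prime interval of $C$ is projective, \emph{as an interval of $L$}, to the $\sigma(i)$-th prime interval of $C'$; moreover, since in a modular lattice a perspectivity out of a prime interval lands again in a prime interval, these projectivities run through prime intervals of $L$ only.

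The main obstacle is the last step: converting lattice-perspectivity of prime intervals into the transposition relation $\tr$ on the associated atoms. It suffices to treat one perspectivity, say $\iv{c_0R}{c_1R}$ transposing up to $\iv{d_0R}{d_1R}$ in $L$ (the down case being symmetric), so $c_0R=c_1R\cap d_0R$ and $d_1R$ is the join of $c_1R$ and $d_0R$ in $L$. Write $c_0=c_1r$ and $d_0=d_1t$ with $r$, $t$ atoms, and $c_1=d_1s$ (since $c_1R \subseteq d_1R$). Cancelling $d_1$ from $c_1R\cap d_0R = d_1(sR\cap tR)$ and from $c_0R = d_1srR$ gives $srR=sR\cap tR$, which is principal by the right LCM hypothesis; hence $sr$ is a least common right multiple of $s$ and $t$, and $sr=tw$ for some $w\in R^\bullet$. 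Furthermore $s$ and $t$ have no non-unit common left divisor: if $s=fg$ and $t=fh$, then $d_1fR \in L$ lies above $c_1R$ and $d_0R$, hence above their join $d_1R$, forcing $f\in R^\times$; so $(s,t)_l=1$. (It is crucial here to argue about common left divisors inside $L$ rather than through GCLDs, since $L$ need not be a sublattice of the lattice of all right ideals; this is exactly the unwinding of the characterization that $x\tr x'$ iff $\iv{xR}{R}$ is down-perspective to $\iv{bx'R}{bR}$ in $\iv{bx'R}{R}$.) Thus $s\tr w$ with witness $b=t$, and the accompanying fact (which yields $t\tr b'$ for the $b'$ with $tw=sb'$, i.e.\ $b'=r$) gives $t\tr r$; so $r$ and $t$ are projective as elements. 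Iterating over the chain of perspectivities connecting the $i$-th composition factor of $C$ to the $\sigma(i)$-th of $C'$ shows that $u_i$ and $v_{\sigma(i)}$ are projective, which together with the atomicity of $R$ is exactly the claim that $R$ is projectivity factorial.
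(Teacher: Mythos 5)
Your proof is correct and follows essentially the same route the paper indicates for this result: apply the Jordan--Hölder/Schreier theorem to the modular factor lattice $[aR,R]$ and then translate projectivity of prime intervals into Beauregard's transposition relation $\tr$ on the corresponding atoms. The only difference is one of detail, not of method --- you carry out explicitly the verification (perspective prime intervals give $[s,t]_r=sr=tw$ with $(s,t)_l=1$, hence $t \tr r$) that the paper merely records as the observation needed to make the lattice-theoretic argument yield projectivity of the atoms.
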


In \cite{beauregard95}, the condition of modularity has been weakened to the \emph{right atomic multiple property (RAMP)}.
A domain satisfies the RAMP if, for elements $a$,~$b \in R$ with $a$ an atom and $aR \cap bR \ne 0$, there exist $a'$,~$b' \in R$ with $a'$ an atom such that $ab'=ba'$.
One can check that, for an LCM domain, the RAMP is equivalent to the lattice $[aR,R]$ being lower semimodular for all $a \in R^\bullet$.
An atomic LCM domain is modular if and only if it satisfies both, the RAMP and LAMP, which is defined symmetrically (see \cite[Theorem 3]{beauregard95}).

Beauregard shows that, in a right LCM domain $R$, the RAMP is equivalent to the following condition: If $a$,~$a' \in R^\bullet$ such that $a$ is an atom, and $a \tr a'$, then $a'$ is also an atom (see \cite[Proposition~2]{beauregard95}).
He obtains the following generalization of the previous theorem.
\begin{theorem}[{\cite[Theorem~1]{beauregard95}}]
  If $R$ is an atomic right LCM domain satisfying the RAMP, then $R$ is projectivity factorial.
\end{theorem}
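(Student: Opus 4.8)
The plan is to run the lattice-theoretic Jordan--H\"older argument inside the factor poset $[aR,R]$ and then translate the outcome back into a statement about factorizations. Fix $a \in R^\bullet$. Since $R$ is atomic, $\sZ^*(a) \ne \emptyset$, and each rigid factorization $z = \rf{u_1,\ldots,u_m}$ of $a$ corresponds to the finite maximal chain
\[
aR = u_1\cdots u_m R \subsetneq u_1\cdots u_{m-1}R \subsetneq \cdots \subsetneq u_1 R \subsetneq R
\]
of $L := [aR,R]$; here $R$ is a domain, so $aR \subseteq bR$ is equivalent to left divisibility and the correspondence between rigid factorizations of $a$ and finite maximal chains of $[aR,R]$ applies. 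The $i$-th covering step of this chain is a prime quotient of $L$ which, through the order isomorphism $[u_1\cdots u_i R,\, u_1\cdots u_{i-1} R] \cong [u_i R, R]$, ``represents'' the atom $u_i$. Here $u_i$ is pinned down by the interval only up to two-sided association, but that is precisely the ambiguity already inherent in a rigid factorization, and projectivity of atoms is insensitive to it.

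Next I would check that $L$ is a lower semimodular lattice of finite length. That $L$ is a lattice is Beauregard's Lemma~1, recalled above, valid for any right LCM domain. That $L$ is lower semimodular is what the RAMP buys: as recalled above, in a right LCM domain the RAMP is equivalent to the statement that a transpose of an atom is again an atom, equivalently to lower semimodularity of all factor lattices $[aR,R]$. Finally, the mere existence of one finite maximal chain (from any rigid factorization of $a$) forces $L$ to have finite length --- the dual of the fact, recalled in Section~\ref{sec:rf}, that a semimodular lattice possessing a finite maximal chain has finite length. Consequently all maximal chains of $L$ have a common length $\ell$, which already gives $m = n = \ell$ for any two rigid factorizations $z = \rf{u_1,\ldots,u_m}$, $z' = \rf{v_1,\ldots,v_n}$ of $a$.

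I would then apply the Jordan--H\"older theorem for lower semimodular lattices of finite length to the two maximal chains attached to $z$ and $z'$: there is a bijection between their prime quotients, inducing a permutation $\sigma \in \fS_m$, such that the $i$-th prime quotient of the first chain is projective \emph{in the lattice sense} to the $\sigma(i)$-th prime quotient of the second, i.e.\ linked to it by a finite sequence of perspectivities of intervals. It remains to convert lattice perspectivity into the element relation $\tr$. For a right LCM domain one has $c \tr c'$ if and only if the prime interval $[cR,R]$ is down-perspective, inside a suitable ambient interval, to the prime interval representing $c'$; applying this along the sequence of perspectivities furnished by Jordan--H\"older yields a chain $u_i = a_0, a_1, \ldots, a_t = v_{\sigma(i)}$ in which consecutive terms are transposed in one direction or the other, i.e.\ $u_i$ and $v_{\sigma(i)}$ are projective. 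Together with the hypothesis that $R$ is atomic, this is exactly the assertion that $R$ is projectivity factorial.

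The main obstacle I anticipate is this last conversion. One must verify that the elementary perspectivities delivered by the Jordan--H\"older argument can be arranged to run between prime quotients throughout, so that each step really is a transposition of \emph{atoms} rather than of arbitrary elements; the RAMP re-enters here via Beauregard's observation that a transpose of an atom is an atom. A secondary, bookkeeping-level task is to keep track of the left/right association ambiguities --- of the generators representing a prime quotient, and of the least common right multiples and greatest common left divisors in the definition of $\tr$ --- and to confirm that projectivity descends to association classes, so that $\sigma$ and the conclusion are well defined. Note finally that modularity is not needed: the Jordan--H\"older theorem for (lower) semimodular lattices already produces projectivity of corresponding prime quotients, which is exactly the equivalence relation in the definition of projectivity factoriality; it is only the stronger Schreier-refinement conclusion that would require modularity.
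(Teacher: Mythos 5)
Your argument is precisely the lattice-theoretic deduction that the survey itself sketches immediately after the theorem statement, but the survey is careful to claim that deduction only for atomic \emph{two-sided} LCM domains, and that restriction is exactly where your proof has a gap. The opening step, ``$L=[aR,R]$ is a lattice, by Beauregard's Lemma~1, valid for any right LCM domain,'' is not what the paper recalls: Lemma~1 of \cite{beauregard71} is invoked there under the hypothesis that $R$ is an LCM domain (left \emph{and} right). In a right LCM domain the poset $[aR,R]$ does have binary meets ($bR \wedge cR = bR \cap cR = [b,c]_r R$, which is principal and contains $a$), but joins are the issue: under the canonical anti-isomorphism $[aR,R] \to [Ra,R]$, $bR \mapsto Rb'$ with $a=bb'$, the join of $bR$ and $cR$ corresponds to the meet of $Rb'$ and $Rc'$, and that meet is supplied by $Rb' \cap Rc'$ being principal --- i.e.\ by the \emph{left} LCM property. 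Without it, the least principal right ideal containing $bR+cR$ (the GCLD) is not known to exist, so $L$ need not be a lattice and the Jordan--H\"older machinery cannot even be started. The companion claim that the RAMP is equivalent to lower semimodularity of all factor lattices is likewise stated in the survey only ``for an LCM domain''; what Beauregard proves for right LCM domains is the element-wise reformulation (a transpose of an atom is again an atom), which does not presuppose a lattice structure.

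For the atomic two-sided LCM case your outline matches the paper's remark step for step: rigid factorizations as finite maximal chains in $[aR,R]$, lower semimodularity from the RAMP, Jordan--H\"older for semimodular lattices of finite length, and the bridge from lattice projectivity of prime quotients to projectivity of atoms (the paper's closing observation that if $[aR,R]$ is projective to $[ba'R,bR]$ inside $[ba'R,R]$ then $a$ and $a'$ are projective as elements). To prove the theorem as actually stated, for right LCM domains, you would need either to show that the RAMP plus atomicity forces $[aR,R]$ to be a lattice (nothing in the paper supports this, and it would require its own argument), or to abandon the lattice formulation and run Beauregard's direct exchange/induction argument on elements, using only LCRMs and the atom-transposition form of the RAMP.
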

If $R$ is an atomic LCM domain, this theorem (as well as the previous one) can be deduced from the Jordan-Hölder theorem for semimodular lattices (see, for instance, \cite{graetzer-nation10}).
To do so, note the following: If $a$,~$a' \in R^\bullet$ and there exists $b \in R^\bullet$ such that interval $[aR,R]$ is projective to $[ba'R,bR]$ in the lattice $[ba'R,R]$, then the elements $a$ and $a'$ are projective.

Beauregard has also obtained a form of Nagata's Theorem for modular right LCM domains (see \cite{beauregard77}).
He has moreover shown that an atomic LCM domain with conjugation is already modular (see \cite[Theorem 4]{beauregard95}).
In \cite[Example~3]{beauregard95} he gives an example of an LCM domain which satisfies neither the RAMP nor the LAMP, and hence, in particular, does not have modular factor lattices.

Skew polynomial rings over total valuation rings provide another source of LCM domains.
A subring $V$ of a division ring $D$ is called a \emph{total valuation ring} if $x \in V$ or $x^{-1} \in V$ for each $x \in D^\bullet$.
\begin{theorem}[{\cite{marubayashi10}}]
Let $V$ be a total valuation ring, let $\sigma$ be an automorphism of $V$ and let $\delta$ be a $\sigma$-derivation on $V$ such that $\delta(J(V))\subset J(V)$, where $J(V)$ denotes the Jacobson radical of $V$.
Then $V[x;\sigma,\delta]$ is an LCM domain.
\end{theorem}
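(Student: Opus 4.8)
The plan is to verify the two defining conditions of an LCM domain for $R := V[x;\sigma,\delta]$: that $fR\cap gR$ is a principal right ideal and $Rf\cap Rg$ is a principal left ideal, for all $f,g\in R$. These two assertions are interchanged by passing to the opposite ring, and since $\sigma$ is an automorphism, $R^{\mathrm{op}}$ is again a skew polynomial ring $V^{\mathrm{op}}[x;\sigma_0,\delta_0]$ in which $V^{\mathrm{op}}$ is a total valuation ring with $J(V^{\mathrm{op}})=J(V)$, $\sigma_0$ is an automorphism, and $\delta_0(J(V^{\mathrm{op}}))\subseteq J(V^{\mathrm{op}})$ (using $\delta(J(V))\subseteq J(V)$ and $\sigma(J(V))=J(V)$). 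Hence it suffices to prove that $R$ is a right LCM domain for every such $R$. Throughout, I would fix the division ring of fractions $D$ of $V$ — it exists because the one-sided ideals of a total valuation ring form a chain, so $V$ is a left and right Ore domain — extend $\sigma$ to an automorphism of $D$ and $\delta$ to a $\sigma$-derivation of $D$, and put $\tilde R:=D[x;\sigma,\delta]$. Then $\tilde R$ is right (and left) Euclidean for the degree function, hence a PID, and $R$ embeds in $\tilde R$. For nonzero $a,b\in R$, a nonzero common right multiple of $a,b$ in $\tilde R$ becomes a nonzero common right multiple in $R$ after right multiplication by a suitable common denominator in $V$; thus $R$ is itself right Ore, and $fR\cap gR\neq 0$ whenever $f,g\neq 0$.

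Next, fix nonzero $f,g\in R$, put $L:=fR\cap gR$, and use that $\tilde R$ is a PID to write $f\tilde R\cap g\tilde R=m\tilde R$. After right-multiplying $m$ by a common denominator in $V$ we may assume $m\in R$, and then, since $m\in f\tilde R\cap g\tilde R$, also $m\in L$, so that $L\tilde R=m\tilde R$. Let $n$ be the least degree of a nonzero element of $L$; one checks $n=\deg m$. A degree-$n$ element of $L$ has the form $md$ with $d\in D$, and $md\in L$ if and only if $d\in T:=\{\,d\in D:\tilde pd\in R\text{ and }\tilde p'd\in R\,\}$, where $m=f\tilde p=g\tilde p'$ in $\tilde R$. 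Now $T$ is a finite intersection of principal right $V$-submodules of $D$, and because these submodules are totally ordered by inclusion, $T=d_0V$ is principal. Consequently the leading coefficients of the degree-$n$ elements of $L$ form the principal right ideal $\mu d_0V=:\gamma V$ of $V$, where $\mu$ is the leading coefficient of $m$, and $c:=md_0\in L$ has degree $n$ and leading coefficient $\gamma$.

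It remains to show $L=cR$. The inclusion $cR\subseteq L$ is clear. Conversely, take $0\neq d\in L$ and induct on $k:=\deg d\geq n$. If $k=n$, the leading coefficient $e$ of $d$ lies in $\gamma V$, say $e=\gamma v$; then $d-cv\in L$ has degree below $n$, hence vanishes, and $d=cv\in cR$. If $k>n$, it suffices to know that $e\in\sigma^{k-n}(\gamma)V$: granting this, $d-c\bigl(x^{k-n}\sigma^{k-n}(\gamma)^{-1}e\bigr)$ lies in $L$ and has degree below $k$, so the induction closes. The whole argument therefore reduces to showing that the leading coefficient of a degree-$k$ element of $L$ lies in $\sigma^{k-n}(\gamma)V$, and this is where the hypothesis $\delta(J(V))\subseteq J(V)$ will be used: it forces $J(V)R$ to be a two-sided ideal of $R$ — moving a coefficient from $J(V)$ past $x$ produces only $\delta$-corrections that again lie in $J(V)$ — and it makes $R/J(V)R\cong(V/J(V))[\bar x;\bar\sigma,\bar\delta]$ a skew polynomial ring over the division ring $V/J(V)$, hence a PID. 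I would run the comparison of leading coefficients through this quotient, and more generally through the chain of right ideals of $V$, while tracking the lower-degree terms that subtracting a right multiple of $c$ introduces. The hardest part will be exactly this last step: the leading-coefficient bookkeeping in the possibly non-Noetherian valuation ring $V$, together with the control of the $\sigma$-derivation cross-terms. Once it is in place, the induction gives $L=cR$, so $R$ is a right LCM domain, and by the opposite-ring reduction $R$ is an LCM domain.
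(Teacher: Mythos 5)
The survey states this theorem with a bare citation to Marubayashi's paper and contains no proof, so there is no argument in the paper to compare yours against; I can only assess the proposal on its own terms.

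Your reductions are sound as far as they go: the passage to the opposite ring (with $\sigma^{-1}$ and $-\delta\sigma^{-1}$, which again satisfy the hypotheses), the Ore localization to the PID $\widetilde R=D[x;\sigma,\delta]$, and the construction of the candidate generator $c=md_0$ from the fact that the nonzero cyclic right $V$-submodules of $D$ form a chain are all correct. One slip: from $m\in R\cap f\widetilde R\cap g\widetilde R$ you cannot conclude $m\in fR\cap gR$ (take $V=\bZ_{(p)}$, $f=p$, $g=1$: then $m=1$ generates $f\widetilde R\cap g\widetilde R$ and lies in $R$, but $1\notin pR$); the correct move, which your set $T$ implicitly performs, is to choose the denominator so that $\widetilde p\,c$ and $\widetilde p'c$ both land in $R$. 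The genuine gap is the step you yourself flag as ``the hardest part''. Writing $m=f\widetilde p=g\widetilde p'$ and $p=\widetilde p\,d_0$, $p'=\widetilde p'd_0$, the desired equality $L=cR$ is equivalent to the implication: for $u\in\widetilde R$, if $pu\in R$ and $p'u\in R$, then $u\in R$. Your construction of $d_0$ establishes exactly the degree-zero case of this implication and nothing more; the higher-degree case --- equivalently, your claim that every degree-$k$ element of $L$ has leading coefficient in $\sigma^{k-n}(\gamma)V$ --- is where the entire content of the theorem and the only use of the hypothesis $\delta(J(V))\subseteq J(V)$ are concentrated, and it is announced rather than proved, accompanied only by an unexecuted plan (reduction modulo the two-sided ideal $J(V)R$).

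This step cannot be waved through, because everything you have actually established about $L$ is shared by non-principal right ideals. Concretely, in $R=\bZ_{(p)}[x]\subset\widetilde R=\bQ[x]$ the right ideal $I=pxR+x^2R$ has minimal degree $n=1$, its degree-$1$ elements are exactly $pxV$ so that $c=px$ generates the leading coefficients in minimal degree, and $c\widetilde R=x\widetilde R=I\widetilde R$; yet $x^2\in I$ has leading coefficient $1\notin\sigma^{1}(p)V=pV$, and $I$ is not principal. So an ideal can satisfy every property you have derived for $L$ and still fail the leading-coefficient estimate and the conclusion. The special structure of $L$ as an intersection of two principal right ideals, together with the hypothesis on $\delta$, must therefore enter precisely in the missing estimate, and no mechanism for this is supplied. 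As it stands, the proposal is a clean reduction of the theorem to its core difficulty, not a proof of it.
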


\subsection{A Different Notion of UFRs and UFDs}
\label{ssec:chatters-ufds}

A commutative domain is factorial if and only if every nonzero prime ideal contains a prime element.
Based on this characterization, Chatters introduced Noetherian unique factorization domains (Noetherian UFDs) in \cite{chatters84}.
Noetherian UFDs were generalized to Noetherian unique factorization rings (Noetherian UFRs) by Chatters and Jordan in \cite{chatters-jordan86}.

Noetherian UFDs and UFRs, and generalizations thereof, have received quite a bit of attention and found many applications (e.g., \cite{gilchrist-smith84,brown85,lebruyn86,abbasi-kobayashi-marubayashi-ueda91,chatters-clark91,chatters-gilchrist-wilson92,chatters95,jespers-wang01,jespers-wang02,launois-lenagan-rigal06,goodearl-yakimov,goodearl-yakimov14}).
A large number of examples of Noetherian UFDs have been exhibited in the form of universal enveloping algebras of finite-dimensional solvable complex Lie algebras as well as various semigroup algebras.
Moreover, Noetherian UFRs are preserved under the formation of polynomial rings in commuting indeterminates.

UFRs, respectively UFDs, which need not be Noetherian, were introduced by Chatters, Jordan, and Gilchrist in \cite{chatters-gilchrist-wilson92}.
Many Noetherian Krull orders turned out not to be Noetherian UFRs in the sense of \cite{chatters-gilchrist-wilson92}, despite having a factorization behavior similar to Noetherian UFRs.
This was the motivation for Abbasi, Kobayashi, Marubayashi, and Ueda to introduce the notion of a ($\sigma$-)UFR in \cite{abbasi-kobayashi-marubayashi-ueda91}, which provides another generalization of Noetherian UFRs.

Let $R$ be a prime ring.
An element $n \in R$ is \emph{normal} provided that $Rn=nR$.
We denote the subsemigroup of all normal elements of $R$ by $N(R)$.
Since $R$ is a prime ring, $N(R)^\bullet= N(R)\setminus\{0\}$ is a subset of the non-zero-divisors of $R$.
An element $p \in R \setminus \{0\}$ is \emph{prime} if $p$ is normal and $pR$ is a prime ideal.
An element $p \in R \setminus \{0\}$ is \emph{completely prime} if $p$ is normal and $pR$ is a completely prime ideal, that is, $R/pR$ is a domain.
If $R$ is Noetherian and $p \in R$ is a prime element, the principal ideal theorem (see \cite[Theorem 4.1.11]{mcconnell-robson01}) implies that $pR$ has height one.
\begin{definition}[{\cite{chatters-gilchrist-wilson92}}]
  Let $R$ be a ring.
  \begin{enumerate}
    \item $R$ is a \emph{unique factorization ring}, short \emph{UFR}, (in the sense of \cite{chatters-gilchrist-wilson92}) if it is a prime ring and every nonzero prime ideal of $R$ contains a prime element.
    \item $R$ is a \emph{unique factorization domain}, short \emph{UFD}, (in the sense of \cite{chatters-gilchrist-wilson92}) if it is a domain and every nonzero prime ideal of $R$ contains a completely prime element.
  \end{enumerate}
\end{definition}
Some remarks on this definition and its relation to the definitions of Noetherian UFRs and Noetherian UFDs in \cite{chatters-jordan86}, respectively \cite{chatters84}, are in order.
\begin{remark}
  \begin{enumerate}
  \item In \cite{chatters-jordan86}, \emph{Noetherian UFRs} were defined.
    A ring $R$ is a \emph{Noetherian UFR} in the sense of \cite{chatters-jordan86} if and only if it is a UFR in the sense of \cite{chatters-gilchrist-wilson92} and Noetherian.

  \item We will call a domain $R$ a \emph{Noetherian UFD} if it is a UFD and Noetherian.
    Except in Theorem~\ref{t-chatters-n-ufd}, we will not use the original definition of a \emph{Noetherian UFD} from \cite{chatters84}.
    A domain $R$ is a \emph{Noetherian UFD} in the sense of \cite{chatters84} if it contains at least one height one prime ideal and every height one prime ideal of $R$ is generated by a completely prime element.

    For a broad class of rings the two definitions of Noetherian UFDs agree.
    Suppose that $R$ is a Noetherian domain which is not simple.
    If every nonzero prime ideal of $R$ contains a height one prime ideal, then $R$ is a Noetherian UFD in the sense of \cite{chatters84} if and only if it is a UFD in the sense of \cite{chatters-gilchrist-wilson92}.
    If $R$ is a UFR or $R$ satisfies the descending chain condition (DCC) on prime ideals, then every nonzero prime ideal contains a height one prime ideal.
    In general, it is open whether every Noetherian ring satisfies the DCC on prime ideals (see \cite[Appendix, \S3]{goodearl-warfield04}).

  \item We warn the reader that a [Noetherian] UFR which is a domain need not be a [Noetherian] UFD: prime elements need not be completely prime.
    See Example~\ref{e-noetherian-ufr-not-ufd} below.
  \end{enumerate}
\end{remark}

From the point of view of factorization theory, UFRs and UFDs of this type are quite different from similarity factorial domains.
UFRs have the property that the subsemigroup $N(R)^\bullet$ of nonzero normal elements is a UF-monoid (see Theorem~\ref{t-akmu-ufr-ufm}).
However, if $R$ is a UFR, the prime elements of $N(R)^\bullet$ need not be atoms of $R^\bullet$.
If $R$ is a UFD, then prime elements of $N(R)^\bullet$ are indeed atoms in $R^\bullet$.
However, since they also need to be normal, this is in some sense quite a restrictive condition.
Nevertheless, many interesting examples of (Noetherian) UFRs and UFDs exist.
\begin{example}
  \begin{enumerate}
  \item Universal enveloping algebras of finite-dimensional solvable Lie algebras over $\bC$ are Noetherian UFDs (see \cite{chatters84}).
  \item Trace rings of generic matrix rings are Noetherian UFRs (see \cite{lebruyn86}).
  \item Let $R$ be a commutative ring and $G$ a polycyclic-by-finite group.
    It has been characterized when the group algebra $R[G]$ is a Noetherian UFR, respectively a Noetherian UFD.
    See \cite{brown85,chatters-clark91} and also \cite{chatters95}.
    There exist extensions of these results to semigroup algebras (see \cite{jespers-wang01,jespers-wang02}).
    Also see the book \cite{jespers-okninski07}.
  \item Certain iterated skew polynomial rings are Noetherian UFDs. This has been used to show that many quantum algebras are Noetherian UFDs. See \cite{launois-lenagan-rigal06}.
  \item Let $R$ be a Noetherian UFR.
    Then also $M_n(R)$ for $n \in \bN$ as well as $R[x]$ are Noetherian UFRs.
    It has been studied when $R[x;\sigma]$, with $\sigma$ an automorphism, and $R[x;\delta]$ are UFRs (see \cite{chatters-jordan86}).
  \end{enumerate}
\end{example}
We refer to the survey \cite{akalan-marubayashi} for more comprehensive results on the behavior of UFRs and UFDs under ring-theoretic constructions.

In \cite{abbasi-kobayashi-marubayashi-ueda91}, a generalization of Noetherian UFRs is introduced (even more generally, when $R$ is a ring and $\sigma$ is an automorphism of $R$, the notion of $\sigma$-UFR is defined).
Let $R$ be a prime Goldie ring and let $Q$ be its simple Artinian quotient ring.
For $X \subset R$, let $(R:X)_l = \{\, q \in Q \mid qX \subset R \,\}$ and $(R:X)_r = \{\, q \in Q \mid Xq \subset R\,\}$.
For a right $R$-ideal $I$, that is, a right ideal $I$ of $R$ containing a non-zero-divisor, let $I_v = (R:(R:X)_l)_r$, and for a left $R$-ideal $I$, let ${}_v I = (R:(R:X)_r)_l$.
A right [left] $R$-ideal $I$ is called \emph{divisorial} (or \emph{reflexive}) if $I = I_v$ [$I={}_v I$].
We refer to any of \cite{akalan-marubayashi,chamarie81,marubayashi-vanoystaeyen12} for the definition of right [left] $\tau$-$R$-ideals. (The terminology in \cite{chamarie81} is slightly different in that such right [left] ideals are called \emph{fermé}.)
Recall that any right [left] $\tau$-$R$-ideal is divisorial.
In particular, if $R$ satisfies the ACC on right [left] $\tau$-$R$-ideals, it also satisfies the ACC on divisorial right [left] $R$-ideals.

\begin{definition}[{\cite{abbasi-kobayashi-marubayashi-ueda91}}] \label{d-akmu}
  A prime Goldie ring $R$ is a \emph{UFR (in the sense of \cite{abbasi-kobayashi-marubayashi-ueda91})} if
  \begin{enumerate}
    \item $R$ is $\tau$-Noetherian, that is, it satisfies the ACC on right $\tau$-$R$-ideals as well as the ACC on left $\tau$-$R$-ideals.
    \item\label{d-akmu:ii} Every prime ideal $P$ of $R$ such that $P=P_v$ or $P={}_v P$ is principal.
  \end{enumerate}
\end{definition}
Equivalent characterizations, including one in terms of the factorizations of normal elements, can be found in \cite[Proposition 1.9]{abbasi-kobayashi-marubayashi-ueda91}.

\begin{theorem}[{\cite[Proposition 1.9]{abbasi-kobayashi-marubayashi-ueda91}}]
  Let $R$ be a $\tau$-Noetherian prime Goldie ring.
  Then the following statements are equivalent.
  \begin{equivenumerate}
    \item $R$ is a UFR in the sense of \cite{chatters-gilchrist-wilson92}.
    \item $R$ is a UFR in the sense of \cite{abbasi-kobayashi-marubayashi-ueda91} and the localization $(N(R)^\bullet)^{-1}R$ is a simple ring.
  \end{equivenumerate}
\end{theorem}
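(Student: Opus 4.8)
The plan is to prove the two implications separately, keeping fixed throughout that $R$ is $\tau$-Noetherian prime Goldie. Since $R$ is prime, $N(R)^\bullet$ consists of non-zero-divisors, and being multiplicatively closed and normal it is a (left and right) denominator set; write $S = (N(R)^\bullet)^{-1}R$ for the corresponding ring of fractions, which contains $R$. I will use two standard facts about this localization: first, extension and contraction set up a bijection between the prime ideals of $S$ and the prime ideals $P$ of $R$ with $P \cap N(R)^\bullet = \emptyset$; second, for an ideal $P$ of $R$ one has $PS = S$ if and only if $P \cap N(R)^\bullet \neq \emptyset$.

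For (a) $\Rightarrow$ (b): assume $R$ is a UFR in the sense of \cite{chatters-gilchrist-wilson92}. As $R$ is already $\tau$-Noetherian prime Goldie, to conclude that $R$ is a UFR in the sense of \cite{abbasi-kobayashi-marubayashi-ueda91} it remains to show that every prime ideal $P$ with $P = P_v$ or $P = {}_v P$ is principal. This is clear if $P = 0$, so let $P \neq 0$ be divisorial. By hypothesis $P$ contains a prime element $p$, so $pR = Rp$ is a nonzero prime ideal with $pR \subseteq P$, and $pR$ is itself divisorial since $(R : pR)_l = Rp^{-1}$ and $(R : Rp^{-1})_r = pR$. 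Here I invoke the structure theory of noncommutative Krull orders (see \cite{chamarie81,marubayashi-vanoystaeyen12,akalan-marubayashi}): a nonzero divisorial prime ideal of a $\tau$-Noetherian prime Goldie ring is minimal among the nonzero prime ideals, which is the analogue of ``divisorial prime equals height-one prime''. Since $0 \subsetneq pR \subseteq P$ with $P$ minimal, $P = pR$ is principal. For the simplicity assertion, note that every nonzero prime ideal of $R$ contains a prime element and hence meets $N(R)^\bullet$; thus no nonzero prime of $R$ is disjoint from $N(R)^\bullet$, so by the bijection above $0$ is the only prime ideal of $S$. A nonzero ring whose only prime ideal is $0$ is simple, since every proper two-sided ideal is contained in a maximal, hence prime, ideal; therefore $S$ is simple.

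For (b) $\Rightarrow$ (a): assume $R$ is a UFR in the sense of \cite{abbasi-kobayashi-marubayashi-ueda91} and $S$ is simple. Since $R$ is prime, it suffices to show that every nonzero prime ideal $P$ of $R$ contains a prime element. The extension $PS$ is a nonzero two-sided ideal of the simple ring $S$, so $PS = S$, and hence $P \cap N(R)^\bullet \neq \emptyset$; fix $c \in P \cap N(R)^\bullet$. By Theorem~\ref{t-akmu-ufr-ufm}, $N(R)^\bullet$ is a UF-monoid, so $c = \varepsilon p_1 \cdots p_k$ with $\varepsilon \in R^\times$ and each $p_i$ a prime element of $R$. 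Then $p_1 \cdots p_k \in P$, and since $P$ is prime and each $p_i$ is normal (so that $p_i R p_{i+1} \cdots p_k = R p_i p_{i+1} \cdots p_k$), an induction on $k$ yields some $i$ with $p_i \in P$. Hence $P$ contains a prime element, so $R$ is a UFR in the sense of \cite{chatters-gilchrist-wilson92}.

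The main obstacle is the step in (a) $\Rightarrow$ (b) that upgrades ``$P$ is a divisorial prime containing the prime element $p$'' to ``$P = pR$'': the localization bookkeeping and the factorization step are routine, but this step genuinely needs input. One can try to avoid quoting minimality of divisorial primes by working with the $v$-calculus directly — setting $P' := Pp^{-1}$, one has $R \subseteq P' \subseteq Rp^{-1}$, $P'$ is again divisorial, and $(R:P)_l = p^{-1}(R:P')_l$, so that $P = P_v$ becomes a condition forcing $P' = R$ — but discharging that last point uses the ACC on divisorial ideals together with primeness of $pR$, and amounts to a reproof of minimality in the case at hand. A secondary point to check is that the version of Theorem~\ref{t-akmu-ufr-ufm} being used applies to UFRs in the sense of \cite{abbasi-kobayashi-marubayashi-ueda91}, so that (b) $\Rightarrow$ (a) is not circular; if one only has it for \cite{chatters-gilchrist-wilson92}-UFRs, one instead factors $c$ by applying the ACC to the principal, hence divisorial, ideals dividing $cR$ directly.
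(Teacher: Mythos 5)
First, a remark on the comparison you asked for: the survey does not prove this statement at all --- it is quoted, with citation, from \cite[Proposition 1.9]{abbasi-kobayashi-marubayashi-ueda91} --- so your argument has to be judged on its own merits rather than against a proof in the text.

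Your direction (b)\,$\Rightarrow$\,(a) is sound: the localization bookkeeping for the Ore set of regular normal elements, the deduction $P \cap N(R)^\bullet \neq \emptyset$ from simplicity, and the extraction of a prime element from a factorization of a normal element of $P$ all work, and Theorem~\ref{t-akmu-ufr-ufm} is stated in the survey for UFRs in the sense of \cite{abbasi-kobayashi-marubayashi-ueda91}, so there is no circularity. The simplicity half of (a)\,$\Rightarrow$\,(b) is likewise fine. The genuine gap is exactly the step you flag as the main obstacle: the fact you invoke --- that a nonzero divisorial prime of a $\tau$-Noetherian prime Goldie ring is minimal among the nonzero primes --- is \emph{false} at that level of generality. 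It is a theorem about Krull orders, i.e.\ \emph{maximal} orders with the appropriate chain conditions, and $R$ is not assumed to be a maximal order here. Already commutatively: $R = k[x,xy,y^2,y^3]$ is a Noetherian domain (hence a $\tau$-Noetherian prime Goldie ring), and its maximal ideal $M=(x,xy,y^2,y^3)$ satisfies $M=M_v$ (since $y \in (R:M)_l\setminus R$ forces $M_v \subsetneq R$, while $M \subset M_v$ and $M$ is maximal), yet $M$ has height $2$ and properly contains nonzero primes. So this cannot be quoted as a structural fact about the class of rings in the hypothesis; hypothesis (a) must enter.

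The repair is short and needs neither minimality nor the ACC. Let $P\neq 0$ be a prime ideal with $P=P_v$, and let $p\in P$ be a prime element (which exists by (a), and is a unit of the quotient ring $Q$). From $pR\subset P$ one gets $(R:P)_l\subset (R:pR)_l=p^{-1}R$, so $J:=p\,(R:P)_l$ is an ideal of $R$ contained in $R$, and $JP=p\,(R:P)_l\,P\subset pR$. Since $pR$ is a prime ideal, either $P\subset pR$, in which case $P=pR=Rp$ is principal, or $J\subset pR$, i.e.\ $(R:P)_l\subset R$, i.e.\ $(R:P)_l=R$ and hence $P=P_v=(R:R)_r=R$, contradicting that $P$ is proper. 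The case $P={}_vP$ is symmetric. This is essentially the $v$-calculus route you sketch in your final paragraph, but note that it closes using only the primeness of $pR$ and the divisoriality of $P$ --- no appeal to the ACC on divisorial ideals, and no minimality statement, is required.
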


Following P.\,M.~Cohn, a cancellative normalizing semigroup $H$ is called a \emph{UF-monoid} if $H/H^\times$ is a free abelian monoid.
Equivalently, $H$ is a normalizing Krull monoid in the sense of \cite{geroldinger13} with trivial divisor class group.

\begin{theorem}[{\cite{abbasi-kobayashi-marubayashi-ueda91,chatters-gilchrist-wilson92}}] \label{t-akmu-ufr-ufm}
If $R$ is a UFR in the sense of \cite{abbasi-kobayashi-marubayashi-ueda91} or a UFR in the sense of \cite{chatters-gilchrist-wilson92}, then $N(R)^\bullet = N(R)\setminus\{0\}$ is a UF-monoid.
Explicitly, every nonzero normal element $a \in N(R)^\bullet$ can be written in the form
\[
a = \varepsilon p_1 \cdots p_n
\]
with $n \in \bN_0$, a unit $\varepsilon \in R^\times$, and prime elements $p_1$,~$\ldots\,$,~$p_n$ of $R$.
This representation is unique up to order and associativity of the prime elements.
\end{theorem}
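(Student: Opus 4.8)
The plan is to verify directly that $H := N(R)^\bullet$ is a cancellative normalizing semigroup whose quotient $H/H^\times$ is free abelian on the set of associativity classes of prime elements of $R$; this is exactly the assertion. The generalities come first and are quick. Since $R$ is prime, a nonzero normal element is a non-zero-divisor, so $H \subset R^\bullet$ is cancellative, and $H^\times = R^\times$ (every unit of $R$ is normal, and conversely). For $a \in H$, conjugation $\phi_a\colon r \mapsto a^{-1}ra$ is a ring automorphism of $R$ — it maps $R$ into $R$ because $aR = Ra$ — which carries $N(R)$ into itself; from $ab = \phi_a^{-1}(b)\,a$ with $\phi_a^{-1}(b) \in H$ one reads off $aH = Ha$, so $H$ is normalizing. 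One also notes that for $a,b \in H$ one has $aH \subset bH$ if and only if $aR \subset bR$, since the cofactor $b^{-1}a$ is automatically normal; hence chain conditions on principal ideals of $H$ translate into chain conditions on the principal two-sided ideals $aR = Ra$.

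Next I would establish \emph{atomicity} of $H$ and identify its atoms. The hypotheses enter here through two claims: (a) $H$ satisfies the ACC on principal ideals, and (b) every prime ideal $P$ minimal over a proper ideal $aR$ with $a \in H$ is generated by a prime element of $R$. For a UFR in the sense of \cite{abbasi-kobayashi-marubayashi-ueda91}, (a) holds because each $aR$ is divisorial and $R$ is $\tau$-Noetherian, while (b) follows from the description of divisorial primes together with Definition~\ref{d-akmu}\ref*{d-akmu:ii}; for a UFR in the sense of \cite{chatters-gilchrist-wilson92}, both (a) and (b) are part of the structure theory of such rings, with (b) exploiting that $P$ contains \emph{some} prime element together with the minimality of $P$. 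Granting (a) and the correspondence above, the atomicity lemma recalled in Section~\ref{sec:rf} (\cite[Lemma~3.1]{smertnig13}) shows $H$ is atomic. Granting (b): if $p$ is a prime element and $p = bc$ in $H$, then $bRc = R\,bc = pR$ by normality of $b$ and of $p$, so the primeness of the \emph{ideal} $pR$ forces $b$ or $c$ into $pR$, hence into $H^\times$, so $p$ is an atom; conversely, if $u$ is an atom of $H$, a minimal prime $P$ over $uR$ equals $pR$ for a prime element $p$ by (b), and $uR \subset pR$ gives $u = pb$ with $b$ a unit, so $u$ is associated to $p$. Thus the atoms of $H$ are precisely the prime elements of $R$, up to association.

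The third ingredient is that prime elements are prime \emph{in the semigroup} $H$: if $p \mid ab$ in $H$ then $ab \in pR$, hence $aRb \subset pR$, and the primeness of $pR$ gives $a \in pR$ or $b \in pR$, i.e.\ $p \mid a$ or $p \mid b$ in $H$ (once more the cofactor is normal). Combining atomicity with the facts that the atoms of $H$ are the prime elements and that these are semigroup primes, the usual induction on factorization length yields that any two factorizations of an element of $H$ into prime elements agree up to permutation and association — this is the asserted uniqueness.

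It remains to see that $H/H^\times$ is \emph{commutative}, which is where uniqueness is used a second time: for prime elements $p,q$ write $qp = p\,\phi_p(q)$; since $\phi_p(q)$ generates the prime ideal $\phi_p(qR) = p^{-1}(qR)p$, it is again a prime element, so $qp$ admits the two prime factorizations $q\cdot p$ and $p\cdot\phi_p(q)$, and uniqueness forces $\phi_p(q)$ to be associated to $q$, hence $qp$ to be associated to $pq$. Therefore $H/H^\times$ is a cancellative commutative monoid in which every element factors uniquely into primes, i.e.\ it is free abelian, which is precisely the statement that $H$ is a UF-monoid; the explicit form $a = \varepsilon p_1\cdots p_n$ with its uniqueness is a restatement of this. (Alternatively, one can phrase (a) and (b) as saying that $H$ is a normalizing Krull monoid in the sense of \cite{geroldinger13} whose divisor class group is trivial.) The one genuinely substantial point is claim (b) — that minimal primes over principal normal ideals are themselves principal — together with the chain condition (a); these are exactly what each of the two definitions of UFR is engineered to deliver, and they are the only places where one must invoke the structure theory of the respective classes of rings. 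Everything else is formal semigroup bookkeeping.
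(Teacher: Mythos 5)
The survey itself offers no proof of this theorem---it is quoted from the cited sources---so your attempt has to be measured against the arguments of \cite{chatters-gilchrist-wilson92} and \cite{abbasi-kobayashi-marubayashi-ueda91} rather than against anything in the text. The formal semigroup half of your write-up is correct and well organized: the verification that $N(R)^\bullet$ is cancellative and normalizing, that a normal cofactor of normal elements is again normal, that prime elements are atoms of $N(R)^\bullet$ and are prime in the semigroup sense, and the metacommutation argument $qp = p\,\phi_p(q)$ forcing $N(R)^\bullet/R^\times$ to be commutative are all sound.

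The gap sits exactly where you locate the substance, in claims (a) and (b), and for the definition of \cite{chatters-gilchrist-wilson92} it is fatal. That definition imposes \emph{no} chain condition whatsoever---$R$ is merely a prime ring in which every nonzero prime ideal contains a prime element---so (a), the ACC on the principal ideals $aR$ with $a$ normal, is simply not available from the hypotheses; the only evident way to obtain it is as a \emph{consequence} of the factorization theorem being proved. Your sketch of (b) is also broken as stated: if $P$ is a prime minimal over $aR$ and $q \in P$ is a prime element, minimality yields $P = qR$ only if one already knows $aR \subseteq qR$, i.e.\ that $q$ divides $a$---which again presupposes the factorization. The correct route in the absence of chain conditions is Kaplansky's multiplicative-set argument transplanted to normal elements: let $S$ be the set of products of units and prime elements; $S$ is saturated in $N(R)^\bullet$ because primes are semigroup primes (which you did prove); if some $a \in N(R)^\bullet$ lay outside $S$, an ideal maximal among ideals containing $aR$ and disjoint from $S$ would be prime (the usual computation $(P+I)(P+J) \subseteq P + IJ \subseteq P$, valid because $S$ is multiplicatively closed) and nonzero, hence would contain a prime element, contradicting disjointness from $S$. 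This gives existence of prime factorizations with no chain condition, identifies the atoms with the primes for free, and uniqueness then follows from your step on semigroup primality exactly as you say. For the $\tau$-Noetherian definition of \cite{abbasi-kobayashi-marubayashi-ueda91} your route is closer to workable---principal one-sided ideals generated by non-zero-divisors are divisorial, so (a) does follow---but (b) still requires the nontrivial structural fact that a prime minimal over such an $aR$ is itself divisorial, which you assert rather than prove.
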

\begin{remark}
  The unique factorization property for normal elements has been taken as the definition of another class of rings, studied by Jordan in \cite{jordan89}.
  Jordan studied \emph{Noetherian UFN-rings}, that is, Noetherian prime rings $R$ such that every nonzero ideal of $R$ contains a nonzero normal element and $N(R)^\bullet$ is a UF-monoid.
\end{remark}

Noetherian UFDs in the sense of \cite{chatters84} can be characterized in terms of factorizations of elements.
If $P$ is a prime ideal of a ring $R$, denote by $C(P) \subset R$ the set of all elements of $R$ whose images in $R/P$ are non-zero-divisors.

\begin{theorem}[{\cite{chatters84}}] \label{t-chatters-n-ufd}
  Let $R$ be a prime Noetherian ring.
  Set $C = \bigcap C(P)$, where the intersection is over all height one primes $P$ of $R$.
  The following statements are equivalent.
  \begin{equivenumerate}
    \item $R$ is a Noetherian UFD in the sense of \cite{chatters84}.
    \item\label{t-chatter-n-ufd:el} Every nonzero element $a \in R^\bullet$ is of the form $a=cp_1\cdots p_n$ for some $c \in C$ and completely prime elements $p_1$,~$\ldots\,$,~$p_n$ of $R$.
  \end{equivenumerate}
\end{theorem}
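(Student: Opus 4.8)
The plan is to prove the two implications separately, using freely that (a) builds in the hypotheses that $R$ is a \emph{domain} possessing at least one height one prime and that every height one prime $P$ of such a ring has the form $P=pR=Rp$ for a completely prime (hence normal) element $p$. I would first isolate one elementary lemma that is used in the converse direction: in a prime ring a nonzero normal element $n$ is a non-zero-divisor. Indeed, $nx=0$ forces $nRx=0$ (push each $r\in R$ through $n$ via $nR=Rn$), hence $(nR)(RxR)=0$; since $nR\neq 0$, primeness gives $RxR=0$, so $x=0$, and the right-hand side is symmetric.

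For (a)$\Rightarrow$(b) I would argue by peeling completely prime elements off \emph{on the right}. Start with $0\neq a\in R^\bullet$. If $a\in C$ we are done with $n=0$; otherwise, by the definition of $C$ there is a height one prime $P$ with $a\in P=Rp$, so $a=a_1p$ with $a_1\in R$, and $a_1\in R^\bullet$ since $R$ is a domain. The decisive point is that $aR=a_1pR\subseteq a_1R$ with \emph{strict} inclusion: if $a_1=ar$, then $a=arp$, and left cancellation in the domain $R$ gives $1=rp$, making $p$ a unit---impossible, since $pR=P$ is a proper ideal. Iterating yields a strictly ascending chain $aR\subsetneq a_1R\subsetneq a_2R\subsetneq\cdots$ of right ideals of $R$; by the Noetherian hypothesis it terminates, and termination means precisely that the final remainder $c$ can no longer be peeled, i.e.\ $c$ lies in no height one prime, i.e.\ $c\in C$. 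Reassembling the removed factors in reverse order produces $a=cp_1\cdots p_n$.

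For (b)$\Rightarrow$(a) I would fix a height one prime $P$ (one exists by hypothesis) and recover a completely prime generator. Since $R$ is prime Noetherian, hence prime Goldie, the nonzero ideal $P$ contains some $a\in R^\bullet$; applying (b), write $a=cp_1\cdots p_n$ with $c\in C$ and each $p_i$ completely prime, and note $n\geq 1$ because $a$, lying in the proper ideal $P$, is not a unit. Pass to the prime ring $R/P$: since $c\in C\subseteq C(P)$, the element $\bar c$ is a non-zero-divisor, so $\bar c\,\bar p_1\cdots\bar p_n=0$ gives $\bar p_1\cdots\bar p_n=0$. Each $\bar p_i$ is normal in $R/P$, so by the lemma above a product of nonzero ones would be nonzero; hence some $p_i\in P$, giving $0\subsetneq p_iR\subseteq P$ with $p_iR$ a completely prime ideal, and $\mathrm{height}(P)=1$ forces $P=p_iR$. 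That is the conclusion of (a). (If one only assumes $R$ prime Noetherian, one would additionally deduce that $R$ is a domain, e.g.\ from $\bigcap_m p^mR=0$ for a completely prime $p$ together with $R/pR$ being a domain.)

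The main obstacle I anticipate is concentrated in the first implication, and it is really a matter of organization rather than computation: one must divide out $p$ on the \emph{correct} side. Normality of $p$ makes both $a=a_1p$ and $a=pa_1'$ available, but only the former makes the ascending chain $aR\subsetneq a_1R\subsetneq\cdots$ transparent, and it is this chain---together with the one-line cancellation argument keeping the inclusions strict---that powers the termination. A secondary, structural subtlety is ensuring the statement is not vacuous, namely that $R$ is a domain and that a height one prime exists at all; in the formulation of \cite{chatters84} these are encoded as standing hypotheses, and I would carry them along accordingly.
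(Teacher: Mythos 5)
The survey does not actually prove this theorem --- it only cites \cite{chatters84} --- so there is no in-paper argument to compare against; your proposal reconstructs what is essentially Chatters' original proof, and both implications are structurally sound. For (a)$\Rightarrow$(b), the right-handed peeling with the strictly ascending chain $aR\subsetneq a_1R\subsetneq\cdots$ is exactly the right mechanism; note only that $rp=1$ yields $pr=1$ because one-sided inverses in a domain are two-sided, and that ``$a\notin C$ iff $a\in P$ for some height one prime $P$'' holds precisely because, under (a), each $R/P$ is a domain, so $C(P)=R\setminus P$. For (b)$\Rightarrow$(a), the reduction modulo $P$, the regularity of nonzero normal elements in a prime ring, and the height-one pinch $0\subsetneq p_iR\subseteq P$ are all correct. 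One small blemish: the reason $n\ge 1$ is not that $a$ fails to be a unit, but that $a\in P$ while $c\in C(P)$, so $n=0$ would make $\bar c=\bar a=0$ a zero-divisor in $R/P$.

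The one genuine gap is the parenthetical claim that $\bigcap_m p^mR=0$ for a completely prime element $p$, which you need in order to conclude that $R$ is a domain (this is part of the definition of a Noetherian UFD in the sense of \cite{chatters84} as recorded in this survey, which states it only for domains). The claim is true but is a real lemma of Chatters, not a one-liner: the naive computation only gives $I=pI$ for $I=\bigcap_m p^mR$, and the Nakayama/determinant trick does not apply directly to a normal element lying outside the Jacobson radical; one needs the Artin--Rees property for ideals generated by normalizing elements (or an equivalent Krull-intersection statement for invertible ideals of prime Noetherian rings). You are also right that the statement as reproduced here silently assumes the existence of at least one height one prime: without it, (b) holds vacuously for any simple Noetherian domain (e.g.\ the first Weyl algebra, where $C=R$ because the intersection is empty) while (a) fails, so importing that standing hypothesis from \cite{chatters84} is necessary rather than cosmetic.
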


We note that property \ref*{t-chatter-n-ufd:el} of the previous theorem also holds for Noetherian UFDs in the sense of \cite{chatters-gilchrist-wilson92}.
If $C \subset R^\times$, then $R=N(R)$, and hence $R^\bullet$ is a UF-monoid.

In a ring $R$ which is a UFR, a prime element $p$ of $R$ is an atom of $N(R)^\bullet$ but need not be an atom in the (possibly larger) semigroup $R^\bullet$.
On the other hand, if $R$ is a UFD, the additional condition that $R/pR$ be a domain forces $p$ to be an atom.

\begin{example}[{\cite{chatters-jordan86}}] \label{e-noetherian-ufr-not-ufd}
Let $\bH_\bQ$ be the Hamilton quaternion algebra with coefficients in $\bQ$.
The ring $R=\bH_{\bQ}[x]$ is a Noetherian UFR and a domain, but $R$ is no UFD.
The element $x^2+1$ is central and generates a height one prime ideal, but $(x^2+1)R$ is not completely prime.
Thus, $R$ is not a UFD, even though it is Euclidean.
The element $x^2 + 1$ is an atom in $N(R)^\bullet$.
However, in $R^\bullet$, it factors as $x^2 + 1 = (x-i)(x+i)$.
\end{example}

Thus many interesting rings are UFRs but not UFDs.
This is especially true in the case of classical maximal orders in central simple algebras over global fields.
In this case, all but finitely many associativity classes of prime elements of $N(R)^\bullet$ are simply represented by the prime elements of the center of $R$.
We elaborate on this in the following example.

\begin{example}
  \begin{enumerate}
  \item \label{e-ufr-non-ufd:maxord}
    Let $\cO$ be a holomorphy ring in a global field $K$, and let $A$ be a central simple $K$-algebra with $\dim_K A=n^2 > 1$.
    Let $R$ be a classical maximal $\cO$-order.

    If $\fp$ is a prime ideal of $\cO$ such that $\fp$ is unramified in $R$ (i.e., $\fp R$ is a maximal ideal of $R$), then $\fp R$ is a height one prime ideal of $R$, and $R/\fp R \cong M_n(\cO/\fp)$.
    Thus, if $\fp=p\cO$ is principal, then $p$ is a prime element of $R$ which is not completely prime.
    Recall that at most finitely many prime ideals of $\cO$ are ramified in $R$.
    Thus, $R$ is not a UFD.
    However, $R$ is a Noetherian UFR if and only if the normalizing class group of $R$, that is, the group of all fractional $R$-ideals modulo the principal fractional $R$-ideals (generated by normalizing elements), is trivial.

  \item
    Elaborating on \ref*{e-ufr-non-ufd:maxord} in a specific example, the ring of Hurwitz quaternions $\cH$ is Euclidean and a Noetherian UFR, but not a UFD.
    The only completely prime element in $\cH$ (up to right associativity) is $1+i$.
    If $p$ is an odd prime number, then $p$ is a prime element of $\cH$ which is not completely prime, since $M_2(\cH/p\cH) \cong M_2(\bF_p)$.
    A complete set of representatives for associativity classes of prime elements of $\cH$ is given by $\{ 1 + i \} \,\cup\, \bP\setminus \{2\}$.
    If $p$ is an odd prime number, the $p+1$ maximal right $\cH$-ideals containing the maximal $\cH$-ideal $p\cH$ are principal and correspond to right associativity classes of atoms of reduced norm $p$.
    Thus $\card{\sZ_{\cH}^*(p)} = p+1$.
    However, all atoms of reduced norm $p$ are similar.
    As already observed, $\cH$ is similarity factorial.

  \item If $R$ is a commutative Dedekind domain with class group $G$, and $\exp(G)$ divides $r$, then $M_r(R)$ is a Noetherian UFR, but not a UFD if $r > 1$.
  \end{enumerate}
\end{example}

We say that a prime ring $R$ is \emph{bounded} if every right $R$-ideal and every left $R$-ideal contains a nonzero ideal of $R$.
Recall that every prime PI ring is bounded.
In \cite{gilchrist-smith84}, Gilchrist and Smith showed that every bounded Noetherian UFD which is not commutative is a PID.
This was later generalized to the following.
\begin{theorem}[{\cite{chatters-gilchrist-wilson92}}]
  Let $R$ be a UFD in the sense of \cite{chatters-gilchrist-wilson92}.
  Let $C = \bigcap C(P)$, where the intersection is over all height one prime ideals $P$ of $R$.
  If $C \subset R^\times$, then $R$ is duo.
  That is, every left or right ideal of $R$ is an ideal of $R$.
  Moreover, if $R$ is not commutative, then $R$ is a PID.
\end{theorem}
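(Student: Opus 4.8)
The plan is to obtain the duo conclusion almost for free from facts already recorded, and then to reduce the principal-ideal conclusion to a single comaximality statement, isolating the one place where noncommutativity is genuinely used. First, since units remain non-zero-divisors modulo every prime ideal we have $R^\times \subseteq C$, so the hypothesis forces $C = R^\times$. By the remark following Theorem~\ref{t-chatters-n-ufd} this yields $R = N(R)$: every $a \in R^\bullet$ has the form $a = c p_1 \cdots p_n$ with $c \in C = R^\times$ a unit and $p_1,\ldots,p_n$ completely prime, hence normal, so $a$ is normal. Consequently, if $I$ is any right ideal of $R$ then $I = \sum_{a \in I} aR = \sum_{a \in I} Ra$, which exhibits $I$ as a left ideal; symmetrically, every left ideal is a right ideal, so $R$ is duo (in particular bounded). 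Moreover $R^\bullet = N(R)^\bullet$ is a UF-monoid by Theorem~\ref{t-akmu-ufr-ufm}, so $R^\bullet/R^\times$ is free abelian; in particular any $a,b \in R^\bullet$ admit a greatest common divisor $d$, unique up to units, with $a = da'$, $b = db'$ and $a', b'$ coprime.

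Now suppose $R$ is not commutative. If $R$ is a division ring it is trivially a PID, so assume otherwise; then $R$ is not simple, since over a simple ring the intersection defining $C$ is empty, forcing $C = R \not\subseteq R^\times$. Since $R$ is duo it suffices to show every nonzero two-sided ideal is principal, and this reduces to the assertion $(\star)$: \emph{if $a', b' \in R^\bullet$ have no common completely prime factor, then $a'R + b'R = R$}. Indeed, given a nonzero ideal $I$, pick $0 \ne a \in I$ with the least number of completely prime factors; if $I \ne aR$, choose $b \in I \setminus aR$ and put $d = \gcd(a,b)$, $a = da'$, $b = db'$. Then $d$ is a proper divisor of $a$ (else $a \mid d \mid b$ and $b \in aR$), while $aR + bR = d(a'R + b'R) = dR$ by $(\star)$; as $aR + bR \subseteq I$ this places an element of $I$ with fewer prime factors than $a$ inside $I$, a contradiction, so $I = aR$.

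The heart of the matter is $(\star)$. Using that $R^\bullet/R^\times$ is free abelian, $(\star)$ is equivalent to asking that $R/pR$ be a division ring for every completely prime $p$ (a nonzero element of $R/pR$ lifts to a member of $R^\bullet$ with no factor associated to $p$, hence to a product of a unit with completely prime elements none associated to $p$, and $(\star)$ turns each such factor into a unit of $R/pR$), equivalently that every nonzero prime ideal of $R$ be principal and maximal, i.e. that $R$ have Krull dimension at most $1$. This is precisely the step that fails commutatively --- $K[x,y]$ satisfies all of the above hypotheses (there $C = K^\times$) yet has Krull dimension $2$ and is not a PID --- so no purely formal argument can suffice. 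The mechanism is that, $R$ being noncommutative and duo, it contains a non-central normal element $a$, whose conjugation automorphism $\sigma_a \colon r \mapsto ara^{-1}$ of $R$ is non-trivial; exploiting this --- in the spirit of the Gilchrist--Smith theorem for bounded Noetherian UFDs, whose Noetherian hypothesis is used only through the ascending chain condition on principal ideals, which is automatic here from the freeness of $R^\bullet/R^\times$ --- one first produces a single instance of comaximality (for example, that some completely prime element generates a maximal ideal) and then bootstraps, via the UF-monoid structure, to $(\star)$ in general. Once $(\star)$ is known, every nonzero ideal is principal and, $R$ being duo, every one-sided ideal is principal, so $R$ is a PID.

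I expect the collapse to Krull dimension at most $1$ --- the proof of $(\star)$ --- to be the only real obstacle: all the surrounding steps are formal bookkeeping in the free abelian monoid $R^\bullet/R^\times$, and the very reason the conclusion branches according to commutativity is that this collapse must be driven by a non-central normal element, which simply is not available in the commutative case.
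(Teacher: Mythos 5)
The theorem is quoted in the survey from \cite{chatters-gilchrist-wilson92} without proof, so there is no in-paper argument to compare against; judged on its own terms, your proposal has a genuine gap at exactly the point you flag yourself. Everything up to and including the reduction is fine: $R^\times \subset C$ forces $C = R^\times$, the element-wise factorization $a = cp_1\cdots p_n$ then makes every element normal, hence $R$ is duo and $R^\bullet$ is a UF-monoid, and the gcd/induction argument correctly reduces the PID claim to the comaximality statement $(\star)$ (equivalently, that $R/pR$ is a division ring for every completely prime $p$). But $(\star)$ is the entire mathematical content of the theorem, and for it you offer only the observation that a noncommutative duo domain has a non-central normal element with nontrivial conjugation automorphism, followed by ``exploiting this \dots one first produces a single instance of comaximality \dots and then bootstraps.'' No mechanism is given for how the nontriviality of some $\sigma_a\colon r \mapsto ara^{-1}$ forces even one completely prime $p$ to generate a maximal ideal, nor for the claimed bootstrap from one such $p$ to all of them; your own $K[x,y]$ example shows the conclusion cannot follow from the formal structure you have set up, so the missing step is not routine. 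This is precisely the Gilchrist--Smith argument (transported to the non-Noetherian setting by Chatters, Gilchrist, and Wilson), and it has to be supplied, not invoked in spirit.

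Two smaller points. First, the element-wise factorization $a = cp_1\cdots p_n$ with $c \in C$ is recorded in the survey (the remark after Theorem~\ref{t-chatters-n-ufd}) only for \emph{Noetherian} UFDs; the theorem you are proving concerns general UFDs in the sense of \cite{chatters-gilchrist-wilson92}, so you need the non-Noetherian version of that decomposition (it does hold, but it is part of what \cite{chatters-gilchrist-wilson92} proves, not something you may take from Theorem~\ref{t-chatters-n-ufd} as stated). Second, your parenthetical that the ascending chain condition on principal ideals ``is automatic here from the freeness of $R^\bullet/R^\times$'' is circular in the general case, since that freeness (Theorem~\ref{t-akmu-ufr-ufm} applied with $N(R)^\bullet = R^\bullet$) is itself one of the conclusions being assembled; it is safer to derive ACC on principal ideals directly from the finite prime factorizations of elements.
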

Hence, ``noncommutative UFDs are often PIDs,'' as the title of \cite{gilchrist-smith84} proclaims.

\section{Non-unique Factorizations}
\label{sec:non-unique-factorization}

We now come to non-unique factorizations.
We have already noted that a ring $R$ satisfying the ascending chain condition on principal left ideals and on principal right ideals is atomic.
In particular, this is true for any Noetherian ring.
Thus, we can consider rigid factorizations of elements in $R^\bullet$.
However, the conditions which are sufficient for various kinds of uniqueness of factorizations are much stricter.
Hence, a great many natural examples of rings have some sort of non-unique factorization behavior.

\subsection{Arithmetical Invariants} \label{ss-arithmetical-invariants}

The study of non-unique factorizations proceeds by defining suitable arithmetical invariants intended to capture various aspects of the non-uniqueness of factorizations.
The following invariants are defined in terms of lengths of factorizations, and have been investigated in commutative settings before.

\begin{definition}[Arithmetical invariants based on lengths] \label{d-len-inv}
  Let $H$ be a cancellative small category.
  \begin{enumerate}
    \item $\sL(a)=\{\, \length{z} \mid z \in \sZ^*(a) \,\}$ is the \emph{set of lengths} of $a \in H$.
    \item $\cL(H) = \{\, \sL(a) \mid a \in H \,\}$ is the \emph{system of sets of lengths} of $H$.
  \end{enumerate}
  Let $H$ be atomic.
  \begin{enumerate}
    \setcounter{enumi}{2}
    \item For $a \in H\setminus H^\times$,
      \[
      \rho(a) = \frac{\sup \sL(a)}{\min \sL(a)}
      \]
      is the \emph{elasticity} of $a$, and $\rho(a)=1$ for $a \in H^\times$.
    \item $\rho(H) = \sup \{\, \rho(a) \mid a \in H \,\}$ is the \emph{elasticity} of $H$.
    \item The invariants
      \[
      \rho_k(H) = \sup\big\{\, \sup \sL(a) \mid a \in H \text{ with }\min \sL(a) \le k \,\big\},
      \]
      for $k \in \bN_{\ge 2}$, are the \emph{refined elasticities} of $H$.
    \item For $k \in \bN_{\ge 2}$,
      \[
      \cU_k(H) = \bigcup_{\substack{L \in \cL(H) \\ k \in L}} L
      \]
      is the union of all sets of lengths containing $k$.

    \item If $a \in H$ with $k$,~$l \in \sL(a)$ and $[k,l] \cap \sL(a) = \{k,l\}$, then $l-k$ is a \emph{distance} of $a$.
    We write $\Delta(a)$ for the \emph{set of distances} of $a$.
    \item The \emph{set of distances} of $H$ is $\Delta(H) = \bigcup_{a \in H} \Delta(a)$.
  \end{enumerate}
\end{definition}

\begin{example}
  \begin{enumerate}
    \item
      Let $\bH_\bQ$ denote the Hamilton quaternion algebra with coefficients in $\bQ$, and let $\cH$ denote the ring of Hurwitz quaternions.
      Beauregard's results (Theorems~\ref{e-beau-hurwitz} and \ref{e-beau-rational}) imply $\rho_2(\cH[x]) \ge 4$ and $\rho_2(\bH_\bQ[x,y]) \ge 4$.
      Hence $\rho(\cH[x]) \ge 2$ and $\rho(\bH_\bQ[x,y]) \ge 2$.

    \item
      If $A_1(K)=K\langle x,y \mid xy-yx=1 \rangle=K[x][y;-\frac{d}{dx}]$ denotes the first Weyl algebra over a field $K$ of characteristic $0$, the example $x^2y=(1+xy)x$ of P.\,M.~Cohn shows $\rho_2(A_1(K)) \ge 3$, and hence $\rho(A_1(K)) \ge \frac{3}{2}$.
    \end{enumerate}
\end{example}

Recall that $H$ is \emph{half-factorial} if $\card{\sL(a)} = 1$ for all $a \in H$ (equivalently, $H$ is atomic, and $\Delta(H)=\emptyset$ or $\rho(H)=1$).
Since all the invariants introduced so far are defined in terms of sets of lengths, they are trivial if $H$ is half-factorial.

It is more difficult to make useful definitions for the more refined arithmetical invariants, such as catenary degrees, the $\omega$-invariant, and the tame degree, in a noncommutative setting.
In \cite{baeth-smertnig15}, a formal notion of distances between rigid factorizations was introduced.
This allows the definition and study of catenary degrees and monotone catenary degrees.

\begin{definition}[Distances]
  Let $H$ be a cancellative small category.
  A \emph{global distance on $H$} is a map $\sd \colon \sZ^*(H) \times \sZ^*(H) \to \bN_0$ satisfying the following properties.
  \begin{enumerate}[label=\textup{(\textbf{D\arabic*})},ref=\textup{(D\arabic*)}]
    \item\label{d:ref} $\sd(z,z) = 0$ for all $z \in \sZ^*(H)$.
    \item\label{d:sym} $\sd(z,z') = \sd(z',z)$ for all $z$,~$z' \in \sZ^*(H)$.
    \item\label{d:tri} $\sd(z,z') \le \sd(z,z'') + \sd(z'',z')$ for all $z$,~$z'$,~$z'' \in \sZ^*(H)$.
    \item\label{d:mul} For all $z$,~$z' \in \sZ^*(H)$ with $s(z)=s(z')$ and $x \in \sZ^*(H)$ with $t(x) = s(z)$ it holds that $\sd(x\rfop z, x \rfop z') = \sd(z,z')$, and for all $z$,~$z' \in \sZ^*(H)$ with $t(z)=t(z')$ and $y \in \sZ^*(H)$ with $s(y) = t(z)$ it holds that $\sd(z\rfop y, z'\rfop y) = \sd(z,z')$.
    \item\label{d:len} $\babs{\length{z} - \length{z'}} \le \sd(z,z') \le \max\big\{ \length{z}, \length{z'}, 1 \big\}$ for all $z$,~$z' \in \sZ^*(H)$.
  \end{enumerate}
  Let $L = \{\, (z, z') \in \sZ^*(H) \times \sZ^*(H) : \pi(z) = \pi(z') \,\}$.
  A \emph{distance on $H$} is a map $\sd \colon L \to \bN_0$ satisfying properties \ref*{d:ref}, \ref*{d:sym}, \ref*{d:tri}, \ref*{d:mul}, and \ref*{d:len} under the additional restrictions on $z$,~$z'$ and $z''$ that $\pi(z)=\pi(z')=\pi(z'')$.
\end{definition}

Let us revisit the notion of factoriality using distances as a tool.
We follow \cite[Section~3]{baeth-smertnig15}.
If $\sd$ is a distance on $H$, we can define a congruence relation $\sim_\sd$ on $\sZ^*(H)$ by $z \sim_\sd z'$ if and only if $\pi(z)=\pi(z')$ and $\sd(z,z')=0$.
That is, two factorizations are identified if they are factorizations of the same element and are at distance zero from each other.

\begin{definition}
Let $H$ be a cancellative small category, and let $\sd$ be a distance on $H$.
The quotient category $\sZ_\sd(H)= \sZ^*(H) / \sim_\sd$ is called the \emph{category of $\sd$-factorizations}.
The canonical homomorphism $\pi\colon\sZ^*(H) \to H$ induces a homomorphism $\pi_\sd\colon \sZ_\sd(H) \to H$.
For $a \in H$, we call $\sZ_\sd(a)=\pi_\sd^{-1}(a)$ the \emph{set of $\sd$-factorizations of $a$}.
We say that $H$ is \emph{$\sd$-factorial} if $\card{\sZ_d(a)} = 1$ for all $a \in H$.
\end{definition}

\begin{example}
  \begin{enumerate}
    \item We may define a so-called \emph{rigid distance} $\sd^*$.
      Informally speaking, $\sd^*(z,z')$ is the minimal number of replacements, deletions, and insertions of atoms necessary to pass from $z$ to $z'$.
      (The actual definition is more complicated to take into account the presence of units and the necessity to replace, delete, or insert longer factorizations than just atoms.)
      If $\sd^*(z,z') = 0$, then $z=z'$, and hence $\sZ_{\sd^*}(H) = \sZ^*(H)$.
      We say that $H$ is \emph{rigidly factorial} if it is $\sd^*$-factorial.

    \item \label{e-distances:equiv}
      Let $\sim$ be an equivalence relation on the set of atoms $\cA(H)$ such that $v = \varepsilon u \eta$ with $\varepsilon$,~$\eta \in H^\times$ implies $u \sim v$.
      Then, comparing atoms up to order and equivalence with respect to $\sim$ induces a global distance $\sd_{\sim}$ on $\sZ^*(H)$.
      Let $R$ be a domain, $H = R^\bullet$, and consider for the equivalence relation $\sim$ one of similarity, subsimilarity, monosimilarity, or projectivity.
      Then $R$ is $\sd_\sim$-factorial if and only if it is similarity [subsimilarity, monosimilarity, projectivity] factorial.

    \item If, in \ref*{e-distances:equiv}, we use two-sided associativity as the equivalence relation on atoms, we obtain the \emph{permutable distance} $\sd_p$.
      We say that $H$ is \emph{permutably factorial} if it is $\sd_p$-factorial.
      For a commutative cancellative semigroup $H$, the permutable distance is just the usual distance.
  \end{enumerate}
\end{example}

Having a rigorous notion of factorizations and distances between them at our disposal, it is now straightforward to introduce catenary degrees.

\begin{definition}[Catenary degree]
  Let $H$ be an atomic cancellative small category, $\sd$ a distance on $H$, and $a \in H$.
  \begin{enumerate}
    \item Let $z$,~$z' \in \sZ^*(a)$ and $N \in \bN_0$.
      A finite sequence of rigid factorizations $z_0$,~$\ldots\,$,~$z_n \in \sZ^*(a)$, where $n \in \bN_0$, is called an \emph{$N$-chain (in distance $\sd$)} between $z$ and $z'$ if
      \[
        z = z_0,\  z' = z_n,\ \text{ and } \ \sd(z_{i-1}, z_{i}) \le N\ \text{ for all $i \in [1,n]$.}
      \]

    \item The \emph{catenary degree (in distance $\sd$) of $a$}, denoted by $\sc_\sd(a)$, is the minimal $N \in \bN_0 \cup \{\infty\}$ such that for any two factorizations $z$,~$z' \in \sZ^*(a)$ there exists an $N$-chain between $z$ and $z'$.

    \item The \emph{catenary degree (in distance $\sd$) of $H$} is
      \[
      \sc_\sd(H) = \sup\{\, \sc_\sd(a) \mid a \in H \,\} \in \bN_0 \cup \{\infty\}.
      \]
  \end{enumerate}
\end{definition}
To abbreviate the notation, we write $\sc^*$ instead of $\sc_{\sd^*}$, $\sc_p$ instead of $\sc_{\sd_p}$, and so on.

Note that $H$ is $\sd$-factorial if and only if it is atomic and $\sc_\sd(H) = 0$.
Hence, the catenary degree provides a more fine grained arithmetical invariant than those derived from sets of lengths.

\begin{example}
  If $R$ is an atomic $2$-fir, it follows from the usual inductive proof of the Jordan-Hölder Theorem that $\sc^*(R^\bullet) \le 2$.
  Since $R$ is similarity factorial, $\sc_{\dsim}(R^\bullet) = 0$, where $\sc_{\dsim}$ denotes the catenary degree with respect to the similarity distance.
  However, $\sc^*(R^\bullet) =0$ if and only if $R$ is rigid.
  More generally, if $R$ is an atomic modular LCM domain, then $\sc^*(R^\bullet) \le 2$, and $\sc_{\text{proj}}(R^\bullet) = 0$, where the latter stands for the catenary degree in the projectivity distance.
\end{example}

The definitions of the monotone and the equal catenary degree can similarly be extended to the noncommutative setting.
For the permutable distance it is also possible to introduce an $\omega_p$-invariant $\omega_p(H)$ and a tame degree $\st_p(H)$ (see \cite[Section 5]{baeth-smertnig15}).
Unfortunately, these notions are not as strong as in the commutative setting.

\subsection{FF-Domains}

Faced with an atomic domain with non-unique factorizations, a first question one can ask is when $R$ is a BF-domain, that is, $\card{\sL(a)} < \infty$ for all $a \in R^\bullet$, respectively an FF-domain, that is, $\card{\sZ^*(a)} < \infty$ for all $a \in R^\bullet$.
A useful sufficient condition for $R$ to be a BF-domain is the existence of a length function (see Lemma~\ref{l-length-bf}).

In \cite{bell-heinle-levandovskyy}, Bell, Heinle, and Levandovskyy give a sufficient condition for many important noncommutative domains to be FF-domains.
Let $K$ be a field and $R$ a $K$-algebra.
A \emph{finite-dimensional filtration} of $R$ is a filtration of $R$ by finite-dimensional $K$-subspaces.
\begin{theorem}[{\cite[Corollary 1.2]{bell-heinle-levandovskyy}}]
  Let $K$ be a field, $\overline K$ an algebraic closure of $K$, and let $R$ be a $K$-algebra.
  If there exists a finite-dimensional filtration on $R$ such that the associated graded ring $\gr R$ has the the property that $\gr R \otimes_K \overline K$ is a domain, then $R$ is an FF-domain.
\end{theorem}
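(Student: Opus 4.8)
\emph{Plan.} I would argue in three parts: (a) $R$ is a BF-domain, obtained by using the filtration to manufacture a length function; (b) counting the rigid factorizations of a fixed element reduces, after base change to $\overline{K}$, to counting factorizations of a homogeneous element of $\gr R\otimes_K\overline{K}$ with a prescribed sequence of factor-degrees; (c) this latter count is finite precisely because $\gr R\otimes_K\overline{K}$ is a domain.

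\textbf{(a) $R$ is a BF-domain.} Since $\overline{K}$ is faithfully flat over $K$, $\gr R$ embeds into $\gr R\otimes_K\overline{K}$ and is therefore a domain. Let $v\colon R\to\bN_0\cup\{-\infty\}$, $v(a)=\min\{\,i:a\in R_i\,\}$, be the order function of the filtration, and write $\sigma(a)$ for the image of $a$ in $R_{v(a)}/R_{v(a)-1}$. As the filtration is exhaustive and separated, for $a,b\ne 0$ the product $\sigma(a)\sigma(b)$ in $\gr R$ equals the image of $ab$ in $R_{v(a)+v(b)}/R_{v(a)+v(b)-1}$; since $\gr R$ is a domain this is nonzero, so $ab\ne 0$ and $v(ab)=v(a)+v(b)$. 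Hence $R$ is a domain. Moreover $R_0$ is a finite-dimensional $K$-algebra which is a domain, hence a division ring, so $R^\times=R_0\setminus\{0\}=\{\,a\in R^\bullet:v(a)=0\,\}$, and consequently $v$ restricts to a right length function on $R^\bullet$ ($a=bc$ with $c\notin R^\times$ forces $v(a)=v(b)+v(c)>v(b)$). By Lemma~\ref{l-length-bf}, $R$ is a BF-domain; in particular $R^\bullet$ is atomic and every maximal chain in the factor poset $\iv{aR}{R}$ is finite, of length at most $v(a)$.

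\textbf{(b) Reduction to the graded, algebraically closed case.} By the bijection between $\sZ^*(a)$ and the finite maximal chains of $\iv{aR}{R}$ from Section~\ref{sec:rf}, it suffices to bound the number of such chains. Since $v$ is additive, a rigid factorization of $a$ into atoms $u_1\cdots u_m$ satisfies $v(u_i)\ge 1$ and $v(u_1)+\ldots+v(u_m)=v(a)=:n$, and there are only finitely many compositions $(d_1,\ldots,d_m)$ of $n$ into positive parts. Fixing one, consider the affine variety
\[
  W=\{\,(b_1,\ldots,b_m)\in R_{d_1}\times\ldots\times R_{d_m}:v(b_i)=d_i,\ b_1\cdots b_m=a\,\};
\]
rigid factorizations of $a$ with this degree sequence correspond to a subset of $W$, two tuples being identified when they differ by the unit action rescaling adjacent factors. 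Base-changing to $\overline R=R\otimes_K\overline{K}$ (with filtration pieces $\overline R_j=R_j\otimes_K\overline{K}$, all finite-dimensional, and $\gr\overline R=\gr R\otimes_K\overline{K}$): by part (a) it is again a domain, and $\overline R_0=R_0\otimes_K\overline{K}$ is a finite-dimensional domain over the algebraically closed field $\overline{K}$, hence equals $\overline{K}$, so $\overline R^\times=\overline{K}^\times$ is central. Over $\overline R$ the rescaling action becomes the \emph{free} action of the torus $T=\{\,(\lambda_i)\in(\overline{K}^\times)^m:\lambda_1\cdots\lambda_m=1\,\}\cong(\overline{K}^\times)^{m-1}$ on $W$, and it suffices to show $W/T$ is finite; finiteness over $\overline R$ transfers back to $R$ because $b\overline R=b'\overline R\iff bR=b'R$ for $b,b'\in R^\bullet$ (from $b'=\lambda b$ with $\lambda\in\overline{K}^\times$, comparing coefficients along a $K$-basis of $\overline{K}$ containing $1$ forces $\lambda\in K$), so $bR\mapsto b\overline R$ embeds $\iv{aR}{R}$ into $\iv{a\overline R}{\overline R}$ strictly and order-preservingly, carrying maximal chains to chains. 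To bound $\dim_{\overline{K}}W$, apply $\sigma$ coordinatewise: this sends $W$ onto a subset of the set of factorizations $\sigma(a)=h_1\cdots h_m$ with $h_i$ homogeneous of degree $d_i$ in the graded domain $\gr\overline R$ — leading forms multiplying without cancellation because $\gr\overline R$ is a domain. Over a fixed tuple $(h_i)$ the corresponding fibre of $W$ sits inside an affine space of dimension $\sum_i\dim_{\overline{K}}\overline R_{d_i-1}$, cut out by the condition that the degree-$<n$ part of $b_1\cdots b_m$ agree with that of $a$; using that multiplication by a nonzero homogeneous element of $\gr\overline R$ of degree $d$ is injective, one gets $\dim_{\overline{K}}\overline R_{n-1}\ge\sum_i\dim_{\overline{K}}\overline R_{d_i-1}$ whenever $\sum d_i=n$, so these fibres have nonpositive \emph{expected} dimension, and the torus $T$ accounts for the $m-1$ dimensions coming from rescaling the $h_i$ within their lines. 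Thus the question is reduced to: the set of factorizations $\sigma(a)=h_1\cdots h_m$ into homogeneous pieces of prescribed degrees, taken up to scalars, is finite.

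\textbf{(c) The main obstacle.} Parts (a) and (b) are routine; the heart of the matter — and the technical core of \cite{bell-heinle-levandovskyy} — is the final claim: in a graded domain $S$ with finite-dimensional homogeneous components over an algebraically closed field, a homogeneous element has only finitely many homogeneous factorizations into pieces of prescribed degrees, up to scalars; equivalently, only finitely many homogeneous left divisors up to scalars (which also closes the expected-versus-actual dimension gap left in (b)). Here it is essential that $\gr R\otimes_K\overline{K}$, and not merely $\gr R$, be a domain: without algebraic closedness there can be a genuine one-parameter family of distinct factorizations of a single element — for instance $x^2$ factors in infinitely many ways into real linear forms in $\bR[x,y]/(x^2+y^2)$, a graded domain whose complexification is not a domain — so the FF conclusion really fails in that case. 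Proving the finiteness of homogeneous divisors in the good case is the step I expect to require the most work: it amounts to a dimension argument showing that the factorization varieties of a homogeneous element of such an $S$ carry no positive-dimensional family beyond the orbits of the obvious scaling torus, with the domain property of $S$ — cancellation, hence monotonicity of the Hilbert function, and no collapsing of leading terms — used throughout.
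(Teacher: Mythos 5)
The paper does not actually prove this statement: it cites \cite[Corollary 1.2]{bell-heinle-levandovskyy} and records only that ``the proof of the theorem proceeds by (classical) algebraic geometry,'' so the comparison can only be at the level of strategy. At that level your outline is consistent with the cited proof, and parts (a) and (b) are essentially sound: $\gr R$ embeds in $\gr R\otimes_K\overline K$ and is a domain, the order function $v$ is additive, $R_0$ is a division ring equal to $R^\times\cup\{0\}$, so $v$ is a length function and Lemma~\ref{l-length-bf} makes $R$ a BF-domain; the descent argument ($b\overline R=b'\overline R$ forces $b'=\lambda b$ with $\lambda\in K$) correctly reduces the count to $\overline R=R\otimes_K\overline K$; and the example $x^2=(ax+by)(ax-by)/(a^2+b^2)$ in $\bR[x,y]/(x^2+y^2)$ is a genuinely instructive illustration of why the hypothesis must be imposed on $\gr R\otimes_K\overline K$ rather than on $\gr R$.

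The gap is that part (c) is the theorem, and you do not prove it. Everything in (a) and (b) reduces the problem to: a homogeneous element of a graded domain with finite-dimensional components over an algebraically closed field has only finitely many homogeneous factorizations of prescribed degree type up to scalars. Your dimension bookkeeping shows only that the fibres of the factorization variety over a fixed tuple of leading forms have nonpositive \emph{expected} dimension; an ``expected dimension $\le 0$'' computed by counting equations never by itself yields finiteness, since the equations may fail to be independent and the variety may contain positive-dimensional components beyond the orbits of the scaling torus. Ruling that out --- i.e., showing the multiplication morphism $(b_1,\dots,b_m)\mapsto b_1\cdots b_m$ has fibres of dimension exactly $m-1$ over the points in question, using irreducibility and fibre-dimension arguments that genuinely exploit the domain hypothesis on $\gr R\otimes_K\overline K$ --- is the entire content of the cited corollary, and you explicitly defer it (``the step I expect to require the most work''). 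As it stands the proposal is an accurate and well-motivated roadmap, not a proof.
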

The proof of the theorem proceeds by (classical) algebraic geometry.

\begin{definition}
  Let $K$ be a field and $n \in \bN$.
  For $i,j \in [1,n]$ with $i < j$, let $c_{i,j} \in K^\times$ and $d_{i,j} \in K\langle x_1,\ldots,x_n \rangle$.
  The $K$-algebra
  \[
  R = K \langle x_1,\ldots,x_n \mid x_j x_i = c_{i,j} x_i x_j + d_{i,j},\; i < j \in [1,n] \rangle
  \]
  is called a \emph{G-algebra} (or \emph{PBW algebra}, or \emph{algebra of solvable type}) if
  \begin{enumerate}
    \item the family of monomials $\cM=(\, x_1^{k_1}\cdots x_n^{k_n})_{(k_1,\ldots,k_n) \in \bN_0^n}$ is a $K$-basis of $R$, and
    \item there exists a monomial well-ordering $\prec$ on $\cM$ such that, for all $i<j \in [1,n]$ either $d_{i,j}=0$, or the leading monomial of $d_{i,j}$ is smaller than $x_i x_j$ with respect to $\prec$.
  \end{enumerate}
\end{definition}
\begin{remark}
The family of monomials $\cM$ is naturally in bijection with $\bN_0^n$.
A monomial well-ordering on $\cM$ is a total order on $\cM$ such that, with respect to the corresponding order on $\bN_0^n$, the  semigroup $\bN_0^n$ is a totally ordered semigroup, and such that $\mathbf 0$ is the least element of $\bN_0^n$.
By Dickson's lemma, this implies that the order is a well-ordering.
\end{remark}
\begin{corollary}[{\cite[Theorem 1.3]{bell-heinle-levandovskyy}}]
  Let $K$ be a field.
  Then $G$-algebras over $K$ as well as their subalgebras are FF-domains.
  In particular, the following algebras are FF-domains:
  \begin{enumerate}
    \item Weyl algebras and shift algebras,
    \item universal enveloping algebras of finite-dimensional Lie algebras,
    \item coordinate rings of quantum affine spaces,
    \item $q$-shift algebras and $q$-Weyl algebras,
  \end{enumerate}
  as well as polynomial rings over any of these algebras.
\end{corollary}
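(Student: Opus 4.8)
The plan is to deduce the corollary entirely from the preceding theorem of Bell, Heinle, and Levandovskyy, so that the work reduces to exhibiting, for a $G$-algebra $R$ and for each of its subalgebras, a filtration by finite-dimensional $K$-subspaces whose associated graded ring remains a domain after base change to $\overline K$.

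First I would treat a $G$-algebra $R = K\langle x_1,\dots,x_n \mid x_j x_i = c_{i,j}x_i x_j + d_{i,j}\rangle$ directly. Using the defining conditions --- that $\cM=(x_1^{k_1}\cdots x_n^{k_n})$ is a $K$-basis of $R$, and that the leading monomial of each $d_{i,j}$ is $\prec$-smaller than $x_ix_j$ for the monomial well-ordering $\prec$ --- one chooses positive integer weights $w_1,\dots,w_n$ for the generators so that the weighted degree of every $d_{i,j}$ is strictly less than $w_i+w_j$; the existence of such weights is exactly what the ordering condition buys us. Let $R_m$ be the span of the basis monomials $x_1^{k_1}\cdots x_n^{k_n}$ with $\sum_\nu k_\nu w_\nu \le m$. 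There are only finitely many such monomials for each $m$, so this is a filtration by finite-dimensional $K$-subspaces with $1\in R_0=K$ and $R=\bigcup_m R_m$; the weighted-degree bound on the $d_{i,j}$ ensures that rewriting a product into PBW-normal form never raises the weighted degree, so $R_mR_{m'}\subseteq R_{m+m'}$. I would then identify $\gr R$: in $\gr R$ the identity $x_jx_i = c_{i,j}x_ix_j+d_{i,j}$ yields $\bar x_j\bar x_i = c_{i,j}\bar x_i\bar x_j$, because the contribution of $d_{i,j}$ sits in strictly lower filtration degree, so there is a surjective $K$-algebra map from the quantum affine space $Q = K\langle x_1,\dots,x_n \mid x_jx_i = c_{i,j}x_ix_j\rangle$ onto $\gr R$; since the symbols of the PBW monomials form a $K$-basis of $\gr R$, this map is an isomorphism, $\gr R\cong Q$. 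Finally, $Q$ is an iterated Ore extension of $K$ by automorphisms, hence a domain, and $Q\otimes_K\overline K$ is the quantum affine space over $\overline K$ with the same parameters, again such an Ore extension, hence a domain; thus $\gr R\otimes_K\overline K$ is a domain and the theorem yields that $R$ is an FF-domain.

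For a subalgebra $S\subseteq R$ I would use the induced filtration $S_m = S\cap R_m$. This is again a filtration of $S$ by finite-dimensional $K$-subspaces, and the canonical map $\gr S\to\gr R$ is injective, so $\gr S$ is a $K$-subalgebra of $Q$. Since $\overline K$ is flat over $K$, the map $\gr S\otimes_K\overline K\to Q\otimes_K\overline K$ is injective; as $Q\otimes_K\overline K$ is a domain, so is $\gr S\otimes_K\overline K$, and the theorem gives that $S$ is an FF-domain. The listed examples then follow by checking that each is a $G$-algebra, or a subalgebra of one, with a degree-compatible term order: Weyl algebras, $(q$-$)$shift and $(q$-$)$Weyl algebras, and coordinate rings of quantum affine spaces have each $d_{i,j}$ of degree at most $1$ (a scalar or a linear term), and for $U(\mathfrak g)$ the PBW theorem supplies the basis $\cM$ while $d_{i,j}=[x_j,x_i]\in\mathfrak g$ has degree $1<2$; a polynomial ring over any of these is obtained by adjoining a central generator $t$ with the relations $tx_i = x_it$ (so $c=1$, $d=0$), hence is again a $G$-algebra, and likewise a polynomial ring over a subalgebra is a subalgebra of a $G$-algebra.

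The main obstacle is the identification of $\gr R$ with the quantum affine space for a \emph{general} $G$-algebra: one must both verify that the chosen weighted-degree filtration is genuinely a ring filtration (this is where the condition on $\prec$ and the $d_{i,j}$ is used, to see that PBW-reduction does not increase weighted degree) and rule out any further collapse in $\gr R$, which is precisely where one invokes that $\cM$ is a $K$-basis of $R$. Once $\gr R\cong Q$ is established, the remaining facts --- $Q$ being a domain, its stability under $-\otimes_K\overline K$, and the reduction for subalgebras via flatness of $\overline K/K$ --- are routine.
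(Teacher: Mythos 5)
Your argument is correct and is essentially the proof from the cited source \cite{bell-heinle-levandovskyy}: the survey itself states this corollary by citation only, and the intended derivation is exactly your reduction to the preceding theorem via the weighted filtration with $\gr R$ isomorphic to a quantum affine space, plus the induced-filtration/flatness argument for subalgebras. The only step you gloss over --- producing positive integer weights realizing the finitely many strict $\prec$-inequalities $d_{i,j} \prec x_ix_j$ --- is the standard fact that a monomial well-ordering can be represented by a positive weight vector on any finite set of monomials, so nothing is missing.
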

In addition, explicit upper bounds on the number of factorizations are given in \cite[Theorem 1.4]{bell-heinle-levandovskyy}.

The following example shows that even for very nice domains (e.g., PIDs) one cannot in general expect there to be only finitely many rigid factorizations for each element.
\begin{example} \label{e-pid-not-ff}
  Let $Q$ be a quaternion division algebra over a (necessarily infinite) field $K$ with $\chr(K) \ne 2$.
  Let $a \in Q^\times \setminus K^\times$.
  We denote by $\overline{a}$ the conjugate of $a$.
  Then $\nr(a) = a\overline{a} \in K^\times$ and $\tr(a) = a + \overline{a} \in K$.
  For all $c \in Q^\times$,
  \[
    f = x^2 - \tr(a) x + \nr(a) = (x - cac^{-1})(x - c\overline{a}c^{-1}) \in Q[x],
  \]
  and thus $\card{\sZ^*(f)} = \card{Q^\times}$ is infinite.
  Hence $Q[x]$ is not an FF-domain.
  However, being Euclidean, $Q[x]$ is similarity factorial, that is, $\card{\sZ_{\dsim}(f)} = 1$ for all $f \in Q[x]^\bullet$.
\end{example}

\begin{remark}
  Another sufficient condition for a domain or semigroup to have finite rigid factorizations is given in \cite[Theorem 5.23.1]{smertnig13}.
\end{remark}

\subsection{Transfer Homomorphisms}

Transfer homomorphisms play an important role in the theory of non-unique factorizations in the commutative setting.
A transfer homomorphism allows us to express arithmetical invariants of a ring, semigroup, or small category in terms of arithmetical invariants of a possibly simpler object.

In the commutative setting, a particularly important transfer homomorphism is that from a commutative Krull monoid $H$ to the monoid of zero-sum sequences $\cB(G_0)$ over a subset $G_0$ of the class group $G$ of $H$.
In particular, if $H = \cO^\bullet$ with $\cO$ a holomorphy ring in a global field, then $G_0=G$ is a finite abelian group.
This allows one to study the arithmetic of $H$ through combinatorial and additive number theory.
(See \cite{geroldinger09}.)

In a noncommutative setting, transfer homomorphisms were first explicitly used by Baeth, Ponomarenko, Adams, Ardila, Hannasch, Kosh, McCarthy, and Rosenbaum in the article \cite{baeth-ponomarenko-etal11}.
They studied non-unique factorizations in certain subsemigroups of $M_n(\bZ)^\bullet$ and $T_n(\bZ)^\bullet$.
Transfer homomorphisms for cancellative small categories have been introduced in \cite{smertnig13}, where the main application was to classical maximal orders in central simple algebras over global fields.
This has been developed further in \cite{baeth-smertnig15}, where arithmetical invariants going beyond sets of lengths were studied.

Implicitly, the concept of a transfer homomorphism was already present in earlier work due to Estes and Matijevic (in \cite{estes-matijevic79a, estes-matijevic79b}), who essentially studied when $\det\colon M_n(R)^\bullet \to R^\bullet$ is a transfer homomorphism, and Estes and Nipp (in \cite{estes-nipp89,estes91a,estes91b}), who essentially studied when the reduced norm in a  classical hereditary $\cO$-order over a holomorphy ring $\cO$ is a transfer homomorphism.
Unfortunately, their results seem to have been largely overlooked so far.

We recall the necessary definitions.
See \cite[Section 2]{baeth-smertnig15} for more details.
\begin{definition}[Transfer homomorphism]
  Let $H$ and $T$ be cancellative small categories.
  A homomorphism $\phi\colon H \to T$ is called a \emph{transfer homomorphism} if it has the following properties:
  \begin{enumerate}[label=\textup{(\textbf{T\arabic*})},ref=\textup{(T\arabic*)}]
  \item\label{th:units} $T=T^\times \phi(H)T^{\times}$ and $\phi^{-1}(T^{\times})=H^{\times}$.
  \item\label{th:lift} If $a \in H$, $b_1$, $b_2 \in T$ and $\phi(a)=b_1b_2$, then there exist $a_1$,~$a_2 \in H$ and $\varepsilon \in T^\times$ such that $a = a_1a_2$, $\phi(a_1) = b_1 \varepsilon^{-1}$, and $\phi(a_2) = \varepsilon b_2$.
  \end{enumerate}
\end{definition}

We denote by $T_n(D)$ the ring of $n \times n$ upper triangular matrices over a commutative domain $D$.
To study $T_n(D)^\bullet$, weak transfer homomorphisms were introduced by Bachman, Baeth, and Gossell in \cite{bachman-baeth-gossell14}.

\begin{definition}[Weak transfer homomorphism]
  Let $H$ and $T$ be cancellative small categories, and suppose that $T$ is atomic.
  A homomorphism $\phi\colon H \rightarrow T$ is called a \emph{weak transfer homomorphism} if it has the following properties:
  \begin{enumerate}[label=\textup{(\textbf{WT\arabic*})},ref=\textup{(WT\arabic*)}]
  \item[\textbf{(T1)}] $T=T^\times \phi(H)T^{\times}$ and $\phi^{-1}(T^{\times})=H^{\times}$.
    \setcounter{enumi}{1}
  \item\label{wth:lift} If $a \in H$, $n \in \bN$, $v_1$, $\ldots\,$,~$v_n \in \cA(T)$ and $\phi(a)=v_1\cdots v_n$, then there exist $u_1$, $\ldots\,$,~$u_n \in \cA(H)$ and a permutation $\sigma \in \fS_n$ such that $a=u_1\cdots u_n$ and $\phi(u_i) \simeq v_{\sigma(i)}$ for each $i \in [1,n]$.
  \end{enumerate}
\end{definition}
(Weak) transfer homomorphisms map atoms to atoms.
If $a \in H$, property \ref*{th:lift} of a transfer homomorphism allows one to lift rigid factorizations of $\phi(a)$ in $T$ to rigid factorizations of $a$ in $H$.
For a weak transfer homomorphism, \ref*{wth:lift} allows the lifting of rigid factorizations of $\phi(a)$ up to permutation and associativity.
These properties are sufficient to obtain an equality of the system of sets of lengths of $H$ and $T$ (see Theorem~\ref{t-transfer} below).

To obtain results about the catenary degree, in the case where $\phi$ is a transfer homomorphism, we need additional information about the fibers of the induced homomorphism $\phi^*\colon \sZ^*(H) \to \sZ^*(T)$.

\begin{definition}[Catenary degree in the permutable fibers]
  Let $H$ and $T$ be atomic cancellative small categories, and let $\sd$ be a distance on $H$.
  Suppose that there exists a transfer homomorphism $\phi\colon H \to T$.
  Denote by $\phi^*\colon \sZ^*(H) \to \sZ^*(T)$ its natural extension to the categories of rigid factorizations.

  \begin{enumerate}
    \item
      Let $a \in H$, and let $z$,~$z' \in \sZ^*(a)$ with $\sd_p(\phi^*(z), \phi^*(z')) = 0$.
      We say that an $N$-chain $z=z_0$,~$z_1$, $\ldots\,$,~$z_{n-1}$,~$z_n=z' \in \sZ^*(a)$ of rigid factorizations of $a$ \emph{lies in the permutable fiber of $z$} if $\sd_p(\phi^*(z_i),\phi^*(z))=0$ for all $i \in [0,n]$.

    \item
      We define $\sc_\sd(a,\phi)$ to be the smallest $N \in \bN_0 \cup \{\infty\}$ such that, for any two $z$,~$z' \in \sZ^*(a)$ with $\sd_p(\phi^*(z),\phi^*(z'))=0$, there exists an $N$-chain (in distance $\sd$) between $z$ and $z'$, lying in the permutable fiber of $z$.
      Moreover, we define the \emph{catenary degree in the permutable fibers}
      \[
      \sc_\sd(H, \phi) = \sup\big\{\, \sc_\sd(a, \phi) \mid a \in H \,\big\} \in \bN_0 \cup \{\infty \}.
      \]
    \end{enumerate}
\end{definition}

For the following basic result on [weak] transfer homomorphisms, see \cite{baeth-smertnig15} and also \cite{smertnig13,bachman-baeth-gossell14}.
\begin{theorem} \label{t-transfer}
  Let $H$ and $T$ be cancellative small categories.
  Let $\phi\colon H \to T$ be a transfer homomorphism, or let $T$ be atomic and $\phi\colon H \to T$ a weak transfer homomorphism.
  \begin{enumerate}
  \item $H$ is atomic if and only if $T$ is atomic.
  \item For all $a \in H$, $\sL_H(a) = \sL_T(\phi(a))$.
    In particular $\cL(H) = \cL(T)$, and all arithmetical invariants from Definition~\ref*{d-len-inv} coincide for $H$ and $T$.
  \item If $\phi$ is a transfer homomorphism and $H$ is atomic, then
      \begin{align*}
        \sc_\sd(H) & \le \max\big\{ \sc_p(T), \sc_\sd(H, \phi) \big\}.
      \end{align*}
    \item If $\phi$ is an isoatomic weak transfer homomorphism (that is, $\phi(a) \simeq \phi(b)$ implies $a \simeq b$) and $T$ is atomic, then $\sc_p(H) = \sc_p(T)$.
      If, moreover, $T$ is an atomic commutative semigroup, then $\omega_p(H) = \omega_p(T)$ and $\st_p(H) = \st_p(T)$.
  \end{enumerate}
\end{theorem}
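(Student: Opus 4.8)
The plan is to distill from the defining properties of (weak) transfer homomorphisms two tools — \emph{atoms lift to atoms} and \emph{factorizations lift} — and then read off the four assertions, the catenary bound in (3) being the one that requires genuine work. First I would record the preliminaries. A transfer homomorphism, and (since $T$ is then assumed atomic) also a weak transfer homomorphism, maps $\cA(H)$ into $\cA(T)$: if $u \in \cA(H)$ then $\phi(u) \notin T^\times$ by \ref*{th:units}, and any factorization $\phi(u) = b_1 b_2$ pulls back via \ref*{th:lift} — respectively, one writes $\phi(u)$ as a product of atoms of $T$ and applies \ref*{wth:lift} — to a factorization of the atom $u$, forcing $b_1 \in T^\times$ or $b_2 \in T^\times$. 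Iterating \ref*{th:lift} gives the lifting tool used throughout: if $\phi(a) = \varepsilon v_1 \cdots v_k$ with $\varepsilon \in T^\times$ and $v_i \in \cA(T)$, then $a = u_1 \cdots u_k$ with $u_i \in \cA(H)$ and $\phi(u_i)$ associated to $v_i$; in particular $\phi^*$ restricts to a surjection $\sZ^*(a) \to \sZ^*(\phi(a))$. For a weak transfer homomorphism \ref*{wth:lift} gives the same lift up to a permutation of the $u_i$. Finally, rigid factorizations, lengths and the permutable distance $\sd_p$ are unchanged when one multiplies an element by units on either side, and $T = T^\times \phi(H) T^\times$ by \ref*{th:units}; these facts let me pass freely between $a$, $\phi(a)$ and an arbitrary $b \in T$.

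Parts (1) and (2) follow at once. For (1): if $T$ is atomic (automatic in the weak case) lift a factorization of $\phi(a)$ to one of $a$; conversely, if $H$ is atomic, write $b = \eta \phi(a) \eta'$, factor $a$ into atoms, and apply $\phi$, so $T$ is atomic too — giving the converse in the transfer case. For (2): running the lifting in both directions yields $\sL_H(a) = \sL_T(\phi(a))$ for every $a \in H$, and unit invariance of sets of lengths together with $T = T^\times \phi(H) T^\times$ upgrades this to $\cL(H) = \cL(T)$; since the elasticities $\rho$ and $\rho_k$, the unions $\cU_k$ and the sets of distances $\Delta$ are defined purely from the system of sets of lengths, they all coincide for $H$ and $T$.

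For (3) fix $a \in H$ and $z, z' \in \sZ^*(a)$, and set $N = \max\{\sc_p(T), \sc_\sd(H, \phi)\}$. If $\sd_p(\phi^*(z), \phi^*(z')) = 0$ then an $N$-chain between $z$ and $z'$ exists directly from the definition of $\sc_\sd(a, \phi)$. In general I would fix an $\sc_p(T)$-chain $\phi^*(z) = w_0, w_1, \ldots, w_n = \phi^*(z')$ in $\sZ^*(\phi(a))$, so that $\sd_p(w_{j-1}, w_j) \le \sc_p(T)$ for each $j$, and lift it to $\sZ^*(a)$ step by step. Starting from a lift of $w_{j-1}$, I first move inside its permutable fiber — at cost at most $\sc_\sd(a, \phi)$ — to a lift of $w_{j-1}$ in which the at most $\sc_p(T)$ atoms that have to change occur consecutively; the corresponding block replacement in $T$ then pulls back through (an iteration of) \ref*{th:lift} to a single block replacement in $H$ of length at most $\sc_p(T)$, which by the distance axioms \ref*{d:mul} and \ref*{d:len} moves the factorization by at most $\sc_p(T)$ and lands on a lift of $w_j$; a concluding fiber move (cost $\le \sc_\sd(a, \phi)$) continues the chain. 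Every elementary move has cost at most $N$, and since $z$ and $z'$ are themselves lifts of $w_0$ and $w_n$ respectively, concatenation gives $\sc_\sd(a) \le N$; taking the supremum over $a$ proves the bound. The hard part is the reduction to a consecutive block: a $\sd_p$-small perturbation in $T$ may rearrange atoms arbitrarily, so one must produce, within the permutable fiber, a bona fide factorization of $a$ in which precisely the offending atoms are isolated and \ref*{th:lift} applies, all while tracking units; this bookkeeping is the technical core, for which I would follow \cite{baeth-smertnig15}.

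For (4) let $\phi$ be an isoatomic weak transfer homomorphism with $T$ atomic. Since $\phi$ carries atoms to atoms, \ref*{wth:lift} makes the induced map $\sZ_{\sd_p}(a) \to \sZ_{\sd_p}(\phi(a))$ on permutable factorizations surjective, and isoatomicity ($\phi(a) \simeq \phi(b) \Rightarrow a \simeq b$) makes it injective; as $\phi$ is a functor, $a \simeq b \Rightarrow \phi(a) \simeq \phi(b)$ as well, so two-sided association is reflected faithfully, which also shows the map is a $\sd_p$-isometry. Hence $\sc_p(a) = \sc_p(\phi(a))$ for every $a$, and taking suprema — using unit invariance and $T = T^\times \phi(H) T^\times$ — gives $\sc_p(H) = \sc_p(T)$. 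If moreover $T$ is an atomic commutative semigroup, then $\sZ_{\sd_p}(\phi(a))$ is the ordinary set of factorizations of $\phi(a)$, and the isometry identifies $\sZ_{\sd_p}(a)$ with it compatibly with lengths, with concatenation, and — via $a \simeq b \Leftrightarrow \phi(a) \simeq \phi(b)$ — with divisibility of atoms into products up to permutation; as $\omega_p$ and $\st_p$ are defined solely from these data, $\omega_p(H) = \omega_p(T)$ and $\st_p(H) = \st_p(T)$ follow, the remaining routine verification being carried out in \cite[Section 5]{baeth-smertnig15}.
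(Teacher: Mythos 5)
The paper does not prove Theorem~\ref{t-transfer} itself but defers entirely to \cite{baeth-smertnig15} (and \cite{smertnig13,bachman-baeth-gossell14}), and your sketch reconstructs exactly the argument used there: atoms lift to atoms, rigid factorizations lift along \ref{th:lift}/\ref{wth:lift} to give (1) and (2), the catenary bound in (3) comes from lifting an $\sc_p(T)$-chain step by step and interpolating inside permutable fibers at cost $\sc_\sd(H,\phi)$, and isoatomicity makes the induced map on permutable factorization classes a $\sd_p$-isometric bijection for (4). This is correct and is essentially the same approach as the paper's cited proof, with the genuinely technical unit-tracking in (3) and the $\omega_p$/$\st_p$ verification in (4) appropriately deferred to the same source.
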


The strength of a transfer result comes from being able to find transfer homomorphism to a codomain $T$ which is significantly easier to study than the original category $H$.
Monoids of zero-sum sequences have played a central role in the commutative theory, and also turn out to be useful in studying classical maximal orders in central simple algebras over global fields.
We recall their definition and some of the basic structural results about their arithmetic.

Let $(G,+)$ be an additively written abelian group, and let $G_0 \subset G$ be a subset.
In the tradition of combinatorial number theory, elements of the multiplicatively written free abelian monoid $\cF(G_0)$ are called \emph{sequences over $G_0$}.
The inclusion $G_0 \subset G$ extends to a homomorphism $\sigma\colon \cF(G_0) \to G$.
Explicitly, if $S=g_1\cdot \ldots \cdot g_l \in \cF(G_0)$ is a sequence, written as a formal product of elements of $G_0$, then $\sigma(S) = g_1 + \cdots + g_l \in G$ is its sum in $G$.
We call $S$ a \emph{zero-sum sequence} if $\sigma(S)=0$.
The subsemigroup
\[
\cB(G_0) = \big\{\, S \in \cF(G_0) \mid \sigma(S) = 0 \,\big\}
\]
of the free abelian monoid $\cF(G_0)$ is called the \emph{monoid of zero-sum sequences over $G_0$}.
(See \cite{geroldinger09} or \cite[Chapter 2.5]{ghk06}.)

The semigroup $\cB(G_0)$ is a Krull monoid.
It is of particular importance in the theory of non-unique factorizations since every commutative Krull monoid [domain] $H$ possesses a transfer homomorphism to a monoid of zero-sum sequences over a subset of the class group of $H$.
Thus, problems about non-unique factorizations in $H$ can often be reduced to questions about $\cB(G_0)$.

Factorization problems in $\cB(G_0)$ are studied with methods from combinatorial and additive number theory.
Motivated by the study of rings of algebraic integers, the case where $G_0=G$ is a finite abelian group has received particular attention.
We recall some of the most important structural results in this case.
See \cite[Definition 3.2.2]{geroldinger09} for the definition of an almost arithmetical multiprogression (AAMP).

\begin{theorem} \label{t-zss}
  Let $G$ be a finite abelian group, and let $H=\cB(G)$ be the monoid of zero-sum sequences over $G$.
  \begin{enumerate}
  \item $H$ is half-factorial if and only if $\card{G} \le 2$.
  \item The set of distances, $\Delta(H)$, is a finite interval, and if it is non-empty, then $\min \Delta(H) = 1$.
  \item For every $k \in \bN$, the union of sets of lengths containing $k$, $\cU_k(H)$, is a finite interval.
  \item\label{t-zss:structure} There exists an $M \in \bN_0$ such that for every $a \in H$ the set of lengths $\sL(a)$ is an AAMP with difference $d \in \Delta (H)$ and bound $M$.
  \end{enumerate}
\end{theorem}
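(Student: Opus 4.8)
The plan is to take the four parts in increasing order of difficulty. Part~(1) and the easy half of part~(2) admit short self-contained arguments; part~(3) and part~(4) are, respectively, the ``Structure Theorem for Unions of Sets of Lengths'' and the ``Structure Theorem for Sets of Lengths'' for monoids of zero-sum sequences, for which I would invoke the monograph treatment in \cite{geroldinger09} and \cite{ghk06} and only describe the mechanism.

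The common first step is to recall that the atoms of $\cB(G)$ are exactly the minimal zero-sum sequences over $G$, and that the maximal length of such an atom is the Davenport constant $\sD(G)$, which is finite. For~(1), if $\card{G}\le 2$ then the only atoms are the length-one sequence consisting of the neutral element and the sequences $g\cdot g$ with $g$ of order~$2$; one checks directly that $\cB(G)$ is then freely generated by its atoms, hence factorial and in particular half-factorial. Conversely, if $\card{G}\ge 3$ I would exhibit a single zero-sum sequence whose set of lengths has at least two elements. If $G$ contains an element $g$ of order $n\ge 3$, then, writing sequences multiplicatively, $g^n\,(-g)^n$ factors both as $(g^n)((-g)^n)$, of length~$2$, and as $(g\cdot(-g))^n$, of length~$n$. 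If instead every nonzero element of $G$ has order~$2$, then $\card{G}\ge 3$ forces the rank to be at least~$2$, and for independent $g,h$ the sequence $g^2\,h^2\,(g+h)^2$ factors both as $(g^2)(h^2)((g+h)^2)$, of length~$3$, and as $(g\cdot h\cdot(g+h))^2$, of length~$2$. In either case $\cB(G)$ is not half-factorial; moreover the second construction, and the first one when $g$ has order exactly~$3$, already show that $1\in\Delta(\cB(G))$.

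For part~(2), the finiteness of $\Delta(\cB(G))$ is the easy half, since $\max\Delta(\cB(G))\le\sD(G)-2<\infty$. The assertion that $\Delta(\cB(G))$ is in fact an \emph{interval} --- so that $\min\Delta(\cB(G))=1$ whenever it is nonempty --- is the genuinely nontrivial point; for cyclic $G$ one even has the sharp equality $\Delta(\cB(G))=\iv{1}{\card{G}-2}$, and in general I would cite the interval theorem for $\Delta(\cB(G))$ from \cite{geroldinger09,ghk06} rather than reprove it. Likewise, for part~(3), finiteness of $\cU_k(\cB(G))$ amounts to $\rho_k(\cB(G))<\infty$, which follows from $\rho_k(H)\le k\,\rho(H)$ together with the finiteness of the elasticity (indeed $\rho(\cB(G))=\sD(G)/2$ when $\card{G}\ge 3$, and $\rho(\cB(G))=1$ otherwise); the interval property of $\cU_k(\cB(G))$ is again a structure theorem of the literature to be cited.

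The main obstacle is part~(4), the Structure Theorem for Sets of Lengths, which is among the deepest results of the area and which I would not attempt to reprove. The strategy, due to Geroldinger, is to reduce --- via transfer homomorphisms and the arithmetic of $\cB(G)$ --- to a combinatorial analysis over the finite group $G$: one controls the sets of lengths of products of few atoms, uses additive combinatorics (splitting and recombining zero-sum subsequences) to understand how sets of lengths behave under multiplication, and then extracts, by a pigeonhole-and-periodicity argument, a uniform bound $M$ and a difference $d\in\Delta(\cB(G))$ such that every $\sL(a)$ is an almost arithmetical multiprogression with these parameters. Combining~(1)--(4) then completes the proof, with essentially all of the combinatorial content imported from \cite{geroldinger09} and \cite{ghk06}.
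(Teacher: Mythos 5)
Your proposal is correct and matches the paper's treatment: the paper states this theorem purely as a recollection of known structural results, deferring entirely to \cite{geroldinger09} and \cite{ghk06}, which is exactly where you send the reader for the interval properties in (2)--(3) and the Structure Theorem in (4). The elementary verifications you supply for (1) and the easy halves of (2) and (3) --- the Davenport-constant bound, the sequences $g^n(-g)^n$ and $g^2h^2(g+h)^2$, and $\rho_k(H)\le k\rho(H)$ --- are all sound and go beyond what the survey itself records, but they do not change the overall route.
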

The last result, \ref*{t-zss:structure}, is called the \emph{Structure Theorem for Sets of Lengths}, and is a highly non-trivial result on the general structure of sets of lengths.
We give a short motivation for it.
Suppose that $H$ is a cancellative semigroup and an element $a$ has two factorizations of distinct length, say $a=u_1\cdots u_k$ and $a=v_1\ldots v_l$ with $k < l$ and atoms $u_i$, $v_j \in \cA(H)$.
That is, $\{k, l  = k + (l-k) \} \subset \sL(a)$.
Then $a^n = (u_1\cdots u_k)^i (v_1\cdots v_l)^{n-i}$ for all $i \in [0,n]$.
Hence the arithmetical progression $\{\, k + i(l-k) \mid i \in [0,n] \,\}$ with difference $l-k$ and length $n+1$ is contained in $\sL(a^n)$.
Additional pairs of lengths of $a$ give additional arithmetical progressions in $\sL(a^n)$.

If everything is ``nice,'' we might hope that this is essentially the only way that large sets of lengths appear.
Consequently, we would expect large sets of lengths to look roughly like unions of long arithmetical progressions.
The Structure Theorem for Sets of Lengths implies that this is indeed so in the setting above: If $a \in H$, then $\sL(a)$ is contained in a union of arithmetical progressions with some difference $d \in \Delta(H)$, and with possible gaps at the beginning and at the end.
The size of these gaps is uniformly bounded by the parameter $M$ which only depends on $H$ and not the particular element $a$.

\subsection{Transfer Results}
\label{ssec:transfer-results}

In this section, we gather transfer results for matrix rings, triangular matrix rings, and classical hereditary and maximal orders in central simple algebras over global fields.

\subsubsection{Matrix Rings}
\label{sssec:matrix}
For $R$ a $2n$-fir, factorizations in $M_n(R)$ have been studied by P.\,M.~Cohn.
In the special case where $R$ is a commutative PID, the existence of the Smith normal form implies that $\det\colon M_n(R)^\bullet \to R^\bullet$ is a transfer homomorphism.
This was noted in \cite{baeth-ponomarenko-etal11}.

Let $R$ be a commutative ring.
In \cite{estes-matijevic79a,estes-matijevic79b}, Estes and Matijevic studied when $M_n(R)$ has \emph{[weak] norm-induced factorization}, respectively \emph{determinant-induced factorization}.
Here, $M_n(R)$ has determinant-induced factorization if for each $A \in M_n(R)$ and each $r \in R^\bullet$ which divides $\det(A)$, there exists a right divisor of $A$ having determinant $r$.
We do not give the definition of [weak] norm-induced factorization, but recall the following.
\begin{proposition} \label{pnife}
  Let $R$ be a commutative ring and $n \in \bN$.
  Consider the following statements:
  \begin{equivenumerate}
    \item\label{pnife:nif} $M_n(R)$ has norm-induced factorization.
    \item\label{pnife:dif} $M_n(R)$ has determinant-induced factorization.
    \item\label{pnife:th} $\det\colon M_n(R)^\bullet \to R^\bullet$ is a transfer homomorphism.
  \end{equivenumerate}
  Then \ref*{pnife:nif}${}\Rightarrow{}$\ref*{pnife:dif}${}\Rightarrow{}$\ref*{pnife:th}.
  If $R$ is a finite direct product of Krull domains, then also the converse implications hold.
\end{proposition}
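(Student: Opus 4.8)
The plan is to establish the substantive equivalence \ref*{pnife:dif}$\Leftrightarrow$\ref*{pnife:th} directly and completely, and to reduce the two implications involving \ref*{pnife:nif} to the work of Estes and Matijevic, since [weak] norm-induced factorization is not developed in this survey. The starting point is the dictionary between $M_n(R)$ and $R$ furnished by McCoy's theorem: for a commutative ring $R$ and $A \in M_n(R)$, the element $\det(A)$ is a zero-divisor in $R$ if and only if $A$ is a zero-divisor in $M_n(R)$. Indeed, if $\det(A) \in R^\bullet$, then $\adj(A)A = A\adj(A) = \det(A)I$ shows that $AB = 0$ or $BA = 0$ forces $B = 0$; conversely, if $\det(A)$ is a zero-divisor, McCoy's theorem yields $0 \ne v \in R^n$ with $Av = 0$, and padding $v$ out by zero columns gives a nonzero $B$ with $AB = 0$. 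Hence $M_n(R)^\bullet = \{\, A \in M_n(R) \mid \det(A) \in R^\bullet \,\}$, the determinant restricts to a multiplicative homomorphism $\det\colon M_n(R)^\bullet \to R^\bullet$ of cancellative semigroups, and $\det^{-1}(R^\times) = \mathrm{GL}_n(R) = (M_n(R)^\bullet)^\times$. Since $\det(\operatorname{diag}(r,1,\dots,1)) = r$ for every $r \in R^\bullet$, this homomorphism is even surjective, so property \ref*{th:units} of a transfer homomorphism holds automatically, for every commutative $R$.

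Given this, \ref*{pnife:dif}$\Rightarrow$\ref*{pnife:th} reduces to the lifting axiom \ref*{th:lift}: for $A \in M_n(R)^\bullet$ and a factorization $\det(A) = b_1b_2$ in $R^\bullet$, apply determinant-induced factorization with $r = b_2$ to write $A = A_1A_2$ with $\det(A_2) = b_2$; then $\det(A_1) = b_1$ by cancellation in $R^\bullet$, and $A_1,A_2 \in M_n(R)^\bullet$ by the dictionary, so \ref*{th:lift} holds with $\varepsilon = 1$. For \ref*{pnife:th}$\Rightarrow$\ref*{pnife:dif}, take $A \in M_n(R)^\bullet$ and $r \in R^\bullet$ with $\det(A) = rs$; here $s \in R^\bullet$ is automatic, since $st = 0$ would force $\det(A)t = 0$, hence $t = 0$. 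Applying \ref*{th:lift} to $\det(A) = s\cdot r$ gives $A = A_1A_2$ with $\det(A_1) = s\varepsilon^{-1}$ and $\det(A_2) = \varepsilon r$ for some $\varepsilon \in R^\times$; replacing $A_1$ by $A_1\operatorname{diag}(\varepsilon,1,\dots,1)$ and $A_2$ by $\operatorname{diag}(\varepsilon^{-1},1,\dots,1)A_2$ leaves the product $A$ unchanged and makes $A_2$ a right divisor of $A$ of determinant exactly $r$. (If $\det(A)\notin R^\bullet$ one uses $A = 0\cdot\operatorname{diag}(r,1,\dots,1)$, or reads the statement as vacuous.) Thus \ref*{pnife:dif}$\Leftrightarrow$\ref*{pnife:th} over every commutative ring, with no recourse to the Krull hypothesis.

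It remains to connect these to \ref*{pnife:nif}. After unwinding the Estes--Matijevic definitions, \ref*{pnife:nif}$\Rightarrow$\ref*{pnife:dif} should be formal, as the pertinent norm on the algebra $M_n(R)$ is the determinant, so a norm-induced factorization of $A$ along a divisor of $\det(A)$ is in particular determinant-induced. The genuinely substantial implication --- and the only one requiring that $R$ be a finite direct product of Krull domains --- is \ref*{pnife:dif}$\Rightarrow$\ref*{pnife:nif}. For this I would first split along the finite product, since $M_n(-)$ and all the notions in play decompose, reducing to the case of a single Krull domain; then I would use that divisibility in a Krull domain is detected at the height-one localizations $R_\mathfrak{p}$, at which $M_n(R_\mathfrak{p})$ is a matrix ring over a discrete valuation ring and the Smith normal form is available. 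Making precise this passage from the coarse determinant data to the finer norm-theoretic data is the main obstacle; it is exactly what Estes and Matijevic carry out in \cite{estes-matijevic79a,estes-matijevic79b}, which I would invoke rather than reproduce, everything else being routine bookkeeping with the determinant.
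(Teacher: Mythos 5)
Your handling of \ref*{pnife:nif}$\,\Rightarrow\,$\ref*{pnife:dif}$\,\Rightarrow\,$\ref*{pnife:th} agrees with the paper's proof: both rest on the fact that $A \in M_n(R)$ is a zero-divisor if and only if $\det(A)$ is, plus a direct check of the transfer axioms (your verification of \textbf{(T1)} and the lifting step with $\varepsilon = 1$ is correct), and both defer \ref*{pnife:dif}$\,\Rightarrow\,$\ref*{pnife:nif} to Estes and Matijevic.

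The gap is in your claim that \ref*{pnife:th}$\,\Rightarrow\,$\ref*{pnife:dif} holds over every commutative ring. Determinant-induced factorization, as defined in the paper, quantifies over \emph{all} $A \in M_n(R)$, not only over $A \in M_n(R)^\bullet$: it demands a right divisor of $A$ of determinant $r$ for every $r \in R^\bullet$ dividing $\det(A)$, and this is a non-vacuous condition when $A$ is a zero-divisor. Over a domain, $\det(A)=0$ is divisible by every nonzero $r$, so the condition asks that every singular matrix admit a right divisor of every prescribed nonzero determinant; over $R = \bZ\times\bZ$ the non-zero-divisor $(2,1)$ divides the zero-divisor $(2,0)$, so the issue also arises with $\det(A)\ne 0$. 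The transfer homomorphism $\det\colon M_n(R)^\bullet \to R^\bullet$ carries no information about factorizations of singular matrices, so your argument only establishes the restriction of \ref*{pnife:dif} to $M_n(R)^\bullet$; the parenthetical escape via $A = 0\cdot\operatorname{diag}(r,1,\dots,1)$ covers only $A=0$, and the statement is not vacuous for nonzero singular $A$. This is precisely where the paper invokes the Krull hypothesis: it deduces \ref*{pnife:th}$\,\Rightarrow\,$\ref*{pnife:dif} from \cite[Lemma~2]{estes-matijevic79a}, which supplies the bridge to the singular case. Your argument for $A \in M_n(R)^\bullet$, including the unit adjustment by $\operatorname{diag}(\varepsilon,1,\dots,1)$, is a correct partial step, but the unconditional equivalence \ref*{pnife:dif}$\,\Leftrightarrow\,$\ref*{pnife:th} is not proved as written.
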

\begin{proof}
  The implications \ref*{pnife:nif}${}\Rightarrow{}$\ref*{pnife:dif}${}\Rightarrow{}$\ref*{pnife:th} follow immediately from the definitions and the fact that a matrix  $A \in M_n(R)$ is a zero-divisor if and only if $\det(A) \in R$ is a zero-divisor.
  Suppose that $R$ is a finite direct product of Krull domains.
  Then \ref*{pnife:dif}${}\Rightarrow{}$\ref*{pnife:nif} holds by \cite[Proposition~5]{estes-matijevic79a}, and \ref*{pnife:th}${}\Rightarrow{}$\ref*{pnife:dif} can be deduced from \cite[Lemma~2]{estes-matijevic79a}.
\end{proof}

In the characterization of rings $R$ for which $M_n(R)$ has norm-induced factorization, the notion of a \emph{Towber ring} (see \cite{towber68,lissner-geramita70}) appears.
We do not recall the exact definition, but give a sufficient as well as a necessary condition for $R$ to be Towber when $R$ is a commutative Noetherian domain.
There is a small gap between the sufficient and the necessary condition.

Let $R$ be a commutative Noetherian domain.
If $\gldim(R) \le 2$ and every finitely generated projective $R$-module is isomorphic to a direct sum of a free module and an ideal of $R$, then $R$ is a Towber ring.
Conversely, if $R$ is a Towber ring, then $\gldim(R) \le 2$ and every finitely generated projective $R$-module of rank at least 3 is isomorphic to a direct sum of a free module and an ideal.
\begin{theorem}[{\cite{estes-matijevic79a}}] \label{t-nif}
  Let $R$ be a commutative Noetherian ring with no nonzero nilpotent elements.
  Then the following statements are equivalent.
  \begin{equivenumerate}
    \item\label{t-nif:nnif} $M_n(R)$ has norm-induced factorization for all $n \in \bN$.
    \item\label{t-nif:2nif} $M_2(R)$ has norm-induced factorization.
    \item\label{t-nif:towber} $R$ is a Towber ring, that is, $R$ is a finite direct product of Towber domains.
    \item\label{t-nif:proj} $\gldim(R) \le 2$, each projective module $P$ of constant rank $r(P)$ is stably equivalent to $\bigwedge^{r(P)} P$, and stably free finitely generated projective $R$-modules are free.
  \end{equivenumerate}

  Moreover, the statements above imply the following statements \ref*{t-nif:ndif}--\ref*{t-nif:2th}.
  If $R$ is a finite direct product of Noetherian integrally closed domains, then the converse holds, and any of the above statements \ref*{t-nif:nnif}--\ref*{t-nif:proj} is equivalent to any of the statements \ref*{t-nif:ndif}--\ref*{t-nif:2th}.
  \begin{equivenumerate}
    \setcounter{enumi}{4}
    \item \label{t-nif:ndif} $M_n(R)$ has determinant-induced factorization for all $n \in \bN$.
    \item \label{t-nif:2dif} $M_2(R)$ has determinant-induced factorization.
    \item \label{t-nif:nth} $\det\colon M_n(R)^\bullet \to R^\bullet$ is a transfer homomorphism for all $n \in \bN$.
    \item \label{t-nif:2th} $\det\colon M_2(R)^\bullet \to R^\bullet$ is a transfer homomorphism.
  \end{equivenumerate}
\end{theorem}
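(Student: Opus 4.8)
The plan is to assemble the theorem from three pieces: Proposition~\ref{pnife}, which for a fixed $n$ already links norm-induced factorization, determinant-induced factorization, and the transfer property of $\det$; a reduction of the first four statements to the case of a domain; and the structural theorems of Estes and Matijevic in \cite{estes-matijevic79a}, which supply the two genuinely hard implications. First I would establish \ref*{t-nif:nnif}$\Leftrightarrow$\ref*{t-nif:2nif}$\Leftrightarrow$\ref*{t-nif:towber}$\Leftrightarrow$\ref*{t-nif:proj}. The direction \ref*{t-nif:nnif}$\Rightarrow$\ref*{t-nif:2nif} is trivial. To handle the rest I would first reduce to a domain: a Towber ring is by definition a finite direct product of Towber domains, a Towber domain has $\gldim\le 2$ (hence is regular) and satisfies the module conditions of \ref*{t-nif:proj}, and conversely $\gldim(R)\le 2$ forces $R$ to be a regular Noetherian ring, hence a finite direct product of regular domains; since $M_n$ and $\det$ distribute over finite direct products of rings, the equivalence reduces to the case that $R$ is a domain. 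Over a domain, \ref*{t-nif:towber}$\Leftrightarrow$\ref*{t-nif:proj} is the unwinding of the definition of a Towber domain (the informal sufficient and necessary conditions quoted before the statement bracket this equivalence), so the substantive content is the pair of implications \ref*{t-nif:2nif}$\Rightarrow$\ref*{t-nif:proj} and \ref*{t-nif:proj}$\Rightarrow$\ref*{t-nif:nnif}.

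I expect \ref*{t-nif:2nif}$\Rightarrow$\ref*{t-nif:proj} to be the main obstacle: from the hypothesis that every $2\times 2$ matrix over $R$ factors compatibly with each factorization of its norm form, one must extract the global homological data ($\gldim(R)\le 2$, stably free finitely generated projectives are free, and $P$ stably equivalent to $\bigwedge^{r(P)} P$ for every finitely generated projective $P$), using the dictionary between $2$-generated modules and $2\times 2$ matrices and showing that the $2\times 2$ case already controls projectives of every rank. This is the technical heart of \cite{estes-matijevic79a}, and I would take it from there rather than reprove it. The converse \ref*{t-nif:proj}$\Rightarrow$\ref*{t-nif:nnif} is by contrast a construction --- given the homological and exterior-power hypotheses one builds, for an arbitrary $A\in M_n(R)$ and an arbitrary factorization of its norm form, a matching factorization of $A$ --- and is likewise found in \cite{estes-matijevic79a}.

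Finally I would deduce \ref*{t-nif:ndif}--\ref*{t-nif:2th} by bookkeeping with Proposition~\ref{pnife}. For each $n$ it gives ``norm-induced $\Rightarrow$ determinant-induced $\Rightarrow$ $\det$ a transfer homomorphism'', so \ref*{t-nif:nnif}$\Rightarrow$\ref*{t-nif:ndif}$\Rightarrow$\ref*{t-nif:nth} and \ref*{t-nif:2nif}$\Rightarrow$\ref*{t-nif:2dif}$\Rightarrow$\ref*{t-nif:2th}, while \ref*{t-nif:ndif}$\Rightarrow$\ref*{t-nif:2dif} and \ref*{t-nif:nth}$\Rightarrow$\ref*{t-nif:2th} are trivial specializations to $n=2$; combined with the equivalence of \ref*{t-nif:nnif}--\ref*{t-nif:proj} this shows those four statements imply all of \ref*{t-nif:ndif}--\ref*{t-nif:2th}. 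If moreover $R$ is a finite direct product of Noetherian integrally closed domains --- which are Krull domains --- the second half of Proposition~\ref{pnife} makes the three properties equivalent for each fixed $n$, so \ref*{t-nif:ndif}$\Leftrightarrow$\ref*{t-nif:nth}$\Leftrightarrow$\ref*{t-nif:nnif} and \ref*{t-nif:2dif}$\Leftrightarrow$\ref*{t-nif:2th}$\Leftrightarrow$\ref*{t-nif:2nif}; together with \ref*{t-nif:nnif}$\Leftrightarrow$\ref*{t-nif:2nif} this closes the chain and makes all eight statements equivalent in that case.
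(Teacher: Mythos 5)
Your proposal is correct and follows essentially the same route as the paper: the equivalence of \ref{t-nif:nnif}--\ref{t-nif:proj} is delegated to \cite[Theorem~1]{estes-matijevic79a}, and the implications involving \ref{t-nif:ndif}--\ref{t-nif:2th} (in both directions, using the Krull hypothesis for the converse) are exactly the bookkeeping with Proposition~\ref{pnife} that the paper performs. Your additional remarks on how the Estes--Matijevic equivalences might be proved internally are reasonable commentary but play no role in the argument, which the paper likewise leaves entirely to the cited reference.
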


\begin{proof}
  The equivalences \ref*{t-nif:nnif}${}\Leftrightarrow{}$\ref*{t-nif:2nif}${}\Leftrightarrow{}$\ref*{t-nif:towber}${}\Leftrightarrow{}$\ref*{t-nif:proj}, and more, follow from \cite[Theorem~1]{estes-matijevic79a}.
  The remaining implications follow from Proposition~\ref{pnife}.
\end{proof}

\begin{theorem}[{\cite{estes-matijevic79b}}] \label{thm:em-matrix-sim-fact}
  Let $R$ be a commutative Noetherian ring with no nonzero nilpotent elements.
  Then the following statements are equivalent.
  \begin{equivenumerate}
    \item\label{tsf:np} $M_n(R)$ is permutably factorial for all $n \in \bN$.
    \item\label{tsf:2p} $M_2(R)$ is permutably factorial.
    \item\label{tsf:n} $M_n(R)$ is similarity factorial for all $n \in \bN$.
    \item\label{tsf:2} $M_2(R)$ is similarity factorial.
    \item\label{tsf:pid} $R$ is a finite direct product of PIDs.
  \end{equivenumerate}
\end{theorem}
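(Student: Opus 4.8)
\subsubsection*{Proof proposal}

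The plan is to prove the cycle $(5)\Rightarrow(1)\Rightarrow(2)\Rightarrow(4)\Rightarrow(5)$ together with $(1)\Rightarrow(3)\Rightarrow(4)$; every implication except $(4)\Rightarrow(5)$ is short. For $(5)\Rightarrow(1)$ write $R=R_1\times\cdots\times R_t$ with each $R_i$ a PID, so that $M_n(R)^\bullet=\prod_i M_n(R_i)^\bullet$, and reduce to a single PID $R_i$. Over a PID the Smith normal form shows that $\det\colon M_n(R_i)^\bullet\to R_i^\bullet$ is a transfer homomorphism; by Theorem~\ref{t-transfer}, $M_n(R_i)$ is then atomic and $\sL_{M_n(R_i)}(A)=\sL_{R_i}(\det A)$, which is a singleton since $R_i$ is half-factorial. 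An atom of $M_n(R_i)^\bullet$ has prime determinant, hence Smith form $\mathrm{diag}(1,\dots,1,p)$, so any two atoms with associated determinants are two-sided associated; moreover two atoms $U,V$ satisfy $\mathrm{coker}(U)\cong\mathrm{coker}(V)$ iff $R_i/pR_i\cong R_i/qR_i$ iff $p\sim q$, so for atoms similarity, two-sided associativity, and associativity of the determinants all coincide. Hence in any factorization $A=U_1\cdots U_\ell$ the multiset of similarity classes of the $U_j$ equals the prime factorization of $\det A$, and factorizations of $A$ are unique up to order and two-sided associativity: $M_n(R_i)$ is permutably factorial, and the same then follows coordinatewise for the product.

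For $(1)\Rightarrow(2)$ and $(3)\Rightarrow(4)$ one specializes $n=2$. For $(1)\Rightarrow(3)$ and $(2)\Rightarrow(4)$: if $A=\varepsilon B\eta$ with $\varepsilon,\eta\in\mathrm{GL}_n(R)$, then left multiplication by $\varepsilon^{-1}$ is a ring automorphism of $M_n(R)$ carrying $AM_n(R)=\varepsilon B M_n(R)$ onto $BM_n(R)$, so $M_n(R)/AM_n(R)\cong M_n(R)/BM_n(R)$ as right modules; thus two-sided associated elements are similar, and permutable factoriality implies similarity factoriality. The whole equivalence therefore reduces to $(4)\Rightarrow(5)$, which is the theorem of Estes and Matijevic.

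For $(4)\Rightarrow(5)$ I would argue in four steps. (a)~Reduce to $R$ indecomposable: a Noetherian ring is a finite product of indecomposable rings, $M_2$ of a product is the product of the $M_2$'s, and similarity factoriality passes to and from finite products. (b)~Show that an indecomposable, reduced, Noetherian $R$ with $M_2(R)$ similarity factorial is a domain: if $R$ had two distinct minimal primes, then from elements separating them one builds a non-zero-divisor matrix with two factorizations whose atoms are not pairwise similar. (c)~Show $R$ is a Dedekind domain (one-dimensional and integrally closed, or a field): over a Noetherian domain possessing a height-two prime, or one strictly contained in its integral closure, one produces explicit $2\times 2$ matrices with incompatible factorizations, exploiting that $\mathrm{coker}(A)=R^2/AR^2$ has zeroth Fitting ideal $\det(A)R$ and arranging two such cokernels with equal Fitting ideal but non-isomorphic, so that the two factorizations involve non-similar atoms of the same length. (d)~For $R$ Dedekind, deduce the class group is trivial, hence $R$ is a PID: if $\mathfrak q$ is a nonprincipal maximal ideal with $\mathfrak q^2=cR$ principal, then $\mathrm{diag}(c,c)=\mathrm{diag}(c,1)\,\mathrm{diag}(1,c)$ is a product of two atoms with cokernel $R/\mathfrak q^2$, while $cR^2=\mathfrak q^2\oplus\mathfrak q^2\subseteq\mathfrak q\oplus\mathfrak q\subseteq R^2$ with $\mathfrak q\oplus\mathfrak q\cong R\oplus\mathfrak q^2\cong R^2$ free, giving a factorization $\mathrm{diag}(c,c)=A_1A_2$ into two atoms with cokernel $(R/\mathfrak q)^2$ (all four matrices are atoms because $\mathfrak q$ is nonprincipal, so preimages of proper submodules of the cokernels are $\cong R\oplus\mathfrak q$ and not free); since $R/\mathfrak q^2\not\cong(R/\mathfrak q)^2$ these two factorizations contradict similarity factoriality. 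A nonprincipal $\mathfrak q$ with $\mathfrak q^2$ also nonprincipal is handled by the analogous construction with $\mathfrak q^2$ replaced by a principal ideal having the same $\mathfrak q$-adic valuation.

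The main obstacle is step~(c): forcing one-dimensionality and normality of $R$ out of similarity factoriality of $M_2(R)$. Whereas in steps~(a), (b), (d) the mechanism is transparent --- non-isomorphic cokernels with equal Fitting ideal give non-similar atoms of equal length --- ruling out higher-dimensional or non-normal $R$ requires genuine constructions of matrices with non-unique factorizations tied to the failure of unique factorization for ideals and modules in such rings; this is the technical heart of the argument, and I would cite \cite{estes-matijevic79b} for its details. Finally, I would remark that the hypothesis of no nonzero nilpotents is essential: for $R=k[\varepsilon]/(\varepsilon^2)$ every non-zero-divisor of $M_2(R)$ is a unit, so $M_2(R)$ is trivially permutably and similarity factorial, yet $R$ is not a finite product of PIDs.
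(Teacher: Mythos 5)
Your proposal follows essentially the same route as the paper: the easy implications among (1)--(4) via ``two-sided associated implies similar'' and specialization to $n=2$, the implication (5)$\Rightarrow$(1) via the Smith Normal Form (the paper phrases this through $\det$ being a transfer homomorphism, exactly as you do), and the key implication (4)$\Rightarrow$(5) deferred to \cite[Theorem~2]{estes-matijevic79b}, which is precisely what the paper cites. Your extra sketch of the Estes--Matijevic argument and the remark on the necessity of the reducedness hypothesis are correct additions, but the logical skeleton is identical.
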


\begin{proof}
  Here, \ref*{tsf:np}${}\Rightarrow{}$\ref*{tsf:2p} and \ref*{tsf:n}${}\Rightarrow{}$\ref*{tsf:2} are trivial.
  Since associated elements are similar, \ref*{tsf:np}${}\Rightarrow{}$\ref*{tsf:n} and \ref*{tsf:2p}${}\Rightarrow{}$\ref*{tsf:2} are also clear.
  The key implication \ref*{tsf:2}${}\Rightarrow{}$\ref*{tsf:pid} follows from \cite[Theorem~2]{estes-matijevic79b}.
  Finally, \ref*{tsf:pid}${}\Rightarrow{}$\ref*{tsf:np} follows using the Smith Normal Form.
  (The implication \ref*{tsf:pid}${}\Rightarrow{}$\ref*{tsf:n} can also be deduced from Theorem~\ref{t-nfir-sf}.)
\end{proof}

In \cite{estes-matijevic79b}, the ring $M_n(R)$ is called \emph{determinant factorial} if factorizations of elements in $M_n(R)^\bullet$ are unique up to order and associativity of the determinants of the atoms.
If $M_n(R)$ is similarity factorial, then it is determinant factorial (by \cite[Proposition 5]{estes-matijevic79b}.
\begin{theorem}[{\cite{estes-matijevic79b}}] \label{thm:em-matrix-det-fact}
  Let $R$ be a commutative Noetherian ring with no nonzero nilpotent elements.
  Then the following statements are equivalent.
  \begin{equivenumerate}
    \item\label{tdf:n-df} $M_n(R)$ is determinant factorial for all $n \in \bN$.
    \item\label{tdf:2-df} $M_2(R)$ is determinant factorial.
    \item\label{tdf:n-r-f} $R^\bullet$ is factorial and for all $n \in \bN$ and $U \in \cA(M_n(R)^\bullet)$ we have $\det(U) \in \cA(R^\bullet)$.
    \item\label{tdf:2-r-f} $R^\bullet$ is factorial and for all $U \in \cA(M_2(R)^\bullet)$ we have $\det(U) \in \cA(R^\bullet)$.
    \item\label{tdf:towber} $R$ is a finite direct product of factorial Towber domains.
  \end{equivenumerate}
\end{theorem}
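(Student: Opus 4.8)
The plan is to prove the cycle \ref*{tdf:towber}${}\Rightarrow{}$\ref*{tdf:n-r-f}${}\Rightarrow{}$\ref*{tdf:n-df}${}\Rightarrow{}$\ref*{tdf:2-df}${}\Rightarrow{}$\ref*{tdf:towber}, together with the equivalence \ref*{tdf:2-df}${}\Leftrightarrow{}$\ref*{tdf:2-r-f}. Throughout, $R^\bullet$ and each $M_n(R)^\bullet$ are atomic because $R$, hence $M_n(R)$, is Noetherian. The basic observation is that $\det$ sends a factorization $A=U_1\cdots U_m$ in $M_n(R)^\bullet$ to the factorization $\det A=\det U_1\cdots\det U_m$ in $R^\bullet$, and that $U\in M_n(R)^\bullet$ is an atom whenever $\det U$ is an atom of $R^\bullet$ (if $U=VW$ then $\det V$ or $\det W$ is a unit of $R$, forcing $V$ or $W$ into $GL_n(R)=M_n(R)^\times$). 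Consequently \ref*{tdf:n-r-f}${}\Rightarrow{}$\ref*{tdf:n-df} and \ref*{tdf:2-r-f}${}\Rightarrow{}$\ref*{tdf:2-df} are immediate: given two atomic factorizations of $A$, their determinant-multisets are both factorizations of $\det A$ into atoms of the \emph{factorial} semigroup $R^\bullet$, hence agree up to order and associativity, which is exactly determinant factoriality; and \ref*{tdf:n-df}${}\Rightarrow{}$\ref*{tdf:2-df} is a trivial specialization.

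For the descent \ref*{tdf:2-df}${}\Rightarrow{}$\ref*{tdf:2-r-f} I first show $R^\bullet$ is factorial: for $r\in R^\bullet$ the atom observation makes $\operatorname{diag}(r,1)$ an atom of $M_2(R)^\bullet$ whenever $r$ is an atom of $R^\bullet$, so any two atomic factorizations $r=p_1\cdots p_k=q_1\cdots q_l$ in $R^\bullet$ lift to atomic factorizations $\operatorname{diag}(r,1)=\prod_i\operatorname{diag}(p_i,1)=\prod_j\operatorname{diag}(q_j,1)$ in $M_2(R)^\bullet$, and \ref*{tdf:2-df} forces $k=l$ with $p_i$ associate to $q_{\sigma(i)}$. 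Next, let $U\in\cA(M_2(R)^\bullet)$ and write $\det U=p_1\cdots p_k$ with the $p_i$ atoms of $R^\bullet$; I claim $k=1$. Factoring $\adj(U)=W_1\cdots W_m$ into atoms of $M_2(R)^\bullet$ (possible since $\det(\adj(U))=\det(U)\in R^\bullet$, so $\adj(U)\in M_2(R)^\bullet$) and using $U\adj(U)=\det(U)I_2$ yields the atomic factorization $\det(U)I_2=UW_1\cdots W_m$. Comparing its determinant-multiset, via \ref*{tdf:2-df}, with that of the atomic factorization $\det(U)I_2=\prod_i\operatorname{diag}(p_i,1)\prod_i\operatorname{diag}(1,p_i)$ — all of whose atoms have atomic determinant — forces $\det U$ to be associate to some $p_i$, hence itself an atom of $R^\bullet$, i.e.\ $k=1$. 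Thus \ref*{tdf:2-r-f} holds, and the converse \ref*{tdf:2-r-f}${}\Rightarrow{}$\ref*{tdf:2-df} was recorded above.

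For \ref*{tdf:towber}${}\Rightarrow{}$\ref*{tdf:n-r-f}, write $R=\prod_{i=1}^t R_i$ with each $R_i$ a factorial (hence integrally closed) Noetherian Towber domain. Then $M_n(R)=\prod_i M_n(R_i)$, and both the factoriality of $R^\bullet$ and the determinant-multiset statements for $R$ reduce to the corresponding statements for the $R_i$. Fix $i$: since $R_i$ is a Towber ring, Theorem~\ref{t-nif} (implication \ref*{t-nif:towber}${}\Rightarrow{}$\ref*{t-nif:nth}) gives that $\det\colon M_n(R_i)^\bullet\to R_i^\bullet$ is a transfer homomorphism for every $n$. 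A transfer homomorphism carries atoms to atoms, so $\det(\cA(M_n(R_i)^\bullet))\subseteq\cA(R_i^\bullet)$; together with $R_i^\bullet$ being factorial (a UFD) this is \ref*{tdf:n-r-f} for $R_i$, and as in the first paragraph it also yields \ref*{tdf:n-df} for $R_i$. Passing back through the product gives \ref*{tdf:n-r-f} for $R$, and \ref*{tdf:n-r-f}${}\Rightarrow{}$\ref*{tdf:n-df} was noted in the first paragraph.

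The remaining implication \ref*{tdf:2-df}${}\Rightarrow{}$\ref*{tdf:towber} is the heart of the matter and where the real obstacle lies. By the descent step, \ref*{tdf:2-df} gives that $R^\bullet$ is factorial and that atoms of $M_2(R)^\bullet$ have atomic determinant; from this one checks directly that $\det\colon M_2(R)^\bullet\to R^\bullet$ is a transfer homomorphism (it is surjective onto $R^\bullet$ via $\operatorname{diag}(r,1)\mapsto r$, satisfies $\det^{-1}(R^\times)=M_2(R)^\times$, and a splitting $\det A=b_1b_2$ in the factorial semigroup $R^\bullet$ is pulled back by grouping the atoms of an atomic factorization of $A$ so that the determinants of one group multiply to $b_1$ and those of the other to $b_2$), so statement~\ref*{t-nif:2th} of Theorem~\ref{t-nif} holds for $R$. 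The crux is to upgrade this to structure: to show that the a priori merely reduced Noetherian ring $R$ is a \emph{finite direct product of integrally closed domains}. Here reducedness is essential — it already fails for $K[\varepsilon]/(\varepsilon^2)$, where $R^\bullet$ and every $M_n(R)^\bullet$ are groups — and Theorem~\ref{t-nif} by itself is not enough; I expect to have to invoke the ideal- and module-theoretic matrix-factorization machinery of Estes and Matijevic \cite{estes-matijevic79a,estes-matijevic79b}, in the vein of the key implication \ref*{tsf:2}${}\Rightarrow{}$\ref*{tsf:pid} of Theorem~\ref{thm:em-matrix-sim-fact}. Granting $R=\prod_iR_i$ with each $R_i$ a Noetherian integrally closed domain, each $R_i^\bullet$ is factorial (being a direct factor of the factorial semigroup $R^\bullet$), so each $R_i$ is a Krull domain with trivial divisor class group, that is a UFD; and \ref*{t-nif:2th} holds for each $R_i$, so the converse part of Theorem~\ref{t-nif} — applicable since $R_i$ is a Noetherian integrally closed domain — shows each $R_i$ is a Towber ring. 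Hence $R$ is a finite direct product of factorial Towber domains, namely \ref*{tdf:towber}.
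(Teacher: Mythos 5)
Your proposal agrees with the paper on the decisive point: the implication \ref*{tdf:2-df}${}\Rightarrow{}$\ref*{tdf:towber} is not proved but delegated to Estes and Matijevic (the paper cites \cite[Theorem~1]{estes-matijevic79b} for precisely this step), and \ref*{tdf:towber}${}\Rightarrow{}$\ref*{tdf:n-df} goes through Theorem~\ref{t-nif} in both treatments. Where you genuinely diverge is on the equivalences \ref*{tdf:n-df}${}\Leftrightarrow{}$\ref*{tdf:n-r-f} and \ref*{tdf:2-df}${}\Leftrightarrow{}$\ref*{tdf:2-r-f}, which the paper simply quotes from \cite[Proposition~1]{estes-matijevic79b} and which you reprove from scratch. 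Your two devices --- the embedding $r\mapsto\operatorname{diag}(r,1)$, which extracts factoriality of $R^\bullet$ from determinant factoriality of $M_2(R)$, and the comparison of $U\adj(U)=\det(U)I_2$ with $\prod_i\operatorname{diag}(p_i,1)\prod_i\operatorname{diag}(1,p_i)$, which forces $\det(U)$ to be an atom of $R^\bullet$ for every atom $U$ of $M_2(R)^\bullet$ --- are correct (they rest only on the facts that a matrix with unit determinant is invertible and that $M_2(R)^\bullet$ is atomic because $R$ is Noetherian), and they make this part of the argument self-contained where the survey is not.

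One sub-claim is wrong. You assert that \ref*{tdf:2-r-f} directly yields that $\det\colon M_2(R)^\bullet\to R^\bullet$ is a transfer homomorphism, by ``grouping the atoms'' of a rigid factorization of $A$ so that the determinants of one group multiply to $b_1$. Property \ref*{th:lift} requires $A=A_1A_2$ with $\det(A_1)\simeq b_1$, so the chosen atoms must form an \emph{initial segment} of some rigid factorization of $A$; matrix factorizations cannot in general be reordered, and knowing only that the determinant multiset is well defined up to associates gives no control over which orderings of that multiset are realized. This is exactly the gap between the transfer property and determinant-induced factorization recorded in Proposition~\ref{pnife}, whose converse direction \ref*{pnife:th}${}\Rightarrow{}$\ref*{pnife:dif} needs $R$ to be a finite direct product of Krull domains. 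The slip matters beyond being a side remark, because your final paragraph reuses it: even after granting the Estes--Matijevic structure result that $R$ is a finite direct product of Noetherian integrally closed domains $R_i$, you invoke \ref*{t-nif:2th} for each $R_i$ to conclude via Theorem~\ref{t-nif} that $R_i$ is a Towber ring, and that invocation rests on the same unproved transfer claim. Since you had already conceded that \ref*{tdf:2-df}${}\Rightarrow{}$\ref*{tdf:towber} must come from \cite{estes-matijevic79a,estes-matijevic79b}, the overall logic survives, but the phrase ``one checks directly'' should be deleted or replaced by a citation.
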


\begin{proof}
  The implications \ref*{tdf:n-df}${}\Rightarrow{}$\ref*{tdf:2-df} and  \ref*{tdf:n-r-f}${}\Rightarrow{}$\ref*{tdf:2-r-f} are clear.
  The equivalences \ref*{tdf:n-df}${}\Leftrightarrow{}$\ref*{tdf:n-r-f} and \ref*{tdf:2-df}${}\Leftrightarrow{}$\ref*{tdf:2-r-f} follow from \cite[Proposition 1]{estes-matijevic79b}.
  Finally, \ref*{tdf:2-df}${}\Rightarrow{}$\ref*{tdf:towber} is the key implication and follows from \cite[Theorem 1]{estes-matijevic79b}, and \ref*{tdf:towber}${}\Rightarrow{}$\ref*{tdf:n-df} follows from \cite[Corollary to Proposition~1]{estes-matijevic79b} or Theorem~\ref{t-nif}.
\end{proof}

The following example from \cite{estes-matijevic79a} forms the basis of a key step in \cite{estes-matijevic79b}.
We recall it here, as it demonstrates explicitly that a matrix ring over a factorial commutative domain need not even be half-factorial.
\begin{example}
  \begin{enumerate}
  \item
  Let $R$ be a commutative ring containing elements $x$, $y$, $z$ which form a regular sequence in any order.
  (E.g., if $R$ is a regular local ring of dimension at least $3$, three elements from a minimal generating set of the maximal ideal of $R$ will do. Also $R=K[x,y,z]$ with $K$ a field works.)

  Consider the ring $M_2(R)$.
  In \cite{estes-matijevic79a} it is shown that the matrix
  \[
  A=\begin{pmatrix} x^2 & xy - z \\ xy + z & y^2 \end{pmatrix},
  \]
  which has $\det(A)=z^2$, has no right factor of determinant $z$.
  Let $\adj(A)$ denote the adjugate of $A$.
  Then
  \[
  A\adj(A) = z^2 1_{M_2(R)} = \begin{pmatrix} z & 0 \\ 0 & 1 \end{pmatrix}^2 \begin{pmatrix} 1 & 0 \\ 0 & z \end{pmatrix}^2.
  \]
  Hence $\rho_2(M_2(R)^\bullet) \ge 4$.
  In particular, for the elasticity we have $\rho(M_2(R)^\bullet) \ge 2$.

  \item
    Let $K$ be a field.
    Then $M_2(K[x])$ is permutably, similarity, and determinant factorial.
    The ring $M_2(K[x,y])$ is determinant factorial but neither similarity nor permutably factorial.
    For $n \ge 3$, the ring $M_2(K[x_1,\ldots,x_n])$ is not even half-factorial.
  \end{enumerate}
\end{example}

\subsubsection{Rings of Triangular Matrices.}
\label{sssec:triangular}

For a commutative domain $R$ and $n \in \bN$, let $T_n(R)$ denote the ring of $n\times n$ upper triangular matrices.
The study of factorizations in $T_n(R)^\bullet$ turns out to be considerably simpler than in $M_n(R)^\bullet$.
\begin{theorem}
  Let $R$ be an atomic commutative domain and let $n \in \bN$.
  \begin{enumerate}
    \item Suppose $R$ is a BF-domain and $n \ge 2$.
      Then $\det\colon T_n(R)^\bullet \to R^\bullet$ is a transfer homomorphism if and only if $R$ is a PID.
    \item The map $T_n(R)^\bullet \to (R^\bullet)^n$ sending a matrix $(a_{i,j})_{i,j \in [1,n]} \in T_n(R)^\bullet$ to the vector of its diagonal entries $(a_{i,i})_{i\in[1,n]}$ is an isoatomic weak transfer homomorphism.

      Moreover, for atoms of $T_n(R)^\bullet$, associativity, similarity, and subsimilarity coincide, $\sc_p(T_n(R)^\bullet)=\sc_p(R^\bullet)$, $\st_p(T_n(R)^\bullet)=\st(R^\bullet)$, and $\omega_p(T_n(R)^\bullet) = \omega(R^\bullet)$.
      In particular, $T_n(R)$ is permutably [similarity, subsimilarity, determinant] factorial if and only if $R$ is factorial.
  \end{enumerate}
\end{theorem}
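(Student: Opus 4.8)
Throughout write $A=(a_{ij})\in T_n(R)$. The starting observations are that, over the domain $R$, a matrix in $T_n(R)$ is a non-zero-divisor iff all its diagonal entries are nonzero, and is a unit iff all diagonal entries are units (the adjugate of an upper triangular matrix is again upper triangular, so $\det A\in R^\times$ forces $A^{-1}\in T_n(R)$). Two \emph{peeling identities} will drive the constructive arguments: if $r\in R^\bullet$ divides $a_{kk}$, then for $k=1$ one has $A=A_1\cdot\mathrm{diag}(r,1,\dots,1)$, where $A_1$ agrees with $A$ except that its $(1,1)$-entry is $a_{11}/r$, and dually for $k=n$ one has $A=\mathrm{diag}(1,\dots,1,r)\cdot A_1$. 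Finally, if $\det A$ is an atom of $R$, then $A$ is an atom of $T_n(R)^\bullet$, since in any decomposition $A=BC$ one of $\det B,\det C$ is a unit and hence so is one of $B,C$.

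\textbf{Part (1).} Property \ref*{th:units} for $\det\colon T_n(R)^\bullet\to R^\bullet$ is immediate: $\det$ is already surjective onto $R^\bullet$, and $\det^{-1}(R^\times)=T_n(R)^\times$. For the implication ``$R$ a PID $\Rightarrow$ $\det$ is a transfer homomorphism'' we must verify the lifting property \ref*{th:lift}. Since a PID is factorial, by iterating it suffices to lift a factorization $\det A=b_1\pi$ with $\pi$ an atom of $R$, i.e.\ to find a right divisor of $A$ of determinant $\pi$. Argue by induction on $n$, writing $A=\begin{psmallmatrix}a_{11}&v\\0&A'\end{psmallmatrix}$ with $A'\in T_{n-1}(R)^\bullet$ and $v\in R^{1\times(n-1)}$. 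If $\pi\mid a_{11}$, peel it off. Otherwise $\pi\mid\det A'$, the induction hypothesis yields $A'=A_1'A_2'$ with $\det A_2'=\pi$, and one assembles the right divisor $\begin{psmallmatrix}1&u\\0&A_2'\end{psmallmatrix}$ of $A$ (with complementary factor $\begin{psmallmatrix}a_{11}&w\\0&A_1'\end{psmallmatrix}$) by solving $a_{11}u+wA_2'=v$ for rows $u,w$; this is solvable because, $R$ being a PID, $\pi\nmid a_{11}$ makes $\pi$ and $a_{11}$ comaximal, so $\det A_2'=\pi$ is a unit modulo $a_{11}$, whence $R^{1\times(n-1)}A_2'+a_{11}R^{1\times(n-1)}=R^{1\times(n-1)}\ni v$. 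This is the only point where the full PID hypothesis enters.

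\textbf{Part (1), converse.} If $R$ is not a PID then, being a BF-domain it is atomic, hence not Bézout (an atomic commutative Bézout domain is a PID, cf.\ Example~\ref{e-rightpid-notatomic}), so there are $a,b\in R^\bullet$ with $aR+bR$ not principal; in particular $a,b$ are non-units. Let $A\in T_n(R)^\bullet$ be the matrix equal to $\begin{psmallmatrix}a^2&1\\0&b^2\end{psmallmatrix}$ in its top-left $2\times2$ block and to the identity elsewhere, so $\det A=a^2b^2=(ab)(ab)$. I claim this factorization does not lift, i.e.\ $A$ has no right divisor of determinant associated to $ab$. A decomposition $A=A_1A_2$ restricts to one of the $2\times2$ block (the complementary identity block forces the two $(n-2)\times(n-2)$ blocks of $A_1,A_2$ to be mutually inverse units, hence $\det A_2$ is associated to the determinant of the top-left block of $A_2$), so it suffices to treat $n=2$. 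Writing $A_1=\begin{psmallmatrix}p'&s'\\0&q'\end{psmallmatrix}$, $A_2=\begin{psmallmatrix}p&s\\0&q\end{psmallmatrix}$ with $pq\sim ab$, one gets $p'p=a^2$, $q'q=b^2$ and $p's+s'q=1$; the last relation makes $p'$ and $q$ comaximal, and combining $pq\sim ab$ with $p'p=a^2$ gives $aq\sim p'b$. Multiplying $p's+s'q=1$ by $a$ and substituting $aq\sim p'b$ yields $p'\mid a$; writing $a=p't$ and cancelling $p'$ in $aq\sim p'b$ gives $b\sim tq$, so $aR+bR=t(p'R+qR)=tR$ is principal, a contradiction. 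Hence $\det$ is not a transfer homomorphism. I expect this chain of divisibility deductions — and pinning down the cleanest counterexample matrix — to be the main obstacle of the whole argument.

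\textbf{Part (2).} Let $\psi\colon T_n(R)^\bullet\to (R^\bullet)^n$ be the diagonal map; it is a homomorphism because $(AB)_{ii}=A_{ii}B_{ii}$ for upper triangular $A,B$, and $(R^\bullet)^n$ is atomic since $R$ is. Property \textbf{(T1)} is again immediate. For \ref*{wth:lift}, given $A$ and a factorization $\psi(A)=v_1\cdots v_m$ into atoms of $(R^\bullet)^n$ (each $v_i$ a tuple with one atom entry, the rest units), peel the atoms of $a_{11}$ off the right one at a time as factors $\mathrm{diag}(\pi,1,\dots,1)$ — atoms of $T_n(R)^\bullet$, their determinants being atoms — until the $(1,1)$-entry is a unit; then $A$ is associated to $\mathrm{diag}(1,A')$ with $A'\in T_{n-1}(R)^\bullet$, and induction on $n$ factors $A'$ according to the remaining $v_i$'s. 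Thus $\psi$ is a weak transfer homomorphism, and since weak transfer homomorphisms reflect atoms, $U\in T_n(R)^\bullet$ is an atom iff exactly one diagonal entry is an atom of $R$ and the rest are units. For such $U$ with distinguished entry $\pi$ in position $k$, all other diagonal entries being units lets one clear, by left- and right-multiplication by units (the relevant clearing equation is solvable precisely because the surviving diagonal entry is a unit), first the non-distinguished row or column and then inductively the rest, so $U\simeq\mathrm{diag}(1,\dots,\pi,\dots,1)$; hence $\psi(U_1)\simeq\psi(U_2)$ forces $U_1\simeq U_2$ for atoms, i.e.\ $\psi$ is isoatomic. Conversely, if atoms $U_1,U_2$ with data $(k_1,\pi_1),(k_2,\pi_2)$ are subsimilar, compare the standard modules $T_n(R)/\mathrm{diag}(1,\dots,\pi,\dots,1)T_n(R)\cong (R/\pi)^{\,n-k+1}$: their $R$-annihilators give $\pi_1R=\pi_2R$, and their ranks as free $R/\pi$-modules (a free module over a domain cannot embed in a free module of smaller rank) give $k_1=k_2$, so $U_1\simeq U_2$. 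Therefore associativity, similarity and subsimilarity coincide on atoms of $T_n(R)^\bullet$, so the distances $\sd_p$, $\sd_{\dsim}$ and the subsimilarity distance agree on $\sZ^*(T_n(R)^\bullet)$. Applying Theorem~\ref{t-transfer} to the isoatomic weak transfer homomorphism $\psi$ onto the atomic commutative semigroup $(R^\bullet)^n$ gives $\sc_p(T_n(R)^\bullet)=\sc_p((R^\bullet)^n)$, $\st_p(T_n(R)^\bullet)=\st((R^\bullet)^n)$, $\omega_p(T_n(R)^\bullet)=\omega((R^\bullet)^n)$, and since $(R^\bullet)^n$ is a finite direct product of copies of $R^\bullet$ (factorizations obtained by interleaving those of the components) these equal $\sc(R^\bullet)$, $\st(R^\bullet)$, $\omega(R^\bullet)$. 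Finally, $T_n(R)$ is atomic (Theorem~\ref{t-transfer}), and it is permutably [similarity, subsimilarity] factorial iff $\sc_p(T_n(R)^\bullet)=0$ iff $\sc_p((R^\bullet)^n)=0$ iff $(R^\bullet)^n$ — equivalently $R^\bullet$ — is factorial, i.e.\ iff $R$ is factorial; and $T_n(R)$ being determinant factorial is implied by similarity factoriality, while conversely testing determinant factoriality on the elements $\mathrm{diag}(c,1,\dots,1)$ — whose rigid factorizations are exactly the products $\mathrm{diag}(\pi_1,1,\dots,1)\cdots\mathrm{diag}(\pi_l,1,\dots,1)$ coming from atom factorizations $c=\pi_1\cdots\pi_l$ in $R$ — forces unique factorization in $R^\bullet$, hence $R$ factorial.
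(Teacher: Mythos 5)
Your argument is correct, and it is necessarily a different route from the paper's, because the paper (being a survey) gives no proof at all here: it defers part (1) to Bachman--Baeth--Gossell, the weak transfer homomorphism and isoatomicity statements to that paper and to Baeth--Smertnig, and only sketches the determinant-factorial equivalence via the embedding $a \mapsto \mathrm{diag}(a,1,\dots,1)$ -- which your final paragraph reproduces. What your write-up buys is a self-contained verification: the two peeling identities plus the comaximality trick ($\pi$ prime, $\pi \nmid a_{11}$, hence $\pi R + a_{11}R = R$, so $A_2'$ is invertible modulo $a_{11}$) give a clean inductive proof of the lifting property over a PID, and correctly isolate the one place where ``PID'' rather than ``factorial'' is needed; the converse via the matrix with top-left block $\begin{psmallmatrix} a^2 & 1 \\ 0 & b^2\end{psmallmatrix}$ and the divisibility chain $p'\mid a$, $b \sim tq$, $aR+bR = tR$ is a complete and correct obstruction argument; and the reduction of an arbitrary atom to $\mathrm{diag}(1,\dots,\pi,\dots,1)$ by clearing rows and columns against unit diagonal entries is exactly what makes $\psi$ isoatomic and what collapses associativity, similarity, and subsimilarity on atoms.

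One small repair is needed in the subsimilarity step. You justify $k_1=k_2$ by saying a free module over a domain cannot embed in a free module of smaller rank, with the implicit claim that $R/\pi R$ is a domain. But $\pi$ is only an atom of the atomic domain $R$, not necessarily a prime, so $R/\pi R$ need not be a domain. The conclusion survives because the relevant fact holds over every nonzero commutative ring: if $S \ne 0$ and $S^a$ embeds $S$-linearly into $S^b$, then $a \le b$ (McCoy's rank theorem; an injective map given by a $b\times a$ matrix with $a>b$ is impossible since there are no $a\times a$ minors). With that substitution the comparison of $(R/\pi R)^{\,n-k+1}$ for the two atoms goes through and the rest of the argument is unaffected. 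The remaining compressions (bookkeeping of units in the \textup{(WT2)} lifting, the identities $\sc_p((R^\bullet)^n)=\sc(R^\bullet)$, $\omega_p((R^\bullet)^n)=\omega(R^\bullet)$, $\st_p((R^\bullet)^n)=\st(R^\bullet)$ for the direct product) are standard and harmless.
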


\begin{remark}
  \begin{enumerate}
  \item
  The existence of the transfer homomorphism, in case $R$ is a PID, was shown in \cite{baeth-ponomarenko-etal11}.
  The characterization of when the determinant is a transfer homomorphism, in case $R$ is a BF-domain, as well as the existence of a weak transfer homomorphism, is due to \cite[Theorems 2.8 and 4.2]{bachman-baeth-gossell14}.
  The isoatomicity and transfer of catenary degree, tame degree, and $\omega_p$-invariant can be found in \cite[Proposition 6.14]{baeth-smertnig15}.

  \item
  That $T_n(R)$ is determinant factorial if and only if $R^\bullet$ is factorial was not stated before, but is easy to observe.
  If $R$ is factorial, then $T_n(R)^\bullet$ is permutably factorial and hence determinant factorial.
  For the converse, suppose that $T_n(R)^\bullet$ is determinant factorial, and consider the embedding $R^\bullet \to T_n(R)^\bullet$ that maps $a \in R^\bullet$ to the matrix with $a$ in the upper left corner, ones on the remaining diagonal, and zeroes everywhere else.

  \item
  In general, there does not exist a transfer homomorphism from $T_2(R)^\bullet$ to any cancellative commutative semigroup (see \cite[Example~4.5]{bachman-baeth-gossell14}).
  This was the motivation for the introduction of weak transfer homomorphisms.
  \end{enumerate}
\end{remark}

\subsubsection{Classical Hereditary and Maximal Orders.}

Earlier results of Estes and Nipp in \cite{estes-nipp89,estes91a,estes91b} on \emph{factorizations induced by norm factorization (FNF)} can be interpreted as a transfer homomorphism.
The following is proved for central separable algebras in \cite{estes91a}.
We state the special case for central simple algebras.
\begin{theorem}\label{thm:estes-detf}
  Let $\cO$ be a holomorphy ring in a global field $K$, and let $A$ be a central simple $K$-algebra.
  Assume that $A$ satisfies the Eichler condition with respect to $\cO$.
  If $R$ is a classical hereditary $\cO$-order in $A$, $x \in R$, and $a \in \cO$ is such that $a \mid \nr(x)$, then there exists a left divisor $y$ of $x$ in $R$, and $\varepsilon \in \cO^\times$ such that $\nr(y) = a \varepsilon$.
  Moreover, $\varepsilon$ can be taken arbitrarily subject to the restriction that $a\varepsilon$ is positive at each archimedean place of $K$ which ramifies in $A$.
\end{theorem}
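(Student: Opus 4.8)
Write $T$ for the set of archimedean places of $K$ ramifying in $A$, and set $\cO_A^\times:=\cO^\times\cap\cO_A^\bullet$. Since the identity $\nr(R^\bullet)=\cO_A^\bullet$ forces any reduced norm of a non-zero-divisor to be positive along $T$, the unit $\varepsilon$ in the conclusion must satisfy that $a\varepsilon$ is positive along $T$; the real content is that \emph{every} such $\varepsilon$ is realized. I would first discard the trivial cases ($\nr(x)=0$ leaves nothing to prove), so assume $x\in R^\bullet$, hence $a\ne0$; and, after replacing $a$ by an associate, assume $a\in\cO_A^\bullet$ (this is the case of interest, does not affect $a\mid\nr(x)$, and only relabels the $\varepsilon$ of the conclusion). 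It then suffices to show that for each $\varepsilon\in\cO_A^\times$ there is a left divisor $y$ of $x$ with $\nr(y)=a\varepsilon$. The strategy: build a right $R$-ideal $I$ with $xR\subseteq I\subseteq R$ and $\nr(I)=a\cO$; invoke the Eichler condition to see that $I$ is free, say $I=yR$; read off $y$ as a left divisor of $x$; and finally correct the generator by a unit of $R$ to pin down the unit factor in its reduced norm.

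\textbf{Building $I$ place by place.} Only finitely many finite places $v$ matter: away from the finite set where $R_v$ is non-maximal, or $a$ or $\nr(x)$ fails to be a $v$-unit, one takes $I_v:=R_v$. At a relevant $v$, the localization $R_v$ is a local hereditary $\cO_v$-order in $A_v\cong M_{n_v}(D_v)$, and $v(a)\le v(\nr(x))$. Using the explicit classification of local hereditary orders over a complete discrete valuation ring \cite[Theorem 39.14]{reiner75}, one produces a right $R_v$-submodule $I_v$ with $x_vR_v\subseteq I_v\subseteq R_v$ whose reduced-norm ideal $\nr(I_v)\subseteq\cO_v$ has valuation exactly $v(a)$: as one descends a maximal chain of right ideals from $R_v$ to $x_vR_v$, the reduced-norm ideals run through \emph{all} ideals of $\cO_v$ of valuation between $0$ and $v(\nr(x))$, so one halts at the right level. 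This local assertion is the substance of Estes and Nipp's ``factorization induced by norm factorization'' \cite{estes-nipp89,estes91a}. I would then let $I$ be the unique right $R$-lattice in $A$ with these localizations; it satisfies $xR\subseteq I\subseteq R$, it contains the non-zero-divisor $x$ and so is a right $R$-ideal, and, the reduced norm of ideals being local, $\nr(I)=a\cO$.

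\textbf{Freeness and extraction of $y$.} As $a\in\cO_A^\bullet$ we have $a\in K_A^\times$, so $\nr(I)=a\cO$ represents the trivial class in $\Cl_A(\cO)$; under the isomorphism $\cC(R)\to\Cl_A(\cO)$, $[J]\mapsto[\nr(J)]$, this says $I$ is stably free. Because $A$ satisfies the Eichler condition, stably free right $R$-ideals are free, so $I\cong R$ as right $R$-modules, i.e.\ $I=yR$ with $y\in R^\bullet$. Then $x\in xR\subseteq I=yR$ gives $x=yz$ for some $z\in R$, so $y$ is a left divisor of $x$, and $\nr(y)\cO=\nr(I)=a\cO$, whence $\nr(y)=a\delta$ for some $\delta\in\cO^\times$; since $\nr(y)$ and $a$ both lie in $\cO_A^\bullet$, in fact $\delta\in\cO_A^\times$.

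\textbf{Pinning down the unit, and the main obstacle.} To obtain $\nr(y)=a\varepsilon$ for the prescribed $\varepsilon$ instead of for $\delta$: note $\varepsilon\delta^{-1}\in\cO_A^\times$, and, again by strong approximation under the Eichler condition, the reduced norm maps $R^\times$ onto $\cO_A^\times$ — the unit-level counterpart of $\nr(R^\bullet)=\cO_A^\bullet$, deducible from it once one knows that a right ideal of a local hereditary order with unit reduced norm is the whole order. Choosing $\eta\in R^\times$ with $\nr(\eta)=\varepsilon\delta^{-1}$ and replacing $y$ by $y\eta$ (still a generator of $I$, hence still a left divisor of $x$) gives $\nr(y\eta)=a\varepsilon$, as required. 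The step I expect to be the crux is the local construction of $I_v$ in the second paragraph: for a maximal $R_v$ the relevant right ideals and their reduced norms are transparent, but for a genuinely non-maximal hereditary order one must use the explicit structure \cite[Theorem 39.14]{reiner75} to verify that every reduced-norm valuation in the admissible range, compatible with the sandwich $x_vR_v\subseteq I_v\subseteq R_v$, is actually attained. Everything else is bookkeeping together with consequences of the Eichler condition already recorded above — whose role is essential, since without it the overlattice $I$ need not be principal.
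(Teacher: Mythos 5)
Your proposal is correct and follows essentially the same route as the proof the paper cites: the paper records only that the argument of \cite{estes91a} proceeds by localization together with the explicit characterization of classical hereditary orders over complete DVRs, and your reconstruction --- build a right $R$-ideal $I$ with $xR\subseteq I\subseteq R$ and $\nr(I)=a\cO$ locally via \cite[Theorem 39.14]{reiner75}, then use the Eichler condition (stably free implies free, and $\nr(R^\times)=\cO^\times\cap\cO_A^\bullet$) to write $I=yR$ and adjust the generator by a unit --- is exactly that. You also correctly isolate the one genuinely non-formal step, namely the intermediate-value behaviour of the reduced norm along a maximal chain of right ideals of a local hereditary order, which is precisely the substance of Estes and Nipp's ``factorization induced by norm factorization.''
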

The proof in \cite{estes91a} proceeds by localization and an explicit characterization of classical hereditary orders over complete DVRs.
For quaternion algebras, more refined results, not requiring the Eichler condition but instead requiring that every stably free right $R$-ideal is free, can be found in \cite{estes-nipp89,estes91b}.

Let $\cO_A^\bullet$ denote the subsemigroup of all nonzero elements of $\cO$ which are positive at each archimedean place of $K$ which ramifies in $A$.
Recall that $\nr(R^\bullet)=\cO_A^\bullet$ if $R$ is a classical hereditary $\cO$-order.
\begin{corollary}\label{cor:estes-transfer}
  With the conditions as in the previous theorem, $\nr\colon R^\bullet \to \cO_A^\bullet$ is a transfer homomorphism.
  The semigroup $\cO_A^\bullet$ is a Krull monoid with class group $\Cl_A(\cO)$.
  Each class in $\Cl_A(\cO)$ contains infinitely many prime ideals.
  Hence, there exists a transfer homomorphism $R^\bullet \to \cB(\Cl_A(\cO))$.
  In particular, the conclusions of Theorem~\ref{t-zss} hold for $R^\bullet$ in place of $H$.
\end{corollary}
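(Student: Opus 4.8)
The plan is to prove the four assertions in order. The only one that needs genuine input beyond matching up definitions is the first, and that input---Theorem~\ref{thm:estes-detf} together with the equality $\nr(R^\bullet)=\cO_A^\bullet$ recalled just before the corollary---is already at hand. So I would proceed as follows: first verify that $\nr\colon R^\bullet\to\cO_A^\bullet$ satisfies \ref*{th:units} and \ref*{th:lift}; then check that $\cO_A^\bullet$ is a Krull monoid with class group $\Cl_A(\cO)$ in which every class contains infinitely many prime divisors; then compose $\nr$ with the canonical transfer homomorphism from a Krull monoid to its monoid of zero-sum sequences; and finally invoke Theorem~\ref{t-transfer} to transport the conclusions of Theorem~\ref{t-zss} from $\cB(\Cl_A(\cO))$ to $R^\bullet$.

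For the transfer homomorphism properties of $\nr$: at each archimedean place $v$ of $K$ ramifying in $A$ the completion $A_v$ is a totally definite quaternion division algebra, so there the reduced norm is a positive definite quadratic form; hence $\nr$ does map $R^\bullet$ into $\cO_A^\bullet$, and $(\cO_A^\bullet)^\times=\cO^\times\cap K_A^\times$. Property \ref*{th:units} is then immediate: $\cO_A^\bullet=(\cO_A^\bullet)^\times\,\nr(R^\bullet)\,(\cO_A^\bullet)^\times$ because $\nr(R^\bullet)=\cO_A^\bullet$, while $\nr^{-1}((\cO_A^\bullet)^\times)=R^\times$ follows from the observation that $\nr(x)\in\cO^\times$ forces $x$ to be a unit of $R$ via the reduced adjoint identity $xx^\#=x^\#x=\nr(x)$ with $x^\#\in R$. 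For \ref*{th:lift}, given $x\in R^\bullet$ and a factorization $\nr(x)=b_1b_2$ in $\cO_A^\bullet$, I would apply Theorem~\ref{thm:estes-detf} with $a=b_1$ to get a left divisor $y$ of $x$ in $R$ with $\nr(y)=b_1\eta$ for a unit $\eta$, exploiting the freedom in the choice of $\eta$ to make $b_1\eta$ positive at the archimedean places ramifying in $A$; since $b_1$ already is, this forces $\eta\in(\cO_A^\bullet)^\times$. Writing $x=yz$, nonvanishing of $\nr(x)=\nr(y)\nr(z)$ gives $y,z\in R^\bullet$ and $\nr(z)=\eta^{-1}b_2$, so $a_1=y$, $a_2=z$ and the unit $\varepsilon=\eta^{-1}$ witness \ref*{th:lift}.

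Next I would record that the holomorphy ring $\cO$ is a Dedekind domain, so that $a\mapsto a\cO$ is a cofinal divisor homomorphism from $\cO_A^\bullet$ into the monoid of nonzero integral ideals of $\cO$ (divisor homomorphism: $a\cO\supseteq b\cO$ forces $b/a\in\cO$, and $b/a$ is positive at the ramified archimedean places when $a$ and $b$ are; cofinal: every nonzero ideal contains an element that can be made positive there). Hence $\cO_A^\bullet$ is Krull. A short weak-approximation argument identifies its quotient group inside $K^\times$ with $K_A^\times$, so its class group is $\cF^\times(\cO)/\{\,a\cO\mid a\in K_A^\times\,\}=\Cl_A(\cO)$, its prime divisors being the nonzero prime ideals of $\cO$; and each class of $\Cl_A(\cO)$ contains infinitely many of these by the Chebotarev (generalized Dirichlet) density theorem for ray class groups of global fields (cf.\ \cite{reiner75}). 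Alternatively one may simply cite that $\cO_A^\bullet$ is a Krull monoid with class group $\Cl_A(\cO)$ in which every class contains a prime divisor, as in \cite{smertnig13,baeth-smertnig15}.

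It then remains to assemble the pieces. Since every class of $\Cl_A(\cO)$ contains a prime divisor, the standard construction for Krull monoids (see \cite{geroldinger09} or \cite{ghk06}) supplies a transfer homomorphism $\cO_A^\bullet\to\cB(\Cl_A(\cO))$; as \ref*{th:units} and \ref*{th:lift} are preserved under composition, composing with $\nr$ gives the desired transfer homomorphism $R^\bullet\to\cB(\Cl_A(\cO))$. Finally, Theorem~\ref{t-transfer}(2) yields $\cL(R^\bullet)=\cL(\cB(\Cl_A(\cO)))$ together with coincidence of all invariants of Definition~\ref{d-len-inv}; since the conclusions of Theorem~\ref{t-zss}---half-factoriality precisely when $\card{\Cl_A(\cO)}\le 2$, finiteness of $\Delta(R^\bullet)$ as an interval with minimum $1$, finiteness of the unions $\cU_k(R^\bullet)$ as intervals, and the Structure Theorem for Sets of Lengths---are all assertions about the system of sets of lengths, they carry over verbatim to $R^\bullet$ with $G=\Cl_A(\cO)$. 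The main obstacle is not in this deduction, which is routine, but is absorbed into the cited inputs (Theorem~\ref{thm:estes-detf} and the computation $\nr(R^\bullet)=\cO_A^\bullet$ via strong approximation); within the present argument the only delicate points are the unit bookkeeping in \ref*{th:lift}---making sure the latitude of Theorem~\ref{thm:estes-detf} places $\varepsilon$ in $(\cO_A^\bullet)^\times$---and the verification that $q(\cO_A^\bullet)=K_A^\times$, so that the class group comes out as $\Cl_A(\cO)$ on the nose rather than a mere quotient or extension of it.
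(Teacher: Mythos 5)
Your proposal is correct and follows essentially the same route as the paper's proof: deduce that $\nr$ is a transfer homomorphism from Theorem~\ref{thm:estes-detf} together with $\nr(R^\bullet)=\cO_A^\bullet$, identify $\cO_A^\bullet$ as a Krull monoid (a regular congruence submonoid of $\cO^\bullet$) with class group $\Cl_A(\cO)$ in which every class contains infinitely many prime ideals, and compose with the canonical transfer homomorphism to $\cB(\Cl_A(\cO))$. You merely spell out the verification of \textbf{(T1)}/\textbf{(T2)} and the divisor-homomorphism argument that the paper delegates to citations, and your unit bookkeeping there is accurate.
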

\begin{proof}
  By the previous theorem, $\nr \colon R^\bullet \to \cO_A^\bullet$ is a transfer homomorphism.
  The semigroup $\cO_A^\bullet$ is a regular congruence submonoid of $\cO_A^\bullet$ (see \cite[Chapter 2.11]{ghk06}).
  As such it is a commutative Krull monoid, with class group $\Cl_A(\cO)$.
  Each class contains infinitely many prime ideals by a standard result from analytic number theory.
  (See \cite[Corollary 7 to Proposition 7.9]{narkiewicz04} or \cite[Corollary 2.11.16]{ghk06} for the case where $\cO$ is the ring of algebraic integers in a number field.
  The general number field case follows by a localization argument.
  For the function field case, use \cite[Proposition~8.9.7]{ghk06}.)
  Hence there exists a transfer homomorphism $\cO_A^\bullet \to \cB(\Cl_A(\cO))$.
  Since the composition of two transfer homomorphisms is a transfer homomorphism, it follows that there exists a transfer homomorphism $R^\bullet \to \cB(\Cl_A(\cO))$.
\end{proof}
A different way of obtaining the result in Corollary~\ref{cor:estes-transfer} in the case that $R$ is a classical maximal order is given in Theorem~\ref{t-transfer-global}\ref*{t-transfer-global:transfer} below.
It relies on the global ideal theory of $R$.
In this way, we also obtain information about the catenary degree in the permutable fibers.

We first extend the result about the transfer homomorphism for commutative Krull monoids into a setting of noncommutative semigroups, respectively cancellative small categories.
This general result then includes, as a special case, the transfer homomorphism for normalizing Krull monoids obtained in \cite{geroldinger13} as well as the desired theorem.
We follow \cite{smertnig13,baeth-smertnig15}.

A \emph{quotient semigroup} is a semigroup $Q$ in which every cancellative element is invertible, that is, $Q^\bullet = Q^\times$.
Let $Q$ be a quotient semigroup and $H \subset Q$ a subsemigroup.
Then $H$ is an \emph{order} in $Q$ if $Q = H (H \cap Q^\bullet)^{-1} = (H \cap Q^\bullet)^{-1} H$.
Two orders $H$ and $H'$ in $Q$ are equivalent if there exist $x$,~$y$, $z$, $w \in Q^\bullet$ such that $xHy \subset H'$ and $zH'w \subset H$.
A \emph{maximal order} is an order which is maximal with respect to set inclusion in its equivalence class.
Let $H$ be a maximal order.
A subset $I \subset Q$ is a \emph{fractional right $H$-ideal} if $IH \subset I$, and there exist $x$,~$y \in Q^\bullet$ such that $x \in I$ and $yI \subset Q$.
If moreover $I \subset H$, then $I$ is a \emph{right $H$-ideal}.

For a fractional right $H$-ideal $I \subset Q$, we define $I^{-1} = \{\, x \in Q \mid IxI \subset I \,\}$, and $I_v = (I^{-1})^{-1}$.
The fractional right $H$-ideal $I$ is called \emph{divisorial} if $I = I_v$.
A divisorial right $H$-ideal $I$ is \emph{maximal integral} if it is maximal within the set of proper divisorial right $H$-ideals.
Analogous definitions are made for (fractional) left $H$-ideals.
If $H$ and $H'$ are equivalent maximal orders, we call a subset $I \subset Q$ a \emph{[fractional] $(H,H')$-ideal} if it is both, a [fractional] left $H$-ideal and a [fractional] right $H'$-ideal.
A \emph{[fractional] $H$-ideal} is a [fractional] $(H,H)$-ideal.
We say that $H$ is \emph{bounded} if every fractional left $H$-ideal and every fractional right $H$-ideal contains a fractional $H$-ideal.

The additional restrictions imposed in the following definition ensure that the set of maximal orders equivalent to $H$ has a ``good'' theory of divisorial left and right ideals.

\begin{definition}[{\cite[Definition 5.18]{smertnig13}}] \label{amo}
  Let $H$ be a maximal order in a quotient semigroup $Q$.
  We say that $H$ is an \emph{arithmetical maximal order} if it has the following properties:
  \begin{enumerate}[label=\textup{(\textbf{A\arabic*})},ref=(\textup{A\arabic*})]
    \item\label{amo:acc} $H$ satisfies both the ACC on divisorial left $H$-ideals and the ACC on divisorial right $H$-ideals.
    \item\label{amo:bdd} $H$ is bounded.
    \item\label{amo:mod} The lattice of divisorial fractional left $H$-ideals is modular, and the lattice of divisorial fractional right $H$-ideals is modular.
  \end{enumerate}
\end{definition}

Let $H$ be an arithmetical maximal order in a quotient semigroup $Q$, and let $\alpha$ denote the set of maximal orders in its equivalence class.
We define a category $\cF_v=\cF_v(\alpha)$ as follows: the set of objects is $\alpha$, and for $H'$, $H'' \in \alpha$, the set of morphisms from $H'$ to $H''$, denoted by $\cF_v(H',H'')$, consists of all divisorial fractional $(H',H'')$-ideals.
If $I \in \cF_v(H',H'')$ and $J \in \cF_v(H'',H''')$, the composition $I\cdot_v J \in \cF_v(H',H''')$ is defined by $I \cdot_v J = (IJ)_v$.
In terms of our point of view from the preliminaries, $\cF_v(\alpha)_0=\alpha$, and for a divisorial fractional $(H',H'')$-ideal $I$ we have that $s(I)=H'$ is the left order of $I$, and $t(I)=H''$ is the right order of $I$.

With these definitions, $\cF_v$ is an \emph{arithmetical groupoid}, the precise definition of which we omit here.
By $\cI_v = \cI_v(\alpha)$, we denote the subcategory of $\cF_v(\alpha)$ with the same set of objects, but where the morphisms are given by divisorial $(H',H'')$-ideals.
Set $\cH_H = \{\, q^{-1}(aH)q \mid a \in H^\bullet,\, q \in Q^\bullet \,\}$ (as a category).

The subcategory $\cF_v(H)$ of all divisorial fractional $H$-ideals is a free abelian group.
If $H' \in \alpha$, then there is a canonical isomorphism $\cF_v(H) \to \cF_v(H')$.
We identify, and call this group $\bG$.
One can define a homomorphism, the abstract norm, $\eta \colon G \to \bG$.
Set $P_{H^\bullet}$ to be the quotient group of $\eta(\cH_H)$ as a subgroup of $\bG$.

\begin{theorem}[{\cite[Theorem 5.23]{smertnig13} and \cite[Corollary 7.11]{baeth-smertnig15}}]
  Let $H$ be an arithmetical maximal order in a quotient semigroup $Q$ and let $\alpha$ denote the set of maximal orders of $Q$ equivalent to $H$.
  Let $\eta\colon \cF_v(\alpha) \to \bG$ be the abstract norm of $\cF_v(\alpha)$, let $C = \bG / P_{H^\bullet}$, and set $C_M = \{\, [\eta(I)] \in C \mid I \in \cI_v(\alpha) \text{ maximal integral} \,\}$.
  Assume that
  \begin{enumerate}[label=\textup{(\textbf{N})},ref=\textup{(\textbf{N})}]
  \item\label{ass:norm} a divisorial fractional right $H$-ideal $I$ is principal if and only if $\eta(I) \in P_{H^\bullet}$.
  \end{enumerate}

  Then there exists a transfer homomorphism $\theta\colon H^\bullet \to \cB(C_M)$.
  Let $\sd$ be a distance on $H^\bullet$ that is invariant under conjugation by normalizing elements.
  Then $\sc_{\sd}(H^\bullet,\theta) \le 2$.
\end{theorem}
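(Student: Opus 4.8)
This statement combines \cite[Theorem 5.23]{smertnig13}, which constructs the transfer homomorphism $\theta$, with \cite[Corollary 7.11]{baeth-smertnig15}, which bounds the catenary degree in the permutable fibers; I sketch the argument. The category $\cI_v(\alpha)$ of divisorial integral ideals is an arithmetical groupoid, and its structure theory ensures that every divisorial right $H$-ideal $I \subseteq H$ admits a factorization $I = J_1 \cdot_v \cdots \cdot_v J_k$ into maximal integral divisorial right ideals --- equivalently, a maximal chain $I = I_k \subsetneq \cdots \subsetneq I_0 = H$ in the modular lattice \ref*{amo:mod} of divisorial right $H$-ideals --- and that any two such factorizations of $I$ have the same length and, by the Jordan--Hölder theorem, the same multiset of classes $[\eta(J_i)] \in C_M$, since the abstract norm $\eta$ is constant on perspectivity classes. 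Applying this to the principal ideal $aH$ defines $\theta(a) = [\eta(J_1)] \cdots [\eta(J_k)] \in \cF(C_M)$. Since $aH$ is principal, hypothesis \ref*{ass:norm} gives $\eta(aH) \in P_{H^\bullet}$, hence $\sigma(\theta(a)) = [\eta(aH)] = 0$ in $C$ and therefore $\theta(a) \in \cB(C_M)$. Moreover $\theta$ maps atoms to atoms: if $u \in \cA(H^\bullet)$ and $\theta(u) = S_1 S_2$ were a nontrivial factorization in $\cB(C_M)$, the intermediate divisorial right $H$-ideal $J$ with $uH \subseteq J \subseteq H$ and $[\eta(J)] = \sigma(S_1) = 0$ would be principal by \ref*{ass:norm}, contradicting that $u$ is an atom.

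Next, $\theta$ is a transfer homomorphism. Multiplicativity follows from $(ab)H = a(bH)$ and the multiplicativity of $\eta$. As $\cB(C_M)$ is reduced, \ref*{th:units} amounts to $\theta^{-1}(1) = H^\times$ --- immediate, since $\theta(a)$ is empty precisely when $aH = H$ --- together with surjectivity: for $S = c_1 \cdots c_k \in \cB(C_M)$, the structure of $\cI_v(\alpha)$ lets one chain maximal integral divisorial ideals of $\eta$-classes $c_1, \ldots, c_k$ into a divisorial right $H$-ideal $I$ with $[\eta(I)] = \sigma(S) = 0$, so $I = aH$ by \ref*{ass:norm} and $\theta(a) = S$. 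For \ref*{th:lift}, given $\theta(a) = b_1 b_2$ in $\cB(C_M)$, one may --- again using the structure of the groupoid --- choose a factorization of $aH$ into maximal integral ideals whose top links realize the classes of $b_1$ and whose bottom links realize those of $b_2$, with intermediate divisorial right $H$-ideal $J$. Then $[\eta(J)] = \sigma(b_1) = 0$, so $J = a_1 H$ is principal by \ref*{ass:norm}; writing $a = a_1 a_2$ (possible because $aH \subseteq a_1 H$), the chain from $aH$ to $a_1 H$ becomes, after left translation by $a_1^{-1}$, a chain for $a_2 H$. Hence $\theta(a_1) = b_1$ and $\theta(a_2) = b_2$, which is \ref*{th:lift} with unit $\varepsilon = 1$.

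Finally, the bound $\sc_\sd(H^\bullet, \theta) \le 2$. Fix $a \in H^\bullet$ and $z, z' \in \sZ^*(a)$ with $\sd_p(\theta^*(z), \theta^*(z')) = 0$, so that $\theta$ sends the atoms of $z$ and those of $z'$ to the same multiset of atoms of $\cB(C_M)$; in particular $\length{z} = \length{z'}$. Identifying $z$ and $z'$ with maximal chains of principal right ideals and refining these to maximal chains of divisorial right $H$-ideals from $aH$ to $H$, the Jordan--Hölder / Schreier refinement theorem for the modular lattice \ref*{amo:mod} joins the two chains by a sequence of elementary transpositions, each interchanging two consecutive links inside a length-two sublattice and hence preserving the multiset of $\eta$-classes of the links. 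Organizing this process so that every intermediate chain still contains a full sub-chain of principal ideals --- a rigid factorization of $a$ lying in the permutable fiber of $z$ --- and invoking the invariance of $\sd$ under conjugation by normalizing elements, each transposition is realized at the level of factorizations of $a$ as the replacement of two consecutive atoms $u_i, u_{i+1}$ by atoms $w_1, w_2$ with $u_i u_{i+1} = w_1 w_2$; by \ref*{d:mul} and \ref*{d:len} this changes the factorization by $\sd$-distance $\sd(\rf{u_i, u_{i+1}}, \rf{w_1, w_2}) \le \max\{2, 2, 1\} = 2$. Thus $\sc_\sd(a, \theta) \le 2$, and the supremum over $a \in H^\bullet$ gives the claim.

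The main obstacle is the organizing step of the last paragraph: that the Jordan--Hölder transpositions between two divisorial chains with the same $\eta$-content can be carried out so that at every stage one still sees a rigid factorization of $a$ inside a fixed permutable fiber, with consecutive factorizations at $\sd$-distance at most $2$. This is exactly where the coordinatization of the arithmetical groupoid $\cI_v(\alpha)$ and the conjugation-invariance of $\sd$ enter in an essential way, and it constitutes the technical core of the proofs in \cite{smertnig13} and \cite{baeth-smertnig15}.
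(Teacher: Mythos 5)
The survey states this theorem by citation only and reproduces no proof, and your sketch follows precisely the strategy the paper attributes to \cite{smertnig13} and \cite{baeth-smertnig15} in the surrounding remarks: the Asano--Murata-style unique factorization of divisorial one-sided ideals into maximal integral ones inside the arithmetical groupoid $\cI_v(\alpha)$, the abstract norm $\eta$ as the invariant, condition (\textbf{N}) to detect principality of the intermediate ideals, and Jordan--H\"older/Schreier refinement on the modular lattices of condition (\textbf{A3}) for the bound $\sc_\sd(H^\bullet,\theta)\le 2$. Your outline is sound and you correctly locate the deferred technical core --- the rechaining of maximal integral divisorial ideals needed to realize arbitrary reorderings (which underlies surjectivity, atom-preservation, and the lifting property (\textbf{T2}), and which your chain-of-partial-sums arguments quietly use) and the organization of the transpositions so that every intermediate chain still exhibits a rigid factorization in the fixed permutable fiber --- as exactly what the coordinatization of the groupoid in the cited papers supplies.
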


\begin{remark}
  \begin{enumerate}
    \item
      The result can be proven in the more general setting of saturated subcategories of arithmetical groupoids (see \cite[Theorem 4.15]{smertnig13} or \cite[Theorem 7.8]{baeth-smertnig15}).
      The strong condition \ref*{ass:norm} cannot be omitted.
      We discuss the condition in our application to classical maximal orders in central simple algebras over global fields below.

    \item In a saturated subcategory of an arithmetical groupoid (here, $\cI_v$ in $\cF_v$), elements (i.e., divisorial one-sided ideals) enjoy a kind of unique factorization property.
      The boundedness guarantees the existence of the abstract norm, which provides a useful invariant in describing these factorizations.
      This was originally proven by Asano and Murata in \cite{asano-murata53}.
      It is a generalization of a similar result for (bounded) Dedekind prime rings, where the one-sided ideals of the equivalence class of a Dedekind prime ring form the so-called \emph{Brandt groupoid}.
      This unique factorization of divisorial one-sided ideals is the key ingredient in the proof of the previous result.

    \item
      We note in passing that every arithmetical maximal order is a BF-semigroup (see \cite[Theorem 5.23.1]{smertnig13}).
      For a commutative cancellative semigroup $H$ the following is true:
      If $H$ is $v$-Noetherian (satisfies the ACC on divisorial ideals), then $H$ is a BF-monoid.
      It seems to be unknown whether every order $H$ which satisfies \ref{amo:acc} is a BF-semigroup, even in the special case where $H$ is a maximal order.

    \item
      If $G$ is a lattice-ordered group, then $G$ is distributive as a lattice.
      From this, one concludes that a commutative cancellative semigroup that is a maximal order (i.e., completely integrally closed) and satisfies \ref{amo:acc} is already an arithmetical maximal order (that is, a commutative Krull monoid).

      If $H=R$ with $R$ a Dedekind prime ring, or more generally, a Krull ring in the sense of Chamarie (see \cite{chamarie81}), then \ref{amo:mod} holds.
      It is open whether there exist maximal orders which satisfy \ref{amo:acc} and \ref{amo:bdd} but not \ref{amo:mod}.
      It would be interesting to know such examples or sufficient and/or necessary conditions on $H$ for \ref{amo:acc} and \ref{amo:bdd} to imply \ref{amo:mod}.
    \end{enumerate}
\end{remark}

Applied to classical maximal orders in central simple algebras over global fields, we have the following.
(See also Corollary~\ref{cor:estes-transfer}.)
\begin{theorem}[{\cite{smertnig13,baeth-smertnig15}}] \label{t-transfer-global}
  Let $\cO$ be a holomorphy ring in a global field $K$, $A$ a central simple algebra over $K$, and $R$ a classical maximal $\cO$-order of $A$.
  \begin{enumerate}
  \item\label{t-transfer-global:transfer}
    Suppose that every stably free right $R$-ideal is free. Then there exists a transfer homomorphism $\theta \colon R^\bullet \to \cB(\Cl_A(\cO))$.
    Moreover, $\sc_{\sd}(R^\bullet,\theta) \le 2$ for any distance $\sd$ on $R^\bullet$ which is invariant under conjugation by normalizing elements.

    In particular, the conclusions of Theorem~\ref{t-zss} hold for $R^\bullet$ in place of $H$.
    If $R$ is not half-factorial, then $\sc_{\dsim}(R^\bullet) = \sc_{p}(R^\bullet) = \sc^*(R^\bullet) = \sc_p\big( \cB(\Cl_A(\cO)) \big)$.

  \item\label{t-transfer-global:non-transfer}
    Let $K$ be a number field and $\cO$ its ring of algebraic integers.
    If there exist stably free right $R$-ideals that are not free, then there exists no transfer homomorphism $\theta \colon R^\bullet \to \cB(G_0)$, where $G_0$ is any subset of an abelian group. Moreover,
    \begin{enumerate}
      \item $\Delta(R^\bullet) = \bN$.
      \item For every $k \ge 3$, we have $\bN_{\ge 3} \subset \cU_k(R^\bullet) \subset \bN_{\ge_2}$.
      \item $\sc_{\sd}(R^\bullet) = \infty$ for every distance $\sd$ on $R^\bullet$.
    \end{enumerate}
  \end{enumerate}
\end{theorem}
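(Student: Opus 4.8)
The plan is to derive part~(1) from the preceding transfer theorem for arithmetical maximal orders, and part~(2) from an explicit construction built on a single non-principal stably free right ideal. For part~(1), the first step is to check that $R^\bullet$ is an arithmetical maximal order in the quotient semigroup $Q=A^\bullet$ of units of the simple Artinian ring $A$, its equivalence class consisting exactly of the semigroups $R'^\bullet$ with $R'$ a classical maximal $\cO$-order in $A$. Axiom~\ref{amo:acc} holds because $R$ is a finitely generated module over the Dedekind domain $\cO$, hence Noetherian; \ref{amo:bdd} holds because $R$ is a prime PI ring, hence bounded; and \ref{amo:mod} holds because $R$ is hereditary, indeed a Dedekind prime ring, so the lattices of divisorial fractional one-sided ideals are modular (the underlying input being the Asano--Murata unique factorization of divisorial one-sided ideals). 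The second, and more delicate, step is to identify the abstract norm $\eta$ with the ideal-theoretic reduced norm; this gives $\bG\cong\cF^\times(\cO)$, $C=\bG/P_{R^\bullet}\cong\Cl_A(\cO)$, and $C_M=C$ (every ray class contains a prime ideal of $\cO$, hence a maximal integral divisorial right $R$-ideal). Under this identification condition~\ref{ass:norm} says precisely that a right $R$-ideal is principal if and only if it is stably free, which is the hypothesis; here one must check that $P_{R^\bullet}$, the group generated by reduced norms of elements of $R^\bullet$, equals $\{\,c\cO\mid c\in K_A^\times\,\}$, which again uses the Hermite hypothesis together with the fact that every integral ideal of $\cO$ in the trivial ray class is the reduced norm of some right $R$-ideal. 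The preceding theorem then delivers $\theta\colon R^\bullet\to\cB(\Cl_A(\cO))$ with $\sc_\sd(R^\bullet,\theta)\le 2$ for every distance $\sd$ invariant under conjugation by normalizing elements.

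The remaining assertions of part~(1) are then formal. Since $\Cl_A(\cO)$ is finite, Theorem~\ref{t-zss} applies to $\cB(\Cl_A(\cO))$, and Theorem~\ref{t-transfer}(2) transports the invariants of Definition~\ref{d-len-inv}, hence the conclusions of Theorem~\ref{t-zss}, to $R^\bullet$. For the catenary-degree identity when $R$ is not half-factorial, note that the three distances satisfy $\sc_{\dsim}(R^\bullet)\le\sc_p(R^\bullet)\le\sc^*(R^\bullet)$ (finer distances give larger catenary degrees), that Theorem~\ref{t-transfer}(3) with the bound above gives $\sc^*(R^\bullet)\le\max\{\sc_p(\cB(\Cl_A(\cO))),2\}$, and that conversely, since similar atoms of $R^\bullet$ have associated reduced norm, $\theta^*$ carries a $\sd_{\dsim}$-chain to a $\sd_p$-chain of no larger width, so lifting along~\ref{th:lift} yields $\sc_p(\cB(\Cl_A(\cO)))\le\sc_{\dsim}(R^\bullet)$. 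Finally, if $R$ is not half-factorial then $\card{\Cl_A(\cO)}\ge 3$ by Theorem~\ref{t-zss}(1), whence $\sc_p(\cB(\Cl_A(\cO)))\ge 3>2$ and the chain of inequalities collapses to equality.

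For part~(2) we are, by the discussion preceding the statement, dealing with a classical maximal order $R$ in a totally definite quaternion algebra that possesses a non-principal stably free right ideal. The technical core — and the step I expect to be the main obstacle — is to exploit this ideal, together with strong approximation used to select unramified primes in the trivial ray class, so as to produce for each $n$ an element of $R^\bullet$ admitting a factorization of length $\min\sL$ and one of length $n$ with no intermediate lengths: non-triviality of the stably free genus is exactly what makes certain intermediate principal right ideals unavailable, so that long rigid refactorings cannot be broken into short steps. Granting this, (a)--(c) follow: varying $n$ gives $\Delta(R^\bullet)=\bN$; padding the construction with squarefree-norm blocks to shift $\min\sL$ gives $\bN_{\ge 3}\subseteq\cU_k(R^\bullet)\subseteq\bN_{\ge 2}$ for $k\ge 3$; and since two factorizations whose lengths differ by $n-2$ cannot be joined by a chain with steps smaller than $n-2$, one gets $\sc_\sd(R^\bullet)=\infty$ for every distance $\sd$. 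The non-existence of a transfer homomorphism $R^\bullet\to\cB(G_0)$ cannot be read off the arithmetical invariants alone (monoids of zero-sum sequences over infinite groups, e.g. $\cB(\bZ)$, already realize (a)--(c)); instead the plan is to show that any transfer homomorphism from $R^\bullet$ to a reduced commutative cancellative semigroup is forced, through the canonical structure of the divisorial ideal monoid, to coincide up to the obvious identifications with the reduced-norm map, which fails to be a transfer homomorphism when the Eichler condition fails — a failure of norm-induced factorization complementary to Theorem~\ref{thm:estes-detf}. I would follow \cite{smertnig13}, and \cite{estes-nipp89,estes91a} for the norm-factorization input, for the details of both this and the construction above.
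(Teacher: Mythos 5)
Part~(1) of your proposal is correct and follows the same route as the source the paper relies on: you verify that $R^\bullet$ is an arithmetical maximal order (axioms \ref{amo:acc}--\ref{amo:mod} via Noetherianity, the PI/boundedness property, and hereditariness), identify the abstract norm with the ideal-theoretic reduced norm so that $C\cong\Cl_A(\cO)$ and $C_M=C$, and observe that condition \ref{ass:norm} reduces, via Swan's isomorphism $\cC(R)\cong\Cl_A(\cO)$ and Hasse--Schilling--Maass, to the Hermite hypothesis. You are also right to take this route rather than Corollary~\ref{cor:estes-transfer}, which needs the Eichler condition, strictly stronger than ``stably free implies free''. The catenary-degree chain $\sc_{\dsim}\le\sc_p\le\sc^*\le\max\{\sc_p(\cB(\Cl_A(\cO))),2\}$ together with the pushed-down lower bound and $\sc_p(\cB(G))\ge 3$ for $\card{G}\ge 3$ is exactly the argument of the cited source.

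Part~(2) has a genuine gap, and it is precisely at the step you flag as delicate: the non-existence of \emph{any} transfer homomorphism $\theta\colon R^\bullet\to\cB(G_0)$. Your correct observation that sets of lengths alone cannot rule this out (every finite subset of $\bN_{\ge 2}$ is realizable in $\cB(G)$ for $G$ infinite) means that (a)--(c) do not imply the non-existence statement, so it needs a separate argument. The strategy you propose --- showing that any transfer homomorphism to a reduced commutative cancellative semigroup is ``forced to coincide with the reduced norm up to the obvious identifications'' --- is not substantiated and is not the argument of \cite{smertnig13}. A transfer homomorphism is a very loose structure: it is only constrained on atoms up to associativity and permutation of factorizations, and nothing in the axioms ties it to the divisorial ideal theory of $R$; such a rigidity statement would itself be a theorem at least as hard as the one you are proving, and it is unclear it is even true. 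In \cite{smertnig13} the non-existence is instead derived from a concrete combinatorial obstruction: one exhibits a configuration of factorizations in $R^\bullet$ (built from atoms whose right ideals sit under chains of stably free non-free ideals, so that their reduced norms have many prime factors while no intermediate principal right ideals exist) and proves a lemma showing that no element of a monoid of zero-sum sequences can have a factorization pattern lifting to that configuration. Relatedly, your core construction of elements with $\sL(a)=\{2,m\}$ and no intermediate lengths is only asserted; since everything in (a)--(c) --- in particular the $\sd$-independent blow-up of the catenary degree via axiom (D5) --- hinges on it, part~(2) remains a plan rather than a proof. The references you name are the right place to find both the construction and the obstruction lemma, but as written the proposal does not supply them.
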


\begin{remark}
  \begin{enumerate}
  \item
    The importance of the condition for every stably free right $R$-ideal to be free was noted already by Estes and Nipp (see \cite{estes-nipp89,estes91b}).
    That the absence of this condition not only implies that $\nr$, respectively $\theta$, is not a transfer homomorphism, but that the much stronger result in \ref*{t-transfer-global:non-transfer} holds, first appeared in \cite{smertnig13}.
    In the setting of \ref*{t-transfer-global:non-transfer}, arithmetical invariants are infinite and hence the factorization theory is radically different from the case \ref*{t-transfer-global:transfer}, where all arithmetical invariants are finite.

  \item
    Throughout this section we have required that $\cO=\cO_S$ is a holomorphy ring defined by a finite set of places $S \subset S_{\text{fin}}$.
    This is the most important case.
    However, most results go through, with possibly minor modifications, for $\cO=\cO_S$ with an infinite set $S \subsetneq S_{\text{fin}}$.

    Theorem~\ref{thm:estes-detf} remains true without changes.
    In Corollary~\ref{cor:estes-transfer} and Theorem~\ref{t-transfer-global}\ref*{t-transfer-global:transfer} it is not necessarily true anymore that every class of $\Cl_A(\cO)$ contains infinitely many prime ideals.
    However, by a localization argument, every class, except possibly the trivial one, contains at least one nonzero prime ideal.
    Accordingly, there exists a transfer homomorphism $R^\bullet \to \cB(C_M)$ with $C_M$ either equal to $\Cl_A(\cO)$ or to $\Cl_A(\cO) \setminus \{0\}$.
  \end{enumerate}
\end{remark}

It was noted in \cite{estes91a}, that Theorem~\ref{thm:estes-detf} can be extended to a more general setting of classical hereditary orders over Dedekind domains whose quotient fields are not global fields.
In fact, using a description of finitely generated projective modules over hereditary Noetherian prime (HNP) rings, one can extend the construction of the transfer homomorphism to bounded HNP rings.
We refer to \cite{levy-robson11} for background on hereditary Noetherian prime (HNP) rings.

If $R$ is a HNP ring, one can define a class group $G(R)$.
If $R$ is a Dedekind prime ring, then simply $G(R) = \ker(\udim\colon K_0(R) \to \bZ)$.
Let $G_0 \subset G(R)$ denote the subset of classes $[I]-[R]$, where $I$ is a right $R$-ideal such that the composition series of $R/I$ consists precisely of one tower of $R$.

\begin{theorem}[{\cite{smertnig-hnp}}]
  Let $R$ be a bounded hereditary Noetherian prime ring.
  Suppose that every stably free right $R$-ideal is free.
  Then there exists a transfer homomorphism $\theta\colon R^\bullet \to \cB(G_0)$.
\end{theorem}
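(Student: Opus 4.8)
The plan is to adapt, to a bounded HNP ring, the construction of the transfer homomorphism $\theta\colon R^\bullet\to\cB(\Cl_A(\cO))$ given for classical maximal orders in Theorem~\ref{t-transfer-global}\ref*{t-transfer-global:transfer}, replacing the global ideal theory of a maximal order by the ideal theory and the description of finitely generated modules over HNP rings (Eisenbud--Robson, Levy--Robson; see \cite{levy-robson11}). Since $R$ is prime and Noetherian it is a prime Goldie ring with simple Artinian classical quotient ring, and for $a\in R^\bullet$ the torsion module $R/aR$ has finite length. Recall from the discussion of factor posets that the rigid factorizations of $a$ correspond to the maximal chains in $[aR,R]$, i.e.\ to the composition series of $R/aR$; $\theta$ will be obtained by reading off along such a series the \emph{classes} of its composition factors, grouped into towers, an invariant with values in $G_0\subseteq G(R)$.

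First I would assemble the structural input. Over a bounded HNP ring the nonzero ideals are eventually invertible and are built from maximal ideals arranged in finite cycles; correspondingly the simple right modules are partitioned into towers, every finite-length torsion module has composition series compatible with this partition, and the multiset of occurring towers is an invariant of the module. Moreover every right $R$-ideal is finitely generated projective of rank one, every finitely generated projective of rank $n\ge 1$ is isomorphic to $R^{n-1}\oplus I$ for a right $R$-ideal $I$, and the stable isomorphism class of $I$ is an element of the class group $G(R)$; by definition $G_0$ is the set of classes $[I]-[R]$ with $R/I$ a single tower. Boundedness of $R$ is what makes this ideal and module theory globally well behaved --- it is the HNP counterpart of axiom \ref*{amo:bdd} in Definition~\ref{amo} --- and it also guarantees that essential one-sided ideals contain regular elements.

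Given this, $\theta$ is defined as follows. For $a\in R^\bullet$ choose a chain $R=I_0\supsetneq I_1\supsetneq\dots\supsetneq I_k=aR$ of right $R$-ideals in which each quotient $I_{j-1}/I_j$ is a single tower, and put $\theta(a)=\prod_{j=1}^{k}\big([I_j]-[I_{j-1}]\big)\in\cF(G_0)$, where $[I_j]-[I_{j-1}]=[J_j]-[R]\in G_0$ for any right $R$-ideal $J_j$ with $R/J_j\cong I_{j-1}/I_j$. Independence of the chosen chain is the Jordan--H\"older statement for towers; multiplicativity $\theta(ab)=\theta(a)\theta(b)$ follows by refining $R\supseteq aR\supseteq abR$ and using the right-module isomorphism $R/bR\cong aR/abR$ induced by $x\mapsto ax$; and $\theta^{-1}$ of the empty sequence equals $R^\times$ since $R/aR=0$ forces $a\in R^\times$. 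Telescoping gives $\sum_j([I_j]-[I_{j-1}])=[aR]-[R]$, and as $x\mapsto ax$ exhibits an isomorphism $R\cong aR$ we have $[aR]=[R]$, so $\theta(a)$ is a zero-sum sequence and $\theta$ maps $R^\bullet$ into $\cB(G_0)$.

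It remains to verify the transfer axioms \ref*{th:units} and \ref*{th:lift}. As $\cB(G_0)$ is reduced, \ref*{th:units} reduces to surjectivity of $\theta$ together with $\theta^{-1}(1)=R^\times$, already noted. For surjectivity and for the lifting property \ref*{th:lift} one constructs the required chain of right $R$-ideals step by step, attaching below the current ideal a single tower of the prescribed class (possible by the tower structure of the simple $R$-modules); the crucial point is that at the end of a zero-sum block the ideal $I_m$ reached satisfies $[I_m]-[R]=0$, i.e.\ $I_m$ is a \emph{stably free} right $R$-ideal. Here the hypothesis enters: since every stably free right $R$-ideal is free, $I_m=a_1R$ is principal with $a_1$ regular, and $aR\subseteq a_1R$ yields a factorization $a=a_1a_2$ in $R^\bullet$ realizing the prescribed split of $\theta(a)$, with the remaining tower classes transported to $\theta(a_2)$ via left multiplication by $a_1^{-1}$. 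This is exactly the role of condition \ref*{ass:norm} in the arithmetical maximal order theorem. The main obstacle is the bookkeeping with towers: one must show that finite-length torsion $R$-modules admit composition series grouped into single towers in an essentially unique way, that any class in $G_0$ can be realized as a single-tower quotient of any prescribed right $R$-ideal, and --- since a transfer homomorphism must send atoms to atoms --- that the atoms of $R^\bullet$ are compatible with this grouping. All of this rests on the Eisenbud--Robson ideal theory and the Levy--Robson classification of finitely generated modules over HNP rings, and carrying it out carefully is where the substance of \cite{smertnig-hnp} lies.
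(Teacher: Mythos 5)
The survey itself contains no proof of this theorem: it is quoted from \cite{smertnig-hnp}, with only a one-sentence indication of method (extend the transfer-homomorphism construction using the description of finitely generated projective modules over HNP rings, cf.\ \cite{levy-robson11}). Your outline follows exactly that indicated route, and it is consistent with the paper's definition of $G_0$: you define $\theta$ by reading off the single-tower subquotients of $R/aR$ as classes $[J]-[R]$, you obtain the zero-sum property from $[aR]=[R]$ by telescoping (Schanuel's lemma gives $[I_j]-[I_{j-1}]=[J_j]-[R]$), and you correctly locate where the hypothesis enters --- the intermediate ideal $I_m$ at the end of a zero-sum block is stably free, hence free, hence principal, which is precisely the HNP analogue of condition \textup{(\textbf{N})} in the arithmetical-maximal-order theorem.

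What you have, however, is a skeleton rather than a proof, and the three items you defer in your last paragraph are not routine bookkeeping. The delicate one is the lifting axiom \textup{(\textbf{T2})}: given a zero-sum subsequence $b_1$ of $\theta(a)$ you must produce a chain in $[aR,R]$ whose top segment realizes exactly the towers of $b_1$, i.e.\ you need to \emph{reorder} the towers of $R/aR$ at will. In the classical maximal order case this comes for free from the factorization of divisorial one-sided ideals into maximal integral ones in any order compatible with the norm; over a general bounded HNP ring the simple modules belonging to one cycle of idempotent maximal ideals impose ordering constraints on composition series, so the existence of the required chain (together with the uniqueness of the tower multiset of $R/aR$, the realizability of every class of $G_0$ below a prescribed ideal, and the fact that atoms of $R^\bullet$ map to atoms of $\cB(G_0)$) must be extracted from the Levy--Robson structure theory. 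These are exactly the points where the substance of \cite{smertnig-hnp} lies; your proposal names them but does not establish them.
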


\section{Other Results}
\label{sec:scope}

Finally, we note some recent work which is beyond the scope of this article, but may conceivably be considered to be factorization theory.

In a noncommutative setting, even in the (similarity) factorial case, many interesting questions in describing factorizations in more detail remain.
Factorizations of (skew) polynomials over division rings have received particular attention.
This is especially true for Wedderburn polynomials (also called $W$-polynomials).
Some recent work in this direction due to Haile, Lam, Leroy, Ozturk, and Rowen is \cite{lam-leroy87,haile-rowen95,lam-leroy00,lam-leroy04,lam-leroy-ozturk08}.
In \cite{leroy12}, Leroy shows that factorizations of elements in $\bF_q[x;\theta]$, where $\theta$ is the Frobenius automorphism, can be computed in terms of factorizations in $\bF_q[x]$.
We also note \cite{heinle-levandovskyy,bergen-giesbrecht-pappur-zhang15}.

I.~Gelfand and Retakh, using their theory of quasideterminants and noncommutative symmetric functions, have obtained noncommutative generalizations of Vieta's Theorem.
This allows one to express coefficients of polynomials in terms of pseudo-roots.
We mention the surveys \cite{gelfand-gelfand-retakh-wilson05,retakh10} as starting points into the literature in this direction.
In \cite{delenclos-leroy07}, a connection is made between the theory of quasideterminants, noncommutative symmetric functions, and $W$-polynomials.

Motion polynomials are certain polynomials over the ring of dual quaternions.
They have applications in the the study of rational motions and in particular the construction of linkages in kinematics.
This approach was introduced by Hegedüs, Schicho, and Schröcker in \cite{hegedues-schicho-schroecker12,hegedues-schicho-schroecker13} and has since been very successful.
See the survey \cite{li-rad-schicho-schroecker} or also the expository article \cite{hegedues-li-schicho-schroecker15}.

We mainly discussed the semigroup of non-zero-divisors of a noncommutative ring, and, in Section~\ref{ssec:chatters-ufds}, the semigroup of nonzero normal elements.
The factorization theory of some other noncommutative semigroups, which do not necessarily arise in such a way from rings, has been studied.
We mention polynomial decompositions (see \cite{zieve-mueller}) and  other subsemigroups of rings of matrices (see \cite{baeth-ponomarenko-etal11}) over the integers.

\begin{acknowledgement}
  I thank the anonymous referee for his careful reading.
  The author was supported by the Austrian Science Fund (FWF) project P26036-N26.
\end{acknowledgement}

\bibliographystyle{alphaabbr}
\bibliography{survey}

\newcommand{\etalchar}[1]{$^{#1}$}
\begin{thebibliography}{GGRW05}

\bibitem[AKMU91]{abbasi-kobayashi-marubayashi-ueda91}
G.~Q. Abbasi, S.~Kobayashi, H.~Marubayashi, and A.~Ueda.
\newblock Noncommutative unique factorization rings.
\newblock {\em Comm. Algebra}, 19(1):167--198, 1991.

\bibitem[AM53]{asano-murata53}
K.~Asano and K.~Murata.
\newblock Arithmetical ideal theory in semigroups.
\newblock {\em J. Inst. Polytech. Osaka City Univ. Ser. A. Math.}, 4:9--33,
  1953.

\bibitem[AM16]{akalan-marubayashi}
E.~Akalan and H.~Marubayashi.
\newblock {M}ultiplicative {I}deal {T}heory in {N}on-{C}ommutative {R}ings.
\newblock In {\em {M}ultiplicative {I}deal {T}heory and {F}actorization
  {T}heory}. Springer, 2016.
\newblock To appear.

\bibitem[And97]{anderson97}
D.~D. Anderson, editor.
\newblock {\em Factorization in integral domains}, volume 189 of {\em Lecture
  Notes in Pure and Applied Mathematics}. Marcel Dekker, Inc., New York, 1997.

\bibitem[BBG14]{bachman-baeth-gossell14}
D.~Bachman, N.~R. Baeth, and J.~Gossell.
\newblock Factorizations of upper triangular matrices.
\newblock {\em Linear Algebra Appl.}, 450:138--157, 2014.

\bibitem[Bea71]{beauregard71}
R.~A. Beauregard.
\newblock Right {${\rm LCM}$} domains.
\newblock {\em Proc. Amer. Math. Soc.}, 30:1--7, 1971.

\bibitem[Bea74]{beauregard74}
R.~A. Beauregard.
\newblock Right-bounded factors in an {LCM} domain.
\newblock {\em Trans. Amer. Math. Soc.}, 200:251--266, 1974.

\bibitem[Bea77]{beauregard77}
R.~A. Beauregard.
\newblock An analog of {N}agata's theorem for modular {LCM} domains.
\newblock {\em Canad. J. Math.}, 29(2):307--314, 1977.

\bibitem[Bea80]{beauregard80}
R.~A. Beauregard.
\newblock Left versus right {LCM} domains.
\newblock {\em Proc. Amer. Math. Soc.}, 78(4):464--466, 1980.

\bibitem[Bea92]{beauregard92}
R.~A. Beauregard.
\newblock Unique factorization in the ring {$R[x]$}.
\newblock {\em J. Austral. Math. Soc. Ser. A}, 53(3):287--293, 1992.

\bibitem[Bea93]{beauregard93}
R.~A. Beauregard.
\newblock When is {$F[x,y]$} a unique factorization domain?
\newblock {\em Proc. Amer. Math. Soc.}, 117(1):67--70, 1993.

\bibitem[Bea94a]{beauregard94}
R.~A. Beauregard.
\newblock Factorization in {LCM} domains with conjugation.
\newblock {\em Canad. Math. Bull.}, 37(3):289--293, 1994.

\bibitem[Bea94b]{beauregard94b}
R.~A. Beauregard.
\newblock Right unique factorization domains.
\newblock {\em Rocky Mountain J. Math.}, 24(2):483--489, 1994.

\bibitem[Bea95]{beauregard95}
R.~A. Beauregard.
\newblock Rings with the right atomic multiple property.
\newblock {\em Comm. Algebra}, 23(3):1017--1026, 1995.

\bibitem[Ber68]{bergman68}
G.~M. Bergman.
\newblock {\em Commuting elements in free algebras and related topics in ring
  theory}.
\newblock 1968.
\newblock PhD thesis, Harvard University.

\bibitem[BGSZ15]{bergen-giesbrecht-pappur-zhang15}
J.~Bergen, M.~Giesbrecht, P.~N. Shivakumar, and Y.~Zhang.
\newblock Factorizations for difference operators.
\newblock {\em Adv. Difference Equ.}, pages 2015:57, 6, 2015.

\bibitem[BHL15]{bell-heinle-levandovskyy}
J.~P. Bell, A.~Heinle, and V.~Levandovskyy.
\newblock On {N}oncommutative {F}inite {F}actorization {D}omains.
\newblock {\em Trans. Amer. Math. Soc.}, 2015.
\newblock To appear.

\bibitem[BK00]{berrick-keating00}
A.~J. Berrick and M.~E. Keating.
\newblock {\em An introduction to rings and modules with {$K$}-theory in view},
  volume~65 of {\em Cambridge Studies in Advanced Mathematics}.
\newblock Cambridge University Press, Cambridge, 2000.

\bibitem[BPA{\etalchar{+}}11]{baeth-ponomarenko-etal11}
N.~R. Baeth, V.~Ponomarenko, D.~Adams, R.~Ardila, D.~Hannasch, A.~Kosh,
  H.~McCarthy, and R.~Rosenbaum.
\newblock Number theory of matrix semigroups.
\newblock {\em Linear Algebra Appl.}, 434(3):694--711, 2011.

\bibitem[Bro85]{brown85}
K.~A. Brown.
\newblock Height one primes of polycyclic group rings.
\newblock {\em J. London Math. Soc. (2)}, 32(3):426--438, 1985.

\bibitem[Bru69]{brungs69}
H.-H. Brungs.
\newblock Ringe mit eindeutiger {F}aktorzerlegung.
\newblock {\em J. Reine Angew. Math.}, 236:43--66, 1969.

\bibitem[BS15]{baeth-smertnig15}
N.~R. Baeth and D.~Smertnig.
\newblock Factorization theory: {F}rom commutative to noncommutative settings.
\newblock {\em J. Algebra}, 441:475--551, 2015.

\bibitem[CC91]{chatters-clark91}
A.~W. Chatters and J.~Clark.
\newblock Group rings which are unique factorisation rings.
\newblock {\em Comm. Algebra}, 19(2):585--598, 1991.

\bibitem[CGW92]{chatters-gilchrist-wilson92}
A.~W. Chatters, M.~P. Gilchrist, and D.~Wilson.
\newblock Unique factorisation rings.
\newblock {\em Proc. Edinburgh Math. Soc. (2)}, 35(2):255--269, 1992.

\bibitem[Cha81]{chamarie81}
M.~Chamarie.
\newblock Anneaux de {K}rull non commutatifs.
\newblock {\em J. Algebra}, 72(1):210--222, 1981.

\bibitem[Cha84]{chatters84}
A.~W. Chatters.
\newblock Noncommutative unique factorization domains.
\newblock {\em Math. Proc. Cambridge Philos. Soc.}, 95(1):49--54, 1984.

\bibitem[Cha95]{chatters95}
A.~W. Chatters.
\newblock Unique factorisation in {P}.{I}. group-rings.
\newblock {\em J. Austral. Math. Soc. Ser. A}, 59(2):232--243, 1995.

\bibitem[Cha05]{chapman05}
S.~T. Chapman, editor.
\newblock {\em Arithmetical properties of commutative rings and monoids},
  volume 241 of {\em Lecture Notes in Pure and Applied Mathematics}. Chapman \&
  Hall/CRC, Boca Raton, FL, 2005.

\bibitem[CJ86]{chatters-jordan86}
A.~W. Chatters and D.~A. Jordan.
\newblock Noncommutative unique factorisation rings.
\newblock {\em J. London Math. Soc. (2)}, 33(1):22--32, 1986.

\bibitem[CK15]{cohn-kumar15}
H.~Cohn and A.~Kumar.
\newblock Metacommutation of {H}urwitz primes.
\newblock {\em Proc. Amer. Math. Soc.}, 143(4):1459--1469, 2015.

\bibitem[Coh62]{cohn62}
P.~M. Cohn.
\newblock Factorization in non-commutative power series rings.
\newblock {\em Proc. Cambridge Philos. Soc.}, 58:452--464, 1962.

\bibitem[Coh63a]{cohn63}
P.~M. Cohn.
\newblock Noncommutative unique factorization domains.
\newblock {\em Trans. Amer. Math. Soc.}, 109:313--331, 1963.

\bibitem[Coh63b]{cohn63b}
P.~M. Cohn.
\newblock Rings with a weak algorithm.
\newblock {\em Trans. Amer. Math. Soc.}, 109:332--356, 1963.

\bibitem[Coh65]{cohn65}
P.~M. Cohn.
\newblock Errata to: ``{N}oncommutative unique factorization domains''.
\newblock {\em Trans. Amer. Math. Soc.}, 119:552, 1965.

\bibitem[Coh69]{cohn69a}
P.~M. Cohn.
\newblock Factorization in general rings and strictly cyclic modules.
\newblock {\em J. Reine Angew. Math.}, 239/240:185--200, 1969.

\bibitem[Coh73a]{cohn73-c}
P.~M. Cohn.
\newblock Correction to: ``{U}nique factorization domains'' ({A}mer. {M}ath.
  {M}onthly {\bf 80} (1973), 1--18).
\newblock {\em Amer. Math. Monthly}, 80:1115, 1973.

\bibitem[Coh73b]{cohn73}
P.~M. Cohn.
\newblock Unique factorization domains.
\newblock {\em Amer. Math. Monthly}, 80:1--18, 1973.

\bibitem[Coh85]{cohn85}
P.~M. Cohn.
\newblock {\em Free rings and their relations}, volume~19 of {\em London
  Mathematical Society Monographs}.
\newblock Academic Press Inc. [Harcourt Brace Jovanovich Publishers], London,
  second edition, 1985.

\bibitem[Coh06]{cohn06}
P.~M. Cohn.
\newblock {\em Free ideal rings and localization in general rings}, volume~3 of
  {\em New Mathematical Monographs}.
\newblock Cambridge University Press, Cambridge, 2006.

\bibitem[CR87]{curtis-reiner87}
C.~W. Curtis and I.~Reiner.
\newblock {\em Methods of representation theory. {V}ol. {II}}.
\newblock Pure and Applied Mathematics (New York). John Wiley \& Sons, Inc.,
  New York, 1987.
\newblock With applications to finite groups and orders, A Wiley-Interscience
  Publication.

\bibitem[CS85]{cohn-schofield85}
P.~M. Cohn and A.~H. Schofield.
\newblock Two examples of principal ideal domains.
\newblock {\em Bull. London Math. Soc.}, 17(1):25--28, 1985.

\bibitem[CS03]{conway-smith03}
J.~H. Conway and D.~A. Smith.
\newblock {\em On quaternions and octonions: their geometry, arithmetic, and
  symmetry}.
\newblock A K Peters Ltd., Natick, MA, 2003.

\bibitem[DL07]{delenclos-leroy07}
J.~Delenclos and A.~Leroy.
\newblock Noncommutative symmetric functions and {$W$}-polynomials.
\newblock {\em J. Algebra Appl.}, 6(5):815--837, 2007.

\bibitem[EM79a]{estes-matijevic79a}
D.~R. Estes and J.~R. Matijevic.
\newblock Matrix factorizations, exterior powers, and complete intersections.
\newblock {\em J. Algebra}, 58(1):117--135, 1979.

\bibitem[EM79b]{estes-matijevic79b}
D.~R. Estes and J.~R. Matijevic.
\newblock Unique factorization of matrices and {T}owber rings.
\newblock {\em J. Algebra}, 59(2):387--394, 1979.

\bibitem[EN89]{estes-nipp89}
D.~R. Estes and G.~Nipp.
\newblock Factorization in quaternion orders.
\newblock {\em J. Number Theory}, 33(2):224--236, 1989.

\bibitem[Est91a]{estes91a}
D.~R. Estes.
\newblock Factorization in hereditary orders.
\newblock {\em Linear Algebra Appl.}, 157:161--164, 1991.

\bibitem[Est91b]{estes91b}
D.~R. Estes.
\newblock Factorization in quaternion orders over number fields.
\newblock In {\em The mathematical heritage of {C}. {F}. {G}auss}, pages
  195--203. World Sci. Publ., River Edge, NJ, 1991.

\bibitem[Fit36]{fitting36}
H.~Fitting.
\newblock \"{U}ber den {Z}usammenhang zwischen dem {B}egriff der
  {G}leichartigkeit zweier {I}deale und dem \"{A}quivalenzbegriff der
  {E}lementarteilertheorie.
\newblock {\em Math. Ann.}, 112(1):572--582, 1936.

\bibitem[Ger09]{geroldinger09}
A.~Geroldinger.
\newblock Additive group theory and non-unique factorizations.
\newblock In {\em Combinatorial number theory and additive group theory}, Adv.
  Courses Math. CRM Barcelona, pages 1--86. Birkh\"auser Verlag, Basel, 2009.

\bibitem[Ger13]{geroldinger13}
A.~Geroldinger.
\newblock Non-commutative {K}rull monoids: a divisor theoretic approach and
  their arithmetic.
\newblock {\em Osaka J. Math.}, 50(2):503--539, 2013.

\bibitem[GGRW05]{gelfand-gelfand-retakh-wilson05}
I.~Gelfand, S.~Gelfand, V.~Retakh, and R.~L. Wilson.
\newblock Quasideterminants.
\newblock {\em Adv. Math.}, 193(1):56--141, 2005.

\bibitem[GHK06]{ghk06}
A.~Geroldinger and F.~Halter-Koch.
\newblock {\em Non-unique factorizations}, volume 278 of {\em Pure and Applied
  Mathematics (Boca Raton)}.
\newblock Chapman \& Hall/CRC, Boca Raton, FL, 2006.
\newblock Algebraic, combinatorial and analytic theory.

\bibitem[GN10]{graetzer-nation10}
G.~Gr{\"a}tzer and J.~B. Nation.
\newblock A new look at the {J}ordan-{H}\"older theorem for semimodular
  lattices.
\newblock {\em Algebra Universalis}, 64(3-4):309--311, 2010.

\bibitem[GS84]{gilchrist-smith84}
M.~P. Gilchrist and M.~K. Smith.
\newblock Noncommutative {UFD}s are often {PID}s.
\newblock {\em Math. Proc. Cambridge Philos. Soc.}, 95(3):417--419, 1984.

\bibitem[GW04]{goodearl-warfield04}
K.~R. Goodearl and R.~B. Warfield, Jr.
\newblock {\em An introduction to noncommutative {N}oetherian rings}, volume~61
  of {\em London Mathematical Society Student Texts}.
\newblock Cambridge University Press, Cambridge, second edition, 2004.

\bibitem[GY12]{goodearl-yakimov}
K.~R. Goodearl and M.~T. Yakimov.
\newblock From quantum {O}re extensions to quantum tori via noncommutative
  {UFD}s.
\newblock 2012.
\newblock Preprint.

\bibitem[GY14]{goodearl-yakimov14}
K.~R. Goodearl and M.~T. Yakimov.
\newblock Quantum cluster algebras and quantum nilpotent algebras.
\newblock {\em Proc. Natl. Acad. Sci. USA}, 111(27):9696--9703, 2014.

\bibitem[HL14]{heinle-levandovskyy}
A.~Heinle and V.~Levandovskyy.
\newblock Factorization of $\mathbb z$-homogeneous polynomials in the {F}irst
  ($q$)-{W}eyl {A}lgebra.
\newblock 2014.
\newblock Preprint.

\bibitem[HLSS15]{hegedues-li-schicho-schroecker15}
G.~Hegedüs, Z.~Li, J.~Schicho, and H.-P. Schröcker.
\newblock {F}rom the {F}undamental {T}heorem of {A}lgebra to {K}empe’s
  {U}niversality {T}heorem.
\newblock {\em Internat. Math. Nachr.}, 229:13--26, 2015.

\bibitem[HM06]{hallouin-maire06}
E.~Hallouin and C.~Maire.
\newblock Cancellation in totally definite quaternion algebras.
\newblock {\em J. Reine Angew. Math.}, 595:189--213, 2006.

\bibitem[HR95]{haile-rowen95}
D.~Haile and L.~H. Rowen.
\newblock Factorizations of polynomials over division algebras.
\newblock {\em Algebra Colloq.}, 2(2):145--156, 1995.

\bibitem[HSS12]{hegedues-schicho-schroecker12}
G.~Hegedüs, J.~Schicho, and H.-P. Schröcker.
\newblock Construction of overconstrained linkages by factorization of rational
  motions.
\newblock In J.~Lenarcic and M.~Husty, editors, {\em Latest Advances in Robot
  Kinematics}, pages 213--220. Springer Netherlands, 2012.

\bibitem[HSS13]{hegedues-schicho-schroecker13}
G.~Hegedüs, J.~Schicho, and H.-P. Schröcker.
\newblock Factorization of rational curves in the study quadric.
\newblock {\em Mech. Machine Theory}, 69(1):142--152, 2013.

\bibitem[Jac43]{jacobson43}
N.~Jacobson.
\newblock {\em The {T}heory of {R}ings}.
\newblock American Mathematical Society Mathematical Surveys, vol. I. American
  Mathematical Society, New York, 1943.

\bibitem[JO07]{jespers-okninski07}
E.~Jespers and J.~Okni{\'n}ski.
\newblock {\em Noetherian semigroup algebras}, volume~7 of {\em Algebras and
  Applications}.
\newblock Springer, Dordrecht, 2007.

\bibitem[Jor89]{jordan89}
D.~A. Jordan.
\newblock Unique factorisation of normal elements in noncommutative rings.
\newblock {\em Glasgow Math. J.}, 31(1):103--113, 1989.

\bibitem[JW01]{jespers-wang01}
E.~Jespers and Q.~Wang.
\newblock Noetherian unique factorization semigroup algebras.
\newblock {\em Comm. Algebra}, 29(12):5701--5715, 2001.

\bibitem[JW02]{jespers-wang02}
E.~Jespers and Q.~Wang.
\newblock Height-one prime ideals in semigroup algebras satisfying a polynomial
  identity.
\newblock {\em J. Algebra}, 248(1):118--131, 2002.

\bibitem[Lam06]{lam06}
T.~Y. Lam.
\newblock {\em Serre's problem on projective modules}.
\newblock Springer Monographs in Mathematics. Springer-Verlag, Berlin, 2006.

\bibitem[Lan02]{landau02}
E.~Landau.
\newblock Ein {S}atz \"uber die {Z}erlegung homogener linearer
  {D}ifferentialausdr\"ucke in irreductible {F}actoren.
\newblock {\em J. Reine Angew. Math.}, 124:115--120, 1902.

\bibitem[LB86]{lebruyn86}
L.~Le~Bruyn.
\newblock Trace rings of generic matrices are unique factorization domains.
\newblock {\em Glasgow Math. J.}, 28(1):11--13, 1986.

\bibitem[Ler12]{leroy12}
A.~Leroy.
\newblock Noncommutative polynomial maps.
\newblock {\em J. Algebra Appl.}, 11(4):1250076, 16, 2012.

\bibitem[LG70]{lissner-geramita70}
D.~Lissner and A.~Geramita.
\newblock Towber rings.
\newblock {\em J. Algebra}, 15:13--40, 1970.

\bibitem[LL88]{lam-leroy87}
T.~Y. Lam and A.~Leroy.
\newblock Algebraic conjugacy classes and skew polynomial rings.
\newblock In {\em Perspectives in ring theory ({A}ntwerp, 1987)}, volume 233 of
  {\em NATO Adv. Sci. Inst. Ser. C Math. Phys. Sci.}, pages 153--203. Kluwer
  Acad. Publ., Dordrecht, 1988.

\bibitem[LL00]{lam-leroy00}
T.~Y. Lam and A.~Leroy.
\newblock Principal one-sided ideals in {O}re polynomial rings.
\newblock In {\em Algebra and its applications ({A}thens, {OH}, 1999)}, volume
  259 of {\em Contemp. Math.}, pages 333--352. Amer. Math. Soc., Providence,
  RI, 2000.

\bibitem[LL04]{lam-leroy04}
T.~Y. Lam and A.~Leroy.
\newblock Wedderburn polynomials over division rings. {I}.
\newblock {\em J. Pure Appl. Algebra}, 186(1):43--76, 2004.

\bibitem[LLO08]{lam-leroy-ozturk08}
T.~Y. Lam, A.~Leroy, and A.~Ozturk.
\newblock Wedderburn polynomials over division rings. {II}.
\newblock In {\em Noncommutative rings, group rings, diagram algebras and their
  applications}, volume 456 of {\em Contemp. Math.}, pages 73--98. Amer. Math.
  Soc., Providence, RI, 2008.

\bibitem[LLR06]{launois-lenagan-rigal06}
S.~Launois, T.~H. Lenagan, and L.~Rigal.
\newblock Quantum unique factorisation domains.
\newblock {\em J. London Math. Soc. (2)}, 74(2):321--340, 2006.

\bibitem[LO04]{leroy-ozturk04}
A.~Leroy and A.~Ozturk.
\newblock Algebraic and {$F$}-independent sets in 2-firs.
\newblock {\em Comm. Algebra}, 32(5):1763--1792, 2004.

\bibitem[Loe03]{loewy03}
A.~Loewy.
\newblock \"{U}ber reduzible lineare homogene {D}ifferentialgleichungen.
\newblock {\em Math. Ann.}, 56(4):549--584, 1903.

\bibitem[LR11]{levy-robson11}
L.~S. Levy and J.~C. Robson.
\newblock {\em Hereditary {N}oetherian prime rings and idealizers}, volume 174
  of {\em Mathematical Surveys and Monographs}.
\newblock American Mathematical Society, Providence, RI, 2011.

\bibitem[LRSS15]{li-rad-schicho-schroecker}
Z.~Li, T.-D. Rad, J.~Schicho, and H.-P. Schröcker.
\newblock Factorization of {R}ational {M}otions: {A} {S}urvey with {E}xamples
  and {A}pplications.
\newblock In {\em Proceedings of the 14th IFToMM World Congress (Taipei,
  2015)}, 2015.
\newblock Preprint.

\bibitem[Mar10]{marubayashi10}
H.~Marubayashi.
\newblock Ore extensions over total valuation rings.
\newblock {\em Algebr. Represent. Theory}, 13(5):607--622, 2010.

\bibitem[MR01]{mcconnell-robson01}
J.~C. McConnell and J.~C. Robson.
\newblock {\em Noncommutative {N}oetherian rings}, volume~30 of {\em Graduate
  Studies in Mathematics}.
\newblock American Mathematical Society, Providence, RI, revised edition, 2001.
\newblock With the cooperation of L. W. Small.

\bibitem[MR03]{maclachlan-reid03}
C.~Maclachlan and A.~W. Reid.
\newblock {\em The arithmetic of hyperbolic 3-manifolds}, volume 219 of {\em
  Graduate Texts in Mathematics}.
\newblock Springer-Verlag, New York, 2003.

\bibitem[MVO12]{marubayashi-vanoystaeyen12}
H.~Marubayashi and F.~Van~Oystaeyen.
\newblock {\em Prime divisors and noncommutative valuation theory}, volume 2059
  of {\em Lecture Notes in Mathematics}.
\newblock Springer, Heidelberg, 2012.

\bibitem[Nar04]{narkiewicz04}
W.~Narkiewicz.
\newblock {\em Elementary and analytic theory of algebraic numbers}.
\newblock Springer Monographs in Mathematics. Springer-Verlag, Berlin, third
  edition, 2004.

\bibitem[Ore33]{ore33}
O.~Ore.
\newblock Theory of non-commutative polynomials.
\newblock {\em Ann. of Math. (2)}, 34(3):480--508, 1933.

\bibitem[Rei75]{reiner75}
I.~Reiner.
\newblock {\em Maximal orders}.
\newblock Academic Press [A subsidiary of Harcourt Brace Jovanovich,
  Publishers], London-New York, 1975.
\newblock London Mathematical Society Monographs, No. 5.

\bibitem[Ret10]{retakh10}
V.~Retakh.
\newblock From factorizations of noncommutative polynomials to combinatorial
  topology.
\newblock {\em Cent. Eur. J. Math.}, 8(2):235--243, 2010.

\bibitem[Sme13]{smertnig13}
D.~Smertnig.
\newblock Sets of lengths in maximal orders in central simple algebras.
\newblock {\em J. Algebra}, 390:1--43, 2013.

\bibitem[Sme15]{smertnig-canc}
D.~Smertnig.
\newblock A note on cancellation in totally definite quaternion algebras.
\newblock {\em J. Reine Angew. Math.}, 707:209--216, 2015.

\bibitem[Sme16]{smertnig-hnp}
D.~Smertnig.
\newblock Factorizations in bounded hereditary noetherian prime rings.
\newblock 2016.
\newblock Preprint.

\bibitem[Swa80]{swan80}
R.~G. Swan.
\newblock Strong approximation and locally free modules.
\newblock In {\em Ring theory and algebra, {III} ({P}roc. {T}hird {C}onf.,
  {U}niv. {O}klahoma, {N}orman, {O}kla., 1979)}, volume~55 of {\em Lecture
  Notes in Pure and Appl. Math.}, pages 153--223. Dekker, New York, 1980.

\bibitem[Tow68]{towber68}
J.~Towber.
\newblock Complete reducibility in exterior algebras over free modules.
\newblock {\em J. Algebra}, 10:299--309, 1968.

\bibitem[Vig76]{vigneras76}
M.-F. Vign{\'e}ras.
\newblock Simplification pour les ordres des corps de quaternions totalement
  d\'efinis.
\newblock {\em J. Reine Angew. Math.}, 286/287:257--277, 1976.

\bibitem[ZM08]{zieve-mueller}
M.~Zieve and P.~M\"uller.
\newblock On {R}itt's polynomial decomposition theorems.
\newblock 2008.
\newblock Preprint.

\end{thebibliography}

\end{document}